\documentclass[reqno,11pt]{amsart}
\usepackage[left=1in,right=1in,top=1in,bottom=1in]{geometry}
\usepackage{enumitem}
\usepackage{graphicx}
\usepackage{tikz}
\usetikzlibrary{arrows, decorations.markings,matrix,trees}
\usepackage{hyperref}
\hypersetup{
   colorlinks=true,
   citecolor=blue,
   filecolor=blue,
   linkcolor=blue,
   urlcolor=blue
}

\def\XXint#1#2#3{{\setbox0=\hbox{$#1{#2#3}{\int}$}
  \vcenter{\hbox{$#2#3$}}\kern-.5\wd0}}

\newcommand{\al}{\alpha}       
\newcommand{\vt}{\vartheta}    
\newcommand{\lda}{\lambda}
\newcommand{\om}{\Omega}            
\newcommand{\pa}{\partial}
\newcommand{\va}{\varepsilon}       
\newcommand{\ud}{\mathrm{d}}
\newcommand{\be}{\begin{equation}} 
\newcommand{\ee}{\end{equation}}
      
\newcommand{\Lda}{\Lambda}

\newcommand{\cA}{\mathcal{A}}

\newcommand{\cB}{\mathcal{B}}

\newcommand{\cC}{\mathcal{C}}
\newcommand{\C}{\mathbb{C}}

\newcommand{\cD}{\mathcal{D}}

\newcommand{\cE}{\mathcal{E}}

\newcommand{\cF}{\mathcal{F}}

\newcommand{\bG}{\mathbb{G}}

\newcommand{\cL}{\mathcal{L}} 

\newcommand{\Z}{\mathbb{Z}}

\newcommand{\M}{\mathcal{M}}
\newcommand{\MM}{\mathbb{M}}

\newcommand{\cN}{\mathcal{N}}
\newcommand{\N}{\mathbb{N}}

\newcommand{\cP}{\mathcal{P}}

\newcommand{\R}{\mathbb{R}}   

\newcommand{\cR}{\mathcal{R}}

\newcommand{\Ss}{\mathbb{S}}

\newcommand{\wc}{\rightharpoonup}        

\newcommand{\HH}{\mathcal{H}}

\newcommand{\vp}{\varphi}

\newcommand{\T}{\mathrm{T}}
\newcommand{\ga}{\gamma}
\newcommand{\Ga}{\Gamma}

\newcommand{\sg}{\sigma} 
\newcommand{\ift}{\infty} 
\newcommand{\wt}{\widetilde}
\newcommand{\wh}{\widehat}
\newcommand{\f}{\frac}

\newcommand{\ol}{\overline}

\newcommand{\op}{\operatorname}

\newcommand{\na}{\nabla}

\DeclareMathOperator{\dist}{dist}

\DeclareMathOperator{\diam}{diam}

\DeclareMathOperator{\sgn}{sgn}
\DeclareMathOperator{\supp}{supp}

\DeclareMathOperator{\tr}{tr}

\DeclareMathOperator{\sing}{sing}

\DeclareMathOperator{\rank}{rank}

\DeclareMathOperator{\loc}{loc}

\def\<{\langle}\def\>{\rangle}
\def\({\left(}\def\){\right)}
\numberwithin{equation}{section}
\theoremstyle{plain}
\newtheorem{thm}{Theorem}[section]

\newtheorem{lem}[thm]{Lemma}
\newtheorem{prop}[thm]{Proposition}

\newtheorem{mainthm}{Theorem} 

\theoremstyle{definition}
\newtheorem{defn}[thm]{Definition}

\theoremstyle{remark}
\newtheorem{rem}[thm]{Remark}

\title[Three-Dimensional Global Minimizers of Ginzburg--Landau-Type Functionals]{Singular Limits for Three-Dimensional Global Minimizers of Ginzburg--Landau-Type Functionals: \\ Uniform Estimates and Singular Sets}

\date{}

\author{Giacomo Canevari}
\address{Universit\`{a} di Verona Strada Le Grazie 15, 37134 Verona, Italy}
\email{giacomo.canevari@univr.it}

\author{Haotong Fu}
\address{School of Mathematical Sciences, Peking University, Beijing 100871, China}
\email{2301110012@pku.edu.cn}

\author{Wei Wang}
\address{School of Mathematical Sciences, Peking University, Beijing 100871, China}
\email{gjmtamag@gmail.com,\,\,2201110024@stu.pku.edu.cn}

\begin{document}

\begin{abstract}
We study the asymptotic behavior of global minimizers of a Ginzburg--Landau-type functional with general compact vacuum manifold $\cN$ on bounded domains in $\R^3$, in the regime where the energy grows at a logarithmic rate. We show that the normalized energy measures converge, up to a subsequence, to a measure supported on a finite union of closed line segments connecting prescribed singularities on the boundary. The limit map is a harmonic map valued locally by minimizing $\cN$ away from this singular set. We also establish uniform $W^{1,q}$-estimates with $ q\in(1,2) $ and uniform potential estimates for minimizers, independent of the parameter $\va$. Finally, we prove that the singular set of the limiting measure solves the homotopical Plateau problem in codimension $2$. 
\end{abstract}

\subjclass[2020]{47J30, 49Q20, 58E12, 58E20}

\keywords{Ginzburg--Landau-type functional, global minimizer, stationary varifold, harmonic map, uniform estimate, homotopical Plateau problem}

\maketitle

\section{Introduction}

\subsection{Background and problem setting}

Let $\om\subset\R^n$, $n\geq 2$, be a bounded domain, and let $\cN$ be a compact, connected, smooth Riemannian manifold without boundary, isometrically embedded in $\R^m$ for some integer $m=m(\cN)\geq 1$. We consider the Ginzburg--Landau-type functional
\be
E_{\va}(u,\om):=\int_{\om}e_{\va}(u)\ud x,
\quad
e_{\va}(u):=\f{1}{2}|\na u|^2+\frac{1}{\va^2}f(u),
\tag{GL${}_{\va}$}\label{GLfunctional}
\ee
where $u:\om\to\R^m$ and the potential $f:\R^m\to[0,+\ift)$ is nonnegative, with zero set $ f^{-1}(0)=\cN $.

The basic example is the complex Ginzburg--Landau model, where $u:\om\to\C$ and
\[
f(u)=\f{1}{4}(|u|^2-1)^2,
\]
so that the vacuum manifold is $\Ss^1$. This functional has been studied extensively. In two dimensions, the monograph by Bethuel, Brezis, and H\'{e}lein \cite{BBH93} provides the asymptotic analysis of minimizers in simply connected planar domains with prescribed boundary data taking values in $\Ss^1$. Struwe \cite{Str94} and Rivi\`{e}re \cite{Riv99} studied the asymptotics of critical points. The three-dimensional and higher-dimensional cases were treated by Bethuel, Br\'{e}zis, and Orlandi \cite{BBO01}, Bethuel, Orlandi, and Smets \cite{BOS05}, Bourgain, Br\'{e}zis, and Mironescu \cite{BBM04}, and Lin and Rivi\`{e}re \cite{LR99,LR01}. Much of this progress relies on the special geometric structure of $\Ss^1$.

For a general compact manifold $\cN$, the analysis is more involved. As in the complex Ginzburg--Landau model, two natural energy regimes arise. The first is the uniformly bounded energy regime. In this setting, Lin and Wang \cite{LW99} studied critical points of the functional, while Majumdar and Zarnescu \cite{MZ10}, Nguyen and Zarnescu \cite{NZ13}, Contreras, Lamy, and Rodiac \cite{CLR18}, and Contreras and Lamy \cite{CL22} studied minimizers. In particular, \cite{CL22} treats anisotropic energies. The second regime concerns logarithmic energy bounds of the form
\[
E_{\va}(u_{\va},\om)\leq C(|\log\va|+1).
\]
In two dimensions, the asymptotics of minimizers for the uniaxial Landau--de Gennes model with $\cN=\R\mathbb{P}^2$ were studied in \cite{Can15}, and the case of a general vacuum manifold $\cN$ was treated by Monteil, Rodiac, and Van Schaftingen \cite{MRS21,MRS22}. In three dimensions, the analysis of \cite{Can15} was extended in \cite{Can17}, and the biaxial Landau--de Gennes model with $\cN=\Ss^3/Q_8$ was studied in \cite{WZ24}, where $Q_8$ denotes the quaternion group. In arbitrary dimensions, the functional \eqref{GLfunctional} was analyzed in \cite{CO21} via the $\Ga$-convergence.

Based on \cite{Can17,WZ24}, we study \eqref{GLfunctional} in three dimensions. Throughout the paper, $\om\subset\R^3$ is a bounded domain. We assume that the smooth nonnegative potential $f\in C^{\ift}(\R^m,[0,+\ift))$ satisfies the following structural hypotheses.

\begin{enumerate}[label=$(\op{H}\theenumi)$]
\item\label{A1} The fundamental group $\pi_1(\cN)$ of $\cN=f^{-1}(0)$ is finite.

\item\label{A2} The potential $f$ has the non-vanishing property at infinity:
\[
\liminf_{|y|\to+\ift}f(y)>0.
\]

\item\label{A3} There exists $R_0>0$ such that
\[
|y|\geq R_0\quad\Longrightarrow\quad y\cdot Df(y)\geq 0.
\]

\item\label{A4} The potential $f$ vanishes non-degenerately on $\cN$: for every $x\in\cN$ and every $v\in(T_x\cN)^{\perp}\backslash\{0\}$,
\[
D^2f(x)[v,v]>0.
\]
\end{enumerate}

We mainly consider global minimizers of \eqref{GLfunctional} subject to prescribed boundary data. Following the framework introduced by Lin and Rivi\`{e}re \cite{LR99} for the three-dimensional complex Ginzburg--Landau model, we allow boundary data that are smooth away from finitely many singular points on $\pa\om$ and satisfy uniform blow-up estimates near those points. We now define the class of admissible boundary data used throughout the paper.

\begin{defn}\label{defnsuitabledata}
Let $M_0>0$, and let $\om\subset\R^3$ be a bounded $C^{2,1}$ domain; see Definition \ref{RegularityBoundary}. Assume that $\pa\om$ is simply connected. Let $\cA:=\{a_i\}_{i=1}^k\subset\pa\om$ be a collection of $k\geq 2$ distinct points. We say that a family $\{g_{\va}\}_{\va\in(0,1)}\subset C^2(\pa\om,\R^m)$ is a family of suitable boundary data with respect to $\cA$ and $M_0$ if the following four conditions hold.

\begin{enumerate}[label=$(\theenumi)$]

\item\label{boundarycondition1} For any $j\in\{0,1,2\}$, $\va\in(0,1)$, and $x\in\pa\om$,
\[
|D_{\pa\om}^j g_{\va}(x)|
\leq M_0\bigl(\max\{\dist(x,\cA),\va\}\bigr)^{-j},
\]
where $D_{\pa\om}$ denotes the surface derivative on $\pa\om$, with the convention $D_{\pa\om}^0 g_{\va}:=g_{\va}$.

\item\label{boundarycondition2} Set
\[
\rho_0:=\min\left\{\frac{1}{10}|a_i-a_j|: i,j\in\Z\cap[1,k]\},\ i\neq j\right\}.
\]
For any $\rho\in(0,\rho_0)$ and any $0<\va_1<\va_2<\rho$, one has
\[
g_{\va_1}(x)=g_{\va_2}(x)\in\cN\quad\text{for any }x\in\pa\om\backslash\(\bigcup_{i=1}^k B_\rho(a_i)\).
\].

\item\label{boundarycondition3} For any $0<\va<\rho<\rho_0$ and any $i\in\Z\cap[1,k]$, the loop
$g_{\va}|_{\pa\om\cap\pa B_\rho(a_i)}$ is homotopically non-trivial in $\cN$; see Definition \ref{definitionfreehomotopyclass}.

\item\label{boundarycondition4} There exists $\rho_1\in(0,\rho_0)$ such that, for any $x_0\in\cA$, there exist a diffeomorphism $\Phi_0$ of $B_{\rho_1}(x_0)$ and a rotation $R$ of $\R^3$ such that, for any $\va\in(0,1)$ and any $x\in\pa\om\cap B_{\rho_1}(x_0)$,
\be
g_{\va}(x)=g_{\va}^0\left(\frac{(\Phi_0\circ R)(x)}{\va}\right),
\label{expressionof gvalocal}
\ee
where
\[
\left\{
\begin{aligned}
&\Phi_0(x_0)=x_0,\quad \na\Phi_0(x_0)=\op{Id}_3,\quad
\|D^2\Phi_0\|_{L^{\ift}(B_{\rho_1}(x_0))}\leq M_0,\\
&g_{\va}^0(y_1,y_2,y_3)
=h_{\va}\left(\frac{(y_1,y_2)}{(y_1^2+y_2^2)^{\frac{1}{2}}}\right)
\chi\left((y_1^2+y_2^2)^{\frac{1}{2}}\right).
\end{aligned}
\right.
\]
Here, $\chi$ is an increasing function satisfying $\chi(0)=0$, $\chi\equiv 1$ on $[1,+\ift)$, and $\|\chi'\|_{L^{\ift}(\R)}\leq 100$. Moreover, $h_{\va}:\Ss^1\to\cN$ satisfies
\[
\|D_{\Ss^1}^j h_{\va}\|_{L^{\ift}(\Ss^1)}\leq M_0
\quad\text{for any } j\in\{1,2\}.
\]
\end{enumerate}
\end{defn}

\begin{rem}
Conditions \ref{boundarycondition1}--\ref{boundarycondition3} prescribe a fixed finite set of topological singularities $\cA$ on $\pa\om$ through the boundary data $g_{\va}$ and give uniform control of their behavior near $\cA$. The condition \ref{boundarycondition4} imposes a stronger local structure near each singular point. It is analogous to the assumption in \cite[Page 243, (iii)]{LR99} for the complex Ginzburg--Landau model. It requires that, after a suitable change of coordinates and rescaling, $g_{\va}$ has an angular profile with uniform regularity. This structure will be used later to obtain finer boundary estimates for minimizers.
\end{rem}

The first main result gives compactness and partial regularity for global minimizers as $\va\to0^+$.

\begin{mainthm}\label{globalminimizersproperties}
Let $\om\subset\R^3$ be a bounded $C^{2,1}$ domain such that $\ol{\om}$ is strongly convex at any point $x\in\pa\om$. Let $\cA=\{a_i\}_{i=1}^k\subset\pa\om$, let $M_0>0$, and let $\{g_{\va}\}_{\va\in(0,1)}\subset C^2(\pa\om,\R^m)$ be a family of suitable boundary data with respect to $\cA$ and $M_0$. Let $g\in C^2(\pa\om\backslash\cA,\cN)$ be such that, as $\va\to0^+$,
\[
g_{\va}\to g
\quad\text{in } C_{\loc}^2(\pa\om\backslash\cA,\R^m).
\]
Assume that $\{u_{\va}\}_{\va\in(0,1)}$ is a family of global minimizers of \eqref{GLfunctional} satisfying $u_{\va}=g_{\va}$ on $\pa\om$. Then
\be
E_{\va}(u_{\va},\om)\leq C(|\log\va|+1),
\quad
\|u_{\va}\|_{L^{\ift}(\om)}\leq C,
\label{EvaCbound}
\ee
where $C>0$ depends only on $\cA,f,M_0,\cN $, and $\om$.

Moreover, there exist a Radon measure $\mu_*\in(C^0(\ol{\om}))'$ and a sequence $\va_i\to0^+$ such that
\[
\mu_{\va_i}:=\frac{e_{\va_i}(u_{\va_i})}{|\log\va_i|}\ud x
\wc^*\mu_*
\quad\text{weakly}^* \text{ in } (C^0(\ol{\om}))'.
\]
Set $S_*=\supp(\mu_*)$. Then the following properties hold.

\begin{enumerate}[label=$(\theenumi)$]

\item $S_*\cap\om$ is $1$-rectifiable and $\HH^1(S_*\cap\om)<+\ift$.

\item There exists $u_*\in H^1(\om\backslash S_*,\cN)$ such that $u_*$ is a local minimizer of the Dirichlet energy, in the sense of Definition \ref{Localminimizerdef}. Here, for an open set $D\subset\om$,
\begin{equation}
E_0(u,D):=\frac{1}{2}\int_D|\nabla u|^2\ud x.
\label{Dirichlet}
\end{equation}
Moreover, $ u_{\va_i}\to u_* $ strongly in $ H^1_{\loc}(\om\backslash S_*,\R^m) $.

\item The singular set of $u_*$, denoted by $S_0=\sing(u_*)$, is locally finite in $\om\backslash S_*$, and
\[
u_*\in C^{\ift}(\om\backslash(S_*\cup S_0),\cN).
\]
Moreover, $ u_{\va_i}\to u_* $ in $ C^0_{\loc}(\om\backslash(S_*\cup S_0),\R^m) $.
\end{enumerate}
\end{mainthm}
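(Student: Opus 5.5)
The proof has three parts: upper bounds by construction, then a concentration/$\eta$-compactness argument, then local regularity for minimizing harmonic maps.

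\textbf{Step 1: the bounds \eqref{EvaCbound}.} For the energy bound I construct a competitor $v_\va$ with $v_\va=g_\va$ on $\pa\om$ and $E_\va(v_\va,\om)\le C(|\log\va|+1)$.

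Since $\pa\om$ is simply connected and $\pi_1(\cN)$ is finite by \ref{A1}, the free homotopy classes at the points $a_i$ can be grouped in pairs, or more generally into a finite tree of segments. I join the $a_i$ by finitely many straight segments $L_j\subset\ol\om$; strong convexity guarantees they lie in $\om$ except at their endpoints. Near each $L_j$, at distance $r$, I take $v_\va$ to be a $\cN$-valued map homogeneous of degree zero in the normal variables, with energy density $\lesssim r^{-2}$. I cut it off by $\chi(r/\va)$ for $r<\va$, using the model profile $g^0_\va$ from condition \ref{boundarycondition4}. Integrating $r^{-2}$ over $\va<r<1$ in the two normal directions gives $C|\log\va|$.

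Away from the segments the boundary data extend with bounded energy, because $\pi_1(\cN)$ is finite. Condition \ref{boundarycondition3} together with the simple connectivity of $\pa\om$ ensures the loops cancel, so no obstruction is left. Near each $a_i$, the bounds \ref{boundarycondition1} control the boundary layer of size $\va$.

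The $L^\ift$ bound comes from a maximum-principle argument. Using \ref{A3}, replacing $u_\va$ by its radial truncation at radius $\max(R_0,\|g_\va\|_\ift)$ does not increase $|\na u|$, and it does not increase $f$ either: $f$ is nondecreasing along rays when $|y|\ge R_0$. Hence by minimality $|u_\va|\le C$.

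\textbf{Step 2: the limit measure and its support.} The bound \eqref{EvaCbound} gives weak$^*$ compactness of $\mu_\va$ in $(C^0(\ol\om))'$. The key tool is a clearing-out / $\eta$-compactness lemma, which I assume is available from the earlier sections in the form of \cite{Can17,WZ24}. It states that if $\frac1{r|\log\va|}E_\va(u_\va,B_r(x))\le\eta$, then on $B_{r/2}(x)$ the map $u_\va$ is close to $\cN$ and $E_\va(u_\va,B_{r/2}(x))\le C r$ with no $|\log\va|$.

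From this I get a density lower bound $\Theta^1(\mu_*,x)\ge\eta$ on $S_*\cap\om$. Minimality yields the monotonicity formula for $r^{-1}E_\va(u_\va,B_r)$ up to $o(1)$ errors. Combining the two, $S_*$ is closed with $\HH^1(S_*\cap\om)<\ift$.

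Rectifiability should follow by passing to the limit of the stress-energy tensor. This shows that $\mu_*$ is the weight of a stationary $1$-varifold, and Allard-type rectifiability then applies to positive-density stationary $1$-varifolds.

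\textbf{Step 3: the limit map and its regularity.} On compact $K\subset\om\backslash S_*$, $\eta$-compactness gives $E_\va(u_\va,K)\le C_K$. So $u_\va\wc u_*$ in $H^1_{\loc}$, and $f(u_\va)\to0$ forces $u_*\in\cN$.

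For local minimality and strong convergence I use the standard comparison/extension lemma. Given a competitor $w$ for $u_*$ on a ball $B$, I interpolate on a thin annulus between $u_\va$ and $w$, projecting the interpolation to $\cN$ with the nearest-point map. This is where Luckhaus' lemma is needed, since $u_\va$ is only near $\cN$. The construction gives $\limsup E_\va(u_\va,B)\le E_0(w,B)$. Taking $w=u_*$ yields energy convergence, hence strong $H^1_{\loc}$ convergence.

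Schoen--Uhlenbeck regularity for minimizing harmonic maps in dimension three then makes $S_0$ discrete in $\om\backslash S_*$, with $u_*$ smooth elsewhere. $C^0_{\loc}$ convergence away from $S_0$ follows from small-energy $\va$-regularity for $u_\va$ combined with the strong $H^1$ convergence.

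\textbf{Main obstacle.} I expect the hardest step to be the $\eta$-compactness/clearing-out lemma for a general $\cN$, and in particular controlling the energy cost of the homotopy-class constraint.
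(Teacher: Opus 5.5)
The gap is in Step~1, the upper bound $E_\va(u_\va,\om)\le C(|\log\va|+1)$. Your competitor rests on three claims, none of them justified:
\begin{itemize}
\item the classes at the $a_i$ ``can be grouped in pairs, or more generally into a finite tree'';
\item the data then ``extend away from the segments with bounded energy because $\pi_1(\cN)$ is finite'';
\item condition~(3) of Definition~\ref{defnsuitabledata} makes the loops cancel.
\end{itemize}
Finiteness of $\pi_1(\cN)$ is not the relevant input. Pairing can fail outright: if $\pi_1(\cN)=\Z_3$, three boundary singularities of class $1$ are admissible and cannot be paired. For a tree you must check that based, possibly non-commuting, classes are compatible at every junction, and you must build the map in a ball around each interior branch point; you address neither. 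Once lateral data on the tubes are fixed, extending an $\cN$-valued map into the complement of the tubes (a handlebody) is an obstruction problem. The loops bounding its compressing discs must be null-homotopic, and there is a $\pi_2(\cN)$-type obstruction on the top cell that depends on how you chose the lateral data. Condition~(3) only says each loop is non-trivial; it gives no cancellation. Your construction is essentially the one in Section~\ref{SectionMinimality}, where the $\cN$-valued map off the network is part of the data of an admissible competitor. Here you would have to produce that map, and as written Step~1 does not.

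No topology is needed, and this is how the paper argues (Proposition~\ref{locallogestimate}, after \cite[Lemma~25]{Can17}).
\begin{itemize}
\item Since $\ol\om$ is convex, it is strictly star-shaped about any $x_0\in\om$: $(y-x_0)\cdot\nu(y)\ge\dist(x_0,\pa\om)>0$ on $\pa\om$.
\item Take the degree-zero homogeneous extension $v_\va(x_0+t(y-x_0)):=g_\va(y)$ for $y\in\pa\om$, $t\in(0,1]$.
\item Since $\ud x=t^2\,(y-x_0)\cdot\nu(y)\,\ud t\,\ud\HH^2(y)$ and $|\na v_\va|\le Ct^{-1}|\na_\top g_\va(y)|$, you get $E_\va(v_\va,\om)\le C\diam(\om)E_\va(g_\va,\pa\om)$.
\item Definition~\ref{defnsuitabledata} gives $E_\va(g_\va,\pa\om)\le C(|\log\va|+1)$. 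Indeed $\int_{\pa\om}|\na_\top g_\va|^2\le C\int_\va^1 r^{-1}\ud r+C$, while $f(g_\va)$ is bounded and supported in $\bigcup_i\ol B_\va(a_i)$.
\end{itemize}
This cone map is singular exactly along the star $\bigcup_i[x_0,a_i]$, whose complement in $\ol\om$ retracts radially onto $\pa\om\backslash\cA$. That is why, for this particular tree, no obstruction arises.

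Your $L^\ift$ bound by radial truncation is correct and more elementary than the paper's appeal to \cite[Lemma~8.3]{Lam14}. Add the strictness step: if $\{|u_\va|>R\}$ had positive measure, then $(|u_\va|-R)^+\in H^1_0(\om)$ would be non-zero. So $\na u_\va\neq0$ on a positive-measure subset of that set, where truncation strictly lowers the Dirichlet term, contradicting minimality.

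Steps~2 and~3 follow the paper's route:
\begin{itemize}
\item clearing-out (Proposition~\ref{InteriorClearingout}) gives the density lower bound and $\HH^1(S_*\cap\om)<+\ift$;
\item the limit stress tensor gives rectifiability and stationarity \cite[Propositions~51 and~55]{Can17};
\item bounded energy off $S_*$ plus a Luckhaus comparison gives local minimality and strong convergence (Proposition~\ref{interiorcompactnesslem});
\item Schoen--Uhlenbeck and $\va$-regularity give the rest.
\end{itemize}
Three points need tightening.
\begin{itemize}
\item Clearing-out only gives $\theta_\va(u_\va;x,\f{r}{2})\le C$, not closeness to $\cN$; you do not use the latter, so drop it.
\item In the comparison, $u_\va$ is not uniformly close to $\cN$ on spheres, so you cannot apply Luckhaus's lemma to it directly. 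First replace it by an $\cN$-valued map across a thin shell at negligible cost, via projection on the $1$-skeleton of a fine cell decomposition (Lemma~\ref{Luckhauslemma}, Proposition~\ref{Luckhaus11}).
\item The limit stress tensor is a priori only $F_*\mu_*$ with $\tr F_*\ge1$ and eigenvalues at most $1$. Rectifiability therefore needs a criterion for such generalized varifolds with positive lower density before Allard-type results apply.
\end{itemize}
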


We next describe the finer structure of the limiting concentration set. Under the strong convexity assumption on $\om$, the set $S_*$ is a finite weighted network whose endpoints are precisely the prescribed boundary singularities.

Recall that $[\Ss^1,\cN]$ denotes the set of free homotopy classes of loops in $\cN$ (see Definition~\ref{definitionfreehomotopyclass} below); for a loop $\ell:\Ss^1\to\cN$, we write $[\ell]_{\cN}$ for its free homotopy class. This set is equipped with the norm $|\cdot|_*$ defined in Section~\ref{SectionPre} below --- more precisely, see Definition~\ref{DefinitionEsg}.

\begin{mainthm}\label{structureofthelimitset}
Assume that $\om,\cA,S_*,g,u_*$, and $\mu_*$ are as in Theorem
\ref{globalminimizersproperties}. Then the following properties hold.

\begin{enumerate}[label=$(\theenumi)$]

\item $S_*\cap\pa\om=\cA$, and there exists a finite collection of closed line segments
$\{L_j\}_{j\in J}\subset\ol{\om}$ such that $\#J<+\infty$,
\[
S_*=\bigcup_{j\in J}L_j,
\]
and the relative interiors of the segments $L_j$ are pairwise disjoint. Moreover,
$S_*\subset\operatorname{Conv}(\cA)$, where $\operatorname{Conv}(\cA)$ denotes the convex hull of $\cA$.

\item Let $j\in J$, and let $x\in\op{Int}L_j$ $($$\op{Int}(L_j)$ denotes the relative interior of $ L_j $$)$. Suppose that
$\ol{D}\subset\om$ is a closed two-dimensional disk centered at $x$ such that
$D\cap S_*=\{x\}$ and $\pa D\cap S_0=\emptyset$. Then
\begin{equation}
\Theta^1(\mu_*,x):=\lim_{r\to0^+}
\frac{\mu_*(\ol{B}_r(x))}{2r}
=|\al_j|_*,
\label{alifirstappear}
\end{equation}
where $ \al_j:=[u_*|_{\pa D}]_{\cN}\in[\Ss^1,\cN] $. The class $\al_j$ is independent of the choice of $x\in\op{Int}L_j$ and of the disk $D$ satisfying the above conditions. In particular,
\[
\mu_*=\sum_{j\in J}|\al_j|_*\HH^1\llcorner L_j.
\]

\item Let $x\in S_*\cap\om$ be an interior branching point of $S_*$, that is, an endpoint in $\om$ of at least two segments in the family $\{L_j\}_{j\in J}$. Set
\[
J_x:=\{j\in J:x \text{ is an endpoint of } L_j\}.
\]
For each $j\in J_x$, let $v_j$ be the unit direction vector of $L_j$ pointing away from $x$. Then
\[
\sum_{j\in J_x}|\al_j|_*v_j=0.
\]

\item Let $j\in J$, and let $x$ be an endpoint of $L_j$. Then one of the following alternatives holds: either
\begin{enumerate}[label=$(\operatorname{\alph*})$]

\item $x$ is a branching point of $S_*$ lying in $\om$, or

\item $x\in\cA$. Moreover, $ [g|_{\pa D_x}]_{\cN}=[u_*|_{\pa D}]_{\cN} $, where $D_x$ is a geodesic disk on $\pa\om$ centered at $x$ such that
$D_x\cap\cA=\{x\}$, and $\ol{D}\subset\om$ is a closed two-dimensional disk centered at an interior point of $L_j$ such that
$D\cap S_*=\{\text{center of }D\}$ and
$\pa D\cap(\operatorname{Conv}(\cA)\cup S_0)=\emptyset$.

\end{enumerate}
\end{enumerate}
\end{mainthm}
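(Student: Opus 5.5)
The plan is to show that $\mu_*$ is (the weight measure of) a stationary integral-type $1$-varifold in $\om$ with multiplicities drawn from the discrete set $\{|\al|_*:\al\in[\Ss^1,\cN]\setminus\{0\}\}$. Everything in the statement then follows from the classification of such one-dimensional objects together with a boundary analysis.

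\textbf{Stationarity.} The first step is interior stationarity. I would use the Pohozaev/stress-energy identity for $u_\va$: for $X\in C^1_c(\om,\R^3)$ one has $\int (e_\va(u_\va)\op{div}X-\pa_iu_\va\cdot\pa_ju_\va\,\pa_iX^j)\,\ud x=0$. Dividing by $|\log\va|$ and passing to the limit, the stress tensors converge to a measure-valued tensor $A\,\mu_*$. Here $A$ is the projection onto the approximate tangent line of $S_*\cap\om$, which is $1$-rectifiable by Theorem \ref{globalminimizersproperties}(1). Two facts are used in identifying $A$. First, the potential part is negligible on $S_*$, by the uniform potential estimates announced in the abstract. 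Second, the gradient concentrates in the directions normal to the line, which is the usual equipartition argument. Together these give that the varifold $V=(S_*,\theta)$ with $\theta=\Theta^1(\mu_*,\cdot)$ is stationary in $\om$.

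\textbf{Density identification and the segment structure.} Next, I would identify the density: $\Theta^1(\mu_*,x)=|\al|_*$ at $\HH^1$-a.e.\ $x$. The lower bound comes from the $\va$-level lower bound on a slicing disk, using Definition \ref{DefinitionEsg}: the energy on a disk around a loop of class $\al$ is at least $|\al|_*|\log\va|$. The upper bound comes from minimality, via a competitor built on a thin cylinder. Since the densities form a discrete set bounded below and $V$ is stationary, Allard--Almgren theory for one-dimensional stationary varifolds shows that $S_*\cap K$ is a finite union of segments for each $K\Subset\om$, and that the density is constant along each segment. The balancing law (3) is then the first variation evaluated at a junction. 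Independence of $\al_j$ from the choice of $x$ and $D$ follows by homotoping the loops $u_*|_{\pa D}$ through $\om\setminus(S_*\cup S_0)$. The finite set $S_0$ can be avoided, because $S_0$ is discrete and the loops live in a $3$-dimensional region.

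\textbf{Behaviour at the boundary.} To show $S_*\cap\pa\om=\cA$, I would use the boundary data. Away from $\cA$ the data lie in $\cN$ with bounded derivatives, so boundary monotonicity combined with the strong convexity of $\om$ gives a boundary clearing-out $\mu_*(B_r(x))=o(r)$, hence $x\notin S_*$. Finiteness of the whole network, including near $\cA$, needs a boundary monotonicity formula at $a_i$. This is where I would use condition \ref{boundarycondition4} of Definition \ref{defnsuitabledata}: it gives bounded density at $a_i$, so only finitely many segments can end there. The inclusion $S_*\subset\op{Conv}(\cA)$ follows because a stationary $1$-varifold with boundary contained in $\cA$ lies in the convex hull of its boundary, by testing against $X=x-x_0$ near a supporting hyperplane. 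Property (4) comes from two observations. A free endpoint in $\om$ that is not a branching point is impossible, since stationarity forces $\sum v_j=0$ there. At $a_i\in\cA$, a homotopy on a small half-ball, of which $D_x$ is a part, relates $g|_{\pa D_x}$ to $u_*|_{\pa D}$. The hypothesis $\pa D\cap(\op{Conv}(\cA)\cup S_0)=\emptyset$ is exactly what is needed to deform $\pa D$ to $\pa D_x$ inside $\ol\om\setminus(S_*\cup S_0)$.

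\textbf{Main obstacle.} I expect the hardest step to be the sharp density identification $\Theta^1=|\al_j|_*$, in particular the upper bound. Merely being stationary with density in a discrete set does not exclude non-minimal multiplicities. Ruling them out requires a careful $\va$-level competitor construction that replaces $u_\va$ in a thin tube by an optimal $|\al|_*$-profile without creating extra logarithmic energy. I would build this by interpolating on an annulus using the uniform $W^{1,q}$ bounds and the $H^1_{\loc}$ convergence away from $S_*$.
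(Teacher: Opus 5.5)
Your interior analysis follows the paper's route: stationarity of the limiting stress tensor, quantized density from matching cylinder bounds as in Lemma~\ref{chooseseveralcases}, Allard--Almgren, and the balancing law. Ruling out non-branching interior endpoints directly by stationarity is a valid shortcut. The genuine gaps are all at $\pa\om$.

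First, your argument for $S_*\cap\pa\om\subset\cA$ does not work. Boundary monotonicity and boundary clearing-out only show that $\Theta^1(\mu_*,x_0)$ exists and is either $0$ or at least some $\delta>0$. Interior stationarity does not forbid a segment of $S_*$ ending transversally at $x_0\in\pa\om\setminus\cA$, because such an endpoint is invisible to test fields in $C^1_c(\om)$. You need a boundary condition for the limit object. The paper obtains it from the boundary stress identity. Proposition~\ref{EstimatesOntheboundary} bounds $\int_{\pa\om}|\pa_\nu u_\va|^2$ by the interior energy plus the tangential boundary energy, and the latter is $o(|\log\va|)$ away from $\cA$. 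This yields $\int\langle T_*,D\xi\rangle\ge 0$ for all $\xi$ with $\xi\cdot\nu\le0$ on $\pa\om$ (Proposition~\ref{divTstar}). Together with rectifiability up to the boundary, White's maximum principle at strongly convex boundary points then gives $x_0\notin S_*$. Your convex-hull step presupposes exactly this, since it needs the first variation to carry nothing on $\pa\om\setminus\cA$.

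Second, you never address the inclusion $\cA\subset S_*$, and you never use condition~\ref{boundarycondition3} of Definition~\ref{defnsuitabledata}. The paper argues as follows. If $a\notin S_*$, boundary clearing-out and Proposition~\ref{BoundaryPartialRegularity1prop} make $u_*$ continuous up to $\pa\om$ near $a$ with trace $g$. The nontrivial loop of $g$ around $a$ would then be null-homotopic via $u_*$ on a pushed-in disk, a contradiction. Your half-ball homotopy in (4)(b) tacitly needs the same boundary continuity of $u_*$.

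Third, you assume finiteness near $a_i$ rather than prove it. Bounded density at $a_i$ only bounds the number of segments that actually reach $a_i$. Allard--Almgren gives finiteness only on compact subsets of $\om$, so a priori junctions and short segments may accumulate at $a_i$. Nothing in your argument shows that $S_*$ near $a_i$ consists of straight segments issuing from $a_i$. The paper closes this with another use of global minimality:
\begin{itemize}
\item On the cone over $\pa B_\delta\cap\om$ it takes the $0$-homogeneous extension $u_{\va_i}(\delta x/|x|)$.
\item It fills the remaining boundary layer of $\om_\delta$ by a dyadic construction costing only $O(1)$ energy.
\item This gives $\mu_*(\om_\delta)\le\delta h(\delta)$, where $h(r)=\sum_{x\in S_*\cap\pa B_r}\Theta^1(\mu_*,x)$.
\item Choosing $\delta$ where $h$ attains its minimum forces $r^{-1}\mu_*(\om_r)$ to be constant.
\item The annular monotonicity identity then gives $x^\perp=0$, i.e.\ radial segments from $a_i$.
\end{itemize}
So the real difficulty lies in this boundary structure, not in the density upper bound you singled out. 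The paper obtains that upper bound by a fairly standard Luckhaus-type cylinder construction.
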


\begin{rem}\label{remarkfirsttheorem}
We record several comments on Theorems \ref{globalminimizersproperties} and \ref{structureofthelimitset}.
\begin{enumerate}

\item The results of \cite{Can17, WZ24} are primarily related to the interior behavior of the minimizers in $\om$. Since we consider global minimizers with prescribed boundary data, Theorems \ref{globalminimizersproperties} and \ref{structureofthelimitset} provide both interior and boundary descriptions of the limiting behavior. In particular, the characterization of $S_*\cap\pa\om$ and the endpoint condition in Theorem \ref{structureofthelimitset} are new.

\item A key ingredient in the proof is the clearing-out property, both in the interior of $\om$ and near $\pa\om$. It states that, in a ball or half-ball, if the normalized energy $\f{E_{\va}(u_{\va},B_2)}{|\log\va|}$ is sufficiently small, then the energy $E_{\va}(u_{\va},B_1)$ is uniformly bounded.

\item The limiting behavior described above is analogous to that of sequences of $p$-harmonic maps as $p\to2^-$ whose energies are bounded from above by $\f{C}{2-p}$ for some $p$-independent constant~$C$, as studied by Bulanyi--Van Schaftingen--Vaerenbergh \cite{BSV25}. Several arguments in our proof are adapted from \cite{BSV25}. However, the potential term and the different structures of the boundary conditions introduce additional difficulties that do not occur in the $p$-harmonic map setting.

\end{enumerate}
\end{rem}

The next theorem gives estimates for the minimizers $u_{\va}$ that are uniform as $\va\to0^+$.

\begin{mainthm}\label{W1pestimate}
Let $\cA,M_0,\om,g_{\va}$, and $u_{\va}$ be as in Theorem \ref{globalminimizersproperties}. Then the following properties hold.
\begin{enumerate}[label=$(\theenumi)$]

\item For any $\va\in(0,1)$ and any $q\in(1,2)$,
\be
\|\na u_{\va}\|_{L^q(\om)}\leq C,
\label{W1pestimateeq1}
\ee
where $C>0$ depends only on $\cA,f,M_0,\cN,\om$, and $q$.

\item For any $\va\in(0,1)$,
\be
\f{1}{\va^2}\int_{\om}f(u_{\va})\ud x\leq C,
\label{W1pestimateeq11}
\ee
where $C>0$ depends only on $\cA,f,M_0,\cN$, and $\om$.

\end{enumerate}
\end{mainthm}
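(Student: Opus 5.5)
The natural route is through local energy estimates on balls at all scales, summed over a dyadic decomposition of $\om$ adapted to the singular set. I would combine the clearing-out property (interior and boundary versions), the structure of $S_*$ from Theorem \ref{structureofthelimitset}, and a blow-up/compactness argument. The model to keep in mind is the straight vortex $u(x)\approx h(\theta)\,\chi(r/\va)$ around a segment. For it, $|\na u|\lesssim \min\{1/r,1/\va\}$ and $f(u)/\va^2$ is supported in $\{r\lesssim\va\}$ with density $\lesssim \va^{-2}$. Integrating over a tube of length $O(1)$ gives $\int|\na u|^q\lesssim\int_0^1 r^{1-q}\,\ud r<\ift$ for $q<2$, and $\va^{-2}\int f(u)\lesssim \va^{-2}\va^2=O(1)$. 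The whole task is to show that minimizers obey these model bounds uniformly in $\va$.

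\textbf{Step 1 (scale-invariant local bound).} I would show there is $C$, independent of $\va$, such that for every $x_0\in\ol\om$ and every $r\in(\va,r_0)$,
\[
\int_{B_r(x_0)\cap\om}e_\va(u_\va)\,\ud x\leq C r\Bigl(1+\log\frac{r}{\va}\Bigr),
\qquad
\frac1{\va^2}\int_{B_r(x_0)\cap\om}f(u_\va)\,\ud x\le C r .
\]
The first inequality follows from minimality by comparison. On $B_r(x_0)$ I would build a competitor that equals $u_\va$ on $\pa B_r$ (after selecting a good radius via Fubini and the monotonicity formula). It would be the extension along a cone, or a projection onto an $O(1)$ collection of straight vortex lines, each carrying energy $\le C r\log(r/\va)$. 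Finiteness of $\pi_1(\cN)$ \ref{A1} bounds the number of needed lines: each class is a sum of boundedly many generators of bounded $|\cdot|_*$. Near $\pa\om$ I would use the explicit structure \ref{boundarycondition4} of $g_\va$ around $\cA$ and the bounds \ref{boundarycondition1} elsewhere. The second, \emph{potential} inequality is the crux. I would get it from the Pohozaev/monotonicity identity for minimizers. That identity gives $\frac{d}{dr}\bigl(r^{-1}E_\va(u_\va,B_r)\bigr)\ge \frac{2}{r^2}\int_{B_r}\va^{-2}f(u_\va)$ in the interior, and an analogue with controlled boundary error terms, using the $C^{2,1}$ regularity of $\pa\om$ and \ref{boundarycondition1}. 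Integrating from $\va$ to $r_0$ gives $\int_\va^{r_0}\rho^{-2}P(\rho)\,\ud\rho\le C$, where $P(\rho)$ is the potential on $B_\rho$. By itself this only yields a logarithmic loss. Combined with Step 2 it gives the sharp bound.

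\textbf{Step 2 (distance weights).} Let $d(x)=\dist(x,S_*)$ for the limiting set. This needs care, since $\va$ is not a sequence here; I would argue by contradiction along a subsequence and use Theorem \ref{structureofthelimitset}, so the compactness is automatic. The key point is that on a ball $B_{2\rho}(x)$ with $\rho\sim d(x)/4$, the normalized energy $E_\va(u_\va,B_{2\rho})/(\rho|\log\va|)$ is small for $\va$ small. The clearing-out property of Remark \ref{remarkfirsttheorem} (rescaled to unit size, with $\va/\rho$ as parameter) then gives $E_\va(u_\va,B_\rho)\le C\rho$. Hence $|u_\va|$ lies near $\cN$ there, and standard $\va$-regularity yields $|\na u_\va|\le C/\rho$ and $\va^{-2}f(u_\va)\le C\va^{2}/\rho^{4}$ on $B_{\rho/2}$. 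The last bound follows from the nondegeneracy \ref{A4} and the equation $\va^2\Delta(\text{dist to }\cN)\approx$ distance.

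Covering $\om$ by such balls (a Whitney decomposition relative to $S_*$), and using $\HH^1(S_*)<\ift$ so that the number of cubes at scale $2^{-j}$ is $\lesssim 2^{j}$, I would obtain
\[
\int_{\om\cap\{d>\va\}}|\na u_\va|^q\lesssim\sum_j 2^{j}2^{-3j}2^{jq}<\ift\quad(q<2),
\]
\[
\va^{-2}\int_{\{d>\va\}}f\lesssim \sum_{2^{-j}>\va}2^{j}2^{-3j}\va^2 2^{4j}\lesssim 1 .
\]
On the tube $\{d\le C\va\}$, of volume $\lesssim\va^2$, Step 1 at scale $\va$ gives $e_\va\lesssim\va^{-2}$, so its contribution is $O(1)$ for both quantities. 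Since $|\na u|^q\le 1+|\na u|^2$, this also gives the $L^q$ bound there.

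\textbf{Main obstacle.} The hardest step is making Step 2 uniform in $\va$: the clearing-out smallness must hold at every scale $\rho\ge C\va$, not merely for fixed $\rho$ as $\va\to0$, and also up to the boundary near $\cA$. I would first try a contradiction/blow-up argument. Suppose that for $\va_i\to0$ and balls $B_{\rho_i}(x_i)$ with $\rho_i\ge\va_i$ the desired bound fails by a factor $\to\ift$. Rescale by $\rho_i$, apply Theorem \ref{globalminimizersproperties} together with Step 1 to the rescaled minimizers, which are still minimizers with parameter $\va_i/\rho_i$, and reach a contradiction with the structure of the limit measure, which is $\HH^1$ restricted to segments with bounded multiplicity. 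Boundary balls would be handled by the boundary clearing-out lemma and the explicit form \eqref{expressionof gvalocal}.
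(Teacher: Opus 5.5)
There is a genuine gap, and it sits exactly at the point you call the main obstacle. The proposal has no mechanism that makes the estimates uniform across scales $\rho\in(\va,1)$, and several of the steps meant to supply one fail.

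\textbf{(i) Step 2 is not uniform in $\va$.} It measures distance to $S_*$, but $\mu_{\va_i}(B_{2\rho}(x))\to0$ only gives smallness for $\va_i<\va_0(\rho)$. So nothing controls the Whitney sum at scales $\rho$ that are small relative to the unknown rate of convergence. Moreover, the clearing-out hypothesis at scale $\rho$ is $\theta_\va(u_\va;x,2\rho)\le\delta\log(\rho/\va)$. Smallness relative to $|\log\va|$ does not imply this once $\log(\rho/\va)\ll|\log\va|$.

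\textbf{(ii) The blow-up repair rests on an unproved bound.} It needs the rescaled energies to be $O(\log(\rho_i/\va_i))$, i.e.\ your Step 1 bound $\int_{B_r}e_\va\le Cr(1+\log(r/\va))$. The comparison you sketch does not give it. A cone extension from a good sphere only yields $Cr\,E_\va(u_\va,\pa B_r)\lesssim Cr|\log\va|$, since Proposition \ref{locallogestimate} is the best density bound available. A competitor must also carry a defect line from every defect of $u_\va|_{\pa B_r}$, and finiteness of $\pi_1(\cN)$ does not bound how many there are.

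\textbf{(iii) The Step 1 potential inequality is off by $|\log\va|$.} The inequality $\int_\va^{r_0}\rho^{-2}P(\rho)\,\ud\rho\le C$ fails: the left side is only bounded by $\tfrac12\theta_\va(u_\va;x_0,r_0)\lesssim|\log\va|$. If it held, monotonicity of $P$ would already give $P(r)\le Cr$, with no need for Step 2.

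\textbf{(iv) Clearing-out does not give $\va$-regularity.} Where clearing-out applies it yields $\theta\le C$, not $\theta<\delta$. So Proposition \ref{InteriorPartialRegularity1} does not give $|\na u_\va|\le C/\rho$, $u_\va$ close to $\cN$, or $\va^{-2}f(u_\va)\le C\va^2\rho^{-4}$ on $B_{\rho/2}$. Bounded-energy regions can contain point defects: the limit $u_*$ has singular set $S_0$, e.g.\ for $\cN=\R\mathbb{P}^2$. You would need a scale-invariant bound on how many such bad balls each Whitney ball contains.

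The paper never uses $S_*$ here and works at fixed $\va$.
\begin{itemize}
\item \textbf{Type II bad set.} The key is to restrict to $r\in(\va^\ga,1)$, where $\log(r/\va)\ge(1-\ga)|\log\va|$. By clearing-out, every point with density $>\Lda$ at scale $r$ then has $\theta_\va(u_\va;\cdot,2r)\ge\delta|\log\va|$. Vitali against the global bound $E_\va\le C|\log\va|$ covers this set by $\lesssim r^{-1}$ balls of radius $r$ (Lemma \ref{coveringlemma2}).
\item \textbf{Type I bad set.} In the remaining bounded-density balls, an energy-pinching/energy-drop iteration covers it by $O(1)$ balls of radius $r$ (Lemmas \ref{regularscalelem}--\ref{coverpoint}). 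This rests on the fact that a $0$-homogeneous minimizing harmonic map invariant in one direction is constant.
\item \textbf{$L^q$ bound.} Together these give $\cL^3(\{r(u_\va;\cdot)<\delta r\})\le Cr^2$ for $r>\va^\ga$. Hence $\na u_\va\in L^{2\ga,\ift}$, which gives the $L^q$ bound for $q<2\ga$; the loss from $\ga<1$ is harmless.
\item \textbf{Potential bound.} Bounded-density balls give $\va^{-2}\int_{B_r}f\le C\va$ via the improved decay $f\le C\va^4r^{-4}$. Near high-density points, a pigeonhole in $\log r$ over $[\va^{1/4},\va^{1/8}]$ produces $r_x$ with $\va^{-2}\int_{B_{r_x}(x)}f\le C|\log\va|^{-1}r_x\theta_\va(u_\va;x,r_x)$. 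That $1/|\log\va|$ gain is what cancels the total energy $C|\log\va|$ after a Besicovitch covering.
\end{itemize}
The boundary versions run the same scheme using Propositions \ref{BoundaryClearingout} and \ref{BoundaryPartialRegularity1prop}.
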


\begin{rem}\label{remarkuniformestimates}
We make several remarks on the uniform estimates in Theorem \ref{W1pestimate}.
\begin{enumerate}

\item For the complex Ginzburg--Landau model, interior estimates of the form \eqref{W1pestimateeq1} were first obtained by Bourgain--Br\'{e}zis--Mironescu \cite{BBM04} and Bethuel--Orlandi--Smets \cite{BOS05}. The potential estimate \eqref{W1pestimateeq11} was established in \cite{BOS05}. For the Landau--de Gennes model, Fu--Wang--Wang \cite{FWW25b} obtained interior versions of both \eqref{W1pestimateeq1} and \eqref{W1pestimateeq11}. Theorem \ref{W1pestimate} extends these estimates to the present setting and gives bounds up to the boundary.

\item For the $p$-harmonic map model with $p\to2^-$, $W^{1,q}$-estimates with $q\in(1,2)$ were obtained in \cite{BSV25}. However, the method of \cite{BSV25} does not apply directly to the Ginzburg--Landau-type setting. In the proof of Theorem \ref{W1pestimate}, we follow the approach of \cite{FWW25b}. The main new difficulty is the extension of the analysis to neighborhoods of $\pa\om$, where one needs boundary regularity estimates for the minimizers.

\item Both estimates in Theorem \ref{W1pestimate} are sharp. Indeed, in view of \eqref{EvaCbound}, the exponent $q\in(1,2)$ in \eqref{W1pestimateeq1} cannot be improved to $q=2$. The sharpness of \eqref{W1pestimateeq11} is discussed in \cite[Section 5]{FWW25b}.

\end{enumerate}
\end{rem}

Since the maps $u_{\va}$ are global minimizers, the limiting weighted network is expected to inherit a minimizing property. We now introduce the admissible competitors used to formulate this property. The definition follows the spirit of \cite[Theorem 1.2]{BSV25}, with boundary data allowed to have finitely many singular points.

\begin{defn}[Admissible competitors]\label{defadmissible}
Let $U\subset\R^3$ be a bounded Lipschitz domain. Let
$h\in H^{\f{1}{2}}(\pa U,\cN)$. An admissible competitor for $(U,h)$ is a finite family
\[
\Gamma=\{(K_i,\sg_i):i\in I\},
\]
where $I$ is a finite index set, each $K_i\subset\ol{U}$ is a closed line segment, and each
$\sg_i\in[\Ss^1,\cN]$ is a free homotopy class; see Definition \ref{definitionfreehomotopyclass}. Set
\[
S_\Gamma:=\bigcup_{i\in I}K_i.
\]
We require the following conditions.
\begin{enumerate}[label=$(\theenumi)$]

\item\label{property1} Each $K_i$ has positive length, and $ \op{Int}K_i\cap\op{Int}K_j=\emptyset $ whenever $ i\neq j $.

\item\label{property2} For any $i\in I$ and any endpoint $x$ of $K_i$, either
$x\in\pa U$, or $x$ is an interior branching point of $S_\Gamma$, in the sense that
$x\in U$ and $x$ is also an endpoint of $K_j$ for some $j\neq i$.

\item\label{property3} The set $S_\Gamma\cap\pa U$ is finite.

\item\label{property4} There exist a locally finite set
$F\subset U\backslash S_\Gamma$ and a map
\[
v\in H_{\loc}^1(\ol{U}\backslash S_\Gamma,\cN)
\cap C^0(U\backslash(S_\Gamma\cup F),\cN)
\]
such that $v|_{\pa U}=h$ in the local trace sense and the following meridian condition holds. For any $i\in I$, any $x\in\op{Int}K_i$, and any closed two-dimensional disk $\ol D\subset U$ centered at $x$ such that $D\cap S_\Gamma=\{x\}$ and $\pa D\cap F=\emptyset$, one has $ [v|_{\pa D}]_{\cN}=\sg_i $.
\end{enumerate}
We denote by $\cC(U,h)$ the class of all admissible competitors for $(U,h)$.
\end{defn}

For any finite family $\Gamma=\{(K_i,\sg_i):i\in I\}$ of the form above, we define its weighted length by
\[
\MM(\Gamma):=\sum_{i\in I}|\sg_i|_*\HH^1(K_i).
\]

With the class of admissible competitors fixed, we can state the minimality property of the limiting singular set. The labeled network obtained in the limit has a weighted length no larger than that of any admissible competitor with the same boundary datum.

\begin{mainthm}\label{minimalitytheorem}
Let $g,S_*,\{L_j\}_{j\in J}$, and $\{\al_j\}_{j\in J}$ be as in Theorem
\ref{structureofthelimitset}. Set
\[
\Gamma_*:=\{(L_j,\al_j):j\in J\}.
\]
Then $\Gamma_*\in\cC(\om,g)$ and, for any admissible competitor $\Gamma\in\cC(\om,g)$,
\[
\MM(\Gamma_*)\leq \MM(\Gamma).
\]
Equivalently, if $\Gamma=\{(K_i,\sg_i):i\in I\}\in\cC(\om,g)$, then
\[
\sum_{j\in J}|\al_j|_*\HH^1(L_j)\leq\sum_{i\in I}|\sg_i|_*\HH^1(K_i).
\]
In particular, the labeled network $\Gamma_*$ solves the homotopical Plateau problem in codimension $2$ within the class $\cC(\om,g)$.
\end{mainthm}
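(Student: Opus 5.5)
The proof has two parts: first $\Gamma_*\in\cC(\om,g)$, then minimality via an energy comparison.

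\textbf{Admissibility.} Conditions \ref{property1}--\ref{property3} of Definition \ref{defadmissible} follow directly from Theorem \ref{structureofthelimitset}:
\begin{itemize}
\item[] $S_*$ is a finite union of segments with disjoint relative interiors;
\item[] every endpoint either lies in $\cA\subset\pa\om$ or is an interior branching point;
\item[] $S_*\cap\pa\om=\cA$ is finite.
\end{itemize}
For \ref{property4} we take $v=u_*$ and $F=S_0$, which is locally finite by Theorem \ref{globalminimizersproperties}. The regularity $u_*\in H^1_{\loc}(\ol{\om}\backslash S_*,\cN)$ up to the boundary, with trace $g$, comes from the boundary clearing-out property mentioned in Remark \ref{remarkfirsttheorem}: near $\pa\om\backslash\cA$ the energies $E_{\va}(u_{\va},\cdot)$ are locally bounded, so $u_{\va_i}$ converges weakly in $H^1$ there, and the traces converge because $g_{\va}\to g$. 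The meridian condition holds by the very definition of $\al_j$ in Theorem \ref{structureofthelimitset}(2).

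\textbf{Minimality.} Fix a competitor $\Gamma=\{(K_i,\sg_i)\}$ with map $v$. We aim to build $w_{\va}$ with $w_{\va}=g_{\va}$ on $\pa\om$ and
\[
E_{\va}(w_{\va},\om)\leq (\MM(\Gamma)+o(1))|\log\va|.
\]
Since $u_{\va}$ is minimizing, $E_{\va}(u_{\va},\om)\leq E_{\va}(w_{\va},\om)$. Part (2) of Theorem \ref{structureofthelimitset} gives $\mu_*(\ol\om)=\MM(\Gamma_*)$, and lower semicontinuity along $\va_i$ then yields $\MM(\Gamma_*)\leq\MM(\Gamma)$.

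\textbf{Construction of $w_{\va}$.}
\begin{enumerate}
\item Regularize $v$ so that it is Lipschitz away from $S_\Gamma\cup F$, with $|\na v|\leq C/\dist(x,S_\Gamma)$ near $S_\Gamma$. Near the points of $F$, use the standard modification $v(x)\approx\phi(x/|x-p|)$, which has finite energy in $3$D since $\pi_1(\cN)$ is finite.
\item In a tube of radius $r$ around each $\op{Int}K_i$, replace $v$ on each normal disk by a minimal-energy map whose boundary loop is in the class $\sg_i$. By Definition \ref{DefinitionEsg} of $|\cdot|_*$ (the renormalized singular energy), this costs $|\sg_i|_*\log(r/\va)+C$ per unit length.
\item Near branching points and near endpoints on $\pa\om$, use balls of radius $\delta$, where the cost is $O(\delta|\log\va|)$.
\item At the boundary points, match $g_{\va}$ using the local structure \eqref{expressionof gvalocal}.
\end{enumerate}
Letting $\va\to0$ and then $\delta,r\to0$ gives the bound.

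\textbf{Main obstacle.} The competitor's endpoints on $\pa\om$ need not lie in $\cA$, and they need not carry the same classes as $g$. At such points $v$ has trace $g$, which is smooth there, so the meridian loops near the endpoint must be homotopic (free homotopy, modulo the branching conjugations) to small loops of $g$. Hence the segment's class is compatible with what $g_{\va}$ prescribes. If the endpoint is not in $\cA$, the class must be trivial there, so $|\sg_i|_*=0$ or the segment can be discarded; otherwise it matches the local model of $g_{\va}$. The matching needs an interpolation in a half-ball of radius $\delta$ between $g_{\va}$, the $\va$-core profile, and $v$, with energy $O(\delta|\log\va|+C_\delta)$. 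I would do this with an extension lemma on half-balls, homogeneous of degree $0$ outside an $\va$-core, which is the analogue of the boundary constructions already used for \eqref{EvaCbound}.
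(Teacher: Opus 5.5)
Your proposal is correct and follows essentially the paper's route. Admissibility of $\Gamma_*$ comes from Theorem~\ref{structureofthelimitset} with $v=u_*$ and $F=S_0$. Minimality comes from a recovery sequence $w_n$ with $w_n|_{\pa\om}=g_{\va_n}$, built from Lemma~\ref{cylinderex} on tubes around the segments and from $0$-homogeneous extensions (cost $O(\eta|\log\va_n|)$) near branching points and boundary endpoints, and then compared with $u_{\va_n}$ by minimality. Your extra steps are unnecessary: the paper picks good tube and sphere radii by coarea instead of regularizing $v$, and the topological matching at boundary endpoints that you flag as the main obstacle never arises, because $w_n$ is $\R^m$-valued and the $0$-homogeneous extension of data of energy $O(|\log\va_n|)$ ($g_{\va_n}$ on $\pa\om$, $v$ on the tube surface, the cylinder's base disk) works regardless of the homotopy classes, exactly as in your step~3.
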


\begin{rem}
The proof proceeds by constructing, for each admissible competitor
$\Gamma=\{(K_i,\sg_i):i\in I\}$, a competitor for the functional \eqref{GLfunctional} whose rescaled energy converges to $ \MM(\Ga) $. The minimality of $u_{\va}$ then gives the desired inequality in the limit. The construction follows the strategy of \cite{BSV25}.
\end{rem}

\subsection{Organization of this paper}

In Section \ref{SectionPre}, we collect the preliminary material used throughout the paper, including the topological analysis of the target manifold $\cN$ through free homotopy classes, their multi-valued addition, and the associated norm, as well as the monotonicity formula, energy bounds, and compactness properties. In Section \ref{SectionLuc}, we prove Luckhaus-type interpolation and extension results, which connect Ginzburg--Landau maps to $\cN$-valued maps across thin annuli and cylinders and will be used in the clearing-out and singular-set arguments. In Section \ref{SectionClear}, we derive the clearing-out properties, both in the interior and near the boundary; that is, we show that if the normalized energy on a ball or half-ball is small compared to~$|\log\va|$, then the energy on a smaller ball or half-ball is uniformly bounded with respect to~$\va$. In Section \ref{SectionPartial}, we prove interior and boundary partial regularity results. In Section \ref{SectionSingular}, we analyze the singular set $S_*$ of the limiting measure and prove Theorems \ref{globalminimizersproperties} and \ref{structureofthelimitset}. In Section \ref{SectionUniform}, we establish the uniform estimates and prove Theorem \ref{W1pestimate}. In Section \ref{SectionMinimality}, we prove the minimality result, Theorem \ref{minimalitytheorem}.

\subsection{Notations and conventions}

\begin{itemize}

\item Throughout this paper, $C$ denotes a positive constant that may change from line to line. When we need to specify the dependence of $C$ on parameters $a,b,...$, we write $C(a,b,...)$.

\item We adopt the Einstein summation convention: repeated indices are implicitly summed over their range.

\item For $k\in\Z$ with~$k\geq 2$, we write $ B_r^k(x):=\{y\in\R^k:|y-x|<r\} $. We denote the origin in $\R^k$ by $0^k$. When $k=3$, we omit the superscript $k$; when $x=0^k$, we omit the center.

\item We denote the Euclidean inner product in $\R^n$ by $u\cdot v$ for $u,v\in\R^n$. We use $u:v$ for the Euclidean inner product in $\R^m$ when this notation is more convenient.

\item Let $k\in\{1,2,3\}$, and let $\M\subset\R^3$ be a Lipschitz Riemannian $k$-submanifold of $\R^3$. We denote by $\na_{\M}$ the tangential gradient on $\M$. This gradient exists $\HH^k$-a.e. by Rademacher's theorem. If $\M$ is $C^j$ with $j\in\Z$, $j\geq 2$, we denote by $D_{\M}^j$ the corresponding higher-order tangential derivatives on $\M$. If $u\in H^1(\M,\R^m)$ and $0<\va<1$, we define
\[
e_{\va}(u,\M):=\f{1}{2}|\na_{\M}u|^2+\f{1}{\va^2}f(u),
\quad
E_{\va}(u,\M):=\int_{\M} e_{\va}(u,\M)\ud\HH^k.
\]
When $k=3$ and $\M$ is a three-dimensional domain, we write $\na_{\M}=\na$, $D_{\M}^j=D^j$, and $e_{\va}(u,\M)=e_{\va}(u)$. If there is no ambiguity, we use $\na_{\top}$ to denote tangential derivatives with respect to the relevant submanifold.

\item We denote by $\HH^k$ the $k$-dimensional Hausdorff measure. When $k=3$, $\HH^3$ coincides with the Lebesgue measure on $\R^3$. We sometimes omit $\ud\cL^3$ in volume integrals.

\item Let $U\subset\R^j$, $j\in\Z$, $j\geq 2$ be a bounded domain, let $x_0\in\ol U$, and let $r>0$. We define
\[
U_r(x_0):=U\cap B_r(x_0),\quad
\ol{U}_r(x_0):=\ol{U}\cap\ol{B}_r(x_0),\quad
T_r^U(x_0):=\pa U\cap B_r(x_0).
\]
When $B_r(x_0)\subset\subset U$, we have $U_r(x_0)=B_r(x_0)$ and $T_r^U(x_0)=\emptyset$. If $x_0=0^j$, we omit $x_0$ from the notation.

\item We denote the upper half-space by $ \R_+^3:=\{y\in\R^3:y_3>0\} $. For $x\in\pa\R_+^3$ and $r>0$, we define
\begin{align*}
B_r^+(x)&:=\{y\in\R^3:|y-x|<r,\ y_3>0\},\\
T_r(x)&:=\{y\in\pa\R_+^3:|y-x|<r\},\\
(\pa B_r(x))^+&:=\{y\in\R^3:|y-x|=r,\ y_3>0\}.
\end{align*}
For simplicity, we write
\[
B_r:=B_r(0),\quad T_r:=T_r(0),\quad
B_r^+:=B_r^+(0),\quad(\pa B_r)^+:=(\pa B_r(0))^+.
\]
Then $\pa(B_r^+)=(\pa B_r)^+\cup T_r$.

\end{itemize}

\section{Preliminaries}\label{SectionPre}

\subsection{Basic properties of the manifold \texorpdfstring{$\cN$}{} and the potential \texorpdfstring{$f$}{}}

For $\delta>0$, we denote the $\delta$-neighborhood of $\cN$ by
\[
\cN_{\delta}:=\{y\in\R^m:\dist(y,\cN)<\delta\}.
\]
The next lemma collects the geometric and analytic properties of $f$ near $\cN$ that will be used throughout the paper.

\begin{lem}\label{Nfproperty}
Assume that $f$ and $\cN$ satisfy \ref{A1}--\ref{A4}. Then there exists $\delta_{\cN}>0$, depending only on $\cN$, such that the nearest-point projection onto $\cN$ is well-defined. More precisely, there exists a unique $C^{\infty}$ map $\Pi_{\cN}:\cN_{\delta_{\cN}}\to\cN$ such that
\begin{equation}
|y-\Pi_{\cN}(y)|=\dist(y,\cN)
\quad\text{for any } y\in\cN_{\delta_{\cN}}.
\label{Nearestpointprojection}
\end{equation}
Moreover, there exist $\delta_f\in(0,\delta_{\cN})$ and constants $0<m_f<M_f<+\infty$ such that, for any $y\in\cN_{\delta_f}$,
\begin{equation}
\begin{gathered}
m_f(\dist(y,\cN))^2\leq Df(y):(y-\Pi_{\cN}(y))\leq M_f(\dist(y,\cN))^2,\\
m_f(\dist(y,\cN))^2\leq f(y)\leq M_f(\dist(y,\cN))^2,
\end{gathered}\label{mfMfestimates}
\end{equation}
and
\begin{equation}
f(ty+(1-t)\Pi_{\cN}(y))\leq M_f t^2 f(y)
\quad\text{for any } t\in[0,1].
\label{fBconvex}
\end{equation}
\end{lem}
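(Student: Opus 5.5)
The existence of $\delta_{\cN}$ and of the smooth projection $\Pi_{\cN}$ is the standard tubular neighbourhood theorem for the compact smooth submanifold $\cN\subset\R^m$, and I will just quote it. Uniqueness of the nearest point and smoothness of $\Pi_{\cN}$ hold on $\cN_{\delta_{\cN}}$ once $\delta_{\cN}$ is below the reach of $\cN$. Only compactness and smoothness of $\cN$ enter here.

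For the estimates I work in normal coordinates. Every $y\in\cN_{\delta_{\cN}}$ can be written as $y=p+v$ with $p=\Pi_{\cN}(y)$, $v\in(T_p\cN)^\perp$ and $|v|=\dist(y,\cN)$. Since $f\geq0$ and $f=0$ on $\cN$, every point of $\cN$ is a minimum, so $f(p)=0$ and $Df(p)=0$. Taylor expansion in $v$ gives
\[
f(p+v)=\tfrac12D^2f(p)[v,v]+O(|v|^3),\qquad Df(p+v)\cdot v=D^2f(p)[v,v]+O(|v|^3).
\]
The $O(|v|^3)$ terms are uniform in $p$, because $D^3f$ is bounded on the compact set $\ol{\cN_{\delta_{\cN}}}$.

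By \ref{A4} and compactness of the unit normal bundle, there are $0<\lambda\leq\Lambda$ with $\lambda|v|^2\leq D^2f(p)[v,v]\leq\Lambda|v|^2$ for all normal $v$. I choose $\delta_f$ small enough that the cubic errors are at most $\frac{\lambda}{4}|v|^2$. This gives both lines of \eqref{mfMfestimates}, with for instance $m_f=\lambda/4$ and $M_f=2\Lambda$. Note that $Df(y):(y-\Pi_{\cN}(y))=Df(p+v)\cdot v$.

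For \eqref{fBconvex}, I observe that $ty+(1-t)\Pi_{\cN}(y)=p+tv$. The point $p+tv$ is still in $\cN_{\delta_f}$, and its projection is still $p$, since $tv$ remains normal and $|tv|<\delta_{\cN}$. Applying \eqref{mfMfestimates} twice gives
\[
f(p+tv)\leq M_f t^2|v|^2\leq \frac{M_f}{m_f}t^2 f(y).
\]
So, after renaming $M_f$ as $\max\{M_f,M_f/m_f\}$, all three inequalities hold with the same constant. None of the steps is a real obstacle. The only point needing care is that the Taylor remainders are uniform over $\cN$, which compactness provides. Hypotheses \ref{A1}--\ref{A3} are not used in this argument.
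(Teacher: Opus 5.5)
Your proof is correct and follows the same route as the paper. Both use the tubular neighbourhood theorem for $\Pi_{\cN}$, a second-order Taylor expansion in the normal direction together with \ref{A4} and compactness for \eqref{mfMfestimates} (the paper simply cites this from \cite[Lemmas~2.2 and 2.3]{MRS21}), and a quadratic comparison along the segment from $\Pi_{\cN}(y)$ to $y$ for \eqref{fBconvex}. For \eqref{fBconvex} you chain the upper bound at $p+tv$ with the lower bound at $y$, which gives a clean self-contained version of that step; it only needs $\dist(p+tv,\cN)\leq t|v|$, so you do not actually need the claim that the projection of $p+tv$ is still $p$.
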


\begin{proof}
The existence and smoothness of $\Pi_{\cN}$ follow from \cite[Lemma 2.1]{MRS21}. The estimates in \eqref{mfMfestimates} follow from \cite[Lemmas 2.2 and 2.3]{MRS21} together with \ref{A4}. Finally, \eqref{fBconvex} follows from Taylor's theorem applied along the segment joining $\Pi_{\cN}(y)$ to $y$, after decreasing $\delta_f$ if necessary and increasing $M_f$.
\end{proof}

\begin{rem}\label{remfproperty}
Let $\delta_f$ be as in Lemma \ref{Nfproperty}. For any $y\in\cN_{\delta_f}\backslash\cN$, set
\[
\nu(y):=\frac{y-\Pi_{\cN}(y)}{|y-\Pi_{\cN}(y)|}.
\]
We decompose
\[
Df(y)=(Df(y))^{\perp}+(Df(y))^{\top},
\]
where $(Df(y))^{\top}$ is the component of $Df(y)$ parallel to $y-\Pi_{\cN}(y)$ and $(Df(y))^{\perp}$ is orthogonal to $y-\Pi_{\cN}(y)$. Explicitly,
\[
(Df(y))^{\top}:=(Df(y):\nu(y))\nu(y).
\]
For $y\in\cN$, we set $(Df(y))^{\top}=0$. Since $|(Df(y))^{\top}|\leq |Df(y)|$, the lower bound in \eqref{mfMfestimates} gives
\begin{equation}
|Df(y)|\geq |(Df(y))^{\top}|
\geq m_f\dist(y,\cN).
\label{Dfyestimate}
\end{equation}
\end{rem}

\subsection{Boundary analysis}

We begin by specifying the regularity assumptions on the boundary of~$\Omega$. 

\begin{defn}\label{RegularityBoundary}
Let $U\subset\R^j$ be a bounded domain of dimension~$j\geq 2$. We say that $U$ is $C^{k,1}$, with $k$ a nonnegative integer if there exist constants $M_{U,k}>0$ and $r_{U,k}>0$ such that the following property holds. For any $x_0\in\partial U$, there exist a $C^{k,1}$ function $\psi:\R^{j-1}\to\R$ and a coordinate system, obtained by a translation and a rotation, which sends $x_0$ to $0^j\in\R^j$ and such that, in these coordinates,
\begin{equation}
U_r(x_0):=U\cap B_r(x_0)
=
\{y=(y',y_j)\in\R^j:y_j>\psi(y')\}\cap B_r(x_0)
\label{UcapB}
\end{equation}
for any $r\in(0,60(M_{U,k}+1)r_{U,k})$. Moreover,
\begin{equation}
\psi(0^{j-1})=0,
\quad
\|D^i\psi\|_{L^{\infty}(\R^{j-1})}\leq M_{U,k}
\quad\text{for any } i\in\Z\cap[1,k+1].
\label{Dkest}
\end{equation}
When $k=0$, this means that $U$ is Lipschitz. When $k\geq 1$, we choose the above coordinate system so that, in addition, $\na\psi(0^{j-1})=0$. We call $U$ a bounded $C^{k,1}$ domain with parameters $r_{U,k}$ and $M_{U,k}$ if $U$ satisfies \eqref{UcapB} and \eqref{Dkest}. We say that $U$ is smooth if it is $C^{k,1}$ for any $k\in\Z$, $k\geq 0$.
\end{defn}

Having fixed the regularity of the boundary, we introduce the notion of local minimizers, both in the interior and up to a prescribed portion of the boundary.

\begin{defn}\label{Localminimizerdef}
Let $U\subset\R^n$ be a bounded Lipschitz domain, let $\Omega\supset U$ be an open set, and let
\[
F:\Omega\times\R^m\times\R^{m\times n}\to\R
\]
be a measurable integrand. For an open set $D\subset U$, define
\[
\cF(u,D):=\int_D F(x,u,\nabla u)\ud x.
\]
Let $\Gamma\subset\partial U$ be relatively open in $\partial U$.

\begin{enumerate}[label=$(\theenumi)$]

\item We say that $u\in H^1(U,\R^m)$ is a local minimizer of $\cF$ in $U$ if, for any ball $B_r(x)\subset\subset U$ and any $v\in H^1(B_r(x),\R^m)$ with $v=u$ on $\partial B_r(x)$ in the trace sense,
\[
\cF(u,B_r(x))\leq \cF(v,B_r(x)).
\]

\item We say that $u\in H^1(U,\R^m)$ is a local minimizer of $\cF$ in $U$ up to the boundary $\Gamma$ if, for any $x\in U\cup\Gamma$ and any $r>0$ such that $U_r(x)\subset\Omega$ and $ \partial(U_r(x))\cap\partial U\subset\Gamma $, and for any $v\in H^1(U_r(x),\R^m)$ with $v=u$ on $\partial(U_r(x))$ in the trace sense, one has
\[
\cF(u,U_r(x))\leq \cF(v,U_r(x)).
\]
Here $U_r(x)=U\cap B_r(x)$ is as in \eqref{UcapB}.

\end{enumerate}
\end{defn}

\subsection{Topology of the manifold \texorpdfstring{$\cN$}{}}

In this subsection, we recall the topological notation used to describe line defects. We introduce free homotopy classes in $\cN$, the natural multi-valued addition on these classes, and the associated norm. These notions go back to \cite{Mer79}; see also \cite{Can15,CO21,MRS21,MRS22}. For a more complete account, we refer to \cite[Section 2.2]{GFW26}.

\begin{defn}\label{definitionfreehomotopyclass}
Let $\ell_1,\ell_2\in C^0(\Ss^1,\cN)$ be two loops. We say that $\ell_1$ and $\ell_2$ are freely homotopic if there exists $G\in C^0([0,1]\times\Ss^1,\cN)$ such that
\[
G(0,\cdot)=\ell_1
\quad\text{and}\quad
G(1,\cdot)=\ell_2.
\]
We denote this relation by $\ell_1\sim_{\cN}\ell_2$. Free homotopy is an equivalence relation on $C^0(\Ss^1,\cN)$, and we denote the quotient set as
\[
[\Ss^1,\cN]:=C^0(\Ss^1,\cN)/\sim_{\cN}.
\]
For $\ell\in C^0(\Ss^1,\cN)$, we denote its class by $[\ell]_{\cN}$. We will say that a loop~$\ell\in C^0(\Ss^1, \cN)$ is homotopically trivial if~$[\ell]_{\cN}$ contains constant loops.
\end{defn}

Unlike homotopy classes defining the fundamental group $\pi_1(\cN,x)$, free homotopy classes do not involve a fixed base point. It follows from \cite[Exercise 6, p. 38]{Hat02} that, after fixing a base point in $\cN$, there is a natural bijection
\be
[\Ss^1,\cN]\longleftrightarrow
\{\text{conjugacy classes of }\pi_1(\cN)\}.
\label{bijection}
\ee

We next recall the operation on free homotopy classes that records how topological charges combine when several line defects merge.

\begin{defn}\label{Definitionplus}
Let $\cP([\Ss^1,\cN])$ be the power set of $[\Ss^1,\cN]$. We define an operation
\[
+:[\Ss^1,\cN]\times[\Ss^1,\cN]\to\cP([\Ss^1,\cN])
\]
as follows. Let $\{B_{r_i}^2(x_i)\}_{i=0}^2$ be three balls in $\R^2$ such that
\[
B_{r_1}^2(x_1)\cup B_{r_2}^2(x_2)\subset B_{r_0}^2(x_0)
\quad\text{and}\quad
\ol{B}_{r_1}^2(x_1)\cap\ol{B}_{r_2}^2(x_2)=\emptyset.
\]
Given $\al,\beta\in[\Ss^1,\cN]$, we define $\al+\beta$ as the set of all $\sg\in[\Ss^1,\cN]$ for which there exists $G\in C^0(\ol{B}_{r_0}^2(x_0),\cN)$ such that
\[
[G|_{\pa B_{r_1}^2(x_1)}]_{\cN}=\al,
\quad
[G|_{\pa B_{r_2}^2(x_2)}]_{\cN}=\beta,
\quad\text{and}\quad
[G|_{\pa B_{r_0}^2(x_0)}]_{\cN}=\sg.
\]
\end{defn}

By \eqref{bijection}, each element of $[\Ss^1,\cN]$ can be identified with a conjugacy class of $\pi_1(\cN)$. Under this identification, for any $\al,\beta\in[\Ss^1,\cN]$,
\be
\al+\beta
=
\{\text{conjugacy classes of }ab:a\in\al,\ b\in\beta\}.
\label{conjugacy}
\ee
See, for instance, \cite[Section III.C]{Mer79} or \cite[Lemma 2.2]{Can15} for a proof of \eqref{conjugacy}. 
The operation may be multi-valued when $\pi_1(\cN)$ is non-Abelian, because~$ab$ need not be conjugate to~$a^\prime b^\prime$ if~$a$ is conjugate to~$a^\prime$ and~$b$ is conjugate to~$b^\prime$.

We record the elementary algebraic properties of this operation. We write $0$ for the free homotopy class of constant loops and identify a singleton subset of $[\Ss^1,\cN]$ with its unique element when no confusion can arise. Then:
\begin{enumerate}[label=(\roman*)]

\item $0+\al=\al$ for any $\al\in[\Ss^1,\cN]$;

\item for any $\al\in[\Ss^1,\cN]$, there exists a class $-\al\in[\Ss^1,\cN]$ such that
\[
0\in\al+(-\al);
\]

\item the operation is associative in the multi-valued sense, namely
\[
(\al+\beta)+\gamma
:=
\bigcup_{\sg\in\al+\beta}(\sg+\gamma)
=
\bigcup_{\sg\in\beta+\gamma}(\al+\sg)
=:
\al+(\beta+\gamma)
\]
for any $\al,\beta,\gamma\in[\Ss^1,\cN]$;

\item $\al+\beta=\beta+\al$ for any $\al,\beta\in[\Ss^1,\cN]$.

\end{enumerate}
These properties follow directly from \eqref{conjugacy}. For example, commutativity follows from the identity $ab=(aba^{-1})a$ in $\pi_1(\cN)$.

For $\sg\in[\Ss^1,\cN]$, define
\[
E_{\op{min}}(\sg)
:=
\inf\left\{
\frac{1}{2}\int_{\Ss^1}|u'|^2\ud\HH^1:
u\in H^1(\Ss^1,\cN),\ [u]_{\cN}=\sg
\right\}.
\]
Here, an $H^1$ loop is identified with its continuous representative. By the direct method of the calculus of variations, the infimum is attained by a geodesic representative of the class $\sg$.

The following quantity is the cost assigned to a topological charge. The definition allows a class to split into several classes before the energies are added.

\begin{defn}\label{DefinitionEsg}
For $\sg\in[\Ss^1,\cN]$, define
\be
|\sg|_*:=\inf\left\{
\sum_{i=1}^k E_{\op{min}}(\sg_i):
\sg\in\sum_{i=1}^k\sg_i,\ \sg_i\in[\Ss^1,\cN],\ k\in\Z, \ k\geq 1
\right\}.
\label{normsg}
\ee
Here $\sum_{i=1}^k\sg_i$ is understood in the iterated multi-valued sense, using the associativity stated above.
\end{defn}

By Assumption \ref{A1}, the group $\pi_1(\cN)$ is finite. Hence $[\Ss^1,\cN]$ is finite, and the infimum in \eqref{normsg} is achieved. We call $|\cdot|_*$ a norm on $[\Ss^1,\cN]$ because it satisfies the following properties:
\begin{itemize}

\item $|\sg|_*=0$ if and only if $\sg=0$;

\item if $\sg,\sg'\in[\Ss^1,\cN]$ and $0\in\sg+\sg'$, then $|\sg|_*=|\sg'|_*$;

\item if $\sg\in\sg_1+\sg_2$, then $|\sg|_*\leq |\sg_1|_*+|\sg_2|_*$.

\end{itemize}
All these properties easily follow from Definition~\ref{DefinitionEsg}.
Moreover, there exists a constant $c_0=c_0(\cN)>0$ such that
\be
|\sg|_*\geq c_0
\quad\text{for any } \sg\in[\Ss^1,\cN]\backslash\{0\}.
\label{c0geq}
\ee

\begin{rem}\label{remequvalence}
In \cite{MRS21,MRS22}, the authors use the singular energy $\cE^{\op{sg}}(\cdot)$. This notion is equivalent to the norm $|\cdot|_*$ in the sense that it assigns the same cost to each free homotopy class.
\end{rem}

\subsection{Extension theory}

We recall two extension results for manifold-valued maps that will be used later on. The first one treats homotopically trivial boundary data on a two-dimensional disk.

\begin{lem}[\cite{GFW26}, Lemma 2.7]\label{ExtensionLemma1}
Let $r>0$, and let $g\in H^1(\pa B_r^2,\cN)$ be homotopically trivial. Then there exists $u\in H^1(B_r^2,\cN)$ such that $u|_{\pa B_r^2}=g$ and
\begin{equation}
\|\na u\|_{L^2(B_r^2)}^2
\leq
Cr\|\na_{\top}g\|_{L^2(\pa B_r^2)}^2,
\label{ExtensionLemma1eq}
\end{equation}
where $C>0$ depends only on $\cN$.
\end{lem}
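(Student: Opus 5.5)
By scaling, I reduce to $r=1$. If $u_1$ extends $g_1(\theta):=g(r\theta)$ on $B_1^2$, then $u(x):=u_1(x/r)$ extends $g$. Since Dirichlet energy is conformally invariant in two dimensions, $\|\na u\|_{L^2(B_r^2)}^2=\|\na u_1\|_{L^2(B_1^2)}^2$. On the boundary, $\|\na_{\top}g_1\|_{L^2(\pa B_1^2)}^2=r\|\na_{\top}g\|_{L^2(\pa B_r^2)}^2$. So it suffices to prove $\|\na u\|^2_{L^2(B_1^2)}\leq C\|g'\|^2_{L^2(\Ss^1)}$ for homotopically trivial $g\in H^1(\Ss^1,\cN)$. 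Write $E:=\|g'\|_{L^2(\Ss^1)}^2$. By Cauchy--Schwarz, $\op{osc}g\leq \int_{\Ss^1}|g'|\leq\sqrt{2\pi E}$.

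\emph{Small energy case.} Suppose $\sqrt{2\pi E}<\delta_{\cN}/2$. Then $g(\Ss^1)\subset B_{\delta_{\cN}/2}(p)$ with $p=g(1)$, and I take the projected cone extension
\[
u(\rho\theta):=\Pi_{\cN}\bigl(\rho g(\theta)+(1-\rho)p\bigr).
\]
The argument of $\Pi_{\cN}$ stays within $\delta_{\cN}/2$ of $p\in\cN$, so $\Pi_{\cN}$ is smooth with bounded derivative there. For the linear interpolation $w$, the radial derivative is $|g(\theta)-p|\leq \sqrt{2\pi E}$ and the angular derivative is $|g'|$, so $\int_{B_1^2}|\na w|^2\leq C E$. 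Hence $\|\na u\|_{L^2}^2\leq C E$. Note that triviality of $g$ is not even needed here.

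\emph{Large energy case.} Suppose $E\geq \delta_{\cN}^2/(8\pi)$. Since $g$ is homotopically trivial, $g$ is homotopic to a constant, and I need an extension with energy bounded linearly in $E$. My plan is to approximate $g$ by a polygonal loop of controlled size. Choose $N\simeq C E$ points $\theta_j$ on $\Ss^1$ so that each arc carries energy at most $\eta/N$, with $\eta$ small. Using the estimate $\op{osc}\leq\sqrt{|I|\int_I|g'|^2}$, and choosing the arcs of equal length $2\pi/N$, each arc has oscillation at most $\sqrt{2\pi E}/N\cdot\sqrt N\cdot$(const), which is small once $N\gtrsim E$. On each sector I connect $g$ on the arc to geodesic chords as in the small case. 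The loop of chords is a piecewise geodesic loop $\gamma$ with $N$ short pieces, still homotopically trivial.

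The key obstacle is filling $\gamma$ with energy $\leq C N$ rather than an uncontrolled amount. I expect to use compactness of $\cN$. Fix a finite $\delta$-net $\{q_1,\dots,q_M\}$ in $\cN$ and round each vertex of $\gamma$ to the nearest net point; this changes $\gamma$ by a homotopy through short geodesics of bounded energy. Each edge of the resulting loop is then a geodesic between net points, and by the triangle-filling of short loops, each such edge is homotopic to a concatenation of paths from a fixed base point. Since the path space has only finitely many homotopy classes of such short edges (there are at most $M^2$ pairs), and the loop is null-homotopic, I then argue by induction on $N$. Contract consecutive edges using a finite list of fixed fillings of "triangles" $q_a\to q_b\to q_c$ against a fixed spanning tree (van Kampen/Dehn-function-type argument). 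The finiteness of $\pi_1(\cN)$ from \ref{A1} gives a word problem with linear area: the Cayley graph is finite, so any null-homotopic word of length $N$ bounds at most $C N$ relator cells.

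I then assemble the fillings by conformally placing $CN$ unit-energy pieces into an annulus $B_1^2\setminus B_{1/2}^2$, using a decomposition of the disk into $N$ sectors, each of energy $O(1)$ by conformal invariance. The total is $\leq C N\leq C E$. I expect this linear-area filling step to be the delicate part; the remaining steps are routine.
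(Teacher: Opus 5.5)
The scaling reduction and the small-energy case are correct. You are also right that \ref{A1} has to enter through a linear filling bound. (The paper gives no proof of this lemma; it only cites \cite{GFW26}.)

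The gap is in the step you call routine: turning a van Kampen filling with $O(N)$ relator cells into a map on $B_1^2$ with energy $O(N)$ and with the prescribed, uniformly parametrized boundary. Conformal invariance gives a piece energy $O(1)$ only if it sits in a region of uniformly bounded conformal geometry, with matching boundary parametrization. A sector $S_j:=\{\frac12<\rho<1,\ \theta\in I_j\}$ with $|I_j|=2\pi/N$ is instead conformally a rectangle of aspect ratio of order $N$.

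Concretely, suppose the two radial sides of $S_j$ carry paths $p_j,p_{j+1}$, which are interior edges of the diagram (for instance the spanning-tree paths to consecutive vertices). Then
\[
\int_{S_j}|\na u|^2\ud x\geq\int_{1/2}^{1}\frac{1}{\rho}\int_{I_j}|\pa_\theta u|^2\ud\theta\ud\rho\geq\frac{N}{2\pi}\int_{1/2}^{1}|p_{j+1}(\rho)-p_j(\rho)|^2\ud\rho .
\]
This is of order $N$ unless consecutive paths stay $O(N^{-1/2})$-close on most of $(\frac12,1)$. Nothing in a diagram built from a spanning tree and finitely many fixed triangle fillings gives such a coherent combing. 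Summing over the $N$ sectors gives $O(N^2)=O(E^2)$, not $O(E)$. More generally, a bound on the combinatorial area of a diagram does not by itself bound Dirichlet energy. The energy depends on the conformal shape of the diagram relative to the uniform parametrization of $\pa B_1^2$.

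To repair this, choose the filling hierarchically so that it matches a bounded-geometry decomposition of the disk.
\begin{enumerate}
\item Take $N=2^K\simeq E$. Decompose $B_1^2$ into the central disk $B_{1/2}^2$ and the Whitney boxes $Q_{k,m}:=\{1-2^{-k}<\rho<1-2^{-k-1},\ \theta\in 2\pi[m2^{-k},(m+1)2^{-k}]\}$ for $1\leq k\leq K$. After rescaling these are uniformly bi-Lipschitz to a square, and there are fewer than $2N$ of them. The outermost layer $\{1-2^{-K-1}<\rho<1\}$ remains your boundary layer.
\item Lift your chord loop to the universal cover $\wt{\cN}$, which is compact and simply connected by \ref{A1}.
\item On the inner edge of $Q_{k,m}$, place a path in $\wt{\cN}$ of length at most $\diam\wt{\cN}$ joining the lifted vertices with indices $m2^{K-k}$ and $(m+1)2^{K-k}$. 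Keep the radial sides constant.
\item Every box, and the central disk, then has a boundary loop of length at most $3\diam\wt{\cN}$ in a compact simply connected manifold. By a compactness argument such a loop can be filled with energy at most $A(\cN)$, which gives the bound $C(1+N)\leq CE$.
\end{enumerate}

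Alternatively, you can bypass the combinatorics entirely. Lift $g$ to $\wt{g}\in H^1(\Ss^1,\wt{\cN})$, which has the same energy. Apply the Hardt--Lin extension lemma for the $1$-connected compact manifold $\wt{\cN}\subset\R^{\wt{m}}$: take the harmonic extension and compose it with an averaged retraction $\R^{\wt{m}}\backslash X\to\wt{\cN}$, where $\dim X=\wt{m}-3$, so that the gradient blow-up $\dist(\cdot,X)^{-1}$ becomes square-integrable after averaging. Finally, compose with the covering map.
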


The next lemma gives the corresponding extension estimate for balls in $\R^3$.

\begin{lem}[\cite{GFW26}, Lemma 2.8]\label{ExtensionLemma2}
Let $r>0$, and let $g\in H^1(\pa B_r,\cN)$. Then there exists $u\in H^1(B_r,\cN)$ such that $u|_{\pa B_r}=g$ and
\begin{equation}
\|\na u\|_{L^2(B_r)}^2
\leq
Cr\|\na_{\top}g\|_{L^2(\pa B_r)}^2,
\label{ExtensionLemma2eq}
\end{equation}
where $C>0$ depends only on $\cN$.
\end{lem}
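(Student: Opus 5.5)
The plan is to take the $0$-homogeneous (radial) extension
\[
u(x):=g\Bigl(\frac{r x}{|x|}\Bigr),\qquad x\in B_r\backslash\{0\}.
\]
This is the natural candidate because it automatically takes values in $\cN$. Its trace on $\pa B_r$ is $g$. Unlike Lemma \ref{ExtensionLemma1}, no topological condition is needed: in dimension three the single point singularity at the origin costs only finite energy. So the whole proof reduces to an energy computation in polar coordinates together with a justification that $u\in H^1(B_r,\R^m)$.

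The first step is the energy identity. Fix $\rho\in(0,r)$ and consider a smooth map $g$. The radial derivative of $u$ vanishes, and the tangential gradient satisfies
\[
|\na u(x)|=\frac{r}{\rho}\,\bigl|\na_{\top}g(rx/\rho)\bigr| \quad\text{for } |x|=\rho.
\]
The map $x\mapsto rx/\rho$ takes $\pa B_\rho$ onto $\pa B_r$, and its area Jacobian is $(r/\rho)^2$. Hence
\[
\int_{\pa B_\rho}|\na u|^2\,\ud\HH^2=\Bigl(\frac{r}{\rho}\Bigr)^2\Bigl(\frac{\rho}{r}\Bigr)^2\int_{\pa B_r}|\na_{\top}g|^2\,\ud\HH^2=\|\na_{\top}g\|_{L^2(\pa B_r)}^2 .
\]
Integrating in $\rho\in(0,r)$ by the coarea formula gives $\|\na u\|_{L^2(B_r)}^2=r\|\na_{\top}g\|_{L^2(\pa B_r)}^2$. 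This is \eqref{ExtensionLemma2eq} with $C=1$.

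The second step, which I expect to be the only point needing care, passes to a general $g\in H^1(\pa B_r,\cN)$. Such a $g$ need not be continuous, since $H^1$ is critical on two-dimensional surfaces. I would not approximate within $\cN$. Instead, take smooth $g_k\in C^\infty(\pa B_r,\R^m)$ with $g_k\to g$ in $H^1(\pa B_r,\R^m)$, and let $u_k$ be their homogeneous extensions. The identity of the first step holds for $g_k$ and, by linearity, for $g_k-g_l$. The analogous computation for the $L^2$ norm gives $\|u_k-u_l\|_{L^2(B_r)}^2=\tfrac r3\|g_k-g_l\|_{L^2(\pa B_r)}^2$. Hence $(u_k)$ is Cauchy in $H^1(B_r,\R^m)$, and its limit coincides a.e. with $u$.

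Consequently $u\in H^1(B_r,\R^m)$, $u$ takes values in $\cN$ a.e., and the energy identity passes to the limit. The trace of $u$ on $\pa B_r$ equals $g$ by continuity of the trace operator, since $u_k|_{\pa B_r}=g_k\to g$. Alternatively, one can argue directly: $u\in H^1(B_r\backslash\{0\})$ with an $L^2$ gradient, and a point has zero $2$-capacity in $\R^3$, so it is removable. Either route completes the proof, with a constant independent even of $\cN$.
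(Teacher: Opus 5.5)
Your proof is correct, and the $0$-homogeneous extension with the polar-coordinate identity $\|\na u\|_{L^2(B_r)}^2=r\|\na_{\top}g\|_{L^2(\pa B_r)}^2$ is the standard argument for this estimate: the paper does not reprove it but cites \cite[Lemma~2.8]{GFW26}, and it uses the same ``homogeneous extension estimate in dimension three'' in the proof of Proposition~\ref{boundarySstarbehave}. Your density step through $\R^m$-valued approximants is sound and gives $C=1$; the one implicit point is that removability of the origin is already needed to know each smooth $u_k$ lies in $H^1(B_r)$.
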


\subsection{A priori regularity estimates}

Let $U\subset\R^3$ be a bounded domain. If $u$ is a local minimizer of \eqref{GLfunctional}, then it satisfies the Euler--Lagrange equation
\begin{equation}
\Delta u=\frac{1}{\va^2}D_u f(u).
\label{EL}
\end{equation}
In this subsection, we collect the a priori gradient estimates that we will use later. The first is an interior estimate for the bounded classical solutions of \eqref{EL}.

\begin{lem}\label{Apriori}
Let $\va\in(0,1)$, $M,r>0$, and $x\in\R^3$. Suppose that
$u\in C^2(B_{2r}(x),\R^m)$ is a solution of \eqref{EL} satisfying
$\|u\|_{L^\infty(B_{2r}(x))}\leq M$. Then
\begin{equation}
\|\na u\|_{L^\infty(B_r(x))}
\leq
C(\va^{-1}+r^{-1}),
\label{Apriori1}
\end{equation}
where $C>0$ depends only on $f,M$, and $\cN$.
\end{lem}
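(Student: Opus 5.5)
The plan is to argue by scaling followed by a standard interior elliptic bootstrap.

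\emph{Reduction.} Translate so that $x=0$. There are two regimes.

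If $r\le\va$, set $v(y):=u(ry)$ on $B_2$. Then
\[
\Delta v=\frac{r^2}{\va^2}D_uf(v),\qquad \frac{r^2}{\va^2}\le 1 .
\]
We want $\|\na v\|_{L^\infty(B_1)}\le C$, which gives $\|\na u\|_{L^\infty(B_r)}\le C r^{-1}$.

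If $r>\va$, cover $B_r$ by balls $B_\va(z)$ with $z\in B_r$. Each satisfies $B_{2\va}(z)\subset B_{2r}$. Apply the first regime on each small ball at scale $\va$. This gives $|\na u|\le C\va^{-1}$ on $B_r$.

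In both regimes the claimed bound $C(\va^{-1}+r^{-1})$ follows, so everything reduces to a single normalized statement:
\begin{quote}
if $v\in C^2(B_2)$ solves $\Delta v=\lambda D_uf(v)$ with $\lambda\in(0,1]$ and $|v|\le M$, then $|\na v|\le C(f,M,\cN)$ on $B_1$.
\end{quote}

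\emph{Normalized estimate.} Since $f$ is smooth and $|v|\le M$, the right-hand side satisfies
\[
|\Delta v|\le \lambda\sup_{|y|\le M}|Df(y)|\le C(f,M).
\]
Write $v=w+h$ on $B_{3/2}$, where $w$ is the Newtonian potential of $\Delta v\,\chi_{B_{3/2}}$ and $h$ is harmonic.
\begin{itemize}
\item Since $\Delta v$ is bounded, $w\in C^{1,\al}$ with $\|w\|_{C^1(B_{3/2})}\le C$.
\item The harmonic part satisfies $\|h\|_{L^\infty(B_{3/2})}\le M+C$. Interior gradient estimates for harmonic functions then give $\|\na h\|_{L^\infty(B_1)}\le C$.
\end{itemize}
Adding the two gives the bound. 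Equivalently, one can quote the standard estimate
\[
\|\na v\|_{L^\infty(B_1)}\le C\bigl(\|v\|_{L^\infty(B_2)}+\|\Delta v\|_{L^\infty(B_2)}\bigr)
\]
from Gilbarg--Trudinger, for instance via the $W^{2,p}$ estimate with $p>3$ and Morrey's embedding.

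\emph{Main obstacle.} There is no real obstacle. The only care needed is to check that the constant does not depend on $\lambda$ or $\va$: this holds because $\lambda\le1$ and $Df$ is bounded on the ball $\{|y|\le M\}$. One also needs to verify the covering step, namely that each $B_{2\va}(z)$ with $z\in B_r$ lies in $B_{2r}$; this holds since $|z|+2\va<r+2r$ fails only when $|z|\ge 2r-2\va$, so we use $z\in B_r$ and $\va<r$ to get $|z|+2\va<3r$. To stay inside $B_{2r}$ exactly, use radius $\va/2$ balls instead, or note that it suffices to have $B_{2\va}(z)\subset B_{2r}$, which holds whenever $|z|<2r-2\va$. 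If that fails, rescale at scale $\min\{\va,r/2\}$, which yields the same bound up to constants.
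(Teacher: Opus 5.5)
Your argument is correct and is essentially the paper's route: the paper just cites \cite[Lemma A.1]{BBH93}, which is proved the same way, via the local bound $|\nabla u(z)|\le C\bigl(\rho\|\Delta u\|_{L^\infty}+\rho^{-1}\|u\|_{L^\infty}\bigr)$ on a ball $B_\rho(z)$ followed by a choice of the radius; here $\|\Delta u\|_{L^\infty}\le C\va^{-2}$, so the right radius is $\rho=\min\{\va,r/2\}$. Use that single radius at every $z\in B_r$ from the outset, so that $B_{2\rho}(z)\subset B_{2r}$ and $\rho^2/\va^2\le 1$ hold automatically; your first covering claim $B_{2\va}(z)\subset B_{2r}$ fails when $r/2<\va<r$, as you yourself notice at the end.
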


\begin{proof}
This follows directly from \cite[Lemma A.1]{BBH93}.
\end{proof}

We also need a boundary analogue of Lemma~\ref{Apriori}. We first record a standard elliptic boundary estimate and then apply it to \eqref{EL}.

\begin{lem}\label{aprioriboundary}
Let $U\subset\R^3$ be a bounded $C^{2,1}$ domain with parameters $M_{U,2}$ and $r_{U,2}$. Let $x_0\in\pa U$, let $r\in(0,r_{U,2})$, and let
$g\in C^2(T_{2r}^U(x_0),\R^m)$. Suppose that
$u\in C^2(\ol{U}_{2r}(x_0),\R^m)$ is a classical solution of
$\Delta u=F$ in $U_{2r}(x_0)$, where $F\in L^\infty(U_{2r}(x_0),\R^m)$, and that $u=g$ on $T_{2r}^U(x_0)$. Then
\begin{equation}
\begin{aligned}
\|\na u\|_{L^\infty(U_r(x_0))}
\leq C\bigl(
&r^{-1}\|u\|_{L^\infty(U_{2r}(x_0))}
+r\|F\|_{L^\infty(U_{2r}(x_0))} \\
&+\| |\na_{\top}g|+r|D_{\top}^2g| \|_{L^\infty(T_{2r}^U(x_0))}
\bigr),
\end{aligned}
\label{aprioriboundaryeq}
\end{equation}
where $C>0$ depends only on $M_{U,2}$ and $r_{U,2}$.
\end{lem}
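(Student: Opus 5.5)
The plan is to flatten the boundary, apply scaled $C^{1,\alpha}$ (or $W^{2,p}$) boundary estimates for the Laplacian on a reference half-ball, and scale back; by translation and rotation invariance we take $x_0=0$. Using Definition~\ref{RegularityBoundary}, write $U_{2r}=\{y_3>\psi(y')\}\cap B_{2r}$ with $\psi(0)=0$, $\na\psi(0)=0$ and $\|D^i\psi\|_{L^\infty}\le M_{U,2}$ for $i\le3$. The map $\Psi(y',y_3)=(y',y_3-\psi(y'))$ is then a $C^{2,1}$ diffeomorphism. Its $C^{2,1}$ norms are controlled by $M_{U,2}$, and it sends $U_{2r}$ onto a region containing a half-ball $B^+_{cr}$ and contained in $B^+_{Cr}$, with $T^U_{2r}$ going into $\{y_3=0\}$. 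Since $r<r_{U,2}$ is bounded, the dilation $z=y/r$ keeps the coefficients uniformly controlled.

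Set $w(z)=u(\Psi^{-1}(rz))$, defined on a fixed region near $B_1^+$. Then $w$ solves
\[
a^{ij}(z)\pa_{ij}w+r\,b^i(z)\pa_iw=r^2\wt F,
\]
where $a^{ij}=(D\Psi D\Psi^T)\circ\Psi^{-1}(r\cdot)$ is uniformly elliptic with $C^{0,1}$ norm bounded by $C(M_{U,2})$ (the Lipschitz constant even gains a factor $r$). The first-order coefficient $b=\Delta\Psi\circ\Psi^{-1}$ is bounded. The boundary datum is $\wt g(z')=g(\Psi^{-1}(rz',0))$. To reduce to zero boundary data, extend $\wt g$ constantly in $z_3$ as $G(z',z_3)=\wt g(z')$. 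Then
\[
\|\na G\|_{L^\infty}\le Cr\|\na_\top g\|_{L^\infty},\qquad \|D^2G\|_{L^\infty}\le C\bigl(r^2\|D^2_\top g\|_{L^\infty}+r\|\na_\top g\|_{L^\infty}\bigr),
\]
where the last term comes from the curvature of $\pa U$, bounded via $M_{U,2}$. The function $v=w-G$ vanishes on the flat part and solves a uniformly elliptic equation with right-hand side bounded by $r^2\|F\|_\infty+|D^2G|+r|\na G|$.

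Next, apply the global $W^{2,p}$ boundary estimate (Gilbarg--Trudinger, Theorem~9.13) with $p>3$ on the flat half-ball, combined with Morrey's embedding. This gives
\[
\|\na v\|_{L^\infty(B^+_{c/2})}\le C\bigl(\|v\|_{L^\infty(B^+_c)}+\|r^2\wt F-a^{ij}\pa_{ij}G-rb^i\pa_iG\|_{L^\infty(B^+_c)}\bigr).
\]
The $\|v\|_{L^\infty}$ term is at most $\|u\|_{L^\infty}+\|g\|_{L^\infty}$, and $\|g\|_{L^\infty}\le\|u\|_{L^\infty}$ because $u=g$ on the boundary piece. Hence
\[
\|\na w\|_{L^\infty}\le C\bigl(\|u\|_\infty+r^2\|F\|_\infty+r\|\na_\top g\|_\infty+r^2\|D^2_\top g\|_\infty\bigr).
\]
Undoing the scaling gives $\na u=r^{-1}(\text{bounded})\,\na w$, which yields \eqref{aprioriboundaryeq} on $U_{c'r}$.

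The main obstacle is covering all of $U_r(x_0)$ rather than just $U_{c'r}(x_0)$. We handle this by covering. Points of $U_r$ that lie within distance $c'r/4$ of $\pa U$ are covered by the boundary estimate above, recentered at nearby points $x_1\in T^U_{3r/2}$, where the radii stay admissible. The remaining points have $B_{c'r/4}(x)\subset U_{2r}$, and there the interior estimate $|\na u(x)|\le C(\rho^{-1}\|u\|_\infty+\rho\|F\|_\infty)$ with $\rho=c'r/4$ holds by the same $W^{2,p}$ argument, or directly by the Poisson representation. Since $u$ is only $C^2$ up to $\ol U_{2r}$ and the estimate is a priori, there are no regularity-justification issues. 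The remaining care is in tracking the constants, so that they depend only on $M_{U,2}$ and $r_{U,2}$, through the choice of $c$ and $c'$.
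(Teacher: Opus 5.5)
Your proof is correct. Its skeleton is the same as the paper's (straighten the boundary, dilate to unit scale, apply a linear boundary estimate for the Laplacian, scale back). The differences are the black box you invoke and how much you make explicit.

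The paper flattens and rescales, then simply cites \cite[Lemma~11]{NZ13} together with the Caccioppoli inequality. You proceed in three steps:
\begin{enumerate}
\item You reduce to homogeneous Dirichlet data by subtracting the $z_3$-independent extension $G$ of the rescaled datum.
\item You apply the Calder\'on--Zygmund $W^{2,p}$ boundary estimate (Gilbarg--Trudinger, Theorem~9.13) with $p>3$, followed by Morrey's embedding.
\item You add a covering step to pass from $U_{c'r}(x_0)$ to all of $U_r(x_0)$: recentred boundary charts at radius comparable to $r$ near $\pa U$, and interior estimates elsewhere.
\end{enumerate}

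Your route is self-contained from textbook results, and it shows exactly where each term of \eqref{aprioriboundaryeq} comes from. The data terms come from $\na G$ and $D^2G$. The curvature part of $D^2G$ is in fact only of order $r^2|\na_\top g|$, which you correctly absorb using $r<r_{U,2}$. The first-order term $r\,b^i\pa_i w$ comes from $\Delta\Psi$. Your route also needs only $C^{1,1}$ control of the boundary chart.

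The paper's proof is a one-line reduction to a ready-made estimate from the Landau--de Gennes literature. It leaves implicit both how the hypotheses are matched to the cited lemma and the passage from a smaller concentric region to $U_r(x_0)$, which your covering argument makes explicit.
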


\begin{proof}
After straightening the boundary and rescaling, the estimate follows from \cite[Lemma~11]{NZ13}, together with the Caccioppoli inequality.
\end{proof}

Lemma~\ref{aprioriboundary} implies the following boundary gradient estimate for solutions of the Ginzburg--Landau equation.

\begin{lem}\label{AprioriBoundary}
Let $\va\in(0,1)$, $M>0$, and let $U\subset\R^3$ be a bounded $C^{2,1}$ domain with parameters $M_{U,2}$ and $r_{U,2}$. Let $x_0\in\pa U$ and $r\in(0,r_{U,2})$. Assume that $g\in C^2(T_{2r}^U(x_0),\R^m)$ satisfies
\[
\|D_{\pa U}^j g\|_{L^\infty(T_{2r}^U(x_0))}
\leq M\va^{-j}
\]
for any $j\in\{0,1,2\}$, with the convention $D_{\pa U}^0g:=g$. Suppose that $u\in H^1(U_{2r}(x_0),\R^m)$ is a weak solution of \eqref{EL} in $U_{2r}(x_0)$ satisfying
\[
u=g\text{ on }T_{2r}^U(x_0)
\quad\text{and}\quad
\|u\|_{L^\infty(U_{2r}(x_0))}\leq M.
\]
Then $u\in C^1(\ol{U}_r(x_0),\R^m)$ and
\begin{equation}
\|\na u\|_{L^\infty(U_r(x_0))}
\leq
C\va^{-2}r^{-1}(r+\va)^2,
\label{Apriori11}
\end{equation}
where $C>0$ depends only on $f,M,\cN,M_{U,2}$, and $r_{U,2}$.
\end{lem}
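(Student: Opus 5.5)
The plan is to first upgrade the weak solution to a classical one up to the flat part of the boundary, and then apply Lemma~\ref{aprioriboundary} with $F=\va^{-2}D_uf(u)$, separating the scales $r\le\va$ and $r>\va$.

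\emph{Regularity.} Since $\|u\|_{L^\infty(U_{2r}(x_0))}\le M$ and $f$ is smooth, the right-hand side $F=\va^{-2}D_uf(u)$ satisfies $\|F\|_{L^\infty}\le C(f,M)\va^{-2}$. Standard $L^p$ theory for the Laplacian in a $C^{2,1}$ domain with $C^2$ boundary data gives $u\in W^{2,p}$ near $T^U_{2r}(x_0)$ for every $p<\infty$, hence $u\in C^{1,\beta}$ up to the boundary. Then $F\in C^{0,\beta}$, and Schauder theory yields $u\in C^{2,\beta}$ locally up to the boundary. The regularity is needed only on slightly smaller sets, so I will apply the estimate on $U_{2r'}(x_0)$ with $r'<r$ close to $r$ and let $r'\to r$. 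This makes Lemma~\ref{aprioriboundary} applicable.

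\emph{Case $r\le\va$.} I apply \eqref{aprioriboundaryeq} directly at scale $r$. The three terms are bounded by
\[
r^{-1}M,\qquad r\,\va^{-2}C,\qquad \va^{-1}M+r\va^{-2}M.
\]
Since $r\le\va$, each of these is at most $Cr^{-1}$. Meanwhile the target bound is $\va^{-2}r^{-1}(r+\va)^2\ge r^{-1}$, so the estimate holds in this case.

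\emph{Case $r>\va$.} Here a single application at scale $r$ produces the term $r\va^{-2}$, which is fine, but the boundary term $r|D^2_\top g|\le rM\va^{-2}$ is also fine. Indeed, in this regime
\[
\va^{-2}r^{-1}(r+\va)^2\simeq r\va^{-2},
\]
and every term, namely $r^{-1}$, $r\va^{-2}$, $\va^{-1}$ and $r\va^{-2}$, is bounded by $Cr\va^{-2}$ because $r>\va$. So in fact a direct application suffices in both regimes, and the constant depends only on $f,M,\cN,M_{U,2},r_{U,2}$.

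\emph{Alternative if sharper bounds are needed.} One could cover $U_r(x_0)$ by balls of radius $\va$. Interior points would be handled by Lemma~\ref{Apriori}, and boundary-centered balls by the case $r\le\va$, which gives $|\nabla u|\le C\va^{-1}$. This is stronger than what is required, but it is not needed for the stated bound.

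The only real obstacle is the regularity bootstrap up to a boundary portion with merely $C^2$ data, which is standard. Everything else is bookkeeping of the powers of $r$ and $\va$.
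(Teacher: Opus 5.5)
Your argument is the paper's: apply Lemma~\ref{aprioriboundary} with $F=\va^{-2}D_uf(u)$ and compare terms. The case split $r\le\va$ versus $r>\va$ is unnecessary, since $r^{-1}+\va^{-1}+r\va^{-2}\le r^{-1}+2\va^{-1}+r\va^{-2}=\va^{-2}r^{-1}(r+\va)^2$ holds for all $r,\va>0$.

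One minor correction to the regularity step: boundary Schauder estimates need $C^{2,\beta}$ boundary data, so with $g$ only $C^2$ you cannot conclude $u\in C^{2,\beta}$ up to the boundary. The $W^{2,p}$ estimates with $p>3$, or an approximation of $g$, already give $u\in C^1(\ol{U}_r(x_0))$ and justify applying the gradient bound. The paper passes over this point in the same way with ``by elliptic regularity''.
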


\begin{proof}
By elliptic regularity, $u$ is sufficiently regular in $\ol{U}_r(x_0)$ to apply Lemma \ref{aprioriboundary}. We use Lemma \ref{aprioriboundary} with
$F:=\va^{-2}D_uf(u)$. Since $\|u\|_{L^\infty(U_{2r}(x_0))}\leq M$, we have
\[
\|F\|_{L^\infty(U_{2r}(x_0))}
\leq
C\va^{-2}.
\]
The assumptions on $g$ give
\[
\| |\na_{\top}g|+r|D_{\top}^2g| \|_{L^\infty(T_{2r}^U(x_0))}
\leq
C(\va^{-1}+r\va^{-2}).
\]
Together with \eqref{aprioriboundaryeq}, this yields
\[
\|\na u\|_{L^\infty(U_r(x_0))}
\leq
C(r^{-1}+\va^{-1}+r\va^{-2})
\leq
C\va^{-2}r^{-1}(r+\va)^2.
\]
Then we complete the proof.
\end{proof}

\subsection{Monotonicity formula}

Let $U\subset\R^3$ be open. Throughout this subsection, we assume that
$u\in C^\infty(U,\R^m)$ satisfies \eqref{EL}. For $x\in U$ and
$r\in(0,\dist(x,\pa U))$, define
\begin{equation}
\theta_{\va}(u;x,r):=\frac{1}{r}\int_{B_r(x)}e_{\va}(u).
\label{density1}
\end{equation}

We use the following cutoff function to localize the monotonicity formula.

\begin{defn}\label{defnofphi}
Let $\phi\in C^\infty([0,100),[0, +\infty))$ satisfy the following conditions.
\begin{enumerate}[label=$(\theenumi)$]

\item $\supp\phi\subset[0,100)$, $\phi(t)\geq 1$ for any $t\in[0,90]$, and $0\leq\phi(t)\leq 100$ for any $t\in[0,100)$.

\item $\phi'(t)\leq 0$ for any $t\in[0,100)$.

\item $1\leq -\phi'(t)\leq 2$ for any $t\in[0,90]$.

\item $|\phi'(t)|\leq 100$ for any $t\in[0,100)$.

\end{enumerate}
\end{defn}

For $x\in\R^3$ and $r>0$, define $\phi_{x,r},\overline{\phi}_{x,r}:\R^3\to\R$ by
\[
\phi_{x,r}(y):=\phi\left(\frac{|y-x|^2}{r^2}\right),
\quad
\overline{\phi}_{x,r}(y):=\phi'\left(\frac{|y-x|^2}{r^2}\right).
\]
By Definition \ref{defnofphi}, for any $y\in\R^3$,
\begin{gather}
-\overline{\phi}_{x,\f{r}{10}}(y)
\leq 100\chi_{B_r(x)}(y)
\leq 100\bigl(-\overline{\phi}_{x,\f{r}{9}}(y)\bigr),
\label{phiproperty0}\\
\phi_{x,\f{r}{10}}(y)
\leq 100\chi_{B_r(x)}(y)
\leq 100\phi_{x,\f{r}{9}}(y),
\label{phiproperty1}
\end{gather}
where $\chi_{B_r(x)}$ denotes the characteristic function of $B_r(x)$.

We also define the $\phi$-weighted density
\begin{equation}
\vt_{\va}(u;x,r):=\frac{1}{r}\int_{B_{10r}(x)}e_{\va}(u)\phi_{x,r},
\label{density11}
\end{equation}
whenever $B_{10r}(x)\subset\subset U$. When $x\in U$ and $r>\dist(x,\pa U)$, the boundary affects the density. We therefore set
\begin{align*}
\theta_{\va}^U(u;x,r)&:=\frac{1}{r}\int_{U_r(x)}e_{\va}(u),\\
\vt_{\va}^U(u;x,r)&:=\frac{1}{r}\int_U e_{\va}(u)\phi_{x,r}.
\end{align*}

With these definitions in place, we establish the interior monotonicity formula.

\begin{prop}[Interior monotonicity]\label{MonotoneInterior}
Let $u\in C^\infty(B_r(x_0),\R^m)$ be a solution of \eqref{EL}. Then the following properties are satisfied.
\begin{enumerate}[label=$(\theenumi)$]

\item For any $0<s<t<r$,
\begin{equation}
\begin{aligned}
&\theta_{\va}(u;x_0,t)-\theta_{\va}(u;x_0,s)\\
&\quad\quad=
\int_s^t\left(
\frac{1}{\rho}\int_{\pa B_\rho(x_0)}
\left|\frac{y-x_0}{\rho}\cdot\na u\right|^2\ud\HH^2(y)+\frac{2}{\va^2\rho^2}\int_{B_\rho(x_0)}f(u)
\right)\ud\rho.
\end{aligned}
\label{Mo11}
\end{equation}

\item For any $0<s<t<\f{r}{10}$,
\begin{equation}
\begin{aligned}
&\vt_{\va}(u;x_0,t)-\vt_{\va}(u;x_0,s)\\
&\quad\quad=
\int_s^t\left(
-\frac{2}{\rho^2}\int_{B_r(x_0)}
\left|\frac{y-x_0}{\rho}\cdot\na u\right|^2\overline{\phi}_{x_0,\rho}
+\frac{2}{\va^2\rho^2}\int_{B_r(x_0)} f(u)\phi_{x_0,\rho}
\right)\ud\rho.
\end{aligned}
\label{Monotone1}
\end{equation}

\end{enumerate}
\end{prop}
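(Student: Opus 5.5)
The plan is the standard Pohozaev/stress-energy argument: multiply the Euler--Lagrange equation \eqref{EL} by a radial vector field applied to $\na u$, integrate by parts, and differentiate the density in the radius. Translate so that $x_0=0$. For a smooth vector field $X$, define the stress tensor
\[
T_{ij}:=e_{\va}(u)\delta_{ij}-\pa_iu\cdot\pa_ju .
\]
Using \eqref{EL}, one checks that
\[
\pa_jT_{ij}=\pa_iu\cdot\Big(\f{1}{\va^2}D_uf(u)-\Delta u\Big)=0 .
\]
Hence, for any test field $X$ with compact support, or with appropriate boundary terms,
\[
\int\Big(e_{\va}(u)\,\op{div}X-\pa_iu\cdot\pa_ju\,\pa_jX^i\Big)=\text{boundary terms}.
\]

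For part (1), take $X(y)=y$ on $B_\rho$ and keep the boundary term on $\pa B_\rho$. Since $\op{div}X=3$ and $\pa_jX^i=\delta_{ij}$, this gives
\[
\int_{B_\rho}\Big(\f12|\na u|^2+\f{3}{\va^2}f(u)\Big)
=\rho\int_{\pa B_\rho}\Big(e_{\va}(u)-|\pa_r u|^2\Big)\ud\HH^2 .
\]
Next compute
\[
\f{\ud}{\ud\rho}\theta_{\va}(u;0,\rho)=-\rho^{-2}\int_{B_\rho}e_{\va}(u)+\rho^{-1}\int_{\pa B_\rho}e_{\va}(u).
\]
Substituting the identity above for the boundary integral, the $\f12|\na u|^2$ terms cancel. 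What remains is
\[
\rho^{-1}\int_{\pa B_\rho}|\pa_ru|^2+\f{2}{\va^2\rho^2}\int_{B_\rho}f(u),
\]
and integrating this from $s$ to $t$ yields \eqref{Mo11}.

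For part (2), use the field $X(y)=y\,\phi_{0,\rho}(y)$, which is compactly supported in $B_{10\rho}\subset B_r$, so no boundary terms appear. Then
\[
\op{div}X=3\phi_{0,\rho}+\f{2|y|^2}{\rho^2}\ol\phi_{0,\rho},
\qquad
\pa_jX^i=\phi_{0,\rho}\delta_{ij}+\f{2y_iy_j}{\rho^2}\ol\phi_{0,\rho}.
\]
Substituting into the identity gives
\[
\int\Big(\f12|\na u|^2+\f{3}{\va^2}f(u)\Big)\phi_{0,\rho}
+\int\f{2|y|^2}{\rho^2}e_{\va}(u)\ol\phi_{0,\rho}
-\int\f{2}{\rho^2}|y\cdot\na u|^2\ol\phi_{0,\rho}=0 .
\]
Separately, differentiate $\vt_{\va}(u;0,\rho)=\rho^{-1}\int e_{\va}\phi_{0,\rho}$ in $\rho$, using $\pa_\rho\phi_{0,\rho}=-\f{2|y|^2}{\rho^3}\ol\phi_{0,\rho}$:
\[
\f{\ud}{\ud\rho}\vt_{\va}=-\rho^{-2}\int e_{\va}\phi_{0,\rho}-\rho^{-2}\int\f{2|y|^2}{\rho^2}e_{\va}\ol\phi_{0,\rho}.
\]
Now eliminate the $\ol\phi$-energy term using the identity. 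The $\f12|\na u|^2\phi$ contributions cancel, leaving
\[
\f{\ud}{\ud\rho}\vt_{\va}
=-\f{2}{\rho^2}\int\Big|\f{y}{\rho}\cdot\na u\Big|^2\ol\phi_{0,\rho}
+\f{2}{\va^2\rho^2}\int f(u)\phi_{0,\rho}.
\]
Integrating from $s$ to $t$ gives \eqref{Monotone1}. The restriction $t<r/10$ is exactly what guarantees that $\supp\phi_{0,\rho}\subset B_{10\rho}\subset B_r$, so that all integrals are over $B_r(x_0)$.

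There is no real obstacle here. The only care needed is bookkeeping: the factor $\rho^{-2}$ against the powers of $|y|^2/\rho^2$, and checking that the differentiation under the integral sign and the integration by parts are justified. Both follow from the smoothness of $u$ and the compact support of $\phi$ (Definition \ref{defnofphi}). The key algebraic fact is that in three dimensions the gradient terms cancel exactly. The potential terms do not cancel, and they leave the coefficient $2/\va^2$.
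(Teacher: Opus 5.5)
Your proof is correct and follows the paper's argument. For part (2) the paper does exactly what you do: it tests $\pa_jT_{\va}^{ij}(u)=0$ against $y\phi_{0,\rho}$ and then eliminates the $e_{\va}(u)\ol{\phi}_{0,\rho}$ term from $\frac{\ud}{\ud\rho}\vt_{\va}$. For part (1) the paper only cites \cite[Lemma~2]{MZ10}, whose Pohozaev argument is the same as your computation with $X=y$ on $B_\rho$, keeping the boundary term on $\pa B_\rho$.
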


\begin{proof}
The proof of \eqref{Mo11} follows by the same argument as \cite[Lemma 2]{MZ10}; we omit the details. We prove \eqref{Monotone1}. Without loss of generality, assume $x_0=0$. Define the stress-energy tensor
$T_{\va}(u)=(T_{\va}^{ij}(u))_{i,j\in\{1,2,3\}}$ by
\begin{equation}
T_{\va}^{ij}(u):=e_{\va}(u)\delta_{ij}-\pa_i u:\pa_j u.
\label{stress}
\end{equation}
Using \eqref{EL}, one verifies that
\begin{equation}
\pa_j T_{\va}^{ij}(u)=0.
\label{stressidentity}
\end{equation}
Write $\phi_\rho:=\phi_{0,\rho}$ and $\overline{\phi}_\rho:=\overline{\phi}_{0,\rho}$. Testing \eqref{stressidentity} against $y\phi_\rho$ and integrating by parts gives
\begin{equation}
\int \frac{2|y|^2}{\rho^2}\overline{\phi}_\rho e_{\va}(u)
-\int \frac{2}{\rho^2}\overline{\phi}_\rho |y\cdot\na u|^2
+\int e_{\va}(u)\phi_\rho
+\int \frac{2}{\va^2}f(u)\phi_\rho=0.
\label{addingone}
\end{equation}
A direct computation yields
\begin{equation}
\begin{aligned}
\frac{\ud}{\ud\rho}\left(\frac{1}{\rho}\int e_{\va}(u)\phi_\rho\right)
&=
-\frac{1}{\rho^2}\int e_{\va}(u)\phi_\rho
-\frac{2}{\rho^4}\int e_{\va}(u)|y|^2\overline{\phi}_\rho\\
&\stackrel{\eqref{addingone}}{=}
-\frac{2}{\rho^4}\int |y\cdot\na u|^2\overline{\phi}_\rho
+\frac{2}{\va^2\rho^2}\int f(u)\phi_\rho.
\end{aligned}
\label{similarcompute}
\end{equation}
Integrating over $\rho\in(s,t)$ completes the proof of \eqref{Monotone1}.
\end{proof}

The next result gives the corresponding monotonicity formula near the boundary.

\begin{prop}[Boundary monotonicity]\label{MonotoneBoundary}
Let $U\subset\R^3$ be a bounded $C^{2,1}$ domain with parameters $M_{U,2}$ and $r_{U,2}$. Let $x_0\in\pa U$ and $r\in(0,r_{U,2})$. Suppose that $g\in C^1(T_{2r}^U(x_0),\cN)$ satisfies
\begin{equation}
r\|\na_{\top}g\|_{L^\infty(T_{2r}^U(x_0))}\leq M_0.
\label{gleqM0}
\end{equation}
Assume that $u\in C^2(\ol{U}_{2r}(x_0),\R^m)$ is a solution of \eqref{EL} in $U_{2r}(x_0)$, with $u=g$ on $T_{2r}^U(x_0)$, and that
\begin{equation}
\theta_{\va}^U(u;x_0,2r)\leq M_1.
\label{thetavaleqM1}
\end{equation}
Set
\[
\al(M_0,M_1,r)
:=
\frac{CM_0}{r}(M_0+M_1^{\f{1}{2}})
+C(M_0^2+M_1),
\]
where $C>0$ depends only on $M_{U,2}$ and $r_{U,2}$. Then the following properties hold.

\begin{enumerate}[label=$(\theenumi)$]

\item For any $0<s<t<r$,
\begin{equation}
\begin{aligned}
&\theta_{\va}^U(u;x_0,t)-\theta_{\va}^U(u;x_0,s)
+\al(M_0,M_1,r)(t-s)\\
&\quad\geq
\int_s^t\left(
\frac{1}{\rho}\int_{U\cap\pa B_\rho(x_0)}
\left|\frac{y-x_0}{\rho}\cdot\na u\right|^2\ud\HH^2(y)
+\frac{2}{\va^2\rho^2}\int_{U_\rho(x_0)}f(u)
\right)\ud\rho.
\end{aligned}
\label{Monotone11b}
\end{equation}

\item For any $0<s<t<\f{r}{10}$,
\begin{equation}
\begin{aligned}
&\vt_{\va}^U(u;x_0,t)-\vt_{\va}^U(u;x_0,s)
+\al(M_0,M_1,r)(t-s)\\
&\quad\geq
\int_s^t\left(
-\frac{2}{\rho^2}\int_U
\left|\frac{y-x_0}{\rho}\cdot\na u\right|^2\overline{\phi}_{x_0,\rho}
+\frac{2}{\va^2\rho^2}\int_U f(u)\phi_{x_0,\rho}
\right)\ud\rho.
\end{aligned}
\label{Monotone11}
\end{equation}

\end{enumerate}
\end{prop}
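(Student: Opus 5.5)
We follow the interior computation of Proposition~\ref{MonotoneInterior}, now keeping the boundary terms produced by the integration by parts. Translate so that $x_0=0$, and test the conservation law \eqref{stressidentity} on $U_{2r}$ against the vector field $X(y)=y\,\phi_\rho(y)$ for~\eqref{Monotone11}. For~\eqref{Monotone11b} we use $X(y)=y$ on $U_\rho$, or equivalently a sharp cutoff obtained as a limit of smooth radial cutoffs. The divergence theorem then gives the interior identity \eqref{addingone}, corrected by the flux
\[
\int_{\pa U\cap B_{10\rho}} \bigl(e_\va(u)\,(X\cdot\nu)-(X\cdot\na u):(\nu\cdot\na u)\bigr)\,\phi_\rho\,\ud\HH^2 .
\]
Here $\nu$ is the outward normal and $X=y$. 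Combined with the $\rho$-derivative computation \eqref{similarcompute}, this yields
\[
\frac{\ud}{\ud\rho}\vt_\va^U = \text{(RHS integrand of \eqref{Monotone11})} + \frac{1}{\rho^2}\,\mathcal{B}(\rho),
\]
where $\mathcal{B}$ is the boundary flux above. The proof therefore reduces to the bound $\rho^{-2}|\mathcal{B}(\rho)|\le \al(M_0,M_1,r)$ for $\rho<r/10$; the case $\rho<r$ for \eqref{Monotone11b} is analogous.

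To estimate $\mathcal{B}$, decompose $\na u$ on $\pa U$ into its tangential part $\na_\top u=\na_\top g$ and its normal part $\pa_\nu u$. Since $g$ is $\cN$-valued, $f(u)=0$ on $\pa U$. Writing $y=(y\cdot\nu)\nu+y^\top$, the flux integrand becomes
\[
\tfrac12\bigl(|\na_\top g|^2+|\pa_\nu u|^2\bigr)(y\cdot\nu)-\bigl((y\cdot\nu)\pa_\nu u+y^\top\!\cdot\na_\top g\bigr):\pa_\nu u .
\]
This equals
\[
-\tfrac12(y\cdot\nu)|\pa_\nu u|^2+\tfrac12(y\cdot\nu)|\na_\top g|^2-(y^\top\!\cdot\na_\top g):\pa_\nu u .
\]
Because $\pa U$ is $C^{2,1}$ and $x_0\in\pa U$, we have $|y\cdot\nu(y)|\le C M_{U,2}|y|^2$. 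This gives:
\begin{itemize}
\item The $|\pa_\nu u|^2$ term carries the factor $O(|y|^2)$ but has no favourable sign in general, since convexity is not assumed. It must be controlled by $\rho^2\int_{\pa U\cap B_{10\rho}}|\pa_\nu u|^2$.
\item The $|\na_\top g|^2$ term is bounded by $C\rho^2\cdot\rho^2\cdot(M_0/r)^2$ using \eqref{gleqM0}, which contributes $CM_0^2$ to $\al$ after dividing by $\rho^2$.
\item The cross term is bounded by $C\rho\,(M_0/r)\int_{\pa U\cap B_{10\rho}}|\pa_\nu u|$.
\end{itemize}

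The main obstacle is controlling the normal derivative on the boundary, since only energy bounds are assumed. We will use a Pohozaev/Rellich-type identity: test \eqref{EL} against $Y\cdot\na u$, with $Y$ a smooth extension of $\nu$ multiplied by a cutoff supported in $B_{2r}$. Equivalently, test \eqref{stressidentity} against $Y$. The resulting boundary integral is
\[
\int_{\pa U}\Bigl(\tfrac12|\pa_\nu u|^2-\tfrac12|\na_\top g|^2\Bigr)\eta ,
\]
and it equals a bulk integral of $T_\va(u):\na Y$. The bulk term is bounded by $C r^{-1}\int_{U_{2r}}e_\va(u)\le C M_1$ thanks to \eqref{thetavaleqM1}. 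Hence
\[
\int_{T_{r}^U}|\pa_\nu u|^2\le C\Bigl(M_1+\tfrac{M_0^2}{r}\Bigr).
\]
By Cauchy--Schwarz,
\[
\int_{\pa U\cap B_{10\rho}}|\pa_\nu u|\le C\rho\,(M_1+M_0^2/r)^{1/2}.
\]
Inserting these bounds, $\rho^{-2}|\mathcal{B}|$ is at most
\[
C\Bigl(M_1+\tfrac{M_0^2}{r}\Bigr)+C\frac{M_0}{r}\Bigl(M_1^{1/2}+\tfrac{M_0}{r^{1/2}}\Bigr)+CM_0^2 .
\]
Up to the scaling normalization implicit in the constant depending on $r_{U,2}$, this has the form of $\al(M_0,M_1,r)$. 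Integrating in $\rho$ from $s$ to $t$ then gives both \eqref{Monotone11b} and \eqref{Monotone11}.
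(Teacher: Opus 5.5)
Your argument is the same as the paper's. You test \eqref{stressidentity} against $y\phi_\rho$ (resp.\ against $y$ on $U_\rho$), split the boundary flux using $f(g)=0$, $|y\cdot\nu|\le C\rho^2$ and $|y^{\top}|\le C\rho$, and control $\int_{\pa U}|\pa_\nu u|^2$ by testing \eqref{stressidentity} against a cut-off extension of $\nu$; the paper uses $X\phi_{r/9}$ for this last step.

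There is one bookkeeping slip, and the closing remark about ``scaling normalization'' does not actually absorb it. By \eqref{gleqM0} and the area bound, $\int_{T_{2r}^U(x_0)}|\na_{\top}g|^2\,\ud\HH^2\le (M_0/r)^2\,\HH^2(T_{2r}^U(x_0))\le CM_0^2$, not $CM_0^2/r$. The Rellich step therefore gives $\int_{T_r^U(x_0)}|\pa_\nu u|^2\,\ud\HH^2\le C(M_1+M_0^2)$.

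With your weaker bound, the cross term produces $CM_0^2r^{-3/2}$. This is not dominated by $\al(M_0,M_1,r)$ with a constant depending only on $M_{U,2}$ and $r_{U,2}$ as $r\to0^+$. The discrepancy is not cosmetic. In the proof of Lemma~\ref{BoundaryPartialRegularity1} one needs $r_i\,\al(\delta_i,M,r_i)\to0$ with $r_i\in(0,\delta_i)$ arbitrarily small, and $r_i\cdot\delta_i^2r_i^{-3/2}=\delta_i^2r_i^{-1/2}$ need not vanish.

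With the correct area bound, your three boundary contributions sum to exactly $C(M_0^2+M_1)+CM_0r^{-1}(M_0+M_1^{1/2})=\al(M_0,M_1,r)$, and the rest of your argument goes through as written.
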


\begin{proof}
We first prove \eqref{Monotone11}. Set $x_0=0$, and use the notation
$\phi_\rho$ and $\overline{\phi}_\rho$ from the proof of Proposition \ref{MonotoneInterior}. The identity \eqref{stressidentity} holds in $U_{2r}$. Testing against $y\phi_\rho$ for $\rho\in(0,\f{r}{10})$ and integrating by parts yields
\begin{equation}
\begin{aligned}
&\int_U \frac{2|y|^2}{\rho^2}\overline{\phi}_\rho e_{\va}(u)
-\int_U \frac{2}{\rho^2}\overline{\phi}_\rho |y\cdot\na u|^2
+\int_U e_{\va}(u)\phi_\rho
+\int_U \frac{2}{\va^2}f(u)\phi_\rho\\
&\quad=
\int_{\pa U}T_{\va}^{ij}(u)y_i\nu_j\phi_\rho\ud\HH^2,
\end{aligned}
\label{evaboundart}
\end{equation}
where $\nu:\pa U\to\Ss^2$ is the unit outer normal to $\pa U$. The same computation as in \eqref{similarcompute} gives
\begin{equation}
\begin{aligned}
\frac{\ud}{\ud\rho}\vt_{\va}^U(u;0,\rho)
&=
-\frac{2}{\rho^4}\int_U |y\cdot\na u|^2\overline{\phi}_\rho
+\frac{2}{\va^2\rho^2}\int_U f(u)\phi_\rho-\frac{1}{\rho^2}\int_{\pa U}T_{\va}^{ij}(u)y_i\nu_j\phi_\rho\ud\HH^2.
\end{aligned}
\label{dvtva1}
\end{equation}
It remains to estimate the boundary integral from below. Since $g\in\cN$ on $T_{2r}^U$, we have $f(u)=0$ there. Decomposing $y$ into normal and tangential parts along $\pa U$ with $ \tau(y):=y-(y\cdot\nu)\nu $, we obtain
\begin{equation}
\begin{aligned}
-\int_{\pa U}T_{\va}^{ij}(u)y_i\nu_j\phi_\rho\ud\HH^2
&=
\int_{\pa U}(y\cdot\na u)\cdot(\pa_\nu u)\phi_\rho\ud\HH^2
-\int_{\pa U}e_{\va}(u)(y\cdot\nu)\phi_\rho\ud\HH^2\\
&=
\int_{\pa U}(\tau\cdot\na u)\cdot(\pa_\nu u)\phi_\rho\ud\HH^2
+\frac{1}{2}\int_{\pa U}|\pa_\nu u|^2(y\cdot\nu)\phi_\rho\ud\HH^2\\
&\quad
-\frac{1}{2}\int_{\pa U}(|\na u|^2-|\pa_\nu u|^2)(y\cdot\nu)\phi_\rho\ud\HH^2.
\end{aligned}
\label{Tijidentity}
\end{equation}
Since $U$ is $C^{2,1}$ with parameters $M_{U,2}$ and $r_{U,2}$, the boundary representation in Definition \ref{RegularityBoundary} gives
\begin{equation}
|y\cdot\nu|\leq C\rho^2
\quad\text{on } \pa U\cap B_{10\rho},
\label{ycdotnu}
\end{equation}
where $C=C(M_{U,2},r_{U,2})>0$. Applying \eqref{gleqM0}, \eqref{Tijidentity}, \eqref{ycdotnu}, the bound $|\tau|\leq C\rho$, and the Cauchy--Schwarz inequality, we find
\begin{equation}
\begin{aligned}
-\frac{1}{\rho^2}\int_{\pa U}T_{\va}^{ij}(u)y_i\nu_j\phi_\rho\ud\HH^2
&\geq
-\frac{1}{\rho^2}
\left(\int_{\pa U}|\pa_\nu u|^2\phi_\rho\ud\HH^2\right)^{\f{1}{2}}
\left(\int_{\pa U}\rho^2|\na_{\top}g|^2\phi_\rho\ud\HH^2\right)^{\f{1}{2}}\\
&\quad
-C\int_{\pa U}|\pa_\nu u|^2\phi_\rho\ud\HH^2
-\frac{CM_0^2}{r}\\
&\geq
-\frac{CM_0}{r}
\left(\int_{\pa U}|\pa_\nu u|^2\phi_{\f{r}{9}}\ud\HH^2\right)^{\f{1}{2}}-C\int_{\pa U}|\pa_\nu u|^2\phi_{\f{r}{9}}\ud\HH^2
-\frac{CM_0^2}{r}.
\end{aligned}
\label{Tijgeq}
\end{equation}
To bound the remaining normal derivative terms, choose a vector field
$X\in C^1(\ol U,\R^3)$ that
\[
X=\nu\text{ on }\pa U
\quad\text{and}\quad
\|\na X\|_{L^\infty(U)}\leq C(M_{U,2},r_{U,2}).
\]
Testing \eqref{stressidentity} against $X\phi_{\f{r}{9}}$ and using
\eqref{phiproperty0}--\eqref{phiproperty1}, we get
\begin{equation}
\begin{aligned}
\int_{\pa U}|\pa_\nu u|^2\phi_{\f{r}{9}}\ud\HH^2
&\leq C\int_{\pa U}|\na_{\top}g|^2\phi_{\f{r}{9}}\ud\HH^2
+C\int_U e_{\va}(u)\phi_{\f{r}{9}}+\frac{C}{r}\int_U e_{\va}(u)\chi_{B_{2r}}\\
&\leq CM_0^2+C\theta_{\va}^U(u;0,2r)
\leq C(M_0^2+M_1).
\end{aligned}
\label{similarusefor}
\end{equation}
Substituting \eqref{similarusefor} into \eqref{Tijgeq} yields
\[
-\frac{1}{\rho^2}\int_{\pa U}T_{\va}^{ij}(u)y_i\nu_j\phi_\rho\ud\HH^2
\geq
-\frac{CM_0}{r}(M_0+M_1^{\f{1}{2}})
-C(M_0^2+M_1)
=
-\al(M_0,M_1,r).
\]
Inserting this bound into \eqref{dvtva1} and integrating over $\rho\in(s,t)$ proves \eqref{Monotone11}.

The proof of \eqref{Monotone11b} is the same, except that one tests \eqref{stressidentity} against the vector field $y$ instead of $y\phi_\rho$. The corresponding boundary estimate is obtained as above, and this gives \eqref{Monotone11b}.
\end{proof}

We close this subsection by proving a logarithmic energy bound for minimizers 
$\{u_{\va}\}_{\va\in(0,1)}$. This estimate will be used repeatedly to control the
rescaled energy densities.

\begin{prop}\label{locallogestimate}
Let $\om,\cA,g_{\va}$, and $u_{\va}$ be as in Theorem
\ref{globalminimizersproperties}. Then there exists $C>0$, depending only on
$\cA,f,M_0,\cN,\om$, and $\rho_1$, such that the following properties hold.
\begin{enumerate}[label=$(\theenumi)$]

\item For any $\va\in(0,1)$,
\[
E_{\va}(u_{\va},\om)\leq C(|\log\va|+1),
\quad
\|u_{\va}\|_{L^\infty(\om)}\leq C.
\]

\item For any $\va\in(0,1)$, any $x\in\ol{\om}$, and any $r>0$,
\[
\theta_{\va}^{\om}(u_{\va};x,r)\leq C(|\log\va|+1).
\]

\end{enumerate}
\end{prop}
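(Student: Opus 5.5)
The plan has three parts: build an explicit competitor to get the energy bound, use a maximum-principle argument for the $L^\infty$ bound, and apply the boundary monotonicity formula for the density estimate.

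\textbf{Energy bound.} We construct a competitor $w_{\va}$ with $w_{\va}=g_{\va}$ on $\pa\om$ and $E_{\va}(w_{\va},\om)\leq C(|\log\va|+1)$; minimality of $u_{\va}$ then gives the first estimate. Since $\pa\om$ is simply connected and $\om$ is strongly convex, $\om$ is homeomorphic to a ball. We connect the points $a_i$ by line defects carrying the homotopy classes of $g$ around each $a_i$. For concreteness, take an interior point $p$ and the segments $[a_i,p]$; these lie in $\ol\om$ by convexity.

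Away from these segments we define an $\cN$-valued map $w$ that is locally Lipschitz with $|\na w(x)|\leq C\dist(x,\bigcup_i[a_i,p])^{-1}$. Near each segment, $w$ is the $0$-homogeneous angular extension of a geodesic loop in the class $[g|_{\pa D_{a_i}}]_{\cN}$. These classes are compatible at $p$: on a small sphere around $p$, the trace of $w$ is defined on the sphere minus $k$ points with prescribed classes. It extends continuously to that sphere precisely when $0\in\sum_i\sg_i$. This holds because $g$ itself is defined on $\pa\om\setminus\cA$, a sphere minus $k$ points, so its classes sum to $0$. We glue to $g$ near $\pa\om$ using condition \ref{boundarycondition2}, which says $g_{\va}=g$ away from $\cA$.

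Inside the $\va$-tube around the segments, and inside $B_{\va}(a_i)$, we interpolate to the actual boundary datum. Here condition \ref{boundarycondition4} is used: it says $g_{\va}$ is exactly $h_{\va}(\theta)\chi(r/\va)$ in rescaled coordinates, which matches the radial cut-off $\chi(\cdot/\va)$ times the angular profile. The costs then split as follows.
\begin{itemize}
\item The core contributes $O(1)$ per unit length, since the gradient is at most $C/\va$ on a region of volume $O(\va^2)$ and $f$ is bounded there.
\item The annular region $\va<\dist<1$ contributes $\int C/\dist^2\leq C|\log\va|$ per unit length.
\item The neighbourhood of $p$ contributes $\int_{B_\va}\va^{-2}\leq C\va$ plus $\int_{B_1}|x|^{-2}<\infty$ from the homogeneous extension.
\end{itemize}
The main obstacle is the careful matching near the boundary points $a_i$, where the tube meets $\pa\om$. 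We handle it using the diffeomorphism $\Phi_0$ to flatten, and take the competitor to be $g_{\va}^0$ extended constantly along the $y_3$-direction.

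\textbf{$L^\infty$ bound.} We use \ref{A3} together with a maximum principle. Set $R:=\max(R_0,\sup_{\va}\|g_{\va}\|_\infty)$, which is finite by condition \ref{boundarycondition1}. Composing $u_{\va}$ with the radial retraction onto $\ol B_R$ does not increase the Dirichlet energy, since the retraction is $1$-Lipschitz. For $|y|\geq R_0$ we have $f(ty)$ nondecreasing in $t\geq1$, because $\frac{\ud}{\ud t}f(ty)=y\cdot Df(ty)\geq0$; hence the retraction does not increase the potential either. So we may replace $u_{\va}$ by its retraction. Alternatively, since $u_{\va}$ is a smooth minimizer, we can argue directly that $|u_{\va}|^2$ is subharmonic where $|u_{\va}|\geq R_0$, since $\Delta|u|^2/2=|\na u|^2+\va^{-2}u\cdot Df(u)\geq0$. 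Either route gives $\|u_{\va}\|_\infty\leq R$.

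\textbf{Density bound.} For (2), if $r$ is large then $\om_r(x)=\om$ and $\theta^{\om}_{\va}\leq E_{\va}(u_{\va},\om)/r$, which is handled by (1). For small $r$, the trivial bound $\theta\leq E/r$ is insufficient, so we use monotonicity.
\begin{itemize}
\item For $x\in\pa\om$, apply Proposition \ref{MonotoneBoundary}(1) at scale $2r_0$, with $r_0$ comparable to $r_{\om,2}$. The term $\al$ is controlled only when $r\|\na_\top g\|_\infty\leq M_0$. This fails near $\cA$ for scales $r\gg\dist(x,\cA)$, so we need a separate treatment there.
\item Away from $\cA$, (\ref{gleqM0}) holds with scale comparable to $\max(\dist(x,\cA),\va)$, giving $\theta(r)\leq\theta(r_0)+C\leq C|\log\va|$.
\item Near $a_i$, we instead compare directly with the local competitor: on $\om_r(a_i)$, minimality plus the explicit construction above (a defect segment from $a_i$ cut off at scale $r$) gives $E_{\va}(u_{\va},\om_r(a_i))\leq Cr(|\log\va|+1)$. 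The same competitor argument works for any $x$, and is the cleanest uniform route.
\item For interior points, we use Proposition \ref{MonotoneInterior} up to $\dist(x,\pa\om)$, then pass to a boundary ball of comparable radius.
\end{itemize}
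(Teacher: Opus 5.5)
You have a genuine gap in part (2), at the points of $\cA$, which is where the whole difficulty lies. You correctly note that Proposition~\ref{MonotoneBoundary} is unavailable at $a_i$. Your replacement, ``minimality plus the explicit construction cut off at scale $r$'', does not work.

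\textbf{Why the competitor argument fails.} Any competitor on $\om_r(a_i)$ must coincide with $u_\va$ on the interior interface $\om\cap\pa B_r(a_i)$. There $u_\va$ is unknown, need not be close to $\cN$, and may carry arbitrary topology, whereas your construction is built from $g$ alone. Interpolating to $u_\va$ (by a cone extension or a Luckhaus-type shell) costs a multiple of $r\,E_\va(u_\va,\om\cap\pa B_r(a_i))$. So for $F(r):=E_\va(u_\va,\om_r(a_i))$ you only get
\[
F(r)\leq Cr\bigl(F'(r)+E_\va(g_\va,T_r^{\om}(a_i))\bigr).
\]
\begin{enumerate}
\item With $C>1$ this gives just $F(s)\lesssim (s/t)^{1/C}F(t)$, which does not bound $F(s)/s$ as $s\to0$.
\item Even with $C=1$, the term $E_\va(g_\va,T_r^{\om}(a_i))\sim\log(r/\va)$ integrates against $\ud r/r$ to order $|\log\va|^2$.
\end{enumerate}
Such a comparison closes only if the $0$-homogeneous extension from $a_i$ reproduces $g_\va$ on $\pa\om$ up to lower-order errors. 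The same objection applies to your claim that this ``works for any $x$''. Moreover, for boundary points away from $\cA$, Proposition~\ref{MonotoneBoundary} only applies up to scales comparable to $\dist(x,\cA)$, not up to a fixed $r_0$. So that step needs, as input, the bound centred at $a_i$ at that scale. Everything therefore reduces to the missing estimate at $\cA$.

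\textbf{The missing idea: condition~\ref{boundarycondition4}.} You use it only for the energy bound, where it is not needed. The paper extends $g_\va$ $0$-homogeneously from a star centre of $\om$ and gets $E_\va(u_\va,\om)\leq C\diam(\om)E_\va(g_\va,\pa\om)\leq C(|\log\va|+1)$ from conditions (1)--(2) alone. Your segments $[a_i,p]$ are exactly what this radial extension produces, so the compatibility at $p$ and the matching at $a_i$ come for free.

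For the density, the paper keeps the boundary monotonicity identity centred at $a_i$ even though $g_\va\notin\cN$ there. The only dangerous term is $\rho^{-2}\int_{\pa\om}(\tau\cdot\na u_\va)\cdot\pa_\nu u_\va\,\phi_\rho$, with $\tau$ the tangential part of $y$. By condition~\ref{boundarycondition4}, $g_\va$ is $0$-homogeneous about $a_i$ outside an $\va$-core, up to $C^{1,1}$ corrections. Hence
\[
|\tau\cdot\na u_\va|\leq C\rho\va^{-1}\chi_{\{\dist(\cdot,\cA)<\va\}}+C\rho^2|\na_\top g_\va|\quad\text{on }\pa\om.
\]
Combine this with two further bounds:
\begin{enumerate}
\item $\int_{\pa\om}|\pa_\nu u_\va|^2\phi_{r_0/9}\leq C(|\log\va|+1)$, obtained by testing the stress tensor against a field equal to $\nu$ on $\pa\om$;
\item $|\na u_\va|\leq C\va^{-1}$ in the core, from Lemma~\ref{AprioriBoundary}.
\end{enumerate}
Together they give $\frac{\ud}{\ud\rho}\bigl(\vt_\va^{\om}(u_\va;a_i,\rho)+C\log\rho\bigr)\geq-C(|\log\va|+1)$ on $(\va,\frac{r_0}{10})$, which integrates to the claim. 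Radii below $\va$ follow from the gradient bounds.

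Under condition~\ref{boundarycondition1} alone you only have $|\tau\cdot\na g_\va|\leq C$. The naive Cauchy--Schwarz bound on the cross term then integrates to order $|\log\va|^{3/2}$, so near-homogeneity is essential.

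Your $L^\infty$ truncation is fine. To conclude for $u_\va$ itself rather than its retraction, note that equality of energies forces $\na u_\va=0$ a.e.\ on $\{|u_\va|>R\}$, hence $(|u_\va|-R)^+\equiv0$.
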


\begin{proof}
Since $\om$ is strongly convex, it is strictly star-shaped. Thus, there exist
$x_0\in\om$ and $c_1(\om)>0$ such that $ (x-x_0)\cdot\nu(x)\geq c_1(\om) $ for any $ x\in\pa\om $, where $\nu$ denotes the unit outer normal to $\pa\om$. Using an argument analogous to that of \cite[Lemma~25]{Can17},
\be
E_{\va}(u_{\va},\om)
\leq
C\diam(\om)E_{\va}(g_{\va},\pa\om)
\leq
C(|\log\va|+1),
\label{Euvalogleq}
\ee
where $C=C(\cA,f,M_0,\cN,\om,\rho_1)>0$. Applying
\cite[Lemma~8.3]{Lam14} together with Assumption \ref{A3}, we obtain
$\|u_{\va}\|_{L^\infty(\om)}\leq C$. 

It remains to prove the density bound. The densities $\vt_{\va}^{\om}$ and
$\theta_{\va}^{\om}$ are comparable, up to universal constants and a harmless
change of radius, by \eqref{phiproperty1}. We therefore work with
$\vt_{\va}^{\om}$.

We first consider points away from the singular boundary set $\cA$. On compact
subsets of $\ol{\om}\backslash\cA$, the boundary data $g_{\va}$ take values in
$\cN$ and satisfy the uniform tangential gradient bounds required by Proposition
\ref{MonotoneBoundary}. Combining the global energy estimate \eqref{Euvalogleq}
with Propositions \ref{MonotoneInterior} and \ref{MonotoneBoundary}, and then
using a finite covering argument, we obtain
\begin{equation}
\vt_{\va}^{\om}(u_{\va};x,\rho)
\leq C(|\log\va|+1)
\label{x0notincA}
\end{equation}
whenever $x$ stays in a compact fixed subset of $\ol{\om}\backslash\cA$ and
$\rho$ is smaller than the corresponding covering radius.

It remains to treat neighborhoods of the points in $\cA$. Fix $x_0\in\cA$ and
choose $r_0>0$ so that
\[
B_{10r_0}(x_0)\cap\cA=\{x_0\}.
\]
After translation and rotation, we may assume that $x_0=0$ and that the tangent
plane to $\pa\om$ at $0$ is horizontal. Since $g_{\va}$ need not take values in
$\cN$ near $x_0$, Proposition \ref{MonotoneBoundary} cannot be applied directly.
Nevertheless, using the notation of its proof and repeating the calculations
leading to \eqref{dvtva1} and \eqref{Tijidentity}, we obtain, for any
$\rho\in(0,\f{r_0}{2})$,
\begin{equation}
\begin{aligned}
\frac{\ud}{\ud\rho}\vt_{\va}^{\om}(u_{\va};0,\rho)
&\geq
\frac{1}{\rho^2}\int_{\pa\om}
(\tau\cdot\na u_{\va})\cdot(\pa_{\nu}u_{\va})
\phi_{\rho}\ud\HH^2+\frac{1}{2\rho^2}\int_{\pa\om}
|\pa_{\nu}u_{\va}|^2(y\cdot\nu)\phi_{\rho}\ud\HH^2 \\
&\quad-\frac{1}{2\rho^2}\int_{\pa\om}
(|\na u_{\va}|^2-|\pa_{\nu}u_{\va}|^2)
(y\cdot\nu)\phi_{\rho}\ud\HH^2
-\frac{1}{\va^2\rho^2}\int_{\pa\om}
f(g_{\va})(y\cdot\nu)\phi_{\rho}\ud\HH^2,
\end{aligned}
\label{dvtva_gva}
\end{equation}
where $\tau(y):=y-(y\cdot\nu)\nu$ is the tangential component of $y$ along
$\pa\om$.

Using Definition \ref{defnsuitabledata}, the geometric estimate
\[
|y\cdot\nu|\leq C\rho^2
\quad\text{on }\pa\om\cap B_{10\rho},
\]
and the identity
$|\na u_{\va}|^2-|\pa_{\nu}u_{\va}|^2=|\na_{\top}g_{\va}|^2$ on $\pa\om$, we
deduce from \eqref{dvtva_gva} that
\begin{equation}
\begin{aligned}
\frac{\ud}{\ud\rho}\vt_{\va}^{\om}(u_{\va};0,\rho)
&\geq
\frac{1}{\rho^2}\int_{\pa\om}
(\tau\cdot\na u_{\va})\cdot(\pa_{\nu}u_{\va})
\phi_{\rho}\ud\HH^2 \\
&\quad
-C\int_{\pa\om}|\pa_{\nu}u_{\va}|^2\phi_{\rho}\ud\HH^2
-\int_{\pa\om}|\na_{\top}g_{\va}|^2\phi_{\rho}\ud\HH^2
-\frac{C}{\rho}.
\end{aligned}
\label{dvtva_est1}
\end{equation}
Since $\phi_{\rho}\leq C\phi_{\f{r_0}{9}}$ for $\rho\in(0,\f{r_0}{10})$, and since
$E_{\va}(g_{\va},\pa\om)\leq C(|\log\va|+1)$ by Definition
\ref{defnsuitabledata}, an estimate analogous to \eqref{similarusefor}, with
$\phi_{\f{r_0}{9}}$ in place of $\phi_{\f{r}{9}}$, gives
\[
\int_{\pa\om}|\pa_{\nu}u_{\va}|^2\phi_{\f{r_0}{9}}\ud\HH^2
\leq
C\bigl(E_{\va}(g_{\va},\pa\om)
+\theta_{\va}^{\om}(u_{\va};0,2r_0)\bigr)
\leq
C(|\log\va|+1).
\]
Substituting this estimate into \eqref{dvtva_est1}, we obtain
\begin{equation}
\frac{\ud}{\ud\rho}\vt_{\va}^{\om}(u_{\va};0,\rho)
\geq
\frac{1}{\rho^2}\int_{\pa\om}
(\tau\cdot\na u_{\va})\cdot(\pa_{\nu}u_{\va})
\phi_{\rho}\ud\HH^2
-C(|\log\va|+1)-\frac{C}{\rho}.
\label{combinede}
\end{equation}

We now estimate the cross term. By the fourth property in Definition
\ref{defnsuitabledata} and boundary parameterization
\cite[Formulae~(II.28)--(II.29)]{LR99},
\[
|\tau(y)\cdot\na u_{\va}|
\leq
\left|\left(y_i\frac{\pa g_{\va}^0}{\pa y_i}\right)\circ\Phi_0\right|
+C\rho^2|\na_{\top}g_{\va}|
\leq
\frac{C\rho}{\va}\chi_{\cA_{\va}}
+C\rho^2|\na_{\top}g_{\va}|,
\]
where
\[
\cA_{\va}:=\{y\in\pa\om:\dist(y,\cA)<\va\}.
\]
Using this estimate, the gradient bounds in Lemmas \ref{Apriori} and
\ref{AprioriBoundary}, and the preceding normal derivative estimate, we find
that the part supported in $\cA_{\va}$ is bounded below by $-C\rho^{-1}$, while the
remaining part is bounded below by $-C(|\log\va|+1)$. Hence
\[
\frac{\ud}{\ud\rho}
\bigl(\vt_{\va}^{\om}(u_{\va};0,\rho)+C\log\rho\bigr)
\geq
-C(|\log\va|+1)
\quad\text{for any }\rho\in\(\va,\f{r_0}{10}\).
\]
Integrating from $\rho$ to $\f{r_0}{10}$ and using \eqref{Euvalogleq}, we obtain
\[
\vt_{\va}^{\om}(u_{\va};0,\rho)
\leq
C(|\log\va|+1)
\quad\text{for any }\rho\in\(\va,\f{r_0}{10}\).
\]
For $\rho\in(0,\va)$, the same bound follows from the local gradient estimates
in Lemmas \ref{Apriori} and \ref{AprioriBoundary}, together with the uniform
$L^\infty$ bound for $u_{\va}$. Combining this estimate near each point of
$\cA$ with \eqref{x0notincA} and the global energy bound \eqref{Euvalogleq}
proves the asserted density estimate.
\end{proof}

\subsection{Lower bound}

The next lemma gives the two-dimensional lower bound used to detect the energy signature of a topological defect line on a small disk orthogonal to the defect.

\begin{lem}\label{LowerBound}
Let $\va\in(0,\f{r}{2})$ and $u\in H^1(B_r^2,\R^m)$. Assume that
$g=u|_{\pa B_r^2}\in H^1(\pa B_r^2,\R^m)$ and that
$\dist(g,\cN)<\delta_{\cN}$ on $\pa B_r^2$. Then
$\sg:=[\Pi_{\cN}\circ g]_{\cN}$ is well-defined and
\[
E_{\va}(u,B_r^2)+CrE_{\va}(g,\pa B_r^2)
\geq
|\sg|_*\log\f{r}{\va}-C,
\]
where $C>0$ depends only on $f$ and $\cN$.
\end{lem}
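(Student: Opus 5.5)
The lower bound in \cite{MRS21} (their Theorem on the singular energy, combined with Remark \ref{remequvalence}) is stated for maps whose boundary trace takes values in $\cN$. The plan is therefore to reduce to that situation by modifying $u$ in a thin collar near $\pa B_r^2$, paying an error controlled by $rE_{\va}(g,\pa B_r^2)$.

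\textbf{Step 1: well-definedness of $\sg$.} Since $g\in H^1(\pa B_r^2)$ is continuous and $\dist(g,\cN)<\delta_{\cN}$, the composition $\Pi_{\cN}\circ g$ is a continuous loop in $\cN$. Hence $\sg=[\Pi_{\cN}\circ g]_{\cN}$ is well-defined.

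\textbf{Step 2: scaling.} By scaling $x\mapsto x/r$, we may assume $r=1$ and $\va<\tfrac12$. The quantity $E_{\va}(u,B_r^2)$ is scale invariant in two dimensions once $\va$ is replaced by $\va/r$. Likewise $rE_{\va}(g,\pa B_r^2)$ becomes $E_{\va/r}(g,\pa B_1^2)$.

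\textbf{Step 3: the collar construction.} We build $\tilde u$ on the enlarged disk $B_{1+\va}^2$ by setting $\tilde u=u$ on $B_1^2$. On the annulus $A=B_{1+\va}^2\setminus B_1^2$ we use the radial interpolation
\[
\tilde u(s\theta)=\Pi_{\cN}(g(\theta))+\bigl(1-\tfrac{s-1}{\va}\bigr)\bigl(g(\theta)-\Pi_{\cN}(g(\theta))\bigr),\qquad s\in[1,1+\va].
\]
This gives $\tilde u=\Pi_{\cN}\circ g$ on $\pa B_{1+\va}^2$, which is $\cN$-valued and lies in the class $\sg$.

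The energy on $A$ is estimated as follows.
\begin{itemize}
\item The tangential part is $\lesssim \va\int_{\pa B_1}|\na_{\top}g|^2$, since $\Pi_{\cN}$ is smooth on $\cN_{\delta_{\cN}}$.
\item The radial part is $\va^{-1}\int_{\pa B_1}\dist(g,\cN)^2\lesssim \va^{-1}\int f(g)$. This uses \eqref{mfMfestimates} if $\dist(g,\cN)<\delta_f$. Otherwise $f(g)$ is bounded below on $\{\delta_f\le\dist\le\delta_{\cN}\}$ by compactness and \ref{A2}, so the bound still holds with a worse constant.
\item The potential part is $\va^{-2}\cdot\va\int f(g)$, by \eqref{fBconvex}, or by boundedness of $f$ on the compact set $\ol{\cN_{\delta_{\cN}}}$.
\end{itemize}
Altogether
\[
E_{\va}(\tilde u,A)\le C E_{\va}(g,\pa B_1^2),
\]
with $C=C(f,\cN)$. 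Here $\va\le1$ absorbs the extra factors of $\va$, since $E_\va(g)$ contains $\va^{-2}\int f$.

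\textbf{Step 4: applying the lower bound.} We invoke the lower bound of \cite{MRS21} for maps with $\cN$-valued boundary data in the class $\sg$:
\[
E_{\va}(\tilde u,B_{1+\va}^2)\ge |\sg|_*\log\tfrac{1+\va}{\va}-C \ge |\sg|_*\log\tfrac1\va - C.
\]
This inequality is equivalent to the Jerrard--Sandier ball-construction bound, with $\cE^{\op{sg}}=|\cdot|_*$ by Remark \ref{remequvalence}. Subtracting the collar energy from Step 3 gives the claimed inequality.

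\textbf{Main obstacle.} The delicate point is importing the \cite{MRS21} lower bound with constants depending only on $f$ and $\cN$ and uniform in the boundary map. If their statement requires boundary data with bounded energy, we would instead argue directly: run the ball construction on the set $\{\dist(u,\cN)\ge\delta\}$, use the coarea formula on circles, and apply on each circle the one-dimensional bound $\tfrac12\int|\pa_\theta u|^2\ge E_{\min}(\text{class})/s$. Merging balls then accumulates classes via $+$, and the triangle inequality for $|\cdot|_*$ recovers $|\sg|_*\log(1/\va)$ at the outer circle.
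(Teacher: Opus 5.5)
The gap is in Step~4. The inequality you invoke there, $E_{\va}(\wt{u},B_{1+\va}^2)\geq|\sg|_*\log\f{1+\va}{\va}-C$ with $C=C(f,\cN)$, is false for general $\cN$-valued traces in the class $\sg$. No version of \cite{MRS21} can supply it.

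Here is a counterexample. Take any non-trivial $\sg$ and a loop $g:\pa B_1^2\to\cN$ in $\sg$ that equals a constant $q_0\in\cN$ except on an arc of length $\va$ centered at $p\in\pa B_1^2$. On that arc it runs once through a fixed representative of $\sg$. Set $u\equiv q_0$ on $B_1^2\backslash B_{2\va}(p)$. Fill $B_1^2\cap B_{2\va}(p)$ with an $\va$-rescaled copy (after a uniformly bi-Lipschitz straightening) of a fixed bounded $H^1$ extension into $\R^m$; harmonic will do, so $u$ stays in the convex hull of $\cN$. The Dirichlet part is scale invariant and the potential part is at most $C\va^{-2}\cdot\va^2$. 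Hence $E_{\va}(u,B_1^2)\leq C$, whereas $|\sg|_*\log\f{1}{\va}\geq c_0\log\f{1}{\va}\to+\ift$ by \eqref{c0geq}.

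Since this $g$ is already $\cN$-valued, your collar map is just $g(\theta)$ on $A$, so Step~4 asserts exactly this false bound. The lemma itself survives only because $\int_{\pa B_1^2}|\na_{\top}g|^2\sim\va^{-1}$ in this example. So the term $CrE_{\va}(g,\pa B_r^2)$ is not a by-product of making the trace $\cN$-valued. It is what pays for defects that sit within distance of order $\va$ from the boundary and exit through it.

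Your fallback has the same hole. A ball construction inside $B_1^2$ must stop once the balls reach $\pa B_1^2$. On its own it only gives $|\sg|_*\log(d/\va)-C$, where $d$ is the distance of the bad set to the boundary, and you do not say how the boundary energy controls $d$.

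The argument can be repaired by importing a lower bound that keeps a boundary term, namely
\[
E_{\va}(\wt{u},B_\rho^2)+C\rho\int_{\pa B_\rho^2}|\na_{\top}\wt{u}|^2\ud\HH^1\geq|\sg|_*\log\f{\rho}{\va}-C
\]
for $\cN$-valued traces. Your Step~3 then closes the proof, because the trace $\Pi_{\cN}\circ g$ on $\pa B_{1+\va}^2$ has tangential Dirichlet energy at most $C\int_{\pa B_1^2}|\na_{\top}g|^2$.

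However, that input already has the form of the lemma. The paper proceeds directly: it rescales to $B_1^2$ with $\va/r$ in place of $\va$ and quotes \cite[Corollary~6.8]{MRS21} together with Remark~\ref{remequvalence}. That yields the statement as written, including the boundary term and the near-$\cN$ trace, so no collar is needed.
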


\begin{proof}
After rescaling $B_r^2$ to $B_1^2$ and replacing $\va$ by $\f{\va}{r}$, the result
follows from \cite[Corollary~6.8]{MRS21} and Remark \ref{remequvalence}.
\end{proof}

\subsection{Upper bound}

We also need a matching upper construction on two-dimensional disks. The next
lemma provides this competitor with the optimal logarithmic leading order.

\begin{lem}\label{upperboundleast}
Let $r>0$. There exists a constant $C>0$, depending only on $f$ and $\cN$, such
that the following holds. For any $\va\in(0,r)$ and any
$g\in H^1(\pa B_r^2,\cN)$, there exists
$u_{\va}\in H^1(B_r^2,\R^m)$ with $u_{\va}|_{\pa B_r^2}=g$ and
\be
E_{\va}(u_{\va},B_r^2)
\leq
|\sg|_*\log\f{r}{\va}
+C\bigl(r\|\na_{\top}g\|_{L^2(\pa B_r^2)}^2+1\bigr),
\label{upperboundleast1}
\ee
where $\sg:=[g]_{\cN}\in[\Ss^1,\cN]$.
\end{lem}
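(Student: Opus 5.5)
Scaling first reduces everything to $r=1$. Put $v(x):=u(rx)$ and $\tilde g(\theta):=g(r\theta)$. Then $E_{\va}(u,B_r^2)=E_{\va/r}(v,B_1^2)$, since the Dirichlet part is invariant in two dimensions and $\va^{-2}f$ integrated over $B_r^2$ picks up $r^2/\va^2=(\va/r)^{-2}$. Moreover $r\|\na_{\top}g\|^2_{L^2(\pa B_r^2)}=\|\na_{\top}\tilde g\|^2_{L^2(\pa B_1^2)}$. It is therefore enough to treat $r=1$ and $\va\in(0,1)$, and to prove $E_{\va}(u_\va,B_1^2)\le|\sg|_*\log\f1\va+C(\|\na_\top g\|^2_{L^2}+1)$.

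The first step is to realise the norm $|\sg|_*$. Since $[\Ss^1,\cN]$ is finite by \ref{A1}, the infimum in Definition~\ref{DefinitionEsg} is attained. So there are classes $\sg_1,\dots,\sg_k$ with $\sg\in\sum_i\sg_i$ and $\sum_iE_{\min}(\sg_i)=|\sg|_*$; we may discard trivial classes, so that $k\le|\sg|_*/c_0$. Choose minimizing geodesic loops $\gamma_i$ in $\sg_i$. Only finitely many classes exist, so $k$, the loops, and every constant produced below can be fixed once and for all depending on $\cN$. By the definition of the multivalued sum (iterating Definition~\ref{Definitionplus}), we can fix $k$ disjoint small disks $B_{\eta}^2(x_i)\subset B_{1/2}^2$ and a map $G\in C^\infty(\ol B_{1/2}^2\setminus\bigcup_iB_\eta^2(x_i),\cN)$ with $G|_{\pa B_\eta^2(x_i)}=\gamma_i(\cdot-x_i)$ and $[G|_{\pa B^2_{1/2}}]_\cN=\sg$. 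Here $\eta$ and $G$ are fixed, so $\int|\na G|^2\le C(\cN)$; smoothness can be arranged by homotopy and smoothing, since the construction only needs to be done for finitely many classes.

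Next, inside each $B^2_\eta(x_i)$ we put the vortex profile. On the annulus $\va<|x-x_i|<\eta$ set $u=\gamma_i(\f{x-x_i}{|x-x_i|})$, whose energy is $E_{\min}(\sg_i)\log\f\eta\va$. On $B_\va^2(x_i)$ set $u=\f{|x-x_i|}{\va}\gamma_i(\f{x-x_i}{|x-x_i|})$ (radial interpolation to $0$), which costs $O(1)$ because $f$ is bounded on a fixed compact set and $|\na u|\le C/\va$. Summing over $i$ gives $|\sg|_*\log\f1\va+C$.

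It remains to connect $G|_{\pa B_{1/2}^2}$ to $g$ on the annulus $A=B_1^2\setminus B_{1/2}^2$ with $\cN$-valued maps, at cost at most $C(\|\na_\top g\|_{L^2}^2+1)$. Both loops lie in the free class $\sg$. I expect this to be the main obstacle, because the estimate must be linear in the energy of $g$ even though $g$ may wind a great deal. My first attempt would be to compare the two loops after first connecting $g$ to a fixed representative $\ell_\sg$ with $[\ell_\sg]_\cN=\sg$. Take $\ol g$ to be $g$ glued to a reversed copy of $\ell_\sg$, i.e.\ the boundary datum on an annulus viewed as a disk with a hole carrying the class $0\in\sg+(-\sg)$. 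Then apply Lemma~\ref{ExtensionLemma1} after cutting the annulus along a radius: on the boundary of the resulting topological disk (smoothed to a round disk by a bi-Lipschitz map with fixed constants) the datum consists of $g$, a segment, $\ell_\sg$ reversed, and the segment back. The segment is chosen as a path of bounded length connecting $g(1)$ to $\ell_\sg(1)$; since $\cN$ is compact and connected, such a path of length at most $\diam$ exists, and it can be taken so that the concatenated loop is null-homotopic. The latter choice is possible because $g\simeq\ell_\sg$ freely, which means some path $p$ satisfies $g\simeq p\,\ell_\sg\, p^{-1}$, and $p$ may be replaced by a path of bounded length in the same homotopy class rel endpoints, using finiteness of $\pi_1$ and compactness. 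Lemma~\ref{ExtensionLemma1} then gives an extension whose Dirichlet energy is bounded by $C(\|\na_\top g\|_{L^2}^2+\|\ell_\sg'\|^2+|p|^2)\le C(\|\na_\top g\|^2_{L^2}+1)$. The annulus $B_{1/2}^2\setminus B^2_{1/4}$, on which $\ell_\sg$ is joined to $G|_{\pa B_{1/2}}$, is handled in the same way with fixed data, costing $C(\cN)$ (in the construction above $G$ is placed inside $B^2_{1/4}$). Since all these pieces are $\cN$-valued, $f$ vanishes on them, so adding them up yields \eqref{upperboundleast1}.
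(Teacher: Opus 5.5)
Your construction is correct. The paper gives no argument of its own here; it simply invokes \cite[Lemma~2.12]{GFW26}. Your proposal is therefore best read as a self-contained proof of the cited result, built only from tools already in the paper:
\begin{enumerate}
\item an optimal splitting $\sg\in\sg_1+\dots+\sg_k$ attaining $|\sg|_*$, which exists because $[\Ss^1,\cN]$ is finite;
\item a fixed $\cN$-valued template on a disk with $k$ holes;
\item homogeneous vortex annuli contributing exactly $E_{\min}(\sg_i)\log\f{\eta}{\va}$, so the leading coefficient is exactly $|\sg|_*$ with no $o(1)$ loss;
\item cores of bounded cost;
\item an $\cN$-valued interpolation on $B_1^2\backslash B_{1/2}^2$, obtained by slitting the annulus into a rectangle and applying Lemma~\ref{ExtensionLemma1}.
\end{enumerate}
You correctly identify the last step as the one where linearity in $\|\na_\top g\|_{L^2}^2$ is at stake. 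It works because Lemma~\ref{ExtensionLemma1} is itself linear, while the extra data on the slit rectangle (the fixed loop $\ell_\sg$ and the two copies of $p$) have energy bounded in terms of $\cN$ alone. Compared with the bare citation, your version makes two things visible. First, $C$ depends only on $\cN$ and on $f$, the latter through $\sup f$ on the cone over $\cN$. Second, \ref{A1} enters only through the uniformity of finitely many templates and through the bounded conjugating path.

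Two details should be tightened.

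\emph{Length of the slit path.} The path $p$ cannot in general be taken of length at most $\diam(\cN)$: a minimizing geodesic from $g(1)$ to $\ell_\sg(1)$ may lie in the wrong relative homotopy class. The correct bound is $\diam(\wt{\cN})$, where the universal cover $\wt{\cN}$ is compact because $\pi_1(\cN)$ is finite. Lift $p$, and replace it by the projection of a minimizing geodesic between the lifted endpoints. Your follow-up justification already does exactly this, so only the first sentence is off.

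\emph{The regime $\va\geq\eta$.} The vortex annuli require $\va<\eta$. For $\va\in[\eta,1)$, just interpolate radially on all of $B_\eta^2(x_i)$, at cost $C(1+\eta^2\va^{-2})\leq C$.
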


\begin{proof}
This follows from \cite[Lemma~2.12]{GFW26}.
\end{proof}

The following notation will be used for cylindrical neighborhoods of line
segments. Given positive numbers $L$ and $r$, set
\be
\Lda_{r,L}:=B_r^2\times(-L,L),
\quad
\Ga_{r,L}:=\pa B_r^2\times(-L,L).
\label{LdarLGarL}
\ee
For $u\in H^1(\Ga_{r,L},\cN)$, the analysis in \cite[Section~2.2]{Can17}
implies that, for a.e. $t\in(-L,L)$, the slice $u(\cdot,t)$ belongs to
$C^0(\pa B_r^2,\cN)$, and the homotopy class of this slice is independent of
$t$ on a full-measure set. We denote this common class by $[u]_{\cN}$.

The next lemma extends the disk construction in Lemma \ref{upperboundleast} to
cylinders.

\begin{lem}\label{cylinderex}
There exists a constant $C>0$, depending only on $f$ and $\cN$, such that the
following holds. For any $\va\in(0,r)$ and any
$g\in H^1(\Ga_{r,L},\cN)$ with $\sg=[g]_{\cN}\in[\Ss^1,\cN]$, there exists
$u_{\va}\in H^1(\Lda_{r,L},\R^m)$ that satisfy $u_{\va}=g$ on $\Ga_{r,L}$ such
that
\be
E_{\va}(u_{\va},\Lda_{r,L})
\leq
CL\left(\f{L}{r}+\f{r}{L}\right)
\|\na_{\top}g\|_{L^2(\Ga_{r,L})}^2
+2|\sg|_*L\log\f{r}{\va}
+CL,
\label{cyl:bulk}
\ee
and, for each $z\in\{\pm L\}$,
\be
E_{\va}(u_{\va},B_r^2\times\{z\})
\leq
C\left(\f{L}{r}+\f{r}{L}\right)
\|\na_{\top}g\|_{L^2(\Ga_{r,L})}^2
+|\sg|_*\log\f{r}{\va}
+C.
\label{cyl:slice}
\ee
\end{lem}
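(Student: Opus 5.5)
The plan is to build $u_{\va}$ slice by slice. The bulk of the cylinder is filled with the two-dimensional optimal competitor of Lemma~\ref{upperboundleast}, and the lateral datum $g$ is matched by a Lipschitz interpolation in the axial variable.

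\emph{Normalization.} By scaling $x\mapsto x/r$, which replaces $\va$ by $\va/r$ and $L$ by $L/r$, we may assume $r=1$. One checks that all terms in \eqref{cyl:bulk}--\eqref{cyl:slice} scale consistently: the Dirichlet part of $E_\va$ on a 3D set scales like length, and on a slice it is scale invariant.

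\emph{Choice of a good slice.} Set $e(t):=\|\na_{\top}g(\cdot,t)\|_{L^2(\pa B_1^2)}^2$, so that $\int_{-L}^L e\,\ud t\leq \|\na_\top g\|^2_{L^2(\Ga_{1,L})}=:G$. By averaging, there is $t_0\in(-L,L)$ with $e(t_0)\leq G/L$ and $[g(\cdot,t_0)]_\cN=\sg$. Lemma~\ref{upperboundleast} applied to $g(\cdot,t_0)$ gives $w_\va\in H^1(B_1^2,\R^m)$ with
\[
E_\va(w_\va,B_1^2)\leq|\sg|_*\log\tfrac1\va+C(G/L+1).
\]

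\emph{Transport to all heights.} Define $u_\va$ in polar-cylindrical coordinates $(\rho,\theta,t)$. On the core $\{\rho<1/2\}$, put $u_\va(\rho,\theta,t):=w_\va^{(1/2)}(\rho,\theta)$, where $w_\va^{(1/2)}$ is the competitor built on $B^2_{1/2}$ (with boundary datum $g(\cdot,t_0)$ placed on $\pa B^2_{1/2}$). On the annulus $\{1/2<\rho<1\}$, put
\[
u_\va(\rho,\theta,t):=g\bigl(\theta,\;t_0+(2\rho-1)(t-t_0)\bigr),
\]
a conical interpolation that equals $g(\cdot,t_0)$ at $\rho=1/2$ and $g(\cdot,t)$ at $\rho=1$. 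This map is $\cN$-valued in the annulus, so $f=0$ there. Its gradient satisfies
\[
|\na u_\va|\lesssim |\na_\theta g|+(|t-t_0|+1)|\pa_t g|,
\]
evaluated at the shifted height. The change of variables $s=t_0+(2\rho-1)(t-t_0)$ has Jacobian $\sim 1/(2\rho-1)$ in $t$. The danger is the degenerate region near $\rho=1/2$. To handle it, I would instead interpolate on $\rho\in(1/2,1)$ using the variable $s$ directly and integrate in $(\rho,s)$. With $L$ possibly large compared to $r=1$, this yields a bound of the form $C(L+1/L)\,G$, which after unscaling is the factor $L(L/r+r/L)$ in \eqref{cyl:bulk}. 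The core contributes $2L\bigl(|\sg|_*\log\frac1\va+C(G/L+1)\bigr)$, which gives the $2|\sg|_*L\log$ term. The $\log 2$ loss from radius $1/2$ is absorbed in $CL$. For the slices $t=\pm L$, the same computation restricted to one height gives \eqref{cyl:slice}: the core contributes $|\sg|_*\log\frac1\va+C(G/L+1)$, and the annulus contributes at most $C(L+1/L)G$ through the tangential bound at the interpolated heights.

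\textbf{Main obstacle.} The main obstacle is controlling the annular interpolation at an individual slice. The slice energy at $t=\pm L$ involves $g$ along a curve $s\mapsto(\theta,s)$ in $\Ga$, and a pointwise-in-$t$ control is not directly given by $G$. I expect to fix this by choosing $t_0$ more carefully, or by replacing the linear reparametrization with an averaging over $t_0$ (a Fubini/Luckhaus-type argument as in \cite{Can17}). This makes the traces on $\pm L$ involve only integrated quantities bounded by $C(L+1/L)G$. Failing that, I would build $u_\va$ on the two half-cylinders separately from their end slices, with a short matching layer near $t_0$.
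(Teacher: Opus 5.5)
Your architecture matches the paper's: a good slice $t_0$, a $t$-independent core on $B^2_{1/2}$ from Lemma~\ref{upperboundleast}, and an $\cN$-valued interpolation on the annulus. However, the annular map you chose does not give the bulk bound, and the real difficulty is not where you put it. Write $\lambda:=2\rho-1$ and $s=t_0+\lambda(t-t_0)$, and keep your $e(s)=\|\pa_\theta g(\cdot,s)\|^2_{L^2(\Ss^1)}$. Since $\ud t=\ud s/\lambda$, the angular part of the annular Dirichlet integral is
\[
\int_{1/2}^{1}\f{1}{\rho}\int_{-L}^{L}e\bigl(s(\rho,t)\bigr)\,\ud t\,\ud\rho
=\int_0^1\f{1}{(1+\lambda)\lambda}\int_{t_0-\lambda(L+t_0)}^{t_0+\lambda(L-t_0)}e(s)\,\ud s\,\ud\lambda .
\]
This is comparable to $\int_{-L}^{L}e(s)\log\f{\ell(s)}{|s-t_0|}\,\ud s$, with $\ell(s)=L-t_0$ for $s>t_0$ and $\ell(s)=L+t_0$ for $s<t_0$. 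The weight is logarithmically singular at $t_0$. Choosing $t_0$ only through $e(t_0)\leq G/L$ and the homotopy class does not control it. For instance, superpose on a fixed geodesic loop angular oscillations of frequency $2^k$ and amplitude $\sim 2^{-k/2}/k$ on $\{|s-t_0|\sim 2^{-k}\}$. This gives $g\in H^1(\Ga_{1,L},\cN)$ with $\int|\pa_s g|^2<+\infty$, $g(\cdot,t_0)$ the geodesic, and $e(t_0)\leq G/(2L)$. Yet $e(s)\sim(|s-t_0|\log^2|s-t_0|)^{-1}$, so the integral above diverges. Passing to the variables $(\rho,s)$, as you suggest, is precisely what produces the factor $1/(2\rho-1)$; it does not remove it. (Also, $|\pa_\rho u_\va|=2|t-t_0||\pa_sg|$, so the axial part is of order $L^2\int|\pa_s g|^2$. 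At $r=1$ the bound is $C(L^2+1)G$, which is what \eqref{cyl:bulk} requires there, not $C(L+L^{-1})G$.)

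Two remedies work. First, you can keep the conical map and add to the selection of $t_0$ the averaged condition $\int e(s)\log^+\f{4L}{|s-t_0|}\,\ud s\leq CG$. Chebyshev makes this compatible with the other conditions on $[-L/4,L/4]$, because $\f{2}{L}\int_{-L/4}^{L/4}\log^+\f{4L}{|s-t_0|}\,\ud t_0\leq C$ uniformly in $s$. So the device you reserve for the slices, a more careful Fubini choice of $t_0$, is exactly what repairs the bulk. Second, as the paper does with $t_0=0$, you can use a shear instead of a dilation. Set $\wt u(\rho,\theta,z)=g\bigl(\theta,z-2L\sgn(z)(1-\rho)\bigr)$ for $\rho\geq 1-\f{|z|}{2L}$, and $\wt u=g(\cdot,0)$ on the remaining triangle. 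The map $(\rho,z)\mapsto(\rho,z')$ has unit Jacobian, so the sheared part costs at most $(4L^2+1)G$. The frozen triangle costs at most $(\log 2)L\,e(0)\leq CG$, so only the pointwise bound at the good slice is used.

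Finally, the end slices you single out as the main obstacle are harmless for either map. At $t=\pm L$, the map $\rho\mapsto s$ is affine with speed $2(L\mp t_0)$ and sweeps the whole interval between $t_0$ and $\pm L$. Hence the annular slice energy is at most $C\bigl((L\mp t_0)+(L\mp t_0)^{-1}\bigr)G$, an integrated quantity. What this does need is $t_0$ away from the ends, e.g.\ $|t_0|\leq L/4$ as in the paper. Your $t_0\in(-L,L)$ is unrestricted, and for $t_0$ near $L$ the term $(L-t_0)^{-1}\int_{t_0}^{L}e$ is uncontrolled.
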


\begin{proof}
The proof follows from the argument of \cite[Lemma~29]{Can17}. We give the main
steps for completeness. By scaling, it suffices to consider the case $r=1$.
By Fubini's theorem, there exists $L_0\in[-\f{L}{4},\f{L}{4}]$ such that
\be
\|\na_{\top}g\|_{L^2(\pa B_1^2\times\{L_0\})}^2
\leq
8L^{-1}\|\na_{\top}g\|_{L^2(\Ga_{1,L})}^2
\label{PPL8}
\ee
and $[g(\cdot,L_0)]_{\cN}=[g]_{\cN}$.

For notational simplicity, we present the construction when $L_0=0$. The
general case is obtained by replacing $0$ with $L_0$; since
$L_0\in[-\f{L}{4},\f{L}{4}]$, the constants remain unchanged. We use polar coordinates
$(\rho,\theta,z)\in[0,1]\times\Ss^1\times[-L,L]$ in $\Lda_{1,L}$ and define $ \wt{u}:(\ol{B}_1^2\backslash B_{\f{1}{2}}^2)\times[-L,L]\to\cN $ by
\[
\wt{u}(\rho,\theta,z)=
\begin{cases}
g(1,\theta,z'(\rho,z)),
&\rho\in[\rho_0(z),1],\ |z|\leq L,\\
g(1,\theta,0),
&\rho\in[\tfrac{1}{2},\rho_0(z)),\ |z|\leq L,
\end{cases}
\]
where
\[
z'(\rho,z):=2L\sgn(z)(\rho-1)+z,
\quad
\rho_0(z):=1-\frac{|z|}{2L}.
\]
A direct computation gives
\begin{align*}
\|\na\wt{u}\|_{L^2((B_1^2\backslash B_{\f{1}{2}}^2)\times[0,L])}^2&\leq
(4L^2+1)\int_{\Ss^1}\int_0^L
\bigl(|\pa_zg|^2+|\pa_{\theta}g|^2\bigr)(1,\theta,\xi)
\ud\xi\ud\HH^1(\theta) \\
&\quad
+(\log 2)L\int_{\Ss^1}
|\pa_{\theta}g|^2(1,\theta,0)\ud\HH^1(\theta).
\end{align*}
Combining this with the analogous estimate on $z\in[-L,0]$ and using
\eqref{PPL8}, we obtain
\be
\|\na\wt{u}\|_{L^2((B_1^2\backslash B_{\frac{1}{2}}^2)\times[-L,L])}^2
\leq
CL(L+L^{-1})\|\na_{\top}g\|_{L^2(\Ga_{1,L})}^2.
\label{grad:annulus}
\ee

We now fill the inner cylinder. Applying Lemma \ref{upperboundleast} to the
boundary datum $\theta\mapsto g(1,\theta,0)$ on $\pa B_{\frac{1}{2}}^2$, we find
$\wt{u}_{\va}\in H^1(B_{\frac{1}{2}}^2,\R^m)$ such that
$\wt{u}_{\va}(\frac{1}{2},\theta)=g(1,\theta,0)$ and
\be
E_{\va}(\wt{u}_{\va},B_{\frac{1}{2}}^2)
\leq
\frac{C}{L}\|\na_{\top}g\|_{L^2(\Ga_{1,L})}^2
+|\sg|_*|\log\va|
+C,
\label{inner:energy}
\ee
where $\sg=[g]_{\cN}$. Define
\[
u_{\va}(\rho,\theta,z)=
\begin{cases}
\wt{u}(\rho,\theta,z),
&\rho\in(\tfrac{1}{2},1],\\
\wt{u}_{\va}(\rho,\theta),
&\rho\in(0,\tfrac{1}{2}].
\end{cases}
\]
Then $u_{\va}=g$ on $\Ga_{1,L}$. The estimates \eqref{cyl:bulk} and
\eqref{cyl:slice} follow from \eqref{grad:annulus}, \eqref{inner:energy}, and
the scaling back to radius $r$.
\end{proof}

\subsection{Compactness of minimizers with bounded energy}

We finish this section with a compactness statement for local minimizers whose
energies remain uniformly bounded as $\va\to0^+$.

\begin{prop}\label{interiorcompactnesslem}
Let $U\subset\R^3$ be a bounded domain. Assume that
$\{u_{\va}\}_{\va\in(0,1)}$ is a family of local minimizers of
\eqref{GLfunctional} such that
\be
E_{\va}(u_{\va},U)+\|u_{\va}\|_{L^{\ift}(U)}\leq M.
\label{assumptionbounduniform}
\ee
Then there exist a sequence $\va_i\to0^+$ and a map
$u_*\in H^1(U,\cN)$, which is a local minimizer of the Dirichlet energy
\eqref{Dirichlet}, such that the following properties hold.
\begin{enumerate}[label=$(\theenumi)$]

\item $u_{\va_i}\to u_*$ strongly in $H_{\loc}^1(U,\R^m)$.

\item The singular set of $u_*$, defined by
\[
\sing(u_*):=
\{x\in U:u_*\text{ is not continuous in }B_r(x)
\text{ for any }r>0\},
\]
is locally discrete in $U$, and
$u_{\va_i}\to u_*$ in $C^0(U\backslash\sing(u_*),\R^m)$.

\end{enumerate}
\end{prop}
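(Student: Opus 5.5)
The plan follows the standard compactness scheme for Ginzburg--Landau minimizers with bounded energy (cf.\ \cite{Can17,MZ10,NZ13}), organized around three ingredients: weak compactness, a Luckhaus-type comparison giving strong convergence and minimality, and an $\va$-regularity statement giving uniform convergence away from $\sing(u_*)$.

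\emph{Weak limit and target constraint.} By \eqref{assumptionbounduniform}, $\{u_{\va}\}$ is bounded in $H^1(U,\R^m)\cap L^\infty$. Extracting a subsequence gives $u_{\va_i}\wc u_*$ weakly in $H^1$ and strongly in $L^2$. Since $\int_U f(u_{\va})\le M\va^2\to0$, Fatou's lemma yields $f(u_*)=0$ a.e., so $u_*\in H^1(U,\cN)$.

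\emph{Strong convergence and minimality.} Fix a ball $B_r(x)\subset\subset U$ and any $v\in H^1(B_r(x),\cN)$ with $v=u_*$ on $\pa B_r(x)$. By Fubini and Fatou, choose radii $\rho$ close to $r$ along which $\na_\top u_{\va_i}$ is bounded in $L^2(\pa B_\rho)$ and the potential term $\va_i^{-2}\int_{\pa B_\rho}f(u_{\va_i})$ is bounded. A Luckhaus-type interpolation on a thin annulus $B_\rho\backslash B_{(1-\lambda)\rho}$ connects $u_{\va_i}$ to $u_*$ (and hence to the rescaled $v$) with energy cost $o(1)$ as $i\to\infty$ and then $\lambda\to0$; in the annulus the energy is kept small because $f$ is controlled near $\cN$ via \eqref{fBconvex}. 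Here the interpolant must pass through $\cN_{\delta_f}$, and this is where the Luckhaus argument of Section~\ref{SectionLuc}-type, or its classical version as in \cite{MZ10,NZ13}, is used; this is the main obstacle. The key point is that on good spheres the $H^1$ bound together with $L^2$-closeness yields uniform closeness of $u_{\va_i}$ to $u_*$ on most of the sphere, up to a grid argument. Comparing with the minimality of $u_{\va_i}$ gives
\[
\limsup_i E_{\va_i}(u_{\va_i},B_\rho)\le E_0(v,B_\rho)+o(1).
\]
Taking $v=u_*$ and using lower semicontinuity yields $\|\na u_{\va_i}\|_{L^2}\to\|\na u_*\|_{L^2}$ together with $\va_i^{-2}\int f(u_{\va_i})\to0$ locally, which gives strong $H^1_{\loc}$ convergence. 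The general $v$ then shows that $u_*$ is a local minimizer of $E_0$.

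\emph{Singular set and uniform convergence.} By Schoen--Uhlenbeck theory for minimizing harmonic maps in dimension~$3$, $\sing(u_*)$ is discrete in $U$. For uniform convergence, fix $x\notin\sing(u_*)$. By smoothness of $u_*$, the scaled energy $r^{-1}E_0(u_*,B_r(x))$ is small for small $r$, and strong convergence transfers this smallness to $\theta_{\va_i}(u_{\va_i};x,r)$. Combining the interior monotonicity formula (Proposition~\ref{MonotoneInterior}) with an $\va$-regularity lemma for GL minimizers (small density implies $\dist(u_\va,\cN)$ is small and gives a uniform H\"older bound, obtained by comparison with harmonic replacements as in \cite{CL22,NZ13}, with Lemma~\ref{Apriori} covering scales below $\va$) gives equicontinuity of $u_{\va_i}$ near $x$. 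Arzel\`a--Ascoli then yields $C^0$ convergence on compact subsets of $U\backslash\sing(u_*)$.
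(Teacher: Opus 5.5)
Your proposal is correct and follows the same skeleton as the paper: weak compactness and Fatou give $u_*\in H^1(U,\cN)$, a Luckhaus-type comparison gives strong $H^1_{\loc}$ convergence and minimality, Schoen--Uhlenbeck gives discreteness of $\sing(u_*)$, and $\va$-regularity plus Arzel\`a--Ascoli gives locally uniform convergence; the paper simply cites \cite[Proposition~B.1]{CL22} and \cite[Theorem~1.2]{CL22} for the two middle steps, whereas you unpack them, and the tools you need are in the paper as Proposition~\ref{Luckhaus11} and Proposition~\ref{InteriorPartialRegularity1}. The one detail to tidy is that the rescaled $v$ must actually match $u_*|_{\pa B_\rho}$ on the good sphere: take the good radius $\rho$ slightly larger than $r$ and extend $v$ by $u_*$, or assume $v=u_*$ near $\pa B_r(x)$.
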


\begin{proof}
The uniform bound \eqref{assumptionbounduniform} implies that there exist a
subsequence $\va_i\to0^+$ and a map $u_*\in H^1(U,\R^m)$ such that
$u_{\va_i}\wc u_*$ weakly in $H^1(U,\R^m)$. After passing to a further
subsequence, we may also assume that $u_{\va_i}\to u_*$ a.e. in $U$. Since
$f\geq0$, Fatou's lemma gives
\[
0\leq\int_U f(u_*)\leq\liminf_{i\to+\ift}\int_U f(u_{\va_i})
\leq\liminf_{i\to+\ift}\va_i^2E_{\va_i}(u_{\va_i},U)
=0.
\]
Hence $u_*\in H^1(U,\cN)$. It follows from \cite[Proposition~B.1]{CL22} that
$u_*$ is a local minimizer of \eqref{Dirichlet} and that
$u_{\va_i}\to u_*$ strongly in $H_{\loc}^1(U,\R^m)$. The local discreteness of
$\sing(u_*)$ follows from \cite[Theorem~II]{SU82} and
\cite[Theorem~1.5]{NV17}. Finally, the convergence
$u_{\va_i}\to u_*$ in $C^0(U\backslash\sing(u_*),\R^m)$ follows from
\cite[Theorem~1.2]{CL22}.
\end{proof}

\section{Luckhaus-type results}\label{SectionLuc}

\subsection{Luckhaus-type lemmas on the ball}

The following lemma is a Ginzburg--Landau analog of the classical 
interpolation construction of Luckhaus \cite{Luc88}. 

\begin{lem}\label{Luckhauslemma}
Let $ M>0 $ and let $\{u_{\ol{\va}}\}_{\ol{\va}\in(0,1)}\subset H^1(\pa B_1,\R^m)$ be a family of maps satisfying $ \|u_{\ol{\va}}\|_{L^{\ift}(\pa B_1)}\leq M $. Then there exist $\delta\in(0,1)$ and $C>0$, depending only on $f,M $, and 
$\cN$, such that the following properties hold. For any $\ol{\va}\in(0,\delta)$, if
\begin{align}
E_{\ol{\va}}(u_{\ol{\va}},\pa B_1)\leq\delta|\log\ol{\va}|,\label{log1}
\end{align}
then there exist $v_{\ol{\va}}\in H^1(\pa B_1,\cN)$ and 
$w_{\ol{\va}}\in H^1(B_1\backslash B_{1-h(\ol{\va})},\R^m)$ such that
\begin{gather}
w_{\ol{\va}}(x)=u_{\ol{\va}}(x)\text{ and }
w_{\ol{\va}}((1-h(\ol{\va}))x)=v_{\ol{\va}}(x)
\text{ for }\HH^2\text{-a.e. }x\in\pa B_1,\label{interpoL}\\
\f{1}{2}\int_{\pa B_1}|\na_{\top}v_{\ol{\va}}|^2\ud\HH^2
\leq CE_{\ol{\va}}(u_{\ol{\va}},\pa B_1),\label{VboundL}\\
E_{\ol{\va}}(w_{\ol{\va}},B_1\backslash B_{1-h(\ol{\va})})
\leq Ch(\ol{\va})E_{\ol{\va}}(u_{\ol{\va}},\pa B_1),\label{WboundL}\\
\|u_{\ol{\va}}-v_{\ol{\va}}\|_{L^2(\pa B_1)}
\leq Ch^{\f{1}{2}}(\ol{\va})(E_{\ol{\va}}(u_{\ol{\va}},\pa B_1))^{\f{1}{2}},
\label{UolvaVolvami}
\end{gather}
where $h(\ol{\va}):=\ol{\va}^{\f{1}{2}}|\log\ol{\va}|$.
\end{lem}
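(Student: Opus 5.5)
The plan follows the classical Luckhaus construction on a grid of $\pa B_1$ at scale $h=h(\ol{\va})$, adapted to the Ginzburg--Landau setting as in \cite{Can17,MRS21}. We write $u=u_{\ol{\va}}$ and $E:=E_{\ol{\va}}(u,\pa B_1)$.

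\textbf{Step 1: a good cubeulation.} Fix a bi-Lipschitz identification of $\pa B_1$ with the boundary of a cube, and consider a cubical decomposition of mesh size $h$. Translating or rotating the grid and averaging (Fubini), we select a grid whose $1$-skeleton $\Sigma^1$ satisfies
\[
\int_{\Sigma^1}e_{\ol{\va}}(u,\Sigma^1)\,\ud\HH^1\leq Ch^{-1}E .
\]
In addition, $u|_{\Sigma^1}\in H^1$, and the same averaging bound holds on the $2$-cells.

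\textbf{Step 2: $u$ is close to $\cN$ on the $1$-skeleton.} On each edge $\gamma$ of length $h$ we apply the standard one-dimensional Modica--Mortola-type estimate. If $\dist(u(p),\cN)\geq\delta_f$ at some point $p\in\gamma$, then either $u$ travels from distance $\delta_f/2$ to distance $\delta_f$ along $\gamma$, which costs at least $c\,\ol{\va}^{-1}$ in $e_{\ol{\va}}$-energy on $\gamma$, or $u$ stays at distance at least $\delta_f/2$ on all of $\gamma$, which costs at least $c h\ol{\va}^{-2}$. Both lower bounds exceed $Ch^{-1}E\leq C\delta\ol{\va}^{-1/2}|\log\ol{\va}|^{-1}|\log\ol{\va}|=C\delta\ol{\va}^{-1/2}$ as soon as $\ol{\va}<\delta$ with $\delta$ small. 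Hence $\dist(u,\cN)<\delta_f$ on $\Sigma^1$. This is precisely why $h=\ol{\va}^{1/2}|\log\ol{\va}|$ is chosen: $h^{-1}E\ll\ol{\va}^{-1}$ while still $h\gg\ol{\va}$.

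\textbf{Step 3: construct $v$.} On $\Sigma^1$ set $v=\Pi_{\cN}\circ u$. On each $2$-cell $Q$, the loop $v|_{\pa Q}$ must be homotopically trivial in order to be extended. We aim to show it has small energy, namely
\[
\int_{\pa Q}|\na v|^2\leq Ch^{-1}E\cdot h^{-1}\cdot h\ ,
\]
in scale-invariant form $h\int_{\pa Q}|\na v|^2\lesssim\delta$. For this we need a version of Step 1 where the average bound holds cell by cell. The cleanest route is to choose the grid so that, for every cell $Q$, $h\int_{\pa Q}e\leq C\int_{Q'}e$ over a neighbourhood $Q'$. Then $h\int_{\pa Q}|\na v|^2\leq CE\leq C\delta|\log\ol{\va}|$. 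This is \emph{not} small, so the argument must instead use Lemma~\ref{LowerBound} on $Q$: a nontrivial class $\sg$ on $\pa Q$ forces $E_{\ol{\va}}(u,Q)\geq c_0\log(h/\ol{\va})-C\geq \tfrac{c_0}{3}|\log\ol{\va}|$, using \eqref{c0geq}, since $\log(h/\ol{\va})\approx\tfrac12|\log\ol{\va}|$. This contradicts $E\leq\delta|\log\ol{\va}|$ when $\delta<c_0/3$. Hence every boundary loop is trivial, and Lemma~\ref{ExtensionLemma1} extends $v$ into $Q$ with $\int_Q|\na v|^2\leq Ch\int_{\pa Q}|\na v|^2$. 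Summing over cells and using Step 1 gives \eqref{VboundL}.

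\textbf{Step 4: construct $w$ and estimate.} On the shell $B_1\backslash B_{1-h}$, which is decomposed into prisms $Q\times[1-h,1]$, define $w$ by the Luckhaus interpolation. On lateral faces over $\Sigma^1$, take the linear interpolation $(1-t)u+t\Pi_{\cN}(u)$; \eqref{fBconvex} controls the potential, giving $f(w)\leq M_ff(u)$. Then extend into each prism by homogeneous ($0$-homogeneous from the prism center) extension of the boundary data, which is standard in dimension $3$ with energy $\leq Ch\cdot(\text{boundary energy})$. The potential term on the part coming from the face $Q\times\{1\}$ is handled by the same homogeneous extension, because $f(u)$ is integrated there with a factor $h$. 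Summing the prism estimates yields \eqref{WboundL}. Finally, \eqref{UolvaVolvami} follows from $|u-v|\leq|u-w(\cdot,t)|$ integrated along radial segments: $\|u-v\|_{L^2}^2\leq h\int|\pa_r w|^2\leq Ch\,E$. The main obstacle is Step 3: arranging the grid so that the per-cell energy bounds needed for both the homotopy triviality and the summed extension estimate hold simultaneously.
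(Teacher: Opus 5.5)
Your proposal is correct and follows essentially the same route as the paper. It uses an $h$-scale cell decomposition of $\pa B_1$ chosen by Fubini, closeness of $u$ to $\cN$ on the $1$-skeleton from $h^{-1}E\ll\ol{\va}^{-1}$, projection onto $\cN$, null-homotopy of each $2$-cell boundary loop via Lemma~\ref{LowerBound} and \eqref{c0geq}, extension by Lemma~\ref{ExtensionLemma1}, and finally linear interpolation over the $1$-cells and homogeneous extension into the $3$-cells.

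The one thing to fix in Step~3 is that Lemma~\ref{LowerBound} also carries the term $ChE_{\ol{\va}}(u,\pa Q)$ on its left-hand side. This term is bounded by $ChE_{\ol{\va}}(u,\Sigma^1)\leq CE$ from Step~1, so you need $\delta<c_0/(3(1+C))$ rather than $\delta<c_0/3$. With that bound in hand, no per-cell energy control is required, and the ``main obstacle'' you flag does not actually arise.
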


\begin{proof}
The proof follows essentially the same argument as 
\cite[Proposition~33 and Corollary~34]{Can17}; we sketch the key steps and omit the details. Set $h:=h(\ol{\va})$ for brevity. Following the construction in \cite[Lemma~1]{Luc88}, we establish a partition of $\pa B_1$ 
as
\[
\pa B_1=\bigsqcup_{j=0}^{2}\cR_j,\quad\cR_j=\bigsqcup_{i=1}^{k_j}c_{ij},
\]
where $\bigsqcup$ denotes disjoint union. Moreover, there exists an absolute constant $C_0>0$ such that the following properties hold.
\begin{itemize}
\item For any $j\in\{1,2\}$, the closure $\ol{c}_{ij}$ of each $j$-cell is bi-Lipschitz equivalent to $\ol{B}_h^j$: there exists a bi-Lipschitz homeomorphism $\phi_{ij}:\ol{c}_{ij}\to\ol{B}_h^j$ whose restriction to $c_{ij}$ satisfies
\[
\|\na\phi_{ij}\|_{L^{\ift}(c_{ij})}
+\|\na(\phi_{ij}^{-1})\|_{L^{\ift}(B_h^j)}\leq C_0.
\]
\item $ \#\{\ell\in\Z\cap[1,k_2]:c_{i1}\subset\pa c_{\ell,2}\}\leq C_0 $ for any $i\in\Z\cap[1,k_1]$.
\item By Fubini's theorem,
\[
\begin{aligned}
E_{\ol{\va}}(u_{\ol{\va}},\cR_1)
&\leq C_0h^{-1}E_{\ol{\va}}(u_{\ol{\va}},\pa B_1),\\
\int_{\cR_1}f(u_{\ol{\va}})\ud\HH^1
&\leq C_0h^{-1}\int_{\pa B_1}f(u_{\ol{\va}})\ud\HH^2.
\end{aligned}
\]
\end{itemize}
This partition of $\pa B_1$ induces a partition of $B_1\backslash B_{1-h}$ by
\be
B_1\backslash B_{1-h}=\bigsqcup_{j=0}^{2}\widehat{\cR}_j,\quad
\widehat{\cR}_j=\bigsqcup_{i=1}^{k_j}\wh{c}_{ij},\quad
\wh{c}_{ij}=\left\{x\in B_1\backslash B_{1-h}:\frac{x}{|x|}\in c_{ij}
\right\}.\label{induceddecomposition}
\ee
Using the above partition and the choice of $h$, for $\delta_f>0$ given by Lemma~\ref{Nfproperty}, there is $\delta\in(0,1)$ such that
\[
\sup_{\ol{\va}\in(0,\delta)}\sup_{x\in\cR_1}\dist(u_{\ol{\va}}(x),\cN)
<\delta_f.
\]
We set $v_{\ol{\va}}:=\Pi_{\cN}\circ u_{\ol{\va}}$ on $\cR_1$, so that for 
any $x\in\cR_1$,
\[
v_{\ol{\va}}(x)=\Pi_{\cN}(u_{\ol{\va}}(x)),\quad
|v_{\ol{\va}}(x)-u_{\ol{\va}}(x)|<\delta_f.
\]
Applying Lemma \ref{LowerBound} and \eqref{c0geq}, and choosing $\delta\in(0,1)$ in \eqref{log1} sufficiently small, we find that for any $2$-cell $c_{i2}$ with $i\in[1,k_2]$, the restriction $v_{\ol{\va}}|_{\pa c_{i2}}$ is null-homotopic. We then apply the Lemma~\ref{ExtensionLemma1} to extend $v_{\ol{\va}}|_{\pa c_{i2}}$ to each $c_{i2}$. This completes the construction of $v_{\ol{\va}}$ on $\pa B_1$.

Given the induced decomposition \eqref{induceddecomposition}, we define $w_{\ol{\va}}$ as follows.
\begin{itemize}
\item On $\pa B_1\cup\pa B_{1-h}$, set $w_{\ol{\va}}=u_{\ol{\va}}$ and 
$w_{\ol{\va}}((1-h)\cdot)=v_{\ol{\va}}$, respectively.
\item For $x\in\wh{\cR}_1$,
\[
w_{\ol{\va}}(x):=\(1-\f{1-|x|}{h}\)u_{\ol{\va}}\(\f{x}{|x|}\)
+\f{1-|x|}{h}v_{\ol{\va}}\(\f{x}{|x|}\).
\]
\item For each $3$-cell $\wh{c}_{i2}\subset\wh{\cR}_2$ with $i\in[1,k_2]$, 
extend $w_{\ol{\va}}$ homogeneously from $\pa\wh{c}_{i2}$ by setting
\[
w_{\ol{\va}}(x):=w_{\ol{\va}}\circ(\Phi_{i2})^{-1}
\circ\(\f{h\Phi_{i2}(x)}{|\Phi_{i2}(x)|}\),\quad x\in \wh{c}_{i2}\backslash\{\Phi_{i2}^{-1}(0)\},
\]
where $\Phi_{i2}:\ol{\wh{c}}_{i2}\to\ol{B}_h^3$ is a bi-Lipschitz homeomorphism whose restriction to $\wh{c}_{i2}$ satisfies
\[
\|\na\Phi_{i2}\|_{L^{\ift}(\wh{c}_{i2})}
+\|\na(\Phi_{i2}^{-1})\|_{L^{\ift}(B_h^3)}\leq C
\]
for an absolute constant $C>0$.
\end{itemize}
Given the construction of $v_{\ol{\va}}$ and $w_{\ol{\va}}$, the properties \eqref{interpoL}--\eqref{UolvaVolvami} follow by direct computations.
\end{proof}

The following is the original interpolation lemma of Luckhaus, which we state as a reference.

\begin{lem}[\cite{Luc88}, Lemma~1]\label{Luckhaus1}
For any $\beta\in(\f{1}{2},1)$, there exists $C>0$, depending only on 
$\beta$, such that the following holds. For any $\lda\in(0,\f{1}{2})$, 
$\sg\in(0,1)$, and $u,v\in H^1(\pa B_1,\cN)$, there exists 
$w\in H^1(B_1\backslash B_{1-\lda},\R^m)$ satisfying
\begin{gather*}
w(x)=u(x)\text{ and }w((1-\lda)x)=v(x)\quad
\text{for }\HH^2\text{-a.e. }x\in\pa B_1,\\
\sup_{x\in B_1\backslash B_{1-\lda}}\dist(w(x),\cN)
\leq C\sg^{1-\beta}\lda^{-\f{1}{2}}K^{\f{1}{2}},\\
\int_{B_1\backslash B_{1-\lda}}|\na w|^2\ud x
\leq C\lda(1+\sg^2\lda^{-2})K,
\end{gather*}
where
\[
K:=\int_{\pa B_1}\(|\na_{\top}u|^2+|\na_{\top}v|^2
+\f{|u-v|^2}{\sg^2}\)\ud\HH^2.
\]
\end{lem}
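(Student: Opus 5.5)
The plan is to follow Luckhaus's original cell-decomposition argument, as already used in the proof of Lemma~\ref{Luckhauslemma}. The difference is that both $u$ and $v$ now take values in $\cN$, and the only thing to control is how far the interpolation strays from $\cN$.

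\textbf{Step 1: the grid.} Fix $\lda\in(0,\f{1}{2})$ and decompose $\pa B_1$ into a bi-Lipschitz image of a cubical grid of mesh $\lda$, giving cells $c_{ij}$ of dimensions $j=0,1,2$ with uniformly bounded distortion. Rotations and translations of the grid form a family of admissible decompositions. By Fubini's theorem over this family, we can fix one decomposition whose $1$-skeleton $\cR_1$ satisfies
\[
\int_{\cR_1}\Bigl(|\na_{\top}u|^2+|\na_{\top}v|^2+\f{|u-v|^2}{\sg^2}\Bigr)\ud\HH^1\leq C\lda^{-1}K .
\]
We also arrange that $u$ and $v$ restricted to $\cR_1$ are $H^1$ in one variable, hence continuous there.

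\textbf{Step 2: pointwise control on the skeleton.} Set $g:=u-v$ on $\cR_1$. Each arc of $\cR_1$ can be continued inside the connected skeleton, so any $x\in\cR_1$ lies on a connected piece of length at least $s$, for any $s\le 1$. Averaging over such a piece and using the one-dimensional Morrey estimate gives
\[
|g(x)|\leq s^{-\f{1}{2}}\|g\|_{L^2(\cR_1)}+s^{\f{1}{2}}\|g'\|_{L^2(\cR_1)}
\leq C\lda^{-\f{1}{2}}K^{\f{1}{2}}\bigl(s^{-\f{1}{2}}\sg+s^{\f{1}{2}}\bigr).
\]
Choosing $s=\sg$ yields $\sup_{\cR_1}|u-v|\leq C\sg^{\f{1}{2}}\lda^{-\f{1}{2}}K^{\f{1}{2}}$. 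Since $\sg<1$ and $\beta>\f{1}{2}$, this is at most $C\sg^{1-\beta}\lda^{-\f{1}{2}}K^{\f{1}{2}}$. This step is where the exponent $\beta$ enters. I expect it to be the main technical point, because $\sg$ may be smaller or larger than the mesh $\lda$, and the averaging scale must be chosen compatibly with the connectivity of the skeleton.

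\textbf{Step 3: the interpolation $w$.} As in \eqref{induceddecomposition}, the decomposition of $\pa B_1$ induces one of the shell $B_1\backslash B_{1-\lda}$. We build $w$ in three stages.
\begin{itemize}
\item On $\pa B_1$ and on $\pa B_{1-\lda}$, set $w=u$ and $w((1-\lda)\cdot)=v$.
\item On the cone $\wh{\cR}_1$ over the skeleton, interpolate linearly in the radial variable between $u$ and $v$.
\item On each $3$-cell $\wh c_{i2}$ (of size comparable to $\lda$), extend homogeneously from its boundary, that is, compose with the radial projection onto $\pa\wh c_{i2}$ in bi-Lipschitz coordinates.
\end{itemize}
For the distance bound, note that on the radial segments $\dist(w,\cN)\leq|u-v|$, because the endpoints lie in $\cN$. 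On the faces of each $3$-cell lying on the two spheres, $w\in\cN$. The homogeneous extension only reuses boundary values. Hence $\sup\dist(w,\cN)\leq\sup_{\cR_1}|u-v|$, and Step~2 gives the claimed bound.

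\textbf{Step 4: the energy bound.} On $\wh{\cR}_1$, the gradient is bounded by $C\bigl(|\na_{\top}u|+|\na_{\top}v|+\lda^{-1}|u-v|\bigr)$. Integrating over a radial thickness $\lda$ gives the lateral face contributions
\[
\lda\int_{\cR_1}\bigl(|\na_{\top}u|^2+|\na_{\top}v|^2+\lda^{-2}|u-v|^2\bigr)\ud\HH^1\leq C(1+\sg^2\lda^{-2})K,
\]
after using the choice of skeleton in Step~1. Homogeneous extension into a cell of diameter comparable to $\lda$ costs at most $C\lda$ times the boundary Dirichlet energy. The total boundary energy of all cells is the spherical part, bounded by $K$, plus the lateral part just estimated. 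Summing over cells gives
\[
\int_{B_1\backslash B_{1-\lda}}|\na w|^2\leq C\lda(1+\sg^2\lda^{-2})K .
\]
The boundary identities for $w$ hold by construction, which completes the plan.
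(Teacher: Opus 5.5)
Your proposal is correct and is essentially Luckhaus's own construction: a grid of mesh $\lda$ chosen by averaging, linear radial interpolation over the $1$-skeleton, and homogeneous extension into the $3$-cells. The paper does not reprove this statement; it cites it from \cite{Luc88} and reuses the same cell structure in the proof of Lemma~\ref{Luckhauslemma}. Your Step~2 averages along a connected piece of the skeleton whose length is comparable to $s=\min\{\sg,c\}$, for a fixed small $c>0$ (truncate the piece so its length is comparable to $s$, not merely at least $s$). This actually gives the sharper bound $\sup\dist(w,\cN)\leq C\sg^{\f{1}{2}}\lda^{-\f{1}{2}}K^{\f{1}{2}}$, so $\beta$ enters only through the trivial inequality $\sg^{\f{1}{2}}\leq\sg^{1-\beta}$.
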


The next proposition combines Lemma \ref{Luckhauslemma} and 
Lemma \ref{Luckhaus1} to construct an interpolation map on a thin shell that connects a general map $u_{\ol{\va}}$ to an $\cN$-valued map $v_{\ol{\va}}$, with an energy bound uniform in $\ol{\va}$.

\begin{prop}\label{Luckhaus11}
Let $\{u_{\ol{\va}}\}_{\ol{\va}\in(0,1)}\subset H^1(\pa B_1,\R^m)$ and $\{v_{\ol{\va}}\}_{\ol{\va}\in(0,1)}\subset H^1(\pa B_1,\cN)$ be families of maps. Assume that there exists $M>0$ such that
\begin{gather}
\|u_{\ol{\va}}\|_{L^{\ift}(\pa B_1)}\leq M,\label{Uolvabound}\\
\int_{\pa B_1}\(|\na_{\top}u_{\ol{\va}}|^2+\f{1}{\ol{\va}^2}f(u_{\ol{\va}})
+|\na_{\top}v_{\ol{\va}}|^2
+\f{|u_{\ol{\va}}-v_{\ol{\va}}|^2}{\sg_{\ol{\va}}^2}\)\ud\HH^2
\leq M\label{BoundC1Uva}
\end{gather}
for any $\ol{\va}\in(0,1)$, where $\sg_{\ol{\va}}\to 0^+$ as 
$\ol{\va}\to 0^+$. Define
\[
\nu_{\ol{\va}}:=h(\ol{\va})+((h(\ol{\va}))^{\f{1}{2}}+\sg_{\ol{\va}})^{\f{1}{4}}(1-h(\ol{\va})),
\]
where $h(\ol{\va})$ is the same as in Lemma~\ref{Luckhauslemma}.

Then there exist $\delta\in(0,1)$, depending only on $f,M$, and $\cN$, and a family of interpolation maps $ \{\vp_{\ol{\va}}\}_{\ol{\va}\in(0,\delta)}\subset 
H^1(B_1\backslash B_{1-\nu_{\ol{\va}}},\R^m)$ such that
\begin{gather*}
\vp_{\ol{\va}}(x)=u_{\ol{\va}}(x)\text{ and }
\vp_{\ol{\va}}((1-h(\ol{\va}))x)=v_{\ol{\va}}(x)
\quad\text{for }\HH^2\text{-a.e. }x\in\pa B_1,\\
E_{\ol{\va}}(\vp_{\ol{\va}},B_1\backslash B_{1-\nu_{\ol{\va}}})\leq C\nu_{\ol{\va}},
\end{gather*}
where $C>0$ depends only on $f,M$, and $\cN$.
\end{prop}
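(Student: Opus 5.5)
The plan is a concatenation of two shells. On the outer shell $B_1\backslash B_{1-h(\ol{\va})}$ we use Lemma~\ref{Luckhauslemma} to pass from $u_{\ol{\va}}$ to an auxiliary $\cN$-valued map $\tilde v_{\ol{\va}}$. On the inner shell $B_{1-h(\ol{\va})}\backslash B_{1-\nu_{\ol{\va}}}$ we use the classical Lemma~\ref{Luckhaus1}, composed with $\Pi_{\cN}$, to pass from $\tilde v_{\ol{\va}}$ to $v_{\ol{\va}}$. Note that $1-\nu_{\ol{\va}}=(1-h)(1-\lda)$ with $\lda:=\tau^{1/4}$ and $\tau:=h^{1/2}+\sg_{\ol{\va}}$; here $h=h(\ol{\va})$. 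So the inner shell is a rescaled copy of $B_1\backslash B_{1-\lda}$ by the factor $1-h$. I read the inner boundary condition as $\vp_{\ol{\va}}((1-\nu_{\ol{\va}})x)=v_{\ol{\va}}(x)$, which is what the definition of $\nu_{\ol{\va}}$ suggests. The trace at radius $1-h$ is then $\tilde v_{\ol{\va}}$, and it matches from both sides.

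\emph{Step 1.} By \eqref{BoundC1Uva}, $E_{\ol{\va}}(u_{\ol{\va}},\pa B_1)\leq M\leq\delta|\log\ol{\va}|$ once $\ol{\va}$ is small, where $\delta$ is the constant of Lemma~\ref{Luckhauslemma}. Together with \eqref{Uolvabound}, this lets us apply Lemma~\ref{Luckhauslemma}. It yields $\tilde v_{\ol{\va}}\in H^1(\pa B_1,\cN)$ and $w_{\ol{\va}}$ on $B_1\backslash B_{1-h}$ with:
\begin{itemize}
\item $w_{\ol{\va}}=u_{\ol{\va}}$ on $\pa B_1$ and $w_{\ol{\va}}((1-h)\cdot)=\tilde v_{\ol{\va}}$;
\item $E_{\ol{\va}}(w_{\ol{\va}},B_1\backslash B_{1-h})\leq ChM$;
\item $\int_{\pa B_1}|\na_\top\tilde v_{\ol{\va}}|^2\leq CM$;
\item $\|u_{\ol{\va}}-\tilde v_{\ol{\va}}\|_{L^2(\pa B_1)}\leq Ch^{1/2}M^{1/2}$.
\end{itemize}
By the triangle inequality and \eqref{BoundC1Uva}, $\|\tilde v_{\ol{\va}}-v_{\ol{\va}}\|_{L^2(\pa B_1)}\leq C(h^{1/2}+\sg_{\ol{\va}})=C\tau$.

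\emph{Step 2.} Apply Lemma~\ref{Luckhaus1} to $\tilde v_{\ol{\va}}$ and $v_{\ol{\va}}$, with $\beta=\f34$, width $\lda=\tau^{1/4}$, and parameter $\sg=\tau$. Then
\[
K=\int_{\pa B_1}\Bigl(|\na_\top\tilde v_{\ol{\va}}|^2+|\na_\top v_{\ol{\va}}|^2+\f{|\tilde v_{\ol{\va}}-v_{\ol{\va}}|^2}{\tau^2}\Bigr)\ud\HH^2\leq C .
\]
The resulting map $\tilde w$ satisfies:
\begin{itemize}
\item $\sup\dist(\tilde w,\cN)\leq C\tau^{1/4}\lda^{-1/2}=C\tau^{1/8}$;
\item $\int|\na\tilde w|^2\leq C\lda(1+\tau^2\lda^{-2})\leq C\lda$.
\end{itemize}

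\emph{Main obstacle: the potential term.} The naive bound $\ol{\va}^{-2}f(\tilde w)\lesssim\ol{\va}^{-2}\tau^{1/4}$ is useless. I would remove the potential term by projecting. Since $\tau\to0$, for $\ol{\va}<\delta$ small we have $\sup\dist(\tilde w,\cN)<\delta_{\cN}$. Then $\Pi_{\cN}\circ\tilde w$ is $\cN$-valued, has zero potential, and its Dirichlet energy is at most $C\lda$ because $\Pi_{\cN}$ is smooth on $\ol{\cN_{\delta_{\cN}/2}}$. Its boundary traces are unchanged, since $\tilde v_{\ol{\va}}$ and $v_{\ol{\va}}$ are already $\cN$-valued.

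\emph{Step 3.} Rescale by $x\mapsto(1-h)x$. In three dimensions this multiplies the Dirichlet energy by $1-h\leq1$, and it places the shell exactly on $B_{1-h}\backslash B_{1-\nu_{\ol{\va}}}$. Glue it to $w_{\ol{\va}}$ along $\pa B_{1-h}$; the traces agree there, so the glued map $\vp_{\ol{\va}}$ is in $H^1$. Its total energy is at most $ChM+C\lda$. Since $h\leq\nu_{\ol{\va}}$ and $\lda\leq C\nu_{\ol{\va}}$ (because $1-h\geq\f12$), this gives $E_{\ol{\va}}(\vp_{\ol{\va}},B_1\backslash B_{1-\nu_{\ol{\va}}})\leq C\nu_{\ol{\va}}$. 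The threshold $\delta$ is the minimum of the one from Lemma~\ref{Luckhauslemma}, the one guaranteeing $M\leq\delta|\log\ol{\va}|$, and the one guaranteeing $C\tau^{1/8}<\delta_{\cN}/2$.
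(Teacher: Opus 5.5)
Your proposal is correct and is essentially the paper's proof. You apply Lemma~\ref{Luckhauslemma} on the outer shell $B_1\backslash B_{1-h(\ol\va)}$, then Lemma~\ref{Luckhaus1} with $\beta=\f34$, $\sg=h(\ol\va)^{\f12}+\sg_{\ol\va}$, $\lda=\sg^{\f14}$, rescaled by $1-h(\ol\va)$ and composed with $\Pi_{\cN}$, with the same energy bookkeeping.

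Your reading of the inner trace condition as $\vp_{\ol\va}((1-\nu_{\ol\va})x)=v_{\ol\va}(x)$ is the one the construction actually gives, and it is the one used later in \eqref{vpiw0}. One small point: exactly as in the paper, your threshold $\delta$ also depends on how fast $\sg_{\ol\va}\to0^+$, not only on $f,M,\cN$, since you need $\lda<\f12$ and need $\Pi_{\cN}$ to be defined on the image.
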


\begin{proof}
Set $h:=h(\ol{\va})$ for brevity. For $\eta\in(0,1)$, choose $\delta=\delta(\eta,M)\in(0,1)$ such that $\eta|\log\delta|\geq M$. The bound \eqref{BoundC1Uva} gives 
$E_{\ol{\va}}(u_{\ol{\va}},\pa B_1)\leq \eta|\log\ol{\va}|$ for any 
$\ol{\va}\in(0,\delta)$. In view of \eqref{Uolvabound}, we apply 
Lemma~\ref{Luckhauslemma}, choosing a sufficiently small $\eta=\eta(f,M,\cN)\in(0,1)$, to obtain 
$u_{\ol{\va},1}\in H^1(\pa B_1,\cN)$ and 
$\vp_{\ol{\va},1}\in H^1(B_1\backslash B_{1-h},\R^m)$ such that
\begin{gather*}
\vp_{\ol{\va},1}(x)=u_{\ol{\va}}(x)\text{ and }
\vp_{\ol{\va},1}((1-h)x)=u_{\ol{\va},1}(x)
\quad\text{for }\HH^2\text{-a.e. }x\in\pa B_1,\\
\int_{\pa B_1}|\na_{\top}u_{\ol{\va},1}|^2\ud\HH^2
\leq CE_{\ol{\va}}(u_{\ol{\va}},\pa B_1)\leq C,\\
E_{\ol{\va}}(\vp_{\ol{\va},1},B_1\backslash B_{1-h})
\leq ChE_{\ol{\va}}(u_{\ol{\va}},\pa B_1)\leq Ch.
\end{gather*}
Moreover, \eqref{UolvaVolvami} gives 
$\|u_{\ol{\va},1}-v_{\ol{\va}}\|_{L^2(\pa B_1)}\leq C\sg_{\ol{\va},1}$, 
where $\sg_{\ol{\va},1}:=h^{\f{1}{2}}(\ol{\va})+\sg_{\ol{\va}}$. Together 
with \eqref{BoundC1Uva}, this yields
\[
\int_{\pa B_1}\(|\na_{\top}u_{\ol{\va},1}|^2+|\na_{\top}v_{\ol{\va}}|^2
+\f{|u_{\ol{\va},1}-v_{\ol{\va}}|^2}{\sg_{\ol{\va},1}^2}\)\ud\HH^2\leq C.
\]
We apply Lemma~\ref{Luckhaus1} with $\beta=\f{3}{4}$, $\sg=\sg_{\ol{\va},1}$, 
and $\lda=\sg_{\ol{\va},1}^{\f{1}{4}}$. By scaling, there exists 
$\vp_{\ol{\va},0}\in H^1(B_{1-h}\backslash B_{1-\nu_{\ol{\va}}},\R^m)$ 
satisfying
\begin{gather*}
\int_{B_{1-h}\backslash B_{1-\nu_{\ol{\va}}}}|\na\vp_{\ol{\va},0}|^2
\leq C\sg_{\ol{\va},1}^{\f{1}{4}}(1-h),\quad
\sup_{B_{1-h}\backslash B_{1-\nu_{\ol{\va}}}}
\dist(\vp_{\ol{\va},0}(x),\cN)\leq C\sg_{\ol{\va},1}^{\f{1}{8}}.
\end{gather*}
Since $\sg_{\ol{\va},1}\to 0^+$ as $\ol{\va}\to 0^+$, we may shrink 
$\delta$ further to ensure that $\Pi_{\cN}\circ\vp_{\ol{\va},0}$ is 
well-defined for any $\ol{\va}\in(0,\delta)$. We then define
\[
\vp_{\ol{\va}}:=\left\{\begin{aligned}
&\vp_{\ol{\va},1}&&\text{if }x\in B_1\backslash B_{1-h},\\
&\Pi_{\cN}\circ\vp_{\ol{\va},0}&&\text{if }x\in B_{1-h}\backslash 
B_{1-\nu_{\ol{\va}}}.
\end{aligned}\right.
\]
By construction, $\vp_{\ol{\va}}$ satisfies the stated boundary conditions 
and the energy bound.
\end{proof}

\subsection{Luckhaus-type lemmas on the cylinder} For any $ r,L>0 $, recall that the cylinder $ \Lda_{r,L} $and its lateral surface $ \Ga_{r,L} $ are defined in \eqref{LdarLGarL}. The following lemma is the cylindrical form of Lemma~\ref{Luckhauslemma}.

\begin{lem}\label{Luckhauscylinder2}
Let $\delta\in(0,\f{1}{2}]$. Assume that for $\ol{\va}\in(0,1)$, the map $g_{\delta,\ol{\va}}\in H^1(\Ga_{\delta,1},\R^m)$ satisfies
\[
\|g_{\delta,\ol{\va}}\|_{L^{\ift}(\Ga_{\delta,1})}\leq M
\]
for some $M>0$. There exists $\eta\in(0,1)$, depending only on $f,M$, and $\cN$, such that the following holds. If $\ol{\va}\in(0,\eta\delta)$ and
\[
E_{\ol{\va}}(g_{\delta,\ol{\va}},\Ga_{\delta,1})\leq\eta\log\f{\delta}{\ol{\va}},
\]
then there exist $z_-\in(-1+\f{\delta}{2},-1+\delta)$, 
$z_+\in(1-\delta,1-\f{\delta}{2})$, 
$v_{\delta,\ol{\va}}\in H^1(\pa B_{\delta}^2\times(z_-,z_+),\cN)$, 
$\vp_{\delta,\ol{\va}}\in H^1((B_{\delta}^2\backslash B_{\f{\delta}{2}}^2)
\times(z_-,z_+),\R^m)$, and $C>0$ depending only on $f,M$, and $\cN$, 
such that the following assertions hold.
\begin{enumerate}[label=$(\theenumi)$]
\item For $\HH^2$-a.e. $x=(y,z)\in\pa B_{\delta}^2\times(z_-,z_+)$,
\be
\vp_{\delta,\ol{\va}}(x)=g_{\delta,\ol{\va}}(x)\quad\text{and}\quad
\vp_{\delta,\ol{\va}}\(\f{y}{2},z\)=v_{\delta,\ol{\va}}(y,z).
\label{propertyinterpolationcyl}
\ee
\item $v_{\delta,\ol{\va}}$ and $\vp_{\delta,\ol{\va}}$ satisfy
\begin{align}
E_{\ol{\va}}(\vp_{\delta,\ol{\va}},(B_{\delta}^2\backslash B_{\f{\delta}{2}}^2)
\times(z_-,z_+))&\leq C\delta E_{\ol{\va}}(g_{\delta,\ol{\va}},\Ga_{\delta,1}),
\label{Wvaest1}\\
E_{\ol{\va}}(v_{\delta,\ol{\va}},\pa B_{\delta}^2\times(z_-,z_+))
&\leq CE_{\ol{\va}}(g_{\delta,\ol{\va}},\Ga_{\delta,1}).\label{Vvaes1}
\end{align}
\item The restriction $\vp_{\delta,\ol{\va}}^{\pm}:=\vp_{\delta,\ol{\va}}|_{
(B_{\delta}^2\backslash B_{\f{\delta}{2}}^2)\times\{z_{\pm}\}}$ belongs to 
$H^1((B_{\delta}^2\backslash B_{\f{\delta}{2}}^2)\times\{z_{\pm}\},\R^m)$ 
and satisfies
\be
E_{\ol{\va}}(\vp_{\delta,\ol{\va}}^{\pm},(B_{\delta}^2\backslash 
B_{\f{\delta}{2}}^2)\times\{z_{\pm}\})
\leq CE_{\ol{\va}}(g_{\delta,\ol{\va}},\Ga_{\delta,1}).\label{WpluesminusC11}
\ee
\end{enumerate}
\end{lem}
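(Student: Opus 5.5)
We follow the scheme of Lemma~\ref{Luckhauslemma}, adapted to the lateral surface $\Ga_{\delta,1}$, which is flat in the $z$-direction and curved only in the angular direction. First we choose the end slices. Set $E:=E_{\ol{\va}}(g_{\delta,\ol{\va}},\Ga_{\delta,1})$. By Fubini's theorem on $z\in(-1+\f{\delta}{2},-1+\delta)$, an interval of length $\f{\delta}{2}$, we find $z_-$ such that the circle $\pa B_\delta^2\times\{z_-\}$ is a good slice:
\[
\delta\,E_{\ol{\va}}(g_{\delta,\ol{\va}},\pa B_\delta^2\times\{z_-\})\leq 4E,
\]
and $g_{\delta,\ol{\va}}(\cdot,z_-)$ is $H^1$. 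We choose $z_+$ in the same way. Next we rescale by $\delta$, so that $\Ga_{\delta,1}$ becomes $\pa B_1^2\times(-\delta^{-1},\delta^{-1})$ with parameter $\va':=\ol{\va}/\delta<\eta$. The energy then rescales as $E_{\va'}=\delta^{-1}E$, but the logarithmic smallness $E\leq\eta\log\f{\delta}{\ol{\va}}=\eta|\log\va'|$ is stated per unit length in the natural way. For this reason we split $\pa B_1^2\times(z_-/\delta,z_+/\delta)$ into unit-size blocks $\pa B_1^2\times(k,k+1)$, and the two end blocks are adjusted so that they end exactly at the chosen slices.

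On each block we run the Luckhaus grid construction with mesh $h=h(\va')$, exactly as in the proof of Lemma~\ref{Luckhauslemma}: a cubical decomposition into $0$-, $1$- and $2$-cells, with the grid shifted using Fubini's theorem so that the $1$-skeleton $\cR_1$ carries energy at most $Ch^{-1}$ times the block energy. The grid must also contain the two end circles $\{z_\pm\}$ as part of the $1$-skeleton; this is possible because they are good slices. Since $h^{-1}\eta|\log\va'|\cdot\va'^2\to0$, the $L^\infty$ bound on $\cR_1$, together with a one-dimensional Sobolev estimate $|g|_{C^{1/2}}^2\lesssim h\cdot(\text{energy on a $1$-cell})$ and the bound $f\leq C h^{-1}\cdots$, gives $\dist(g,\cN)<\delta_f$ on $\cR_1$ once $\eta$ is small. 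We then set $v:=\Pi_\cN\circ g$ on $\cR_1$.

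The main obstacle is extending $v$ into the $2$-cells. For each $2$-cell $c$, the loop $v|_{\pa c}$ must be null-homotopic. If it were not, Lemma~\ref{LowerBound} applied on $c$ (which is bi-Lipschitz to $B_h^2$) would force $E_{\va'}(g,c)+ChE_{\va'}(g,\pa c)\geq c_0\log\f{h}{\va'}-C\geq\f{c_0}{3}|\log\va'|$, which contradicts the energy bound on $c$ when $\eta\ll c_0$. Note that the per-block energy is at most $\eta|\log\va'|$ only after averaging; this is why the smallness is measured per unit length. To handle it, we use the rescaled hypothesis $\delta^{-1}E\leq\eta\delta^{-1}|\log\va'|$ together with a pigeonhole argument over the $O(\delta^{-1})$ blocks. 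If a local block has large energy, we instead argue via the slice: a $2$-cell whose boundary loop is non-trivial carries at least $\f{c_0}{3}|\log\va'|$ of energy. Rather than excluding such cells, it is simpler to notice that each cell lies in a single block and to apply the smallness block-wise after first choosing a Fubini-good shift. We expect this bookkeeping to be the delicate point.

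With null-homotopic boundary loops, Lemma~\ref{ExtensionLemma1} extends $v$ into each $2$-cell with the cell energy controlled. This gives \eqref{Vvaes1} after scaling back. Next we define $\vp$ on $(B_\delta^2\backslash B_{\delta/2}^2)\times(z_-,z_+)$: on the cone over $\cR_1$ it is the linear radial interpolation between $g$ and $v$, and on the cones over $2$-cells it is the homogeneous extension. Because the radial thickness is now $\f{\delta}{2}$ rather than $h$, the energy is bounded by $C\delta$ times the surface energy, which gives \eqref{Wvaest1}. The potential term is controlled by \eqref{fBconvex}. Finally, since the slices $\{z_\pm\}$ lie in the $1$-skeleton, $\vp^\pm$ restricted to $(B_\delta^2\backslash B_{\delta/2}^2)\times\{z_\pm\}$ is just the radial interpolation of one-dimensional data. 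Its energy is bounded by $C\delta E_{\ol{\va}}(g,\pa B_\delta^2\times\{z_\pm\})\leq CE$ by the choice of $z_\pm$, which is \eqref{WpluesminusC11}.
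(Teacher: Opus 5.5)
Your overall scheme is the one the paper sketches: Fubini choice of $z_\pm$ inside the $1$-skeleton, projection onto $\cN$ on $\cR_1$, Lemma~\ref{LowerBound} to trivialize the $2$-cell loops, Lemma~\ref{ExtensionLemma1}, then interpolation and homogeneous extension in the layer. Two steps, however, do not stand as written.

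\textbf{The scaling.} The energy of a map on a two-dimensional surface is invariant under $x\mapsto\delta x$, $\ol\va\mapsto\va'=\ol\va/\delta$. So the rescaled datum has energy exactly $E:=E_{\ol\va}(g_{\delta,\ol\va},\Ga_{\delta,1})\leq\eta|\log\va'|$, not $\delta^{-1}E$. This is where your argument stalls. With your scaling, averaging over the $O(\delta^{-1})$ blocks controls only the mean block energy, and one block could carry up to $\eta\delta^{-1}|\log\va'|$. Since $\delta$ is arbitrary, Lemma~\ref{LowerBound} would then not rule out a nontrivial loop in that block, so the ``bookkeeping'' you postpone is never resolved. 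With the correct invariance no blocks or pigeonhole are needed. Every $2$-cell $c$ has $E(g,c)\leq E$ and $hE(g,\pa c)\leq hE(g,\cR_1)\leq CE$, so a nontrivial loop would force $\f{c_0}{2}|\log\va'|-C\leq C\eta|\log\va'|$, which is impossible for small $\eta$. Relatedly, $\dist(g,\cN)<\delta_f$ on $\cR_1$ should come from comparing the total $1$-skeleton energy, of order $\eta\va'^{-1/2}$, with the cost (at least of order $\va'^{-1}$) of any point at fixed distance from $\cN$. A $C^{1/2}$ bound at scale $h$ only gives oscillation of order $E^{1/2}$, which is not small.

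\textbf{The thick-layer estimate.} The bound \eqref{Wvaest1} does not follow for the reason you give. With mesh $h$ and radial thickness $\f{\delta}{2}$, the cones over your $2$-cells are prisms of size about $h\delta\times h\delta\times\f{\delta}{2}$, with aspect ratio of order $h^{-1}$. They are not uniformly bi-Lipschitz to balls, so the bound ``cell size times boundary energy'' behind \eqref{WboundL} is not available. Used with the diameter $\sim\delta$, it only gives $C\delta h^{-1}E$, because the lateral faces carry energy of order $\delta E(g,\cR_1)$, i.e.\ up to $h^{-1}E$. The factor $h$ in \eqref{WboundL} comes from the $3$-cells having size $h$ in every direction, not from the thickness of the layer.

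The estimate can be rescued by an anisotropic computation for the extension centred in each prism:
\begin{enumerate}
\item Tangential data on a lateral face enter with weight comparable to the short side $h\delta$, which compensates the factor $(h\delta)^{-1}$ in the $1$-skeleton energy.
\item The grazing-ray amplification, of order $(\text{long side})^2/(\text{short side})$, only affects the radial derivative of the lateral data. That derivative equals $2|g-\Pi_\cN g|/\delta$ and contributes $O(\va'^2h^{-2})\,\delta E$.
\end{enumerate}
This uses the closeness of $g$ to $\cN$ on $\cR_1$ essentially, and it must be written out.

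The paper avoids the issue by using cells of size comparable to $\delta$ (unit cells after rescaling); the fine mesh is not needed in this lemma. The $1$-skeleton energy is then at most $C\eta|\log\va'|\ll\va'^{-1}$, and Lemma~\ref{LowerBound} on a unit cell still gives the contradiction. The $3$-cells then have bounded geometry, so the standard homogeneous extension gives \eqref{Wvaest1} at once. Alternatively, you can keep the mesh $h$, do the Luckhaus interpolation only in the layer $(1-h)\delta<|y|<\delta$, and extend $v$ radially constant on $\f{\delta}{2}<|y|<(1-h)\delta$.
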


\begin{proof}
This result is analogous to \cite[Lemma~58]{Can17}; see also 
\cite[Lemma~7.6]{WZ24}; we only sketch the proof. Set $\wh{\va}:=\f{\ol{\va}}{\delta}$ and $g_{\wh{\va}}(x):=g_{\delta,\ol{\va}}(\delta x)$, 
so that we can work on the rescaled domain $\Ga_{1,\delta^{-1}}$. By 
Fubini's theorem, there exist $z_+'\in(\delta^{-1}-1,\delta^{-1}-\f{1}{2})$ 
and $z_-'\in(-\delta^{-1}+\f{1}{2},-\delta^{-1}+1)$ such that
\[
E_{\wh{\va}}(g_{\wh{\va}},\pa B_1^2\times(z_-',z_+'))
\leq 4E_{\wh{\va}}(g_{\wh{\va}},\Ga_{1,\delta^{-1}}).
\]
We construct a cell decomposition of $\pa B_1^2\times[z_-',z_+']$:
\[
\pa B_1^2\times[z_-',z_+']=\bigsqcup_{j=0}^{2}\cR_j,\quad
\cR_j:=\bigsqcup_{i=1}^{k_j}c_{ij},
\]
where there exists an absolute constant $C_0>0$ such that the following properties hold.
\begin{itemize}
\item For any $j\in\{1,2\}$, the closure $\ol{c}_{ij}$ of each $j$-cell is bi-Lipschitz equivalent to $\ol{B}_1^j$: there exists a bi-Lipschitz homeomorphism $\phi_{ij}:\ol{c}_{ij}\to\ol{B}_1^j$ whose restriction to $c_{ij}$ satisfies
\[
\|\na\phi_{ij}\|_{L^{\ift}(c_{ij})}
+\|\na(\phi_{ij}^{-1})\|_{L^{\ift}(B_1^j)}\leq C_0.
\]
\item $ \#\{\ell\in\Z\cap[1,k_2]:c_{i1}\subset\pa c_{\ell,2}\}\leq C_0 $ for any $i\in\Z\cap[1,k_1]$.
\item We have
\begin{align*}
E_{\wh{\va}}(g_{\wh{\va}},\cR_1)
&\leq C_0 E_{\wh{\va}}(g_{\wh{\va}},\pa B_1^2\times(z_-',z_+')),\\
\int_{\cR_1}f(g_{\wh{\va}})\ud\HH^1
&\leq C_0\int_{\pa B_1^2\times(z_-',z_+')}f(g_{\wh{\va}})\ud\HH^2.
\end{align*}
\end{itemize}
This decomposition of $\pa B_1^2\times[z_-',z_+']$ induces a partition of 
$(B_1^2\backslash B_{\f{1}{2}}^2)\times[z_-',z_+']$ by
\begin{align*}
(B_1^2\backslash B_{\f{1}{2}}^2)\times[z_-',z_+']
=\bigsqcup_{j=0}^{2}\widehat{\cR}_j,\quad
\widehat{\cR}_j=\bigsqcup_{i=1}^{k_j}\wh{c}_{ij},\quad
\wh{c}_{ij}=\left\{(y,z)\in(B_1^2\backslash B_{\f{1}{2}}^2)\times[z_-',z_+']:
\frac{y}{|y|}\in c_{ij}\right\}.
\end{align*}
Applying the same construction as in the proof of Lemma~\ref{Luckhauslemma}, 
we obtain
\[
v_{\wh{\va}}\in H^1(\pa B_1^2\times(z_-',z_+'),\cN),\quad
\vp_{\wh{\va}}\in H^1((B_1^2\backslash B_{\f{1}{2}}^2)\times(z_-',z_+'),\R^m),
\]
such that the following properties hold.
\begin{itemize}
\item For $\HH^2$-a.e. $x=(y,z)\in\pa B_1^2\times(z_-',z_+')$,
\[
\vp_{\wh{\va}}(x)=g_{\wh{\va}}(x)\quad\text{and}\quad
\vp_{\wh{\va}}\(\f{y}{2},z\)=v_{\wh{\va}}(x).
\]
\item $v_{\wh{\va}}$ and $\vp_{\wh{\va}}$ satisfy
\begin{align}
E_{\wh{\va}}(v_{\wh{\va}},\pa B_{\f{1}{2}}^2\times(z_-',z_+'))
&\leq CE_{\wh{\va}}(g_{\wh{\va}},\Ga_{1,\delta^{-1}}),\label{Vvaescylinder}\\
E_{\wh{\va}}(\vp_{\wh{\va}},(B_1^2\backslash B_{\f{1}{2}}^2)\times(z_-',z_+'))
&\leq CE_{\wh{\va}}(g_{\wh{\va}},\Ga_{1,\delta^{-1}}).\label{Wvaestcylinder}
\end{align}
\item The restriction $\vp_{\wh{\va}}^{\pm}:=\vp_{\wh{\va}}|_{(B_1^2\backslash 
B_{\f{1}{2}}^2)\times\{z_{\pm}'\}}$ belongs to 
$H^1((B_1^2\backslash B_{\f{1}{2}}^2)\times\{z_{\pm}'\},\R^m)$ and satisfies
\be
E_{\wh{\va}}(\vp_{\wh{\va}}^{\pm},(B_1^2\backslash B_{\f{1}{2}}^2)
\times\{z_{\pm}'\})\leq CE_{\wh{\va}}(g_{\wh{\va}},\Ga_{1,\delta^{-1}}).
\label{WpluesminusCcylinder}
\ee
\end{itemize}
We now set $v_{\delta,\ol{\va}}(x):=v_{\wh{\va}}(\f{x}{\delta})$, 
$\vp_{\delta,\ol{\va}}(x):=\vp_{\wh{\va}}(\f{x}{\delta})$, and 
$z_{\pm}:=\delta z_{\pm}'$. Then \eqref{Vvaes1}, \eqref{Wvaest1}, and 
\eqref{WpluesminusC11} follow from \eqref{Vvaescylinder}, 
\eqref{Wvaestcylinder}, and \eqref{WpluesminusCcylinder}, respectively, 
by a change of variables.
\end{proof}

The next lemma treats the case in which $g_{\delta,\ol{\va}}$ 
is already uniformly close to $\cN$, so that a direct projection argument suffices in place of the cell decomposition above.

\begin{lem}\label{closeprojlem}
Let $\delta\in(0,\f{1}{2}]$. Assume that for $\ol{\va}\in(0,1)$, 
$g_{\delta,\ol{\va}}\in H^1(\Ga_{\delta,1},\R^m)$. There exists 
$\eta\in(0,1)$, depending only on $\cN$, such that the following holds. If $\ol{\va}\in(0,\delta)$ and
\be
\|\dist(g_{\delta,\ol{\va}},\cN)\|_{L^{\ift}(\Ga_{\delta,1})}<\eta,
\label{smallprojectionuse}
\ee
then there exist $z_-\in(-1+\f{\delta}{2},-1+\delta)$, $z_+\in(1-\delta,1-\f{\delta}{2})$, $v_{\delta,\ol{\va}}\in H^1(\pa B_{\delta}^2\times(z_-,z_+),\cN)$, $\vp_{\delta,\ol{\va}}\in H^1((B_{\delta}^2\backslash B_{\f{\delta}{2}}^2)
\times(z_-,z_+),\R^m)$, and $C>0$ depending only on $f,\cN$, such 
that \eqref{propertyinterpolationcyl}--\eqref{WpluesminusC11} hold.
\end{lem}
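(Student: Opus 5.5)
The plan is to follow the scheme of Lemma \ref{Luckhauscylinder2}, with the cell decomposition replaced by a pointwise nearest-point projection; the $L^\infty$ closeness \eqref{smallprojectionuse} makes this possible. First I choose $\eta\le\delta_f$, with $\delta_f$ from Lemma \ref{Nfproperty}, so that $\Pi_{\cN}$ is defined and smooth on $\cN_\eta$ with $|D\Pi_{\cN}|\le C(\cN)$. Then I rescale as before: set $\wh{\va}:=\ol{\va}/\delta$ and $g_{\wh{\va}}(x):=g_{\delta,\ol{\va}}(\delta x)$ on $\Ga_{1,\delta^{-1}}$. By Fubini I pick $z_+'\in(\delta^{-1}-1,\delta^{-1}-\f12)$ and $z_-'\in(-\delta^{-1}+\f12,-\delta^{-1}+1)$ such that the slices $g_{\wh{\va}}(\cdot,z_\pm')$ lie in $H^1(\pa B_1^2)$ and satisfy
\[
E_{\wh{\va}}(g_{\wh{\va}},\pa B_1^2\times\{z_\pm'\})\le 4E_{\wh{\va}}(g_{\wh{\va}},\Ga_{1,\delta^{-1}}).
\]
The analogous statement holds in the unscaled variables, since the averaging windows have length $\f{\delta}{2}$ in the original scale, i.e. length $\f12$ after rescaling.

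Next I define $v_{\wh{\va}}:=\Pi_{\cN}\circ g_{\wh{\va}}$ on $\pa B_1^2\times(z_-',z_+')$. By the chain rule, $|\na_\top v_{\wh{\va}}|\le C|\na_\top g_{\wh{\va}}|$, and $f(v_{\wh{\va}})=0$. This gives \eqref{Vvaes1} after scaling back; in the original scale the potential term carries $\ol{\va}^{-2}$ and the gradient term is scale-compatible in two dimensions.

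For the interpolation I use the radial linear formula from Lemma \ref{Luckhauslemma}: for $(\rho\theta,z)$ with $\rho\in[\f12,1]$,
\[
\vp_{\wh{\va}}(\rho\theta,z):=(2\rho-1)\,g_{\wh{\va}}(\theta,z)+(2-2\rho)\,v_{\wh{\va}}(\theta,z).
\]
The boundary identities \eqref{propertyinterpolationcyl} then hold by construction. For the gradient, tangential derivatives are controlled by $C(|\na_\top g|+|\na_\top v|)\le C|\na_\top g|$. The radial derivative is $2|g-v|=2\dist(g,\cN)$, and by \eqref{mfMfestimates} we have $\dist(g,\cN)^2\le m_f^{-1}f(g)\le m_f^{-1}\wh{\va}^2 e_{\wh{\va}}(g)$ with $\wh{\va}<1$. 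For the potential, \eqref{fBconvex} applied with $t=2\rho-1\in[0,1]$ gives $f(\vp)\le M_f f(g)$. Integrating over $\rho\in[\f12,1]$ yields \eqref{Wvaest1} in rescaled form. Scaling back, the three-dimensional energy picks up the factor $\delta$, because $\wh{\va}^{-2}\delta^{3}\cdot\delta^{-2}$ matches $\ol{\va}^{-2}\delta$.

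For \eqref{WpluesminusC11} the same pointwise bounds apply on the slices $z=z_\pm'$. The two-dimensional annulus energy is bounded by $C E_{\wh{\va}}(g,\pa B_1^2\times\{z_\pm'\})$, hence by $C E_{\wh{\va}}(g,\Ga_{1,\delta^{-1}})$ from the choice of $z_\pm'$. Scaling back requires care: the annulus has area of order $\delta^2$ and the slice has length of order $\delta$, while $\Ga_{\delta,1}$ has area of order $\delta$. I therefore expect the hardest point to be getting the right powers of $\delta$ in \eqref{WpluesminusC11} via the Fubini choice in the unscaled variables.

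Here is how that step resolves. Let $G:=E_{\ol{\va}}(g_{\delta,\ol{\va}},\Ga_{\delta,1})$. Averaging over windows of length $\f{\delta}{2}$ gives slice energy $E_{\ol{\va}}(g,\pa B_\delta^2\times\{z_\pm\})\le C\delta^{-1}G$. The linear interpolation on a disk annulus of width $\f{\delta}{2}$ then costs at most $C\delta\cdot\delta^{-1}G=CG$. This is exactly \eqref{WpluesminusC11}, so I will carry out the Fubini choice directly in the original scale.

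Finally, measurability and $H^1$ regularity of all maps follow from the fact that $\Pi_{\cN}$ is smooth on $\cN_\eta$.
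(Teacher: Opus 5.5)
Your proposal is correct and follows essentially the same route as the paper. The shared steps are a Fubini choice of $z_\pm$ with slice energy at most $4\delta^{-1}E_{\ol{\va}}(g_{\delta,\ol{\va}},\Ga_{\delta,1})$, the projection $v_{\delta,\ol{\va}}=\Pi_{\cN}\circ g_{\delta,\ol{\va}}$, the radial linear interpolation with \eqref{fBconvex} controlling the potential, and \eqref{WpluesminusC11} obtained from the Fubini choice of the slices. Your explicit bound $\dist(g,\cN)^2\le m_f^{-1}f(g)$, combined with $\ol{\va}<\delta$, for the radial derivative is the ``direct calculation'' the paper leaves implicit.

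The only cosmetic point concerns dependence: taking $\eta\le\delta_f$ makes $\eta$ depend on $f$ as well as $\cN$, as the paper's own use of $\delta_f$ does. If you want $\eta=\eta(\cN)$ literally, fix any $\eta<\delta_{\cN}$. Compactness of $\ol{\cN_\eta}$ together with $f>0$ off $\cN$ then gives $\dist(y,\cN)^2+f(ty+(1-t)\Pi_{\cN}(y))\le C(f,\cN)f(y)$ for all $y\in\ol{\cN_\eta}$ and $t\in[0,1]$.
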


\begin{proof}
By Fubini's theorem, there exist $z_+\in(1-\delta,1-\f{\delta}{2})$ and $z_-\in(-1+\f{\delta}{2},-1+\delta)$ such that
\be
E_{\ol{\va}}(g_{\delta,\ol{\va}},\pa B_{\delta}^2\times\{z_{\pm}\})
\leq 4\delta^{-1}E_{\ol{\va}}(g_{\delta,\ol{\va}},\Ga_{\delta,1}).
\label{gdeltavaFubini}
\ee
Choosing $\eta=\eta(\cN)\in(0,1)$ sufficiently small in 
\eqref{smallprojectionuse}, the nearest-point projection $\Pi_{\cN}$ given 
by Lemma~\ref{Nfproperty} is well-defined on the image 
$g_{\delta,\ol{\va}}(\Ga_{\delta,1})$. We set 
$v_{\delta,\ol{\va}}:=\Pi_{\cN}\circ g_{\delta,\ol{\va}}$, so that for any 
$x\in\pa B_{\delta}^2\times(z_-,z_+)$,
\be
v_{\delta,\ol{\va}}(x)=\Pi_{\cN}(g_{\delta,\ol{\va}}(x)),\quad
|v_{\delta,\ol{\va}}(x)-g_{\delta,\ol{\va}}(x)|<\delta_f,\label{leqdeltaf1}
\ee
where $\delta_f>0$ is given by Lemma~\ref{Nfproperty}. It follows that
\[
E_{\ol{\va}}(v_{\delta,\ol{\va}},\pa B_{\delta}^2\times(z_-,z_+))
\leq CE_{\ol{\va}}(g_{\delta,\ol{\va}},\Ga_{\delta,1}),
\]
which is \eqref{Vvaes1}. For $x=(y,z)\in(B_{\delta}^2\backslash 
B_{\f{\delta}{2}}^2)\times(z_-,z_+)$, we define
\[
\vp_{\delta,\ol{\va}}(x):=\(1-\f{2(\delta-|y|)}{\delta}\)
g_{\delta,\ol{\va}}\(\f{\delta y}{|y|},z\)
+\f{2(\delta-|y|)}{\delta}
v_{\delta,\ol{\va}}\(\f{\delta y}{|y|},z\).
\]
Using \eqref{fBconvex} and \eqref{leqdeltaf1}, we obtain
\[
f(\vp_{\delta,\ol{\va}}(x))
\leq C\(1-\f{2(\delta-|y|)}{\delta}\)^2
f\(g_{\delta,\ol{\va}}\(\f{\delta y}{|y|},z\)\).
\]
Together with a direct calculation, this gives
\[
E_{\ol{\va}}(\vp_{\delta,\ol{\va}},(B_{\delta}^2\backslash B_{\f{\delta}{2}}^2)
\times(z_-,z_+))\leq C\delta E_{\ol{\va}}(g_{\delta,\ol{\va}},\Ga_{\delta,1}),
\]
which is \eqref{Wvaest1}. For restrictions 
$\vp_{\delta,\ol{\va}}^{\pm}:=\vp_{\delta,\ol{\va}}|_{(B_{\delta}^2\backslash 
B_{\f{\delta}{2}}^2)\times\{z_{\pm}\}}$, it follows from \eqref{gdeltavaFubini} 
that
\[
E_{\ol{\va}}(\vp_{\delta,\ol{\va}}^{\pm},(B_{\delta}^2\backslash 
B_{\f{\delta}{2}}^2)\times\{z_{\pm}\})
\leq CE_{\ol{\va}}(g_{\delta,\ol{\va}},\Ga_{\delta,1}),
\]
which is \eqref{WpluesminusC11}.
\end{proof}

\section{The clearing-out property}\label{SectionClear}

The following two propositions establish the clearing-out property for local minimizers of \eqref{GLfunctional}, first in the interior and then near the boundary.

\begin{prop}\label{InteriorClearingout}
Let $r>0$ and $x_0\in\R^3$. Assume that $u\in H^1(B_{2r}(x_0),\R^m)$ is a local minimizer of \eqref{GLfunctional} and satisfies
$\|u\|_{L^{\ift}(B_{2r}(x_0))}\leq M$. There exists $\delta>0$, depending only on $f$, $M$, and $\cN$, such that if $\va\in(0,\delta r)$ and
\[
\theta_{\va}(u;x_0,r)\leq\delta\log\f{r}{\va},
\]
then $\theta_{\va}(u;x_0,\f{r}{2})\leq C$, where $C>0$ depends only on
$f$, $M$, and $\cN$.
\end{prop}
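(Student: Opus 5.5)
By scaling we take $x_0=0$, $r=1$; then $\va\in(0,\delta)$ and $E_{\va}(u,B_1)\leq\delta|\log\va|$. The plan is a comparison argument on a well-chosen sphere, using Lemma \ref{Luckhauslemma} and Lemma \ref{ExtensionLemma2}, followed by the monotonicity formula.

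\textbf{Step 1: choice of a good sphere.} The naive Fubini choice gives a sphere on which the energy is only $\leq C\delta|\log\va|$, which is not bounded. To do better, we exploit the logarithmic scale. Split $(\f12,1)$ into $N\approx\delta^{-1/2}$ dyadic-type shells. I would then use the monotonicity formula \eqref{Mo11} together with a pigeonhole argument to find a radius $\rho\in(\f12,1)$ where $\rho E_{\va}(u,\pa B_\rho)\leq C\delta|\log\va|$. A cleaner alternative, the one I would actually pursue, is to work directly with radii $\rho$ and only require the \emph{logarithmic smallness} \eqref{log1} on $\pa B_\rho$; after rescaling $B_\rho$ to $B_1$, Fubini gives exactly this. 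In that case the target bound is not a direct comparison but an improvement of decay.

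\textbf{Step 2: comparison map.} On the good sphere, Lemma \ref{Luckhauslemma} (applied after scaling to $\pa B_\rho$ with $\ol\va=\va/\rho$) produces $v\in H^1(\pa B_\rho,\cN)$ with $\int|\na_\top v|^2\leq CE_{\va}(u,\pa B_\rho)$ and an interpolation $w$ on a shell of relative width $h$ with energy $\leq Ch\,E_\va(u,\pa B_\rho)$. Inside, Lemma \ref{ExtensionLemma2} extends $v$ to an $\cN$-valued map with Dirichlet energy $\leq C\rho\int|\na_\top v|^2$; here $f=0$. Minimality then yields
\[
E_\va(u,B_\rho)\leq C\rho\,E_\va(u,\pa B_\rho).
\]
This is the key inequality. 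The point is that the smallness assumption kills the topological obstruction, since all $2$-cells are null-homotopic and no $|\log\va|$ cost appears.

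\textbf{Step 3: ODE/iteration.} Set $\Phi(\rho)=E_\va(u,B_\rho)$. For every $\rho$ at which \eqref{log1} holds, the key inequality reads $\Phi\leq C\rho\Phi'$. By Chebyshev, \eqref{log1} fails only on a set of radii of small measure, since $\int_{1/2}^1\Phi'\leq\delta|\log\va|$ and the bad set $\{\Phi'>\delta'|\log\va|\}$ has measure $\leq\delta/\delta'$. Integrating this differential inequality gives only power decay, however, and does not by itself yield an $O(1)$ bound. The main obstacle is precisely getting $O(1)$.

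To overcome it, I would first use Step 2 at a single good radius $\rho_1\in(\f34,1)$ with $E_\va(u,\pa B_{\rho_1})\leq C\delta|\log\va|$. This shows that $u$ is close to an $\cN$-valued map that carries no topology. I would then run the Luckhaus--Proposition \ref{Luckhaus11} compactness argument by contradiction. Suppose $\theta_\va(u_\va;0,\f12)\to\infty$ along a sequence with $\delta\to0$. The normalized energy measures $e_\va/\theta$ then have no concentrated line, and the monotonicity formula \eqref{Mo11} excludes point concentration. Alternatively, as in \cite{Can17}, I would iterate over scales $\rho_k=2^{-k}$: if the energy on $\pa B_{\rho}$ is $\geq K$ on all good radii of a scale, one gains a factor. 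Combining this with $\theta$ being almost monotone by \eqref{Mo11}, the energy stabilizes at $C$. I expect this scale iteration, which turns smallness relative to $|\log\va|$ into an absolute bound, to be the hardest part. I would model it on \cite[Proposition~33]{Can17}, using that the projected map $v$ has energy bounded by $E_\va(u,\pa B_\rho)$ with no logarithmic defect term.
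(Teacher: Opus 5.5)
Your proposal has a genuine gap, and it is the one you flag yourself. The comparison in your Step~2 only gives the linear inequality $E_\va(u,B_\rho)\le C\rho\,E_\va(u,\pa B_\rho)$, i.e.\ $F\le C\rho F'$ for $F(\rho):=E_\va(u,B_\rho)$. At best this improves $F(\f r2)$ over $F(r)$ by a fixed factor, so the bound stays of order $r\delta\log\f r\va$. As soon as $C\ge1$ it contains no more information than the monotonicity formula \eqref{Mo11}, since ``$\rho\mapsto F(\rho)/\rho$ non-decreasing'' is exactly $F\le\rho F'$. So no iteration over scales built on this inequality can produce an $O(1)$ bound at radius $\f r2$. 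The compactness alternative is not yet an argument either. Ruling out line concentration of the normalized energy is, in this framework, a \emph{consequence} of clearing-out (cf.\ Lemma~\ref{smallmu0re}), so invoking it here is circular.

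The missing ingredient is a \emph{sublinear} comparison estimate. The paper works on $D^\va:=\{\rho\in(\f r2,r):E_\va(u,\pa B_\rho)\le4\delta\log\f r\va\}$, which has measure at least $\f r4$ by Fubini, and proves \eqref{Iteene}:
\[
E_\va(u,B_\rho)\le Cr\Big(\big(E_\va(u,\pa B_\rho)\big)^{\f12}+1\Big)\qquad\text{for }\rho\in D^\va .
\]
The Luckhaus shell costs $O(1)$, as in your Step~2. The point is that the $\cN$-valued filling of $v_{\ol\va}$ must cost only $C\|\na_\top v_{\ol\va}\|_{L^2(\pa B_1)}$, not its square; this is \eqref{12E}. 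The paper cites Lemma~\ref{ExtensionLemma2} at this step, but, as your own computation shows, the linear bound stated there is not sufficient.

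Such an extension is of Hardt--Kinderlehrer--Lin type. The harmonic extension has energy bounded by $[v]^2_{H^{1/2}(\pa B_1)}\le C\|v\|_{L^\infty}\|\na_\top v\|_{L^2(\pa B_1)}$, by interpolation. One then returns to $\cN$ with an averaged Hardt--Lin projection, e.g.\ after lifting to the universal cover of $\cN$, which is compact and simply connected by \ref{A1}.

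The square root makes the differential inequality superlinear. Suppose $F(\f r2)\ge2Cr$. Then on $D^\va$ one gets $F'\ge F^2/(4C^2r^2)$, i.e.\ $(1/F)'\le-(4C^2r^2)^{-1}$, while $1/F$ is non-increasing everywhere. Integrating over $(\f r2,r)$ gives $1/F(\f r2)\ge|D^\va|/(4C^2r^2)\ge(16C^2r)^{-1}$. Hence $\theta_\va(u;x_0,\f r2)\le C$ no matter how large $F(r)$ is. This is \cite[Lemma~46]{Can17}, and it is the mechanism that turns logarithmic smallness into an absolute bound. The smallness $\delta\log\f r\va$ itself is used only to make all $2$-cells null-homotopic in Lemma~\ref{Luckhauslemma} and to keep the shell energy bounded.
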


\begin{proof}
The proof is analogous to \cite[Proposition~8]{Can17}; we sketch the key steps. By translation, we may assume $x_0=0$. Fix $\va\in(0,\f{r}{4})$ and define
\[
D^{\va}:=\left\{\rho\in\(\f{r}{2},r\):E_{\va}(u,\pa B_{\rho})
\leq 4\delta\log\f{r}{\va}\right\},
\]
where $\delta>0$ will be chosen below. By Fubini's theorem,
$\HH^1(D^{\va})\geq\f{r}{4}$. Choosing $\delta\in(0,1)$ sufficiently
small and then choosing $\delta'\in(0,1)$ accordingly, we have, for any
$\va\in(0,\delta'r)$ and any $\rho\in D^{\va}$,
\[
E_{\va}(u,\pa B_{\rho})\leq 8\delta\log\f{\rho}{\va}.
\]
We claim that if $\delta=\delta(f,M,\cN)\in(0,1)$ is sufficiently small, then
\be
E_{\va}(u,B_{\rho})\leq Cr\left((E_{\va}(u,\pa B_{\rho}))^{\f{1}{2}}+1\right)
\label{Iteene}
\ee
for any $\va\in(0,\delta'r)$ and any $\rho\in D^{\va}$, where
$C=C(f,M,\cN)>0$. Assuming \eqref{Iteene}, and applying
\cite[Lemma~46]{Can17}, we obtain $\theta_{\va}(u;0,\f{r}{2})\leq C$.
The proposition follows after replacing $\delta$ by $\min\{\delta',\delta\}$.

It remains to prove \eqref{Iteene}. Set $\ol{\va}:=\f{\va}{\rho}$ and
$u_{\ol{\va}}(x):=u(\rho x)$. Applying Lemma~\ref{Luckhauslemma}, we
construct $v_{\ol{\va}}\in H^1(\pa B_1,\cN)$ and
$\vp_{\ol{\va}}\in H^1(B_1\backslash B_{1-h(\ol{\va})},\R^m)$ that satisfy
\eqref{interpoL}, \eqref{VboundL}, and \eqref{WboundL}, where
$h(\ol{\va})=\ol{\va}^{\f{1}{2}}|\log\ol{\va}|$. By
Lemma~\ref{ExtensionLemma2} and \eqref{VboundL}, there exists
$v_{\ol{\va},1}\in H^1(B_1,\cN)$ with
$v_{\ol{\va},1}|_{\pa B_1}=v_{\ol{\va}}$ and
\be
\int_{B_1}|\na v_{\ol{\va},1}|^2\leq
C(E_{\ol{\va}}(u_{\ol{\va}},\pa B_1))^{\f{1}{2}}.
\label{12E}
\ee
Define $w_{\ol{\va}}:B_1\to\R^m$ by
\[
w_{\ol{\va}}(x):=\left\{\begin{aligned}
&\vp_{\ol{\va}}(x)&&\text{if }x\in B_1\backslash B_{1-h(\ol{\va})},\\
&v_{\ol{\va},1}\(\f{x}{1-h(\ol{\va})}\)&&\text{if }x\in B_{1-h(\ol{\va})}.
\end{aligned}\right.
\]
From \eqref{VboundL}, \eqref{WboundL}, and \eqref{12E}, we obtain
\be
E_{\ol{\va}}(w_{\ol{\va}},B_1)
\leq C\left((E_{\ol{\va}}(u_{\ol{\va}},\pa B_1))^{\f{1}{2}}+1\right).
\label{Wolinequality}
\ee
Set $w_{\va}(x):=w_{\ol{\va}}(\f{x}{\rho})$ for $x\in B_{\rho}$. Rescaling
\eqref{Wolinequality} gives
\[
E_{\va}(w_{\va},B_{\rho})
\leq Cr\left((E_{\va}(u,\pa B_{\rho}))^{\f{1}{2}}+1\right).
\]
Since $w_{\va}|_{\pa B_{\rho}}=u|_{\pa B_{\rho}}$, the minimizing property of
$u$ gives \eqref{Iteene}.
\end{proof}

The next proposition extends the clearing-out estimate to the boundary
setting, where the domain is $C^{2,1}$ and the minimizer has prescribed
boundary values.

\begin{prop}\label{BoundaryClearingout}
Let $U\subset\R^3$ be a bounded $C^{2,1}$ domain with parameters $M_{U,2}$ and $r_{U,2}$. Let $x_0\in\pa U$ and $r\in(0,r_{U,2})$. Let
$g\in H^1(T_{2r}^U(x_0),\R^m)$ satisfy
\be
\|g\|_{L^{\ift}(T_{2r}^U(x_0))}+E_{\va}(g,T_{2r}^U(x_0))\leq M.
\label{Boundaryclearingoutestimate1}
\ee
Assume that $u\in H^1(U_{2r}(x_0),\R^m)$ is a local minimizer of
\eqref{GLfunctional} in $U_{2r}(x_0)$, up to $T_{2r}^U(x_0)$, and that
$\|u\|_{L^{\ift}(U_{2r}(x_0))}\leq M$. There exist $\delta\in(0,1)$ and
$C>0$, depending only on $M$, $M_{U,2}$, $r_{U,2}$, $f$, and $\cN$, such
that if $\va\in(0,\delta r)$ and
\be
\theta_{\va}^U(u;x_0,r)<\delta\log\f{r}{\va},
\label{thetaUleqC}
\ee
then $\theta_{\va}^U(u;x_0,\f{r}{2})\leq C$.
\end{prop}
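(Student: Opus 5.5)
The plan mirrors the proof of Proposition~\ref{InteriorClearingout}, with half-balls $U_\rho(x_0)$ in place of balls. The new feature is that $\pa(U_\rho(x_0))$ consists of the curved cap $U\cap\pa B_\rho(x_0)$ and the flat piece $T_\rho^U(x_0)$, where the datum $g$ is prescribed.

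\textbf{Step 1: reduce to a half-ball.} Translate so that $x_0=0$. Straighten the boundary with the $C^{2,1}$ chart of Definition~\ref{RegularityBoundary}. Since $r<r_{U,2}$, the chart is a bi-Lipschitz map with constants depending only on $M_{U,2},r_{U,2}$. It changes the energy only by multiplicative constants, and the constants $\delta,C$ can absorb this. It is convenient to keep working with $U_\rho$ and the spherical caps $(\pa B_\rho)\cap U$, which are bi-Lipschitz to half-spheres uniformly in $\rho\in(r/2,r)$.

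\textbf{Step 2: choose good radii.} Define
\[
D^{\va}:=\Bigl\{\rho\in(\tfrac r2,r):E_{\va}(u,U\cap\pa B_\rho)\leq 4\delta\log\tfrac r\va\Bigr\}.
\]
By Fubini, $\HH^1(D^{\va})\geq r/4$. The aim is the boundary analogue of \eqref{Iteene}:
\begin{equation}
E_{\va}(u,U_\rho)\leq Cr\Bigl(\bigl(E_{\va}(u,U\cap\pa B_\rho)+M\bigr)^{\f12}+1\Bigr)
\label{bdryIteene}
\end{equation}
for every $\rho\in D^{\va}$. Once \eqref{bdryIteene} holds, the ODE/iteration argument of \cite[Lemma~46]{Can17} applies verbatim, since $\rho\mapsto E_\va(u,U_\rho)$ has derivative $E_\va(u,U\cap\pa B_\rho)$. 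It gives $\theta^U_\va(u;0,\tfrac r2)\leq C$.

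\textbf{Step 3: build the competitor on the boundary sphere.} Rescale by $\rho$ and set $\ol\va=\va/\rho$. Consider the whole boundary $\pa(U_\rho)$, a Lipschitz sphere made of the cap and the flat piece $T_\rho^U$. On it, take the map equal to $u$ on the cap and to $g$ on $T_\rho^U$. After rescaling, its energy is at most $\rho^{-1}(E_\va(u,U\cap\pa B_\rho)+E_\va(g,T^U_\rho))$. This is $\leq 8\delta|\log\ol\va|+CM$, hence $\leq 9\delta|\log\ol\va|$ once $\va<\delta r$ is small.

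Map $\pa(U_\rho)$ bi-Lipschitz onto $\pa B_1$, and $U_\rho$ onto $B_1$. Then apply Lemma~\ref{Luckhauslemma} to obtain an $\cN$-valued $v_{\ol\va}$ on the sphere and a shell interpolation $w_{\ol\va}$ with energy $\leq Ch(\ol\va)E$. Next extend $v_{\ol\va}$ inside by Lemma~\ref{ExtensionLemma2}, with Dirichlet energy $\leq CE^{1/2}$ after the usual scaling/Fubini choice. Glue the pieces and pull back, which yields a competitor with boundary trace equal to $u$ on $\pa(U_\rho)$. This competitor is admissible under Definition~\ref{Localminimizerdef}(2), and \eqref{bdryIteene} follows. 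The $M$ term accounts for $g$, which may be far from $\cN$ (near points of $\cA$), but it has bounded energy by \eqref{Boundaryclearingoutestimate1}.

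\textbf{Main obstacle.} The delicate point is that the Luckhaus construction must be run on a Lipschitz sphere with a corner along $\pa B_\rho\cap\pa U$. The cell decomposition of Lemma~\ref{Luckhauslemma} and the $L^\infty$-closeness of the $1$-skeleton to $\cN$ both depend on the bi-Lipschitz constants. I would handle this by transferring everything through a single bi-Lipschitz homeomorphism $U_\rho\to B_1$ with constants depending only on $M_{U,2},r_{U,2}$. This is legitimate because every estimate in Lemmas~\ref{Luckhauslemma} and \ref{ExtensionLemma2} is stable under such maps, up to constants, and $f$ is unaffected by reparametrization.

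A second subtlety is that the $1$-skeleton estimate needs $E_{\ol\va}$ on the sphere to be $\leq\delta|\log\ol\va|$ rather than merely bounded. The bounded contribution $M$ from $g$ is absorbed by taking $\delta r$ small, so that $\log(r/\va)\geq M/\delta$.
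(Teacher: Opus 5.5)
Your proposal follows the paper's proof essentially step for step. You pick good radii $\rho\in D^{\va}$ by Fubini and bound $E_{\va}(u,\pa(U_\rho))$ by $E_{\va}(g,T^U_\rho)+E_{\va}(u,U\cap\pa B_\rho)$. You then transfer the Luckhaus-plus-extension construction of Proposition~\ref{InteriorClearingout} to $U_\rho$ through a uniformly bi-Lipschitz map onto a ball, obtaining $F_{\va}(\rho)\leq Cr\bigl((F_{\va}'(\rho))^{1/2}+1\bigr)$, and conclude with \cite[Lemma~46]{Can17}.

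Two small corrections. First, the bounded $g$-term cannot be absorbed with a single $\delta$: with only $\va<\delta r$ you would need $\delta\log\frac{1}{2\delta}\gtrsim M$, which fails as $\delta\to0$. As the paper does, fix the Luckhaus smallness constant first, then take a separate threshold $\delta'$ (of order $e^{-CM/\delta}$) for $\va/r$, and use $\min\{\delta,\delta'\}$. Second, the two-dimensional energies on $\pa(U_\rho)$ are invariant under $x\mapsto x/\rho$, $\va\mapsto\va/\rho$, so no factor $\rho^{-1}$ appears, consistent with the bound $8\delta|\log\ol{\va}|+CM$ you actually use.
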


\begin{proof}
The proof is analogous to \cite[Proposition~53]{Can17}; we sketch the key
steps. By translation, we may assume $x_0=0$. Define
\[
F_{\va}(\rho):=\int_0^{\rho}E_{\va}(u,U\cap\pa B_s)\ud s,
\]
so that $F_{\va}'(\rho)=E_{\va}(u,U\cap\pa B_{\rho})$ for $\HH^1$-a.e.
$\rho\in(0,r)\subset(0,r_{U,2})$. Set
\[
D^{\va}:=\left\{\rho\in\(\f{r}{2},r\):E_{\va}(u,U\cap\pa B_{\rho})
\leq 4\delta\log\f{r}{\va}\right\}.
\]
By Fubini's theorem, $\HH^1(D^{\va})\geq\f{r}{4}$. For any $\rho\in D^{\va}$,
\eqref{Boundaryclearingoutestimate1} gives
\[
\begin{aligned}
E_{\va}(u,\pa(U_{\rho}))
&=E_{\va}(g,T_{\rho}^U)+E_{\va}(u,U\cap\pa B_{\rho})\leq C+4\delta\log\f{r}{\va}
\leq 8\delta\log\f{\rho}{\va}
\end{aligned}
\]
for any $\va\in(0,\delta'r)$, with $\delta'\in(0,1)$ sufficiently small.
Since $U_{\rho}$ is bi-Lipschitz equivalent to a ball, with constants
controlled by $M_{U,2}$ and $r_{U,2}$, we may repeat the argument of
Proposition~\ref{InteriorClearingout} to obtain
\[
F_{\va}(\rho)\leq Cr\left((E_{\va}(u,\pa(U_{\rho})))^{\f{1}{2}}+1\right)
\leq Cr\left((F_{\va}'(\rho))^{\f{1}{2}}+1\right).
\]
Applying \cite[Lemma~46]{Can17} to this inequality yields
$\theta_{\va}^U(u;0,\f{r}{2})\leq C$. The proposition follows after replacing
$\delta$ with $\min\{\delta',\delta\}$.
\end{proof}

\section{Partial regularity theory}\label{SectionPartial}

\subsection{Interior partial regularity}

We now collect the interior partial regularity properties of local minimizers of \eqref{GLfunctional}. The first lemma gives an $L^\infty$ bound on $e_{\va}(u)$ under the assumption that $u$ is already uniformly close to $\cN$.

\begin{lem}\label{lemusepar1}
Let $M,r>0$ and $x_0\in\R^3$. Assume that
$u\in H^1(B_{2r}(x_0),\R^m)$ is a local minimizer of \eqref{GLfunctional}
that satisfies $\|u\|_{L^{\ift}(B_{2r}(x_0),\R^m)}\leq M$. There exists
$\delta\in(0,1)$, depending only on $f$ and $\cN$, such that if
$\va\in(0,r)$, $\|\dist(u,\cN)\|_{L^{\ift}(B_{2r}(x_0))}<\delta$, and
$\theta_{\va}(u;x_0,2r)<\delta$, then
\[
r^2\|e_{\va}(u)\|_{L^{\ift}(B_r(x_0))}\leq C,
\]
where $C>0$ depends only on $f$, $M$, and $\cN$.
\end{lem}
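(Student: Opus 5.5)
By translation and scaling we reduce to $x_0=0$ and $r=1$. Set $\tilde u(x):=u(rx)$ and $\tilde\va:=\va/r\in(0,1)$. Then $\theta_{\va}(u;0,2r)=\theta_{\tilde\va}(\tilde u;0,2)$ and $r^2e_{\va}(u)(rx)=e_{\tilde\va}(\tilde u)(x)$. So it suffices to show $\|e_{\va}(u)\|_{L^\infty(B_1)}\leq C$ when $r=1$ and $\va\in(0,1)$. The strategy is the Chen--Struwe/Schoen $\va$-regularity argument for the Ginzburg--Landau system near $\cN$. It has two ingredients: a Bochner-type subharmonicity inequality for $e_{\va}(u)$, valid because $u$ takes values in $\cN_\delta$, and the monotonicity formula of Proposition~\ref{MonotoneInterior}. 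By elliptic regularity, $u$ is smooth and solves \eqref{EL}. By Lemma~\ref{Apriori}, $\|\na u\|_{L^\infty(B_{3/2})}\leq C\va^{-1}$, so the quantities below are finite.

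The first step is the differential inequality. Write $u=\Pi_{\cN}(u)+d\,\nu$ with $d=\dist(u,\cN)$, and use \eqref{EL}, \eqref{mfMfestimates}, \eqref{Dfyestimate} and the nondegeneracy \ref{A4}. I would aim to show that, for $\delta\leq\delta_f$ small,
\[
-\Delta e_{\va}(u)\leq C\,e_{\va}(u)^2 \quad\text{in } B_{2}.
\]
The computation is as follows. One has $\Delta\frac12|\na u|^2=|D^2u|^2+\va^{-2}D^2f(u)[\pa_iu,\pa_iu]$ and $\Delta(\va^{-2}f(u))=\va^{-2}D^2f(u)[\pa_iu,\pa_iu]+\va^{-4}|Df(u)|^2$. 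The only bad term is $D^2f(u)[\pa_iu,\pa_iu]$ in its tangential directions. Near $\cN$, $D^2f(y)$ is nonnegative in the normal directions, while its tangential and mixed parts are $O(d)=O(f^{1/2})$. Hence $\va^{-2}D^2f[\na u,\na u]\geq -C\va^{-2}f(u)^{1/2}|\na u|^2\geq -C e_{\va}(u)^{3/2}\va^{-1}$. This is the step I expect to be the main obstacle: as written the bound is not of the form $Ce^2$ and must be absorbed. I would first try absorbing it with Young's inequality into the good term $\va^{-4}|Df|^2\geq m_f^2\va^{-4}d^2\geq c\,\va^{-4}f$ together with $|D^2u|^2$. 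The alternative, as in Chen--Struwe, is to work with $e=\frac12|\na u|^2+\va^{-2}f$ and use $|D^2u|^2\ge|\na|\na u||^2$ to reach $-\Delta e\leq Ce^2$. The smallness of $\delta$ is used exactly to guarantee that $D^2f$ has the right sign and size on $\cN_\delta$.

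The second step is Schoen's trick. Let $\sigma\in(0,1)$ maximize $(1-\sigma)^2\sup_{B_\sigma}e_{\va}(u)$, attained at $x_1\in\ol B_\sigma$, and set $e_0:=e_{\va}(u)(x_1)$. Arguing by contradiction, suppose $(1-\sigma)^2e_0>C_1$ for a large $C_1$. Set $\rho_0:=(1-\sigma)/2$ and rescale $B_{\rho_0}(x_1)$ by the factor $e_0^{-1/2}$. The rescaled energy density $\hat e$ satisfies $\hat e(0)=1$, $\hat e\leq 4$ on $B_{R}$ with $R=\rho_0e_0^{1/2}\geq 1$, and $-\Delta\hat e\leq C\hat e^2\leq 4C\hat e$. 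Moser's mean-value inequality then gives
\[
1\leq C\int_{B_1}\hat e=C\,e_0^{1/2}\int_{B_{e_0^{-1/2}}(x_1)}e_{\va}(u).
\]
The rescaling keeps the structure of \eqref{EL} with $\va$ replaced by $\va e_0^{1/2}$, and the inequality $-\Delta e\leq Ce^2$ is scale-invariant, so this step is legitimate.

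The third step concludes. Proposition~\ref{MonotoneInterior}(1) shows that $\theta_{\va}(u;x_1,\cdot)$ is nondecreasing, so
\[
e_0^{1/2}\int_{B_{e_0^{-1/2}}(x_1)}e_{\va}(u)\leq \theta_{\va}(u;x_1,1)\leq 2\,\theta_{\va}(u;0,2)<2\delta .
\]
This contradicts the lower bound $1\leq C\cdot 2\delta$ once $\delta$ is small. Therefore $(1-\sigma)^2\sup_{B_\sigma}e_{\va}(u)\leq C_1$ for every $\sigma\in(0,1)$, and choosing $\sigma=\tfrac12$ gives $\sup_{B_{1/2}}e_{\va}(u)\leq 4C_1$. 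To reach the full ball $B_1$ stated in the lemma, rerun the same argument on the balls $B_{1}(y)$, $y\in B_1$, which lie in $B_2$ and have density at most $2\delta$. The dependence of $C$ on $M$ enters only through the interior gradient estimate in Lemma~\ref{Apriori} and the bounds on $f$ over $\{|y|\le M\}$.
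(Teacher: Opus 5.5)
Your proposal is correct and follows the paper's route. The paper obtains $-\Delta e_{\va}(u)\leq C(e_{\va}(u))^2$ on $\cN_\delta$ by citing \cite[Lemma~3.2]{CLR18}, and then runs the Schoen-type argument of \cite[Lemma~7]{MZ10} together with the monotonicity formula \eqref{Mo11}, exactly as in your second and third steps. The step you flagged as the main obstacle closes with your first suggestion: by Young's inequality $C\va^{-2}f(u)^{1/2}|\na u|^2\leq \tfrac{c}{2}\va^{-4}f(u)+C'|\na u|^4$, and the first term is absorbed by $\va^{-4}|Df(u)|^2\geq c\,\va^{-4}f(u)$.
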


\begin{proof}
When $\|\dist(u,\cN)\|_{L^{\ift}(B_{2r}(x_0))}<\delta$ with
$\delta=\delta(f,\cN)\in(0,1)$ sufficiently small, \cite[Lemma~3.2]{CLR18}
implies the Bochner-type inequality
\be
-\Delta e_{\va}(u)\leq C(e_{\va}(u))^2.
\label{Bochner1}
\ee
The proof then follows from the argument of \cite[Lemma~7]{MZ10}, using
\eqref{Bochner1} together with the monotonicity formula \eqref{Mo11}.
\end{proof}

The second lemma shows that a small density forces $u$ to be uniformly close
to $\cN$.

\begin{lem}\label{lemusepar11}
Let $M,r>0$ and $x_0\in\R^3$. Assume that
$u\in H^1(B_{2r}(x_0),\R^m)$ is a local minimizer of \eqref{GLfunctional}
that satisfies $\|u\|_{L^{\ift}(B_{2r}(x_0),\R^m)}\leq M$. For any $\eta>0$,
there exists $\delta\in(0,1)$, depending only on $\eta$, $f$, $M$, and
$\cN$, such that if $\va\in(0,\delta r)$ and
$\theta_{\va}(u;x_0,2r)<\delta$, then
$\|\dist(u,\cN)\|_{L^{\ift}(B_r(x_0))}<\eta$.
\end{lem}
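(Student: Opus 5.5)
The plan is to argue by contradiction, combining the interior monotonicity formula \eqref{Mo11} with the a priori gradient bound of Lemma \ref{Apriori}. By translation and scaling ($x\mapsto x_0+rx$, $\va\mapsto\va/r$) we may assume $x_0=0$ and $r=1$. The density $\theta_{\va}$ is scale invariant in three dimensions, so the hypotheses become $\va<\delta$ and $\theta_{\va}(u;0,2)<\delta$. The local minimizer $u$ solves \eqref{EL} and is smooth by elliptic regularity, since $f$ is smooth and $u$ is bounded.

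Suppose the conclusion fails at some point $y\in B_1$, so that $\dist(u(y),\cN)\geq\eta$. Lemma \ref{Apriori}, applied on $B_{2\va}(y)\subset B_2$, gives $|\na u|\leq C_1\va^{-1}$ on $B_{\va}(y)$, where $C_1=C_1(f,M,\cN)$. Hence
\[
\dist(u(z),\cN)\geq\f{\eta}{2}\quad\text{for } z\in B_{\lambda\va}(y),\qquad \lambda:=\min\left\{\f{\eta}{2C_1},1\right\}.
\]
Since $f$ is continuous and positive on the compact set $\{\dist(\cdot,\cN)\geq\eta/2,\ |\cdot|\leq M\}$, it is bounded below there by some $c_\eta>0$; this compactness uses the bound $\|u\|_{L^\infty}\leq M$.

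Next I will use monotonicity. By \eqref{Mo11} centered at $y$, $\theta_{\va}(u;y,\cdot)$ is nondecreasing on $(0,1)$ because $B_1(y)\subset B_2$. Therefore
\[
\theta_{\va}(u;y,\lambda\va)\leq\theta_{\va}(u;y,1)\leq\int_{B_1(y)}e_{\va}(u)\leq 2\theta_{\va}(u;0,2)<2\delta.
\]
On the other hand,
\[
\theta_{\va}(u;y,\lambda\va)\geq\f{1}{\lambda\va}\cdot\f{1}{\va^2}\int_{B_{\lambda\va}(y)}f(u)\geq\f{1}{\lambda\va^3}\cdot c_\eta\cdot\f{4\pi}{3}(\lambda\va)^3=\f{4\pi}{3}c_\eta\lambda^2.
\]
Choosing $\delta<\f{2\pi}{3}c_\eta\lambda^2$ gives a contradiction, and this $\delta$ depends only on $\eta$, $f$, $M$, and $\cN$. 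The condition $\va<\delta$ is needed to ensure $\lambda\va<1$ and $2\va<1$, so that all the balls used stay inside $B_2$.

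The main point requiring care is the justification of the gradient bound near $y$: Lemma \ref{Apriori} requires a classical $C^2$ solution with an $L^\infty$ bound on $B_{2\va}(y)$. This holds once smoothness of the minimizer is established, which follows from standard elliptic regularity given the $L^\infty$ bound. Everything else in the argument is elementary.
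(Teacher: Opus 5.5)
Your proposal is correct and follows essentially the same argument as the paper: Lemma~\ref{Apriori} spreads a pointwise bad value of $u$ over a ball of radius comparable to $\va$, and the interior monotonicity formula \eqref{Mo11} compares the potential on that ball with $\theta_{\va}(u;0,2)$. The only cosmetic difference is that the paper tracks $f(u(y_0))$ directly and optimizes the radius $\rho=f(u(y_0))\va/(2C_0)$ to get $f(u(y_0))\lesssim\delta^{1/3}$ before converting small $f$ into small $\dist(u,\cN)$. You instead work with $\dist(u,\cN)$ at a fixed radius $\lambda\va$ and use the compactness lower bound $c_\eta$ on $f$.
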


\begin{proof}
By scaling and translation, we may assume $r=1$ and $x_0=0$. Fix
$y_0\in B_1$ and set $\al_0:=f(u(y_0))$. By Lemma~\ref{Apriori},
\[
0\leq\al_0\leq f(u(y))+\f{C_0}{\va}|y-y_0|,\quad y\in B_1(y_0),
\]
where $C_0>0$ depends only on $f$, $M$, and $\cN$. Fix
$\rho\in(0,\f{1}{2})$. The monotonicity formula \eqref{Mo11} gives
\be
\f{\rho^2}{\va^2}\left(\al_0-\f{C_0\rho}{\va}\right)
\leq\f{C}{\rho}\int_{B_{\rho}(y_0)}\f{f(u)}{\va^2}\ud y
\leq C_1\theta_{\va}(u;0,2)<C_1\delta,
\label{leqdeltause}
\ee
where $C_1>0$ depends only on $f$, $M$, and $\cN$. Set
$\rho:=\f{\al_0\va}{2C_0}$. Since $\va\in(0,\delta)$, we have
$\rho\in(0,\f{1}{2})$ for $\delta=\delta(f,M,\cN)\in(0,1)$ sufficiently
small. From \eqref{leqdeltause}, it follows that $ 0\leq\al_0\leq 2C_1^{\f{1}{3}}C_0^{\f{2}{3}}\delta^{\f{1}{3}} $. Since $\|u\|_{L^{\ift}}\leq M$ and $\cN$ is compact, we may choose $\delta\in(0,1)$ smaller, depending on $\eta$, $f$, $M$, and $\cN$, so that $f(u(y_0))<\delta$ implies $\dist(u(y_0),\cN)<\eta$. As $y_0\in B_1$ is arbitrary, the proof is complete.
\end{proof}

Combining the preceding two lemmas gives the basic interior
$\varepsilon$-regularity estimate.

\begin{prop}\label{InteriorPartialRegularity1}
Let $M,r>0$ and $x_0\in\R^3$. Assume that
$u\in H^1(B_{2r}(x_0),\R^m)$ is a local minimizer of \eqref{GLfunctional}
in $B_{2r}(x_0)$ and satisfies
$\|u\|_{L^{\ift}(B_{2r}(x_0),\R^m)}\leq M$. There exists
$\delta\in(0,1)$, depending only on $f$, $M$, and $\cN$, such that if
$\theta_{\va}(u;x_0,2r)<\delta$ and $\va\in(0,\delta r)$, then
\be
r^2\|e_{\va}(u)\|_{L^{\ift}(B_r(x_0))}\leq 1.
\label{leq1partial}
\ee
\end{prop}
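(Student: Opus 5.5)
We combine Lemmas \ref{lemusepar11} and \ref{lemusepar1}, working at the intermediate scale $\f{r}{2}$ so that each lemma's hypotheses hold on the ball where it is applied. By translation and scaling, $x\mapsto u(x_0+rx)$ with $\va$ replaced by $\va/r$, we may assume $x_0=0$ and $r=1$. Note that $\theta_{\va}$ scales correctly and the conclusion \eqref{leq1partial} is scale invariant.

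\emph{Step 1 (closeness to $\cN$).} Let $\delta_1=\delta_1(f,\cN)$ be the constant of Lemma \ref{lemusepar1}. Apply Lemma \ref{lemusepar11} with $\eta:=\delta_1$, obtaining $\delta_2=\delta_2(f,M,\cN)$. If $\theta_{\va}(u;0,2)<\delta_2$ and $\va<\delta_2$, then $\|\dist(u,\cN)\|_{L^\ift(B_1)}<\delta_1$.

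\emph{Step 2 (small density at scale $\f12$).} Since $B_1\subset B_2$, we have
\[
\theta_{\va}(u;0,1)\leq 2\theta_{\va}(u;0,2).
\]
So if $\theta_{\va}(u;0,2)<\f{\delta_1}{2}$, then $\theta_{\va}(u;0,1)<\delta_1$. This is the hypothesis of Lemma \ref{lemusepar1} on the ball $B_{2\cdot\frac12}=B_1$ with radius $\f12$. Applying that lemma with radius $\f12$ (using $\va<\f12$) yields
\[
\tfrac14\|e_{\va}(u)\|_{L^\ift(B_{1/2})}\leq C_2(f,M,\cN).
\]

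\emph{Step 3 (upgrading to the constant $1$ on $B_1$).} The statement asks for the sharp constant $1$, and on $B_r$ rather than $B_{r/2}$. I handle both by covering. For any $y\in B_1$, consider the ball $B_{s}(y)$ with $s=\tau$ small, to be fixed. Then $B_{2s}(y)\subset B_2$, and $\theta_\va(u;y,2s)$ is controlled by the monotonicity formula \eqref{Mo11}, which gives
\[
\theta_\va(u;y,2s)\leq \theta_\va(u;y,1)\leq 2\theta_\va(u;0,2).
\]
Run Steps 1--2 around $y$ at radius $s$ with $\va<\delta s$. This gives $s^2\|e_\va(u)\|_{L^\ift(B_{s/2}(y))}\le 4C_2$, that is, $\|e_\va\|\le 4C_2 s^{-2}$. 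This is not yet $\le 1$.

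To get the constant $1$, I would instead exploit smallness quantitatively. Lemma \ref{lemusepar1} comes from the Bochner inequality \eqref{Bochner1} combined with the standard Schoen-type $\va$-regularity argument. That argument actually yields $\rho^2\sup_{B_{\rho/2}}e_\va\le C\theta_\va(u;\cdot,2\rho)$ once the density is below a threshold. Choosing $\delta$ small so that $C\delta\le \f14$ and using the covering above with $s$ comparable to $1$ gives $\|e_\va(u)\|_{L^\ift(B_1)}\le 1$.

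\emph{Main obstacle.} The genuine difficulty is Step 3. Extracting the linear dependence on the density from the proof of Lemma \ref{lemusepar1} requires rerunning the Heinz-trick argument: take the point maximizing $(\rho-|y|)^2e_\va$, rescale, and apply the mean value inequality for the subsolution of $-\Delta e\le Ce^2$, with smallness of $\int e$ given by \eqref{Mo11}. Keeping track of the constants in that argument is what produces a constant that can be made $\le1$. Everything else is bookkeeping of radii and the requirement $\va<\delta r$. The final $\delta$ is the minimum of the thresholds appearing in Steps 1--3.
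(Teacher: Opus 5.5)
Your proposal is correct and uses the same ingredients as the paper: Lemmas \ref{lemusepar11} and \ref{lemusepar1}, together with your covering, give an $O(1)$ bound on $e_{\va}(u)$ on a ball slightly larger than $B_1$, and the constant $1$ then comes from the Bochner inequality plus the mean-value inequality for small energy. The difference is that the step you flag as the main obstacle needs no rerun of the Heinz-trick argument: once you have $e_{\va}(u)\leq C$ on such a ball, \eqref{Bochner1} linearizes to $-\Delta e_{\va}(u)\leq Ce_{\va}(u)$, and the local maximum principle for subsolutions gives $\|e_{\va}(u)\|_{L^{\ift}(B_1)}\leq C\int_{B_2}e_{\va}(u)\leq C\delta$, which is at most $1$ after shrinking $\delta$. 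This two-line step is exactly the paper's proof.
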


\begin{proof}
By scaling and translation, we may assume $r=1$ and $x_0=0$. Choosing
$\delta=\delta(f,M,\cN)\in(0,1)$ sufficiently small and applying
Lemmas~\ref{lemusepar1} and \ref{lemusepar11}, we find that if
$\va\in(0,\delta)$ and $\theta_{\va}(u;0,2)<\delta$, then
\be
\|e_{\va}(u)\|_{L^{\ift}(B_{\f{3}{2}})}\leq C,
\label{evapre1}
\ee
where $C=C(f,M,\cN)>0$. By Lemma~\ref{lemusepar11} and \eqref{Bochner1}, we have
\[
-\Delta e_{\va}(u)\leq C(e_{\va}(u))^2\leq Ce_{\va}(u),
\]
where the second inequality follows from~\eqref{evapre1}. Harnack's inequality
\cite[Theorem~4.1]{HL11} gives
\[
\|e_{\va}(u)\|_{L^{\ift}(B_1)}
\leq C\int_{B_{\f{3}{2}}}e_{\va}(u)\ud x<C\delta.
\]
The estimate \eqref{leq1partial} follows by choosing $\delta$ smaller if
necessary.
\end{proof}

The next proposition shows that, once the energy density is bounded
pointwise, the potential term $f(u)$ decays at an improved rate as
$\va\to 0^+$.

\begin{prop}\label{improvedpotential}
Let $M,r>0$, $\va\in(0,r)$, and $x_0\in\R^3$. Assume that
$u\in C^{\ift}(B_{2r}(x_0),\R^m)$ solves \eqref{EL} in $B_{2r}(x_0)$ and satisfies
\be
r^2\|e_{\va}(u)\|_{L^{\ift}(B_{2r}(x_0))}\leq M.
\label{nablaufirstesti}
\ee
There exists $\delta\in(0,1)$, depending only on $f$, $M$, and $\cN$, such
that if $\va\in(0,\delta r)$, then
\[
\|f(u)\|_{L^{\ift}(B_r(x_0))}\leq C\va^4r^{-4},
\]
where $C>0$ depends only on $f$, $M$, and $\cN$.
\end{prop}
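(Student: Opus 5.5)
By scaling and translation we may assume $r=1$ and $x_0=0$. Then $\|e_{\va}(u)\|_{L^\infty(B_2)}\leq M$, and in particular $f(u)\leq M\va^2$ on $B_2$. The plan has two steps. First use the equation to get a differential inequality for $w:=\dist(u,\cN)^2$ (equivalently for $f(u)$), and then compare with a barrier to improve $f(u)\lesssim\va^2$ to $f(u)\lesssim\va^4$.

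\emph{Step 1: $u$ lies in $\cN_{\delta_f}$.} By \eqref{mfMfestimates}, once $u\in\cN_{\delta_f}$ we have $\dist(u,\cN)^2\leq m_f^{-1}f(u)\leq C\va^2$. To get $u\in\cN_{\delta_f}$ in the first place, note that $f$ vanishes only on $\cN$ and $|u|$ is bounded: by \ref{A2} and compactness, $f\geq c>0$ outside $\cN_{\delta_f}$ on bounded sets, while $f(u)\leq M\va^2<c$ for $\va<\delta$ small. (If no $L^\infty$ bound on $u$ is assumed, \ref{A2} still gives $\inf\{f(y):y\notin\cN_{\delta_f}\}>0$.) Hence $\dist(u,\cN)\leq C\va$ on $B_2$, and $\Pi_\cN(u)$ and $\nu(u)$ are smooth there.

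\emph{Step 2: subsolution inequality.} Set $w:=|u-\Pi_\cN(u)|^2=\dist(u,\cN)^2$. We compute
\[
\tfrac12\Delta w=(u-\Pi_\cN(u))\cdot\Delta u+\text{(gradient terms)}.
\]
By \eqref{EL} and \eqref{mfMfestimates}, the first term equals $\va^{-2}Df(u):(u-\Pi_\cN(u))\geq m_f\va^{-2}w$. For the gradient terms, $|\na(u-\Pi_\cN(u))|^2\geq0$, and the remaining cross terms involve $D^2\Pi_\cN$ contracted with $(u-\Pi_\cN(u))$, so they are bounded by $C\sqrt w\,|\na u|^2\leq C\sqrt w$ using $|\na u|^2\leq 2M$. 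Hence
\[
-\Delta w+\f{2m_f}{\va^2}w\leq C\sqrt{w}\leq \f{m_f}{\va^2}w+C\va^2,
\]
where the last step is Young's inequality. This gives $-\Delta w+\f{m_f}{\va^2}w\leq C\va^2$ in $B_2$. The main obstacle is this computation: one must check that the cross term is genuinely $O(\sqrt w\,|\na u|^2)$, which uses the fact that $D\Pi_\cN(y)$ annihilates normal directions at points of $\cN$. If needed one can instead work directly with $f(u)$, using the bounds $|D^2f|\leq C$ and $|Df|^2\geq cf$ near $\cN$.

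\emph{Step 3: comparison.} Take the barrier
\[
\psi(x):=C_1\va^4+\|w\|_{L^\infty(B_2)}\sum_{i=1}^3\cosh\!\Bigl(\f{\sqrt{m_f}\,x_i}{\sqrt3\,\va}\Bigr)\Big/\cosh\!\Bigl(\f{\sqrt{m_f}}{\sqrt3\,\va}\Bigr).
\]
Each $\cosh$ term satisfies $-\Delta\phi+\f{m_f}{\va^2}\phi\geq0$, and $\psi\geq w$ on $\pa B_{3/2}$ since $\max_i|x_i|\geq\sqrt3/2$ there, after adjusting constants. Choosing $C_1$ so that $\f{m_f}{\va^2}C_1\va^4\geq C\va^2$, the maximum principle gives $w\leq\psi$ in $B_{3/2}$. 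On $B_1$ the $\cosh$ terms are at most $C\va^2e^{-c/\va}\leq C\va^4$, so $w\leq C\va^4$ on $B_1$. Finally $f(u)\leq M_f w\leq C\va^4$ by \eqref{mfMfestimates}, and scaling back gives $\|f(u)\|_{L^\infty(B_r(x_0))}\leq C\va^4r^{-4}$.
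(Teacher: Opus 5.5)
Your argument follows the same route as the paper: confine $u$ to $\cN_{\delta_f}$, derive a subsolution inequality $-\va^2\Delta q+c\,q\leq C\va^4r^{-4}$ for a quantity $q$ comparable to $f(u)$, and conclude by comparison. The paper takes $q=f(u)$ directly. It uses $|Df|^2\geq cf$, the positive semidefiniteness of $D^2f$ on $\cN$, the Lipschitz continuity of $D^2f$, and Young's inequality, and then invokes \cite[Lemma~6]{NZ13} for the comparison. Your choice $q=\dist(u,\cN)^2$ is an equivalent variant. Step~2 is correct: the term $(u-\Pi_\cN(u))\cdot D\Pi_\cN(u)\Delta u$ drops out because $D\Pi_\cN(u)$ is symmetric and annihilates $u-\Pi_\cN(u)$.

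The step that fails as written is the barrier in Step~3. You normalized each term by $\cosh(\sqrt{m_f}/(\sqrt3\va))$, i.e.\ by its value at $|x_i|=1$, but on $\pa B_{3/2}$ you only know $\max_i|x_i|\geq\sqrt3/2$. Take the point $\tfrac{\sqrt3}{2}(1,1,1)\in\pa B_{3/2}$ and set $a=\sqrt{m_f}/(\sqrt3\va)$. There $\psi\approx C_1\va^4+3\|w\|_{L^\infty(B_2)}e^{-(1-\sqrt3/2)a}$, which is exponentially smaller than $\|w\|_{L^\infty(B_2)}$, so $\psi\geq w$ on $\pa B_{3/2}$ is unjustified. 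Adjusting constants does not help. To dominate $w$ on $\pa B_{3/2}$ you must normalize at $|x_i|=\sqrt3/2$. The normalized terms are then small only on the cube $\{\max_i|x_i|<\sqrt3/2\}$, which does not contain $B_1$, and they are exponentially large at points of $B_1$ with $|x_i|$ near $1$.

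Either of two easy repairs works:
\begin{itemize}
\item Compare on the cube $(-\tfrac{11}{10},\tfrac{11}{10})^3\subset B_2$ and normalize by $\cosh(\tfrac{11}{10}a)$. Then every term is at most $2e^{-a/10}$ on $B_1$.
\item Use the radial supersolution $\cosh(a|x|)$. It satisfies $\Delta\cosh(a|x|)\leq 3a^2\cosh(a|x|)$ because $\sinh t\leq t\cosh t$, which is exactly where your factor $\sqrt3$ is right. Set $\psi=C_1\va^4+\|w\|_{L^\infty(B_2)}\cosh(a|x|)/\cosh(\tfrac32a)$.
\end{itemize}
Either way you get $w\leq C_1\va^4+C\va^2e^{-c/\va}\leq C\va^4$ on $B_1$, and the rest of your proof goes through.
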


\begin{proof}
For $\va\in(0,\delta r)$, the bound \eqref{nablaufirstesti} gives
\[
\|f(u)\|_{L^{\ift}(B_{2r}(x_0))}\leq M\va^2r^{-2}\leq M\delta^2.
\]
Choosing $\delta=\delta(f,M,\cN)\in(0,1)$ sufficiently small, we obtain
$\dist(u,\cN)<\delta_f$ in $B_{2r}(x_0)$, where $\delta_f>0$ is given by
Lemma~\ref{Nfproperty}. A direct computation gives
\begin{align*}
\va^2\Delta(f(u))
&=\va^2D^2f(u)[\na u,\na u]+\va^2Df(u):\Delta u\\
&\stackrel{\eqref{EL}}{=}\va^2D^2f(u)[\na u,\na u]+|Df(u)|^2.
\end{align*}
Using \eqref{mfMfestimates} and \eqref{Dfyestimate}, and the fact that
$\dist(u,\cN)<\delta_f$ in $B_{2r}(x_0)$, we obtain
\be
\va^2\Delta(f(u))
\geq\va^2D^2f(u)[\na u,\na u]+C^{-1}f(u).
\label{vaDeltafu}
\ee
Since $f$ is smooth and $\cN$ is bounded, the mean value theorem gives
\[
|D^2f(u)-D^2f(\Pi_{\cN}u)|\leq C|u-\Pi_{\cN}u|.
\]
Together with \eqref{A4}, \eqref{mfMfestimates}, and \eqref{vaDeltafu}, and
then by the Cauchy--Schwarz inequality, this yields
\[
\va^2\Delta(f(u))
\geq -C\va^2(f(u))^{\f{1}{2}}|\na u|^2+C^{-1}f(u)
\geq C^{-1}f(u)-C\va^4|\na u|^4
\stackrel{\eqref{nablaufirstesti}}{\geq}C^{-1}f(u)-C\va^4r^{-4}.
\]
The conclusion follows from \cite[Lemma~6]{NZ13}.
\end{proof}

\subsection{Boundary partial regularity}

In this subsection, we prove boundary partial regularity estimates parallel
to Proposition~\ref{InteriorPartialRegularity1} and
Proposition~\ref{improvedpotential}. We first state the key technical lemma
and then deduce the boundary estimate from it.

\begin{lem}\label{BoundaryPartialRegularity1}
Let $U\subset\R^3$ be a bounded $C^{2,1}$ domain with parameters $M_{U,2}$ and $r_{U,2}$. Let $x_0\in\pa U$ and $r\in(0,r_{U,2})$. Let
$g\in C^2(T_{2r}^U(x_0),\cN)$. Assume that
$u\in H^1(U_{2r}(x_0),\R^m)$ is a local minimizer of \eqref{GLfunctional}
in $U_{2r}(x_0)$ up to the boundary $T_{2r}^U(x_0)$, with $u=g$ on
$T_{2r}^U(x_0)$, and
\begin{gather*}
r\|(|\na_{\top}g|+r|D_{\top}^2g|)\|_{L^{\ift}(T_{2r}^U(x_0))}\leq M,\\
\|u\|_{L^{\ift}(U_{2r}(x_0))}+\theta_{\va}^U(u;x_0,2r)\leq M.
\end{gather*}
There exist $\delta\in(0,1)$ and $C>0$, depending only on
$M$, $M_{U,2}$, $r_{U,2}$, $f$, and $\cN$, such that if
$\va\in(0,\delta r)$, $r\in(0,\delta)$,
\begin{gather*}
r\|\na_{\top}g\|_{L^{\ift}(T_{2r}^U(x_0))}<\delta,\\
\vt_{\va}^U\(u;x_0,\f{r}{10}\)-\vt_{\va}^U\(u;x_0,\f{r}{20}\)<\delta,
\end{gather*}
then
\[
r^2\|e_{\va}(u)\|_{L^{\ift}(U_{\f{r}{40}}(x_0))}\leq C.
\]
\end{lem}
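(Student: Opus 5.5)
I plan to argue by contradiction and compactness, combining the boundary monotonicity formula (Proposition~\ref{MonotoneBoundary}) with the interior $\va$-regularity (Proposition~\ref{InteriorPartialRegularity1}) and a boundary blow-up. After translation and scaling ($x\mapsto x_0+rx$, $\va\mapsto\va/r$), I assume $r=1$ at the cost of a boundary that becomes flatter as $r\to0$; this is why the hypothesis $r<\delta$ appears. Suppose the conclusion fails. Then there are sequences $r_k,\va_k/r_k,\delta_k\to0$, domains $U^k$ whose rescaled boundaries converge in $C^{2}$ to a half-space $\{y_3>0\}$, data $g_k$ with $\|\na_\top g_k\|_{L^\infty}\to0$ and $\|D^2_\top g_k\|\le M$, minimizers $u_k$ with $\theta^{U^k}_{\va_k}(u_k;0,2)\le M$, and pinching $\vt^{U^k}_{\va_k}(u_k;0,\frac1{10})-\vt^{U^k}_{\va_k}(u_k;0,\frac1{20})\to0$. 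Yet $\|e_{\va_k}(u_k)\|_{L^\infty(U^k_{1/40})}\to\infty$.

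First step: extract structure from the pinching via \eqref{Monotone11}. Since $\al(M_0,M_1,r)(t-s)$ is bounded by $C\delta_k$ after rescaling (because $M_0=\|\na_\top g_k\|\to0$ and the curvature terms scale with $r$), I get
\[
\int_{U^k}\Bigl(|y\cdot\na u_k|^2+\va_k^{-2}f(u_k)\Bigr)\chi_{B_{1/2}}\to0 .
\]
So the normalized potential and radial derivative vanish. Second step: bring in minimality. By the boundary clearing-out (Proposition~\ref{BoundaryClearingout}), which applies trivially since $E\le M\ll\log(1/\va_k)$, and by the boundary Luckhaus construction (rescaled Lemma~\ref{Luckhauslemma}/Proposition~\ref{Luckhaus11} on $U_\rho$, bi-Lipschitz to a ball), I show $u_k\to u_*$ strongly in $H^1$ on $B^+_{1/4}$. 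Here $u_*\in H^1(B^+_{1/2},\cN)$ is a minimizing harmonic map with constant boundary trace on the flat boundary; the trace is constant because $\na_\top g_k\to0$. The vanishing radial derivative makes $u_*$ $0$-homogeneous about the origin. A homogeneous minimizer with constant data on the flat part is constant: by boundary regularity of minimizing harmonic maps (Schoen--Uhlenbeck boundary theory, or Hardt--Lin), the boundary singular set is empty for smooth data, so $u_*$ is continuous at $0$, hence constant. Consequently the energy in $B^+_{1/4}$, and therefore $\theta^{U^k}_{\va_k}(u_k;0,\frac14)$, tends to $0$.

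Third step: convert small energy into pointwise bounds. At interior points $y$ with $\dist(y,\pa U^k)\ge\rho$, I use interior monotonicity \eqref{Mo11} to get $\theta_{\va_k}(u_k;y,\rho)$ small, and then Proposition~\ref{InteriorPartialRegularity1} bounds $e_{\va_k}$ by $C\rho^{-2}$ at scale $\rho\gg\va_k$. Near the boundary I need a boundary analogue. I would prove this via a boundary Bochner argument: first show $\dist(u_k,\cN)$ is small near the boundary, using the Lemma~\ref{lemusepar11} argument with the boundary monotonicity \eqref{Monotone11b} and the gradient bound of Lemma~\ref{AprioriBoundary}. Then I apply $-\Delta e_\va\le Ce_\va^2$ from \eqref{Bochner1} on half-balls. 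The boundary term is controlled because $e_\va(u)$ on $T$ involves $|\pa_\nu u|^2$, bounded via Lemma~\ref{aprioriboundary} by data of size $\le M$. A boundary mean-value/Moser iteration for subsolutions, combined with a standard Heinz-type (Schoen) maximum-point rescaling, then yields $\|e_{\va_k}\|_{L^\infty(U_{1/40})}\le C$, contradicting the blow-up.

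The main obstacle is the second step: justifying strong $H^1$ convergence up to the boundary and the constancy of the homogeneous boundary limit. I would first try a boundary version of \cite[Proposition~B.1]{CL22}, proved by building competitors with Proposition~\ref{Luckhaus11} on half-shells. For the constancy, I would use the Hardt--Lin boundary regularity together with the homogeneity.
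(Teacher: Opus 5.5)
Your first two steps follow the paper's proof. These are: the contradiction sequence with $r_i,\va_i/r_i\to0$; flattening of the rescaled boundary; vanishing of the radial energy and of $\va^{-2}\int f$ from the pinching via \eqref{Monotone11}; strong $H^1$ convergence to a minimizer up to the flat boundary with constant trace, using competitors from Proposition~\ref{Luckhaus11}; $0$-homogeneity; and constancy of the limit by boundary regularity of minimizing harmonic maps (the paper cites \cite[Theorem~2.4.3]{LW08}). The paper then uses \eqref{Monotone11b} to make $\theta^U_{\va_i}(u_i;x,\rho)$ small for every ball $B_\rho(x)\subset B_{r_i/8}$, and finishes by quoting the boundary $\va$-regularity result \cite[Proposition~3.1]{CLR18}. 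You instead try to prove that boundary $\va$-regularity yourself, and the argument you give for it fails.

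\textbf{Where it fails.} You claim that the boundary values of $e_\va(u)$ on $T$, namely $\f12|\na_\top g|^2+\f12|\pa_\nu u|^2$, are bounded independently of $\va$ by Lemma~\ref{aprioriboundary}. The right-hand side of \eqref{aprioriboundaryeq} contains $r\|F\|_{L^\ift}$ with $F=\va^{-2}D_uf(u)$.
\begin{itemize}
\item Knowing only $\dist(u,\cN)<\eta$ gives $|F|\le C\eta\va^{-2}$. Optimizing in $r$ then yields at best $|\na u|\le C\eta^{1/2}\va^{-1}$ near $T$, which is not uniform in $\va$.
\item An $\va$-independent bound on $F$ needs $\dist(u,\cN)\le C\va^2$. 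That is the improved potential estimate (Proposition~\ref{improvedpotential}; its boundary version appears in Proposition~\ref{BoundaryPartialRegularity1prop}, which is itself deduced from the present lemma). It is derived from exactly the bound $r^2e_\va\le C$ you are trying to prove, so the argument is circular.
\item Even a bounded $\sup_T e_\va$ would not close a half-ball mean-value or Moser argument for \eqref{Bochner1} after Schoen rescaling. Applied to $(e_\va-\sup_T e_\va)_+$, the mean-value inequality gives $e(0)\le \sup_T e+C\int e$. To contradict $e=1$ at the center you need $\sup_T e$ small, not merely bounded, and nothing in your argument gives that.
\end{itemize}

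\textbf{How to repair it.} Either quote a boundary $\va$-regularity theorem as the paper does, or run the maximum-point rescaling as a compactness argument rather than a mean-value argument:
\begin{itemize}
\item After rescaling so that $e\le4$ on a large (half-)ball with $e=1$ at the center, the bound $e\le4$ itself lets you run the proof of Proposition~\ref{improvedpotential}, using $f(u)=0$ on $T$ and the maximum principle of \cite[Lemma~6]{NZ13}. This gives $\dist(u,\cN)\le C\tilde\va^2$ when $\tilde\va\to0$; when $\tilde\va$ stays bounded below, the right-hand side of \eqref{EL} is bounded directly.
\item In either regime you then have uniform $C^{1,\alpha}$ estimates up to the flattened boundary, whose data has vanishing tangential gradient.
\item These estimates, combined with the smallness of the rescaled densities from \eqref{Mo11} and \eqref{Monotone11b}, force $e$ to be small at the center. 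That is the contradiction you need.
\end{itemize}
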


\begin{proof}
By translation, we may assume $x_0=0$. Throughout the proof, $C$ denotes a
positive constant depending only on $M$, $M_{U,2}$, $r_{U,2}$, $f$, and
$\cN$, and it may change from line to line. We argue by contradiction.
Suppose that the conclusion fails. Then there exist sequences
$\{u_i\}_{i\in\N}\subset H^1(U_{2r_i},\R^m)$ of local minimizers of
\eqref{GLfunctional} up to the boundary $T_{2r_i}^U$, and
$\{g_i\}_{i\in\N}\subset C^2(T_{2r_i}^U,\cN)$, such that the following
properties hold.
\begin{itemize}
\item $u_i=g_i$ on $T_{2r_i}^U$, and
\be
\begin{gathered}
r_i\|(|\na_{\top}g_i|+r_i|D_{\top}^2g_i|)\|_{L^{\ift}(T_{2r_i}^U)}\leq M,\\
\|u_i\|_{L^{\ift}(U_{2r_i})}+\theta_{\va_i}(u_i;0,2r_i)\leq M.
\end{gathered}
\label{energyassumptionui}
\ee
\item For a sequence $\delta_i\to 0^+$, with $\va_i\in(0,\delta_ir_i)$ and
$r_i\in(0,\delta_i)$, we have
\begin{gather}
r_i\|\na_{\top}g_i\|_{L^{\ift}(T_{2r_i}^U)}<\delta_i,
\label{gideltai}\\
\vt_{\va_i}^U\(u_i;0,\f{r_i}{10}\)
-\vt_{\va_i}^U\(u_i;0,\f{r_i}{20}\)<\delta_i.
\label{vtvaileq}
\end{gather}
\item The energy density blows up at the claimed scale:
\be
\lim_{i\to+\ift}r_i^2\|e_{\va_i}(u_i)\|_{L^{\ift}(U_{\f{r_i}{40}})}
=+\ift.
\label{Ur40infty}
\ee
\end{itemize}

Define the rescaled quantities
$\wt{u}_i:=u_i(r_i\cdot)$, $\wt{g}_i:=g_i(r_i\cdot)$, and
$\wt{U}:=r_i^{-1}U$. For sufficiently large $i$, since $r_i\in(0,\delta_i)$,
\be
\begin{gathered}
0<M_{\wt{U},2}\leq M_{U,2}r_i<M_{U,2}\delta_i,\\
r_{\wt{U},2}=r_i^{-1}r_{U,2}>\delta_i^{-1}r_{U,2}>10.
\end{gathered}
\label{MwtUrwtU}
\ee
Since the rescaled flattening radius satisfies $r_{\wt{U},2}>10$, Definition \ref{RegularityBoundary} applies to $\wt{U}$ at the fixed scale $2$. Together with $M_{\wt{U},2}\leq M_{U,2}\delta_i$, this gives, for
each $i$, a bi-Lipschitz homeomorphism
$\Phi_i:\ol{B}_2^+\to\ol{\wt{U}\cap B_2}$ whose restriction to $B_2^+$
satisfies
\be
\|\na\Phi_i-\op{Id}_3\|_{L^{\ift}(B_2^+)}
+\|\na(\Phi_i^{-1})-\op{Id}_3\|_{L^{\ift}(\wt{U}\cap B_2)}
\leq C\delta_i.
\label{PhiId}
\ee
By \eqref{gideltai} and \eqref{energyassumptionui}, there exist
$p_0\in\cN$ and $\wt{u}_0\in H^1(B_2^+,\R^m)$ such that, up to a subsequence,
\be
\begin{aligned}
h_i:=\wt{g}_i\circ\Phi_i|_{T_2}
&\to p_0\quad\text{strongly in }C^1(T_2,\R^m),\\
v_i:=\wt{u}_i\circ\Phi_i
&\wc\wt{u}_0\quad\text{weakly in }H^1(B_2^+,\R^m).
\end{aligned}
\label{hiviconvergence}
\ee
Moreover, $v_i\to\wt{u}_0$ a.e.\ in $B_2^+$. By
\eqref{energyassumptionui} and $\f{\va_i}{r_i}\in(0,\delta_i)$,
\[
0\leq\limsup_{i\to+\ift}\int_{B_2^+}f(v_i)
\leq\limsup_{i\to+\ift}C\left(\f{\va_i}{r_i}\right)^2
\leq\limsup_{i\to+\ift}C\delta_i^2=0.
\]
Fatou's lemma gives
\[
0\leq\int_{B_2^+}f(\wt{u}_0)
\leq\liminf_{i\to+\ift}\int_{B_2^+}f(v_i)=0,
\]
and hence $\wt{u}_0\in H^1(B_2^+,\cN)$.

Fix $\lda\in(0,1)$. Applying Fatou's lemma again,
\[
\int_{2\lda}^2\liminf_{i\to+\ift}
E_{\f{\va_i}{r_i}}(v_i,(\pa B_{\rho})^+)\ud\rho
\leq\liminf_{i\to+\ift}
E_{\f{\va_i}{r_i}}(v_i,B_2^+\backslash B_{2\lda}^+)\leq C_0,
\]
where $C_0=C_0(M,M_{U,2},r_{U,2})>0$. Define
\[
D_{\lda}:=\left\{\rho\in(0,2]:
\liminf_{i\to+\ift}E_{\f{\va_i}{r_i}}(v_i,(\pa B_{\rho})^+)
>\f{2C_0}{1-\lda}\right\}.
\]
By Fubini's theorem, $\HH^1(D_{\lda})\leq 1-\lda$. Therefore, there exists
$\rho_0\in(2\lda,2]$ such that
\[
\liminf_{i\to+\ift}E_{\f{\va_i}{r_i}}(v_i,(\pa B_{\rho_0})^+)
\leq\f{2C_0}{1-\lda}.
\]
Together with \eqref{gideltai}, and after passing to a further subsequence,
this gives
\be
\sup_{i\in\N}E_{\f{\va_i}{r_i}}(v_i,\pa(B_{\rho_0}^+))\leq C.
\label{supivairi}
\ee

Let $\Psi_i:\ol{B_1}\to\ol{B}_1^+$ be a bi-Lipschitz homeomorphism whose
restriction to $B_1$ satisfies
\[
\|\na\Psi_i\|_{L^{\ift}(B_1)}
+\|\na(\Psi_i^{-1})\|_{L^{\ift}(B_1^+)}\leq C.
\]
Set $ \wt{v}_i:=v_i(\rho_0\cdot) $, $
\wt{h}_i:=h_i(\rho_0\cdot) $, $ \wt{v}_0:=\wt{u}_0(\rho_0\cdot) $, and $
\ol{\va}_i:=(\rho_0r_i)^{-1}\va_i $.

Let $\wt{w}_0\in H^1(B_1^+,\cN)$ be a minimizer of
\be
\inf\left\{\f{1}{2}\int_{B_1^+}|\na w|^2\ud x:
w\in H^1(B_1^+,\cN),\,w|_{\pa(B_1^+)}=\wt{v}_0\right\}.
\label{w0problem}
\ee
By \eqref{supivairi} and the lower semicontinuity,
\[
\int_{\pa(B_1^+)}|\na_{\top}\wt{v}_0|^2\ud\HH^2
\leq\liminf_{i\to+\ift}E_{\f{\va_i}{r_i}}(\wt{v}_i,\pa(B_1^+))\leq C.
\]
Combining this with \eqref{hiviconvergence} and the compactness of the trace
embedding, we obtain
\[
\int_{\pa(B_1^+)}\left(|\na_{\top}\wt{v}_i|^2
+\f{1}{\ol{\va}_i^2}f(\wt{v}_i)+|\na_{\top}\wt{v}_0|^2
+\f{|\wt{v}_i-\wt{v}_0|^2}{\sg_i^2}\right)\ud\HH^2\leq C,
\]
where $ \sg_i:=\|\wt{v}_i-\wt{v}_0\|_{L^2(\pa(B_1^+))}+i^{-1} $. Applying Proposition~\ref{Luckhaus11} to
$\wt{v}_i\circ\Psi_i$ and $\wt{v}_0\circ\Psi_i$, we obtain
$\nu_i\to 0^+$ and a sequence
$\{\vp_i\}_{i\in\N}\subset H^1(B_1\backslash B_{1-\nu_i},\R^m)$ such that
\be
\begin{gathered}
\vp_i(y)=\wt{v}_i\circ\Psi_i(y)\text{ and }
\vp_i((1-\nu_i)y)=\wt{v}_0\circ\Psi_i(y)
\quad\text{for }\HH^2\text{-a.e. }y\in\pa B_1,\\
E_{\ol{\va}_i}(\vp_i,B_1\backslash B_{1-\nu_i})\leq C\nu_i.
\end{gathered}
\label{vpiw0}
\ee
Define
\[
w_i(y):=\left\{\begin{aligned}
&\vp_i(y)&&\text{if }y\in B_1\backslash B_{1-\nu_i},\\
&\wt{w}_0\circ\Psi_i\(\f{y}{1-\nu_i}\)&&\text{if }y\in B_{1-\nu_i}.
\end{aligned}\right.
\]
Using the minimality of $u_i$, the flattening estimate \eqref{PhiId}, and
\eqref{vpiw0}, we obtain
\be
\begin{aligned}
\f{1}{2}\int_{B_1^+}|\na\wt{v}_0|^2\ud x
&\leq\liminf_{i\to+\ift}\f{1}{2}\int_{B_1^+}|\na\wt{v}_i|^2\ud x\leq\limsup_{i\to+\ift}\f{1}{2}E_{\ol{\va}_i}(\wt{v}_i,B_1^+)\\
&\leq\limsup_{i\to+\ift}\f{1}{2}E_{\ol{\va}_i}(w_i\circ\Psi_i^{-1},B_1^+)\leq\f{1}{2}\int_{B_1^+}|\na\wt{w}_0|^2\ud x\leq\f{1}{2}\int_{B_1^+}|\na\wt{v}_0|^2\ud x.
\end{aligned}
\label{longchain}
\ee
Here, the third inequality follows from the minimality of $u_i$ after flattening, the
fourth uses \eqref{vpiw0} and $\nu_i\to 0^+$, and the fifth uses the
minimality of $\wt{w}_0$ in \eqref{w0problem}. Hence, all inequalities in
\eqref{longchain} are equalities. Since $\lda\in(0,1)$ was arbitrary,
$\wt{u}_0\in H^1(B_2^+,\cN)$ is a local minimizer up to the boundary
$T_1$, with $\wt{u}_0=p_0$ on $T_1$, and
\be
v_i\to\wt{u}_0\quad\text{strongly in }H^1(B_{\f{3}{2}}^+,\R^m).
\label{viconvergence}
\ee

By \eqref{phiproperty0}, \eqref{Monotone11}, \eqref{gideltai},
\eqref{vtvaileq}, and \eqref{MwtUrwtU},
\begin{align*}
&Cr_i^{-3}\int_{U_{\f{9r_i}{20}}}|y\cdot\na u_i|^2\ud y\leq\int_{\f{r_i}{20}}^{\f{r_i}{10}}
\left(-\f{2}{\rho^2}\int_U\left|\f{y}{\rho}\cdot\na u_i\right|^2\overline{\phi}_{0,\rho}
+\f{2}{\va_i^2\rho^2}\int_Uf(u_i)\phi_{0,\rho}\right)\ud\rho\\
&\quad\leq\vt_{\va_i}^U\(u_i;0,\f{r_i}{10}\)
-\vt_{\va_i}^U\(u_i;0,\f{r_i}{20}\)
+C\left(\f{\delta_i}{r_i}(\delta_i+M^{\f{1}{2}})+(\delta_i^2+M)\right)r_i
\to 0^+
\end{align*}
as $i\to+\ift$. Together with \eqref{PhiId} and \eqref{viconvergence}, this
gives
\[
\lim_{i\to+\ift}\int_{B_{\f{1}{4}}^+}|y\cdot\na v_i|^2\ud y=0.
\]
Thus, $\wt{u}_0$ is $0$-homogeneous in $B_{\f{1}{4}}^+$, that is,
$\wt{u}_0(tx)=\wt{u}_0(x)$ for any $t>0$ with
$x,tx\in B_{\f{1}{4}}^+$. Combined with the minimizing property of
$\wt{u}_0$, \cite[Theorem~2.4.3]{LW08} shows that $\wt{u}_0$ is constant.
By \eqref{viconvergence},
\[
\theta_{\va_i}^U\(u_i;0,\f{r_i}{4}\)
=\theta_{\f{\va_i}{r_i}}^{\wt{U}}\(\wt{u}_i;0,\f{1}{4}\)\to 0^+.
\]
It follows from \eqref{Monotone11b} and \eqref{energyassumptionui} that, for
any ball $B_{\rho}(x)\subset B_{\f{r_i}{8}}$,
\[
\theta_{\va_i}^U(u_i;x,\rho)
\leq C\delta_i+C\theta_{\va_i}^U\(u_i;0,\f{r_i}{4}\)\to 0^+
\]
as $i\to+\ift$. By \cite[Proposition~3.1]{CLR18},
\[
r_i^2\|e_{\va_i}(u_i)\|_{L^{\ift}(U_{\f{r_i}{40}})}\leq C,
\]
contradicting \eqref{Ur40infty}.
\end{proof}

The final proposition gives the boundary version of the pointwise energy
estimate and the improved potential bound.

\begin{prop}\label{BoundaryPartialRegularity1prop}
Let $M>0$ and let $U\subset\R^3$ be a bounded $C^{2,1}$ domain with
parameters $M_{U,2}$ and $r_{U,2}$. Let $x_0\in\pa U$ and
$r\in(0,r_{U,2})$. Let $g\in C^2(T_{2r}^U(x_0),\cN)$ satisfy
\[
r\|(|\na_{\top}g|+r|D_{\top}^2g|)\|_{L^{\ift}(T_{2r}^U(x_0))}\leq M.
\]
Assume that $u\in H^1(U_{2r}(x_0),\R^m)$ is a local minimizer of
\eqref{GLfunctional} in $U_{2r}(x_0)$ up to the boundary
$T_{2r}^U(x_0)$, with $u=g$ on $T_{2r}^U(x_0)$ and
\be
\|u\|_{L^{\ift}(U_{2r}(x_0))}+\theta_{\va}^U(u;x_0,2r)\leq M.
\label{thetavaassenb}
\ee
There exist $\delta\in(0,1)$ and $C>0$, depending only on
$M$, $M_{U,2}$, $r_{U,2}$, $f$, and $\cN$, such that if
$\va\in(0,\delta r)$, then
\begin{gather*}
(\delta r)^2\|e_{\va}(u)\|_{L^{\ift}(U_{\delta r}(x_0))}\leq C,\\
\|f(u)\|_{L^{\ift}(U_{\delta r}(x_0))}\leq C\va^4r^{-4}.
\end{gather*}
\end{prop}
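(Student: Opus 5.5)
We deduce the proposition from Lemma~\ref{BoundaryPartialRegularity1} by a pigeonhole argument on the monotone quantity $\vt_{\va}^U(u;x_0,\cdot)$ over dyadic scales. Then the interior/boundary improved-potential argument of Proposition~\ref{improvedpotential} gives the bound on $f(u)$.

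\textbf{Step 1: choosing a good scale.} Let $\delta_1$ be the constant of Lemma~\ref{BoundaryPartialRegularity1} for the data $M$, $M_{U,2}$, $r_{U,2}$. Consider scales $s_k:=2^{-k}r$ with $k=0,1,\dots,N$, where $N$ is fixed below. On $(0,\f{r}{10})$, the boundary monotonicity \eqref{Monotone11} shows that $\rho\mapsto\vt_{\va}^U(u;x_0,\rho)+\al\rho$ is nondecreasing, where $\al=\al(M,M,r)\leq CM^2/r$ thanks to \eqref{thetavaassenb} and the bound on $g$. Also $\vt_{\va}^U(u;x_0,\f{r}{10})\leq C\theta_{\va}^U(u;x_0,2r)\leq CM$ by \eqref{phiproperty1}. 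Summing the telescoping differences
\[
\vt_{\va}^U\(u;x_0,\tfrac{s_k}{10}\)-\vt_{\va}^U\(u;x_0,\tfrac{s_k}{20}\)
\]
over $k\leq N$ gives a total of at most $CM+C\al r\leq C$. Hence some $k\leq N$ has difference at most $C/N$, up to the correction $\al s_k\leq C$, which is harmless after also summing. The correction is better handled by rescaling: at scale $s=s_k$, the rescaled boundary data satisfy $s\|\na_\top g\|_{L^\infty}\leq M2^{-k}$. Choose $N$ so large that $C/N<\delta_1$. Additionally, only use $k\geq k_0$ with $M2^{-k_0}<\delta_1$ and $s_k<\delta_1$ (using $r<r_{U,2}$ fixed, possibly shrinking by a constant factor). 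Then all hypotheses of Lemma~\ref{BoundaryPartialRegularity1} hold at radius $s_k$, provided $\va<\delta_1 s_k$. This is ensured by $\va<\delta r$ with $\delta:=\delta_1 2^{-(k_0+N)}/40$. The density bound $\theta_{\va}^U(u;x_0,2s_k)\leq C$ follows from \eqref{Monotone11b} and \eqref{thetavaassenb}.

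\textbf{Step 2: energy bound.} The lemma yields $s_k^2\|e_\va(u)\|_{L^\infty(U_{s_k/40}(x_0))}\leq C$. Since $\delta r\leq s_k/40$ and $s_k\leq r$ is comparable to $\delta r$ up to constants, we get $(\delta r)^2\|e_\va(u)\|_{L^\infty(U_{\delta r}(x_0))}\leq C$.

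\textbf{Step 3: potential bound, the main obstacle.} Apply the computation of Proposition~\ref{improvedpotential} on $U_{2\delta r}(x_0)$. It gives $\va^2\Delta f(u)\geq C^{-1}f(u)-C\va^4(\delta r)^{-4}$ wherever $\dist(u,\cN)<\delta_f$, and this holds since $f(u)\leq C\va^2(\delta r)^{-2}$ is small. Moreover $f(u)=0$ on the boundary portion because $g\in\cN$. A comparison argument for $w:=f(u)-C'\va^4(\delta r)^{-4}$ uses a barrier decaying like $\exp(-c\,\dist(x,\pa B)/\va)$ from the curved part of $\pa(U_{2\delta r})$, where $f(u)\leq C$. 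This is the boundary analogue of \cite[Lemma~6]{NZ13}. On the flat part $w\leq0$, so the maximum principle gives $w\leq C e^{-c\delta r/\va}\leq C\va^4r^{-4}$ on $U_{\delta r}$. The delicate point is keeping the constants uniform under the $C^{2,1}$ boundary geometry. We handle it via the bi-Lipschitz flattening, or directly since the barrier only uses distance to the spherical part of $\pa(U_{2\delta r})$. After renaming $\delta$, this completes the plan.
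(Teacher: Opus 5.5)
Your proposal is correct and follows essentially the same route as the paper: a pigeonhole over dyadic differences of $\vt_{\va}^U(u;x_0,\cdot)$, using the almost-monotonicity from \eqref{Monotone11}, to find a scale where Lemma~\ref{BoundaryPartialRegularity1} applies; then the subsolution inequality $\va^2\Delta f(u)\geq C^{-1}f(u)-C\va^4r^{-4}$ together with a decay lemma. The only difference is that you write out the boundary barrier argument explicitly, using $f(u)=0$ on $T_{2r}^U(x_0)$ because $g\in\cN$ and a radial supersolution centered at $x_0$. The paper instead just cites \cite[Lemma~6]{NZ13}, so your version makes explicit the ingredient that the boundary case needs.
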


\begin{proof}
By translation, we may assume $x_0=0$. Let $\eta>0$ be chosen later. For
$\delta_0=\delta_0(M,\eta)\in(0,\f{1}{100})$ sufficiently small,
\be
(\delta_0r)\|\na_{\top}g\|_{L^{\ift}(T_{2\delta_0r}^U)}
\leq M\delta_0<\eta.
\label{delta1rnabla}
\ee
Let $s\in(0,\delta_0r)$. The monotonicity formula \eqref{Monotone11} implies
that $\vt_{\va}^U(u;0,s)+C_0\f{s}{r}$ is non-decreasing in $s$, where
$C_0=C_0(M,M_{U,2},r_{U,2})>0$. By \eqref{thetavaassenb},
\[
\sum_{j=1}^N\left(\vt_{\va}^U\(u;0,\f{\delta_0r}{10\cdot 2^j}\)
-\vt_{\va}^U\(u;0,\f{\delta_0r}{10\cdot 2^{j+1}}\)\right)
\leq\vt_{\va}^U\(u;0,\f{\delta_0r}{20}\)\leq C.
\]
By a pigeonhole argument, there exists $j_0\in[1,N]$ such that
\[
\vt_{\va}^U\(u;0,\f{\delta_0r}{10\cdot 2^{j_0}}\)
-\vt_{\va}^U\(u;0,\f{\delta_0r}{10\cdot 2^{j_0+1}}\)
\leq\f{C}{N}.
\]
Taking $N:=\lceil\log\delta_0^{-1}\rceil$, we find
$s_0\in(\delta_0^2r,\delta_0r]$ such that
\be
\vt_{\va}^U\(u;0,\f{s_0}{10}\)
-\vt_{\va}^U\(u;0,\f{s_0}{20}\)
<\f{C}{|\log\delta_0|}<\eta
\label{decoeta}
\ee
provided $\delta_0=\delta_0(f,M,M_{U,2},r_{U,2},\cN)\in(0,\f{1}{100})$ is
sufficiently small. Choose $\va\in(0,\eta\delta_0^2r)$. Then, for
$\delta_0$ sufficiently small, we have $\va\in(0,\eta s_0)$ and
$s_0\in(0,\eta)$. By \eqref{delta1rnabla},
\[
s_0\|\na_{\top}g\|_{L^{\ift}(T_{2s_0}^U)}<\eta.
\]
Choosing $\eta=\eta(f,M,M_{U,2},r_{U,2},\cN)\in(0,1)$ sufficiently small
and applying Lemma~\ref{BoundaryPartialRegularity1}, we obtain
\[
s_0^2\|e_{\va}(u)\|_{L^{\ift}(U_{\f{s_0}{40}})}\leq C.
\]
Set $\delta_1:=\f{\eta\delta_0^2}{80}$. For any
$\va\in(0,\delta_1r)$, the inclusion $U_{2\delta_1r}\subset U_{\f{s_0}{40}}$
gives
\[
(\delta_1r)^2\|e_{\va}(u)\|_{L^{\ift}(U_{2\delta_1r})}\leq C.
\]
In $U_{2\delta_1r}$ we have
$0\leq f(u)\leq C\va^2(\delta_1r)^{-2}$. By Lemma~\ref{Nfproperty}, there
exists $\delta_2\in(0,1)$ such that, for $\va\in(0,\delta_2r)$, $ \|\dist(u,\cN)\|_{L^{\ift}(U_{2\delta_1r})}<\delta_f $. Repeating the argument of Proposition~\ref{improvedpotential}, we obtain
\[
\va^2\Delta(f(u))\geq C^{-1}f(u)-C\va^4r^{-4}\quad\text{in }U_{2\delta_1r}.
\]
It follows from \cite[Lemma~6]{NZ13} that
\[
\|f(u)\|_{L^{\ift}(U_{\delta_1r})}\leq C\va^4r^{-4}.
\]
The proposition follows after setting $\delta:=\min\{\delta_1,\delta_2\}$.
\end{proof}

\section{The singular set}\label{SectionSingular}

Throughout this section, we retain the notation and assumptions of
Theorem~\ref{globalminimizersproperties}. Specifically, $\om\subset\R^3$ is a
bounded $C^{2,1}$ domain whose closure $\ol{\om}$ is strongly convex at each
point of $\pa\om$. The family
$\{g_{\va}\}_{\va\in(0,1)}\subset C^2(\pa\om,\R^m)$ is a class of suitable
boundary data with respect to $\cA=\{a_i\}_{i=1}^k\subset\om$ and $M_0>0$,
and there exists $g\in C^2(\pa\om\backslash\cA,\cN)$ such that
\[
g_{\va}\to g\quad\text{in }C_{\loc}^2(\pa\om\backslash\cA,\R^m)\quad\text{as }\va\to 0^+.
\]
The family $\{u_{\va}\}_{\va\in(0,1)}$ consists of global minimizers of
\eqref{GLfunctional} subject to the boundary condition $u_{\va}=g_{\va}$ on
$\pa\om$.

By Proposition~\ref{locallogestimate}, there exists $M>0$, depending only on
$\cA$, $f$, $M_0$, $\cN$, and $\om$, such that
\be
E_{\va}(u_{\va},\om)\leq M(|\log\va|+1)
\label{assumptionbound}
\ee
and
\be
\|u_{\va}\|_{L^{\ift}(\om)}\leq M.
\label{assumptionbound1}
\ee

For each $\va\in(0,1)$, define the non-negative Radon measure $\mu_{\va}$ on
$\ol{\om}$ associated with $u_{\va}$ by
\[
\mu_{\va}(U):=\f{1}{|\log\va|}\int_{U\cap\om}e_{\va}(u_{\va})\ud x
\]
for any Borel set $U\subset\ol{\om}$. By \eqref{assumptionbound},
\be
\sup_{\va\in(0,\f{1}{2})}\mu_{\va}(\ol{\om})
\leq M\(1+\f{1}{\log 2}\).
\label{UniformboundMeasure}
\ee
The uniform bound \eqref{UniformboundMeasure} implies that there exist a
non-negative Radon measure $\mu_*\in(C^0(\ol{\om}))'$ and a sequence
$\va_i\to 0^+$ such that
\be
\mu_{\va_i}\wc^*\mu_*\quad\text{weakly${}^*$ in }(C^0(\ol{\om}))'.
\label{munconve}
\ee
Let $S_*:=\supp(\mu_*)$ denote the support of $\mu_*$ in $\ol{\om}$. Then
$S_*$ is a closed subset of $\ol{\om}$.

\subsection{Interior properties of the singular set}

\subsubsection{Preliminary properties}

We first record a clearing-out consequence for the limiting measure. It shows
that a sufficiently small normalized mass excludes points of the singular set
at a smaller scale.

\begin{lem}\label{smallmu0re}
There exists $\delta>0$, depending only on $f$, $M$, and $\cN$, such that
for any $x\in\om$ and any $r\in(0,\dist(x,\pa\om))$, if
$\mu_*(\ol{B}_r(x))<\delta r$, then $\mu_*(B_{\f{r}{2}}(x))=0$. In
particular, $B_{\f{r}{2}}(x)\subset\om\backslash S_*$.
\end{lem}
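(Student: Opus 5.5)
The plan is to transfer the smallness of $\mu_*(\ol{B}_r(x))$ to the approximating measures $\mu_{\va_i}$, apply the interior clearing-out property (Proposition~\ref{InteriorClearingout}) to obtain a uniform energy bound on a smaller ball, and conclude that the normalized measures vanish there in the limit.

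\emph{Step 1: smallness for $\mu_{\va_i}$.} Since $\ol{B}_r(x)$ is compact and $\mu_{\va_i}\wc^*\mu_*$, upper semicontinuity on compact sets gives
\[
\limsup_{i\to+\ift}\mu_{\va_i}(\ol{B}_r(x))\leq\mu_*(\ol{B}_r(x))<\delta r .
\]
Hence, for all large $i$,
\[
\frac{1}{r}\int_{B_r(x)}e_{\va_i}(u_{\va_i})\leq \delta|\log\va_i| .
\]
This is $\theta_{\va_i}(u_{\va_i};x,r)\leq\delta|\log\va_i|$. Since $r$ is fixed, $|\log\va_i|\leq 2\log(r/\va_i)$ for $i$ large. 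So
\[
\theta_{\va_i}(u_{\va_i};x,r)\leq 2\delta\log\frac{r}{\va_i} .
\]
If $2r>\dist(x,\pa\om)$, I would instead apply the clearing-out at a slightly smaller radius. Replace $r$ by $r'<\dist(x,\pa\om)/2$? That loses a factor. A cleaner option is to note that Proposition~\ref{InteriorClearingout} really only needs minimality on $B_r(x)$ (its proof uses spheres $\pa B_\rho$ with $\rho\in(r/2,r)$). I will therefore use it in that form, with $u_{\va}$ a minimizer on $B_r(x)\subset\om$ and $\|u_{\va}\|_{L^\ift}\leq M$ by \eqref{assumptionbound1}.

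\emph{Step 2: clearing-out.} Choose $\delta$ to be half the constant of Proposition~\ref{InteriorClearingout}, which depends only on $f,M,\cN$. For $i$ large we also have $\va_i<\delta r$. Then Proposition~\ref{InteriorClearingout} gives
\[
\theta_{\va_i}(u_{\va_i};x,r/2)\leq C .
\]
Therefore
\[
\mu_{\va_i}(B_{r/2}(x))\leq \frac{Cr}{|\log\va_i|}\to0 .
\]

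\emph{Step 3: passage to the limit.} Because $B_{r/2}(x)$ is open, lower semicontinuity of weak$^*$ limits on open sets gives
\[
\mu_*(B_{r/2}(x))\leq\liminf_{i\to+\ift}\mu_{\va_i}(B_{r/2}(x))=0 .
\]
Since $S_*=\supp\mu_*$, it follows that $B_{r/2}(x)\cap S_*=\emptyset$.

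The main (mild) obstacle is the mismatch between $|\log\va|$ and $\log(r/\va)$, and the radius requirement $B_{2r}$ in Proposition~\ref{InteriorClearingout}. The first is handled by taking $i$ large. For the second, I would check that the proof of that proposition uses only $B_r$; this holds since $D^{\va}\subset(r/2,r)$ and the competitor lives in $B_\rho$.
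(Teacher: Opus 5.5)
Your proposal is correct and takes the same approach as the paper, which treats the lemma as a direct consequence of the interior clearing-out (Proposition~\ref{InteriorClearingout}), following \cite[Lemma~49]{Can17}: upper semicontinuity on $\ol{B}_r(x)$, clearing-out to get $\theta_{\va_i}(u_{\va_i};x,\f{r}{2})\leq C$, then lower semicontinuity on the open ball $B_{\f{r}{2}}(x)$. Your remark that the clearing-out proof only uses minimality and the $L^{\ift}$ bound on $B_r(x)$ (through spheres $\pa B_\rho$ with $\rho\in(\f{r}{2},r)$) correctly patches the mismatch between the $B_{2r}$ hypothesis of that proposition and the range $r<\dist(x,\pa\om)$ in the lemma.
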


\begin{proof}
This is a direct consequence of Proposition~\ref{InteriorClearingout}; see
\cite[Lemma~49]{Can17} for the same argument.
\end{proof}

The next lemma passes the interior monotonicity formula to the limiting
measure.

\begin{lem}\label{monomu0}
For any $x\in\om$, the map
\[
r\in(0,\dist(x,\pa\om))\mapsto\f{\mu_*(\ol{B}_r(x))}{2r}
\]
is non-decreasing.
\end{lem}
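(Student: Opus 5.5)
We pass the interior monotonicity identity \eqref{Mo11} of Proposition~\ref{MonotoneInterior} to the limit along the sequence $\va_i\to0^+$ from \eqref{munconve}. Fix $x\in\om$ and $0<s<t<\dist(x,\pa\om)$. Each $u_{\va_i}$ is a smooth solution of \eqref{EL} in $\om$ (interior elliptic regularity for minimizers). The right-hand side of \eqref{Mo11} is nonnegative, so
\[
\f{1}{t}\int_{B_t(x)}e_{\va_i}(u_{\va_i})\ud y\geq \f{1}{s}\int_{B_s(x)}e_{\va_i}(u_{\va_i})\ud y .
\]
Dividing by $|\log\va_i|$ gives
\[
\f{\mu_{\va_i}(B_t(x))}{t}\geq\f{\mu_{\va_i}(B_s(x))}{s}
\quad\text{for every } i .
\]

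The key step is the passage to the limit, using only the standard semicontinuity properties of weak$^*$ convergence of nonnegative Radon measures. For the compact set $\ol{B}_t(x)\subset\om$ we have $\limsup_i\mu_{\va_i}(\ol{B}_t(x))\leq\mu_*(\ol{B}_t(x))$. For the relatively open set $B_s(x)$ we have $\liminf_i\mu_{\va_i}(B_s(x))\geq\mu_*(B_s(x))$. Since $\mu_{\va_i}(B_t(x))\leq\mu_{\va_i}(\ol{B}_t(x))$, this yields
\[
\f{\mu_*(\ol{B}_t(x))}{t}\geq\f{\mu_*(B_s(x))}{s}.
\]
Here the weak$^*$ convergence holds in $(C^0(\ol{\om}))'$, but the balls lie compactly in $\om$, so test functions supported near them are admissible.

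It remains to upgrade the open ball on the right to a closed one. For $s<s'<t$ we have $\ol{B}_s(x)\subset B_{s'}(x)$, so applying the previous inequality with $s'$ in place of $s$ gives
\[
\f{\mu_*(\ol{B}_t(x))}{t}\geq\f{\mu_*(B_{s'}(x))}{s'}\geq\f{\mu_*(\ol{B}_s(x))}{s'} .
\]
Letting $s'\downarrow s$ gives $\f{\mu_*(\ol{B}_t(x))}{t}\geq\f{\mu_*(\ol{B}_s(x))}{s}$. Dividing by $2$ yields the claimed monotonicity.

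The only delicate point is this boundary-of-ball issue in the weak$^*$ limit, where mass may sit on spheres. The choice of closed ball on the larger radius and open ball on the smaller radius, followed by the approximation $s'\downarrow s$, handles it. No dependence on the boundary data enters, because everything is interior to $\om$.
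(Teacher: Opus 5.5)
Your proof is correct and is essentially the paper's argument: the paper simply states that the lemma follows by passing the interior monotonicity identity \eqref{Mo11} to the limit along $\va_i\to0^+$. You have supplied the details of that limit, namely upper semicontinuity on the closed larger ball, lower semicontinuity on the open smaller ball, and then letting $s'\downarrow s$.
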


\begin{proof}
This follows from the monotonicity formula \eqref{Mo11} by passing to the
limit along the sequence $\va_i\to 0^+$.
\end{proof}

In view of Lemma~\ref{monomu0}, the following density is well-defined. We
define $\Theta^1(\mu_*,\cdot):\om\to\R$ by
\be
\Theta^1(\mu_*,x):=\lim_{r\to 0^+}\f{\mu_*(\ol{B}_r(x))}{2r},
\quad x\in\om,
\label{Thetadensity}
\ee
and call $\Theta^1(\mu_*,x)$ the $1$-dimensional density of $\mu_*$ at $x$.

The following lemma identifies the interior singular set with the set of
positive density points and gives a uniform lower bound for the density on
that set.

\begin{lem}\label{Thetamu0eta0}
There exists $\delta>0$, depending only on $f$, $M$, and $\cN$, such that
\[
S_*\cap\om
=\{x\in\om:\Theta^1(\mu_*,x)>0\}
=\{x\in\om:\Theta^1(\mu_*,x)\geq\delta\}.
\]
\end{lem}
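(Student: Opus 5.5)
The plan is to prove the two set equalities by a chain of inclusions, with $\delta$ taken as the constant from Lemma~\ref{smallmu0re}, possibly halved. Trivially $\{\Theta^1(\mu_*,\cdot)\geq\delta\}\subset\{\Theta^1(\mu_*,\cdot)>0\}$. So it suffices to show two things: first, that $\{x\in\om:\Theta^1(\mu_*,x)>0\}\subset S_*\cap\om$; second, that every $x\in S_*\cap\om$ has $\Theta^1(\mu_*,x)\geq\delta$.

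For the first inclusion, suppose $x\in\om\backslash S_*$. Since $S_*$ is closed, there is $r>0$ with $\ol{B}_r(x)\subset\om\backslash S_*$. Then $\mu_*(\ol{B}_s(x))=0$ for all $s<r$, so $\Theta^1(\mu_*,x)=0$. Contrapositively, positive density forces $x\in S_*$.

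For the density lower bound, take $x\in S_*\cap\om$ and suppose, for contradiction, that $\Theta^1(\mu_*,x)<\delta/2$. By Lemma~\ref{monomu0}, the ratio $\mu_*(\ol{B}_r(x))/(2r)$ is non-decreasing in $r$ and converges to $\Theta^1(\mu_*,x)$ as $r\to0^+$. Hence there is $r\in(0,\dist(x,\pa\om))$ with $\mu_*(\ol{B}_r(x))<\delta r$. Lemma~\ref{smallmu0re} then gives $\mu_*(B_{r/2}(x))=0$. This means $x\notin\supp(\mu_*)=S_*$, a contradiction. Therefore $\Theta^1(\mu_*,x)\geq\delta/2$ on $S_*\cap\om$, and renaming $\delta/2$ as $\delta$ gives the statement, with dependence only on $f,M,\cN$.

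The only delicate point is making sure the density is well defined and that small density really does yield a radius $r$ at which $\mu_*(\ol{B}_r(x))<\delta r$. The monotonicity of Lemma~\ref{monomu0} handles both at once. All the genuine analytic content (clearing-out) is already contained in Lemma~\ref{smallmu0re}, so I expect no real obstacle beyond this bookkeeping of constants.
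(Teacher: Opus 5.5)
Your proposal is correct and follows essentially the same route as the paper, which derives the lemma from the clearing-out Lemma~\ref{smallmu0re} together with the monotonicity in Lemma~\ref{monomu0}, via the argument of \cite[Lemma~50]{Can17}. The replacement of $\delta$ by $\delta/2$ is harmless, since the statement only requires the existence of some $\delta>0$ depending on $f$, $M$, and $\cN$.
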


\begin{proof}
This follows from Lemma~\ref{smallmu0re} by the argument of
\cite[Lemma~50]{Can17}.
\end{proof}

The next proposition describes the convergence of the minimizers away from
$S_*$ and gives the compactness required below.

\begin{prop}\label{propustar}
For any open set $U\subset\subset\om\backslash S_*$,
\be
\limsup_{i\to+\ift}E_{\va_i}(u_{\va_i},U)<+\ift.
\label{limsupleqC}
\ee
Moreover, there exists $u_*\in H_{\loc}^1(\om\backslash S_*,\cN)$ such that,
up to a subsequence, $u_{\va_i}\to u_*$ strongly in
$H_{\loc}^1(\om\backslash S_*,\R^m)$, and the following properties hold.
\begin{enumerate}[label=$(\theenumi)$]
\item $u_*$ is a local minimizer of \eqref{Dirichlet}.
\item Let $S_0:=\sing(u_*)$. Then $S_0$ is locally finite in
$\om\backslash S_*$, and
\[
u_{\va_i}\to u_*
\quad\text{in }C_{\loc}^0(\om\backslash(S_*\cup S_0),\R^m).
\]
\end{enumerate}
\end{prop}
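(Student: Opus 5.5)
The plan is to establish \eqref{limsupleqC} by a covering argument based on clearing-out, and then to invoke Proposition~\ref{interiorcompactnesslem} on an exhaustion of $\om\backslash S_*$, finishing with a diagonal argument.

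\emph{Step 1: local bound.} Fix $x\in\om\backslash S_*$. Choose $r>0$ with $\ol{B}_{2r}(x)\subset\om\backslash S_*$, so that $\mu_*(\ol{B}_{2r}(x))=0$. Weak$^*$ convergence \eqref{munconve} tested against a cutoff equal to $1$ on $\ol{B}_r(x)$ and supported in $B_{2r}(x)$ gives
\[
\limsup_{i\to+\ift}\f{E_{\va_i}(u_{\va_i},B_r(x))}{|\log\va_i|}=0 .
\]
Let $\delta$ be the constant of Proposition~\ref{InteriorClearingout} for the bound $M$ from \eqref{assumptionbound1}. For $i$ large we have $\va_i<\delta r$ and
\[
\theta_{\va_i}(u_{\va_i};x,r)\leq\delta\log\f{r}{\va_i},
\]
since $\log(r/\va_i)\sim|\log\va_i|$. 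The minimizers $u_{\va_i}$ are in particular local minimizers in $B_{2r}(x)\subset\om$, so Proposition~\ref{InteriorClearingout} yields $E_{\va_i}(u_{\va_i},B_{r/2}(x))\leq Cr$, uniformly in $i$ large. Given $U\subset\subset\om\backslash S_*$, the compact set $\ol U$ is covered by finitely many balls $B_{r_x/2}(x)$ of this type. Summing the bounds over the cover gives \eqref{limsupleqC}.

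\emph{Step 2: compactness and limit.} Take an exhaustion $U_1\subset\subset U_2\subset\subset\cdots$ of $\om\backslash S_*$ by open sets. On each $U_k$, Step~1 together with \eqref{assumptionbound1} gives \eqref{assumptionbounduniform} for all $i$ large. Proposition~\ref{interiorcompactnesslem} applied on $U_k$ produces a further subsequence and a limit $u_*^{(k)}\in H^1(U_k,\cN)$ that is a local minimizer of \eqref{Dirichlet}, with strong $H^1_{\loc}(U_k)$ convergence, locally discrete singular set, and $C^0$ convergence away from it.

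The limits are compatible: the a.e.\ limit is unique, so $u_*^{(k+1)}=u_*^{(k)}$ on $U_k$. A Cantor diagonal subsequence therefore defines $u_*\in H^1_{\loc}(\om\backslash S_*,\cN)$ with $u_{\va_i}\to u_*$ strongly in $H^1_{\loc}(\om\backslash S_*,\R^m)$.

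Local minimality is a property of balls $B_r(x)\subset\subset\om\backslash S_*$, and each such ball lies in some $U_k$. Hence property (1) holds. The set $S_0=\sing(u_*)$ is local, and its intersection with $U_k$ is locally finite. This gives local finiteness of $S_0$ in $\om\backslash S_*$. The $C^0_{\loc}$ convergence on $\om\backslash(S_*\cup S_0)$ follows likewise, since every compact subset of that set lies in some $U_k\backslash\sing(u_*^{(k)})$.

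\emph{Main obstacle.} The only delicate point is in Step~1: converting the vanishing of $\mu_*$ near $x$ into the hypothesis of Proposition~\ref{InteriorClearingout}. This needs the weak$^*$ limit to control the open-ball energies, which is why the cutoff on the slightly larger ball $\ol B_{2r}(x)$ is used, together with the requirement that $B_{2r}(x)\subset\om$ for the local minimality. Both are straightforward given the choice of $r$ above. Once \eqref{limsupleqC} is known, everything else reduces to Proposition~\ref{interiorcompactnesslem} and bookkeeping with subsequences.
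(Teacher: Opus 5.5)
Your proposal is correct and follows the same route as the paper. Clearing-out (Proposition~\ref{InteriorClearingout}), applied on balls where $\mu_*$ vanishes, gives \eqref{limsupleqC}. Proposition~\ref{interiorcompactnesslem} then gives the limit map $u_*$, its local minimality, the local finiteness of $S_0$, and the locally uniform convergence. The paper only cites these two propositions; your cutoff argument, exhaustion and diagonal subsequence supply the details it leaves implicit.
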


\begin{proof}
The bound \eqref{limsupleqC} follows from Proposition~\ref{InteriorClearingout}.
The convergence of $u_{\va_i}$ to $u_*$ follows from
Proposition~\ref{interiorcompactnesslem}. The two stated properties are then
standard consequences of the partial regularity theory for minimizing
$\cN$-valued harmonic maps.
\end{proof}

\subsubsection{Further structure of the singular set}

We now formulate the varifold structure associated with the limiting measure.
Let $\bG(3,1)$ denote the Grassmann manifold of $1$-dimensional subspaces of
$\R^3$. Identifying each line with the corresponding orthogonal projection
matrix, we write
\[
\bG(3,1)=\{A\in\MM_3(\R):A^2=A,\, A^{\T}=A,\, \rank(A)=1\}.
\]
Set $\bG_3(\om):=\om\times\bG(3,1)$. A $1$-dimensional varifold in $\om$ is
a non-negative Radon measure on $\bG_3(\om)$.

Let $S\subset\om$ be a countably $\HH^1$-rectifiable set, and let
$\mu\in(C^0(\ol{\om}))'$ be a non-negative Radon measure such that
$\mu\llcorner\om=\theta_S\HH^1\llcorner S$, where $\theta_S$ is locally
$\HH^1$-integrable on $S$. For $\HH^1$-a.e. $x\in S$, let
$T_xS\in\bG(3,1)$ denote the approximate tangent line to $S$ at $x$. For
$\mu$-a.e. $x\in\om$, let $A(x)\in\bG(3,1)$ be the orthogonal projection onto
$T_xS$. The varifold associated with $\mu\llcorner\om$ is defined by
\[
V:=(\op{Id},A)_{\#}(\mu\llcorner\om),
\]
that is,
$V(E):=\mu(\{x\in\om:(x,A(x))\in E\})$ for any Borel set
$E\subset\bG_3(\om)$. We say that $V$ is stationary if
\[
\int_{\om}A_{ij}(x)\pa_i\xi_j(x)\ud\mu(x)=0
\quad\text{for any }\xi\in C_0^1(\om,\R^3).
\]

With this terminology, the following lemma gives the rectifiability of
$S_*\cap\om$ and the stationarity of the associated varifold.

\begin{lem}\label{stationvari}
The following properties hold.
\begin{enumerate}[label=$(\theenumi)$]
\item $S_*\cap\om$ is countably $\HH^1$-rectifiable,
$\HH^1(S_*\cap\om)<+\ift$, and
\[
\mu_*\llcorner\om=\Theta^1(\mu_*,\cdot)\HH^1\llcorner(S_*\cap\om).
\]
\item For $\mu_*$-a.e. $x\in\om$, let $A_*(x)\in\bG(3,1)$ be the orthogonal
projection onto $T_xS_*$. The varifold
$V_0:=(\op{Id},A_*)_{\#}(\mu_*\llcorner\om)$ is stationary.
\end{enumerate}
\end{lem}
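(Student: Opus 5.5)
The plan is to follow the route of Ambrosio--Soner and \cite{BOS05}, as adapted in \cite[Section~4]{Can17} and \cite{BSV25}. First we pass the stress-energy tensor to the limit, and then we apply Allard's rectifiability theorem for varifolds with density bounded below.

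\textbf{Step 1: limit of the stress-energy tensors.} For each $i$ consider the matrix-valued measures
\[
\frac{1}{|\log\va_i|}T_{\va_i}^{jk}(u_{\va_i})\ud x,
\]
with $T_\va$ defined in \eqref{stress}. Their total variation is bounded by $C\mu_{\va_i}$, so, up to a subsequence, they converge weakly$^*$ in $\om$ to $\mu_*$-absolutely continuous measures $B_{jk}\,\mu_*$. Here $B$ is a symmetric matrix field with $|B|\le C$. Passing \eqref{stressidentity} to the limit gives
\[
\int_\om B_{jk}\,\pa_j\xi_k\ud\mu_*=0\quad\text{for any }\xi\in C^1_0(\om,\R^3).
\]
It remains to identify $B$ as a projection onto a line, $\mu_*$-a.e.

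\textbf{Step 2: structure of $B$.} The matrix $\op{Id}-B$ is the limit of
\[
\frac{1}{|\log\va_i|}\(\na u_{\va_i}^{\T}\na u_{\va_i}-\frac{1}{\va_i^2}f(u_{\va_i})\op{Id}\)\ud x.
\]
Since $\na u^\T\na u\ge0$ and $\tr(\na u^\T\na u)\le 2e_\va$, we obtain $\op{Id}-B\ge -\lambda\,\op{Id}$ and $\tr(\op{Id}-B)\le 2$ in the limit, once the potential part is controlled. The key input is that the normalized potential term vanishes in the limit:
\[
\frac{1}{|\log\va_i|}\int\frac{f(u_{\va_i})}{\va_i^2}\to0
\]
locally in $\om$. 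I would derive this from the monotonicity formula \eqref{Mo11}. Its $f$-term integrated against $\ud\rho/\rho^2$ is controlled by $\theta_\va\le C|\log\va|$. Combined with the density bounds of Lemma~\ref{Thetamu0eta0} and a covering argument, following \cite[Lemma~51]{Can17} (or \cite{BOS05}), this shows that the potential carries no mass at order $|\log\va|$. This gives $0\le B\le \op{Id}$ and $\tr B=1$, i.e.\ $B=\op{Id}-\lim$ of the normalized $\na u^\T\na u$.

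\textbf{Step 3: rectifiability via Ambrosio--Soner.} Using the density lower bound $\Theta^1(\mu_*,\cdot)\ge\delta$ on $S_*\cap\om$ (Lemma~\ref{Thetamu0eta0}), the upper bound from monotonicity (Lemma~\ref{monomu0} and Proposition~\ref{locallogestimate}), and the divergence-free identity above, the generalized varifold $V=(\op{Id},B)_\#\mu_*$ in the sense of Ambrosio--Soner has bounded first variation. By their rectifiability theorem, $\mu_*\llcorner\om=\Theta^1\HH^1\llcorner S_*$ with $S_*\cap\om$ countably $1$-rectifiable. Moreover, $B(x)$ equals the orthogonal projection $A_*(x)$ onto $T_xS_*$ for $\mu_*$-a.e.\ $x$. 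This step also requires showing that the trace-one, $[0,\op{Id}]$-valued $B$ is in fact rank one, which follows from their result together with $\tr B=1$ and the tangent-measure characterization. Finiteness of $\HH^1(S_*\cap\om)$ follows from $\Theta^1\ge\delta$ and the bound \eqref{UniformboundMeasure}. Stationarity of $V_0$ is then exactly the limiting identity from Step 1 with $B=A_*$.

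\textbf{Main obstacle.} The hardest step is Step 2, the vanishing of the normalized potential and the resulting identification $\tr B=1$. Without it, $B$ could have trace bigger than one and would fail to be a projection. My first attempt would be the pointwise monotonicity argument. Failing that, I would establish the stronger bound \eqref{W1pestimateeq11} locally, by reproducing the argument of \cite{Can17}.
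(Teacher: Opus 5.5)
Your outline is correct. It is essentially the argument the paper delegates to \cite[Propositions~51 and~55]{Can17}: pass the normalized stress-energy tensor to the limit, keep the divergence identity, run an Ambrosio--Soner-type rectifiability argument with the density bounds of Lemmas~\ref{monomu0} and~\ref{Thetamu0eta0}, and identify the limit tensor $B$ with $A_*$.

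Two details need fixing.

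\textbf{The lower bound on $B$.} Since $e_\va(u)\op{Id}-T_\va(u)=\na u^{\T}\na u$ exactly (there is no $f$-term), the pointwise algebra gives only $-\op{Id}\le B\le\op{Id}$ and $\tr B\ge1$. It does not give $B\ge0$. This is harmless: the rectifiability/blow-up argument only needs $B\le\op{Id}$ and $\tr B\ge1$, and Ambrosio--Soner's class of generalized varifolds admits such matrices. The first-variation identity at $\mu_*$-a.e.\ point then forces $B=A_*$, and hence $B\ge0$, a posteriori.

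\textbf{The potential step.} The bound $\theta_\va\le C|\log\va|$ alone only makes the normalized potential $O(1)$. The right use of \eqref{Mo11} is the increment bound $\f1r\int_{B_r(x)}\va^{-2}f(u_\va)\le\theta_\va(u_\va;x,2r)-\theta_\va(u_\va;x,r)$. Its normalized limit tends to $0$ as $r\to0^+$ by Lemma~\ref{monomu0}. Then $\Theta^1(\mu_*,\cdot)\ge\delta$ on $S_*\cap\om$, together with the fact that the limiting potential measure is at most $\mu_*$, shows that this measure vanishes.

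In fact your ``main obstacle'' can be bypassed entirely. The conditions $B\le\op{Id}$ and $\tr B\ge1$ suffice, and $\tr B=1$ then follows from $B=A_*$. This is precisely how the paper proceeds at the boundary in Part~1 of Proposition~\ref{boundarySstarbehave}, using only $\tr F_*\ge1$ and $\lambda_j(F_*)\le1$. Your fallback through \eqref{W1pestimateeq11} is also legitimate and non-circular, since Proposition~\ref{Interiorpotential} does not use this lemma.
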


\begin{proof}
The proof is based on the arguments of
\cite[Propositions~51~and~55]{Can17}.
\end{proof}

The next lemma is the main cylindrical estimate. It shows that the energy of
a minimizer in a thin cylinder is determined, up to a controlled error, by
the free homotopy class carried by the lateral boundary data.
The high-dimensional counterpart in \cite{GFW26} treats dimensions $n\geq4$,
where $B_{\delta}^2\times\pa B_1^{n-2}$ is connected. In the present
three-dimensional setting, this part reduces to the two caps
$B_{\delta}^2\times\{\pm1\}$, which must be handled separately. For this
reason and for completeness, we give the proof in the form needed below.

\begin{lem}\label{chooseseveralcases}
Let $\delta\in(0,\f{1}{2}]$. Assume that, for each
$\ol{\va}\in(0,1)$, the boundary datum
$g_{\delta,\ol{\va}}$ is defined on $\pa\Lda_{\delta,1}$, its restriction to
$\Ga_{\delta,1}$ belongs to $H^1(\Ga_{\delta,1},\R^m)$, and
\be
\begin{aligned}
\|g_{\delta,\ol{\va}}\|_{L^{\ift}(\pa\Lda_{\delta,1},\R^m)}
&\leq M,\\
E_{\ol{\va}}(g_{\delta,\ol{\va}},B_{\delta}^2\times\{\pm 1\})
&\leq M\log\f{\delta}{\ol{\va}},
\end{aligned}
\label{GvaboundaryH13M}
\ee
for some $M>0$. There exists $\eta\in(0,1)$, depending only on $f$, $M$, and
$\cN$, such that the following properties hold.
\begin{enumerate}[label=$(\theenumi)$]
\item For any $\ol{\va}\in(0,\eta\delta)$, if
\be
E_{\ol{\va}}(g_{\delta,\ol{\va}},\Ga_{\delta,1})
\leq\eta\log\f{\delta}{\ol{\va}},
\label{GvaboundaryH13}
\ee
and if $u_{\delta,\ol{\va}}$ is a minimizer of
$E_{\ol{\va}}(\cdot,\Lda_{\delta,1})$ in
$H^1(\Lda_{\delta,1},\R^m)$ with boundary condition
$u_{\delta,\ol{\va}}=g_{\delta,\ol{\va}}$ on $\pa\Lda_{\delta,1}$, then
there exists $\sg\in[\Ss^1,\cN]$ such that
\be
\left|E_{\ol{\va}}(u_{\delta,\ol{\va}},\Lda_{\delta,1})
-2|\sg|_*\log\f{\delta}{\ol{\va}}\right|
\leq\al(M,\delta,\eta)\log\f{\delta}{\ol{\va}}+C,
\label{EolvaapproximateEstar}
\ee
where $C>0$ depends only on $f$, $M$, and $\cN$, and
\[
\al(M,\delta,\eta)\leq C(\delta+\eta+\delta^{-1}\eta).
\]
\item If, moreover,
\[
\|\dist(g_{\delta,\ol{\va}},\cN)\|_{L^{\ift}(\Ga_{\delta,1})}<\eta,
\]
then the homotopy class $\sg$ in \eqref{EolvaapproximateEstar} satisfies
\be
\sg=[\Pi_{\cN}\circ g_{\delta,\ol{\va}}]_{\cN},
\label{sgfirstdefine}
\ee
where $\Pi_{\cN}$ is the nearest-point projection given by
Lemma~\ref{Nfproperty}.
\end{enumerate}
\end{lem}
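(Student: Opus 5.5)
The plan is to prove matching upper and lower bounds separately, with $\sg$ defined from the projected lateral data obtained from the Luckhaus-type construction. First apply Lemma~\ref{Luckhauscylinder2} (or, under the extra hypothesis of part (2), Lemma~\ref{closeprojlem}) to $g_{\delta,\ol{\va}}|_{\Ga_{\delta,1}}$. This produces heights $z_\pm$, an $\cN$-valued map $v_{\delta,\ol{\va}}$ on $\pa B_\delta^2\times(z_-,z_+)$, and an interpolation $\vp_{\delta,\ol{\va}}$ on the annular cylinder. Their energies are controlled by $E_{\ol{\va}}(g_{\delta,\ol{\va}},\Ga_{\delta,1})\leq\eta\log\f{\delta}{\ol{\va}}$. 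We set $\sg:=[v_{\delta,\ol{\va}}]_{\cN}$, the common class of the slices. In the setting of part (2), $v_{\delta,\ol{\va}}=\Pi_{\cN}\circ g_{\delta,\ol{\va}}$, so \eqref{sgfirstdefine} is immediate.

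\textbf{Upper bound.} Build a competitor on $\Lda_{\delta,1}$ from four pieces.
\begin{enumerate}[label=(\roman*)]
\item On $(B_\delta^2\backslash B_{\delta/2}^2)\times(z_-,z_+)$, use $\vp_{\delta,\ol{\va}}$. Its cost is $\leq C\delta\eta\log\f{\delta}{\ol{\va}}$.
\item On $B_{\delta/2}^2\times(z_-,z_+)$, use the map given by Lemma~\ref{cylinderex} with lateral data $v_{\delta,\ol{\va}}(2\cdot,z)$. Here $r=\delta/2$ and $L\approx 1$, so the cost is $2|\sg|_*\log\f{\delta}{\ol{\va}}+C\delta^{-1}\eta\log\f{\delta}{\ol{\va}}+C$. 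The factor $L/r$ in \eqref{cyl:bulk} is the source of the $\delta^{-1}\eta$ term in $\al$.
\item In the two end regions $B_\delta^2\times(z_+,1)$ and $B_\delta^2\times(-1,z_-)$, whose thickness is at most $\delta$, interpolate along $z$ between the given cap data $g_{\delta,\ol{\va}}$ on $B_\delta^2\times\{\pm1\}$, the lateral data, and the slice at $z_\pm$. The slice at $z_\pm$ consists of $\vp^\pm$ from \eqref{WpluesminusC11} on the annulus and of the map from \eqref{cyl:slice} on the inner disk.
\item For the end regions, I would use a conical/homogeneous extension: the region $B_\delta^2\times(z_+,1)$ is bi-Lipschitz to a ball of radius $\sim\delta$, so extend the boundary data $0$-homogeneously from its center. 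This costs $\lesssim\delta\cdot$(boundary energy). The boundary energy is $\leq (M+C+|\sg|_*)\log\f{\delta}{\ol{\va}}$, so the end regions contribute $O(\delta)\log\f{\delta}{\ol{\va}}$. The potential term is also fine, since $f$ is bounded on the bounded image under homogeneous extension and the volume is small. I would check that the volume term $\ol{\va}^{-2}\delta^3$ is not a problem; if it is, replace the pure cone by a radial rescaling that stays $\ol{\va}$-close to the data.
\end{enumerate}
Minimality of $u_{\delta,\ol{\va}}$ then gives the upper half of \eqref{EolvaapproximateEstar}.

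\textbf{Lower bound.} This is the main obstacle, because the minimizer's slices need not carry the class $\sg$ up to the caps. I would slice: for a.e. $z\in(z_-,z_+)$, consider the disk $B_\delta^2\times\{z\}$, whose boundary datum is $g_{\delta,\ol{\va}}(\cdot,z)$. For slices where $E_{\ol{\va}}(g,\pa B_\delta^2\times\{z\})$ is small, the trace is uniformly close to $\cN$ by a one-dimensional Sobolev/potential argument, and its projection is homotopic to $v_{\delta,\ol{\va}}(\cdot,z)$, hence in class $\sg$. By Fubini, all but a set of $z$ of measure $\leq C\eta/\delta$-proportion qualify; more precisely, a Chebyshev bound with threshold $\sim\sqrt{\eta}\log\f{\delta}{\ol{\va}}$ costs only a small measure of excluded $z$. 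Lemma~\ref{LowerBound} on each good slice gives
\[
E_{\ol{\va}}(u,B_\delta^2\times\{z\})\geq|\sg|_*\log\f{\delta}{\ol{\va}}-C\delta E_{\ol{\va}}(g,\pa B_\delta^2\times\{z\})-C.
\]
Integrating over $z\in(z_-,z_+)$, which has length $\geq 2-2\delta$, and using \eqref{GvaboundaryH13}, the error is $(C\delta+C\eta)\log\f{\delta}{\ol{\va}}+C$. The delicate points are the closeness-to-$\cN$ claim for one-dimensional slices with energy only $o(\log)$, and the identification of the homotopy class of the slices with $[v_{\delta,\ol{\va}}]_{\cN}$. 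I would handle both by running the cell construction of Lemma~\ref{Luckhauscylinder2} with horizontal 1-cells chosen among good slices, so that $v_{\delta,\ol{\va}}=\Pi_{\cN}\circ g$ on them by construction.
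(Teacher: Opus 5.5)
Your choice of $\sg$, your treatment of part (2), and your upper bound coincide with the paper's argument: the annulus is filled with $\vp_{\delta,\ol\va}$, the inner cylinder via Lemma~\ref{cylinderex}, and the caps by homogeneous extension. Your worry about an $\ol\va^{-2}\delta^3$ potential term there is unfounded. A $0$-homogeneous extension transports values along rays, so its potential energy is at most $C\delta$ times the boundary potential energy, which is already part of the boundary energy.

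\textbf{The gap is in the lower bound.} Applying Lemma~\ref{LowerBound} on $B_\delta^2\times\{z\}$ with datum $g_{\delta,\ol\va}(\cdot,z)$ only helps if $[\Pi_{\cN}\circ g_{\delta,\ol\va}(\cdot,z)]_{\cN}=\sg$ for $z$ in a set of measure close to $2$. Closeness to $\cN$ is the easy part: by $H^1\subset C^{0,1/2}$ in one dimension and the potential term, any slice with energy $\ll\ol\va^{-1}$ is uniformly close to $\cN$. The class identification is where your argument fails. Your remedy, placing the horizontal $1$-cells of the Luckhaus decomposition on good slices, yields $v_{\delta,\ol\va}=\Pi_{\cN}\circ g_{\delta,\ol\va}$ only on finitely many skeleton slices, a null set of $z$. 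A generic good slice runs through $2$-cells, where $v_{\delta,\ol\va}$ is an extension with no pointwise relation to $g_{\delta,\ol\va}$. Moreover, $g_{\delta,\ol\va}$ may leave the neighbourhood of $\cN$ in the strip between that slice and the nearest skeleton slice. So nothing you wrote prevents the class from changing. Closing this would need an additional two-dimensional argument on the lateral surface: a logarithmic lower bound for a change of class between good slices, incompatible with \eqref{GvaboundaryH13} for small $\eta$, plus chaining for nearby slices. None of this is among the tools you invoke.

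\textbf{How the paper avoids the issue.} For $z\in(z_-,z_+)$ it sets $\wh u(\rho,\theta,z):=u_{\delta,\ol\va}(2\rho,\theta,z)$ for $\rho<\delta/2$, and $\wh u(\rho,\theta,z):=\vp_{\delta,\ol\va}(\tfrac{3\delta}{2}-\rho,\theta,z)$ for $\rho\in[\delta/2,\delta)$.
\begin{itemize}
\item The radial compression does not increase the energy.
\item The reflected interpolation costs at most $C\delta\eta\log\f{\delta}{\ol\va}$ by \eqref{Wvaest1}.
\item The trace of $\wh u$ on $\pa B_\delta^2\times\{z\}$ is exactly $v_{\delta,\ol\va}(\cdot,z)$, which is $\cN$-valued and of class $\sg$ for a.e.\ $z$.
\end{itemize}
Lemma~\ref{LowerBound} then applies on every such slice, with the boundary term controlled by \eqref{Vvaes1}. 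Integrating in $z$ gives the lower bound without ever identifying the class of $\Pi_{\cN}\circ g_{\delta,\ol\va}$ on individual slices.
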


\begin{proof}
Applying Lemma~\ref{Luckhauscylinder2}, we find
$\eta=\eta(f,M,\cN)\in(0,1)$ and points
\[
z_-\in\(-1+\f{\delta}{2},-1+\delta\),\quad
z_+\in\(1-\delta,1-\f{\delta}{2}\),
\]
together with maps
\[
v_{\delta,\ol{\va}}\in H^1(\pa B_{\delta}^2\times(z_-,z_+),\cN)
\]
and
\[
\vp_{\delta,\ol{\va}}\in
H^1((B_{\delta}^2\backslash B_{\f{\delta}{2}}^2)\times(z_-,z_+),\R^m),
\]
whose restrictions
\[
\vp_{\delta,\ol{\va}}^{\pm}:=
\vp_{\delta,\ol{\va}}|_{(B_{\delta}^2\backslash B_{\f{\delta}{2}}^2)
\times\{z_{\pm}\}}
\]
belong to
$H^1((B_{\delta}^2\backslash B_{\f{\delta}{2}}^2)\times\{z_{\pm}\},\R^m)$.
These maps satisfy \eqref{Wvaest1}, \eqref{Vvaes1}, and
\eqref{WpluesminusC11}. The homotopy class
\be
\sg:=[v_{\delta,\ol{\va}}]_{\cN}\in[\Ss^1,\cN]
\label{sgdefine1}
\ee
is well-defined by the analysis in \cite[Section~2.2]{Can17}.

\smallskip
\noindent\textbf{Step~1. Upper bound for
$E_{\ol{\va}}(u_{\delta,\ol{\va}},\Lda_{\delta,1})$.}
Define
\begin{align*}
\Lda_{\delta}^-&:=B_{\delta}^2\times(-1,z_-),&
\Lda_{\delta}^+&:=B_{\delta}^2\times(z_+,1),\\
\Lda_{\delta}^0&:=B_{\f{\delta}{2}}^2\times(z_-,z_+),&
E_{\delta}&:=(B_{\delta}^2\backslash B_{\f{\delta}{2}}^2)\times(z_-,z_+).
\end{align*}
Since
$v_{\delta,\ol{\va}}\in H^1(\pa B_{\delta}^2\times(z_-,z_+),\cN)$, we
apply Lemma~\ref{cylinderex}, after scaling the radial variable by a factor
$2$, to obtain
$\wh{u}_{\delta,\ol{\va}}^{(0)}\in H^1(\Lda_{\delta}^0,\R^m)$ such that
\[
\wh{u}_{\delta,\ol{\va}}^{(0)}(y,z)
=v_{\delta,\ol{\va}}(2y,z)
\quad\text{for }\HH^2\text{-a.e. }(y,z)\in
\pa B_{\f{\delta}{2}}^2\times(z_-,z_+),
\]
and
\begin{gather}
E_{\ol{\va}}(\wh{u}_{\delta,\ol{\va}}^{(0)},\Lda_{\delta}^0)
\leq(2|\sg|_*+C(\delta^{-1}+\delta)\eta)
\log\f{\delta}{\ol{\va}}+C,
\label{inteU0hat}\\
E_{\ol{\va}}(\wh{u}_{\delta,\ol{\va}}^{(0)},
B_{\f{\delta}{2}}^2\times\{z_{\pm}\})
\leq(|\sg|_*+C(\delta^{-1}+\delta)\eta)
\log\f{\delta}{\ol{\va}}+C.
\label{zplusmiunsU0}
\end{gather}
Define $g_{\delta,\ol{\va}}^{\pm}:\pa\Lda_{\delta}^{\pm}\to\R^m$ by
\[
g_{\delta,\ol{\va}}^{\pm}(x):=\left\{\begin{aligned}
&g_{\delta,\ol{\va}}(x)
&&\text{if }x\in\pa\Lda_{\delta}^{\pm}\cap\pa\Lda_{\delta,1},\\
&\vp_{\delta,\ol{\va}}^{\pm}(x)
&&\text{if }x\in(B_{\delta}^2\backslash B_{\f{\delta}{2}}^2)
\times\{z_{\pm}\},\\
&\wh{u}_{\delta,\ol{\va}}^{(0)}(x)
&&\text{if }x\in\ol{B}_{\f{\delta}{2}}^2\times\{z_{\pm}\}.
\end{aligned}\right.
\]
Then $g_{\delta,\ol{\va}}^{\pm}\in H^1(\pa\Lda_{\delta}^{\pm},\R^m)$. Using
\eqref{WpluesminusC11}, \eqref{GvaboundaryH13M}, \eqref{GvaboundaryH13}, and
\eqref{zplusmiunsU0}, we obtain
\be
\begin{aligned}
E_{\ol{\va}}(g_{\delta,\ol{\va}}^{\pm},\pa\Lda_{\delta}^{\pm})
&\leq E_{\ol{\va}}(\wh{u}_{\delta,\ol{\va}}^{(0)},
B_{\f{\delta}{2}}^2\times\{z_{\pm}\})+E_{\ol{\va}}(\vp_{\delta,\ol{\va}}^{\pm},
(B_{\delta}^2\backslash B_{\f{\delta}{2}}^2)\times\{z_{\pm}\})\\
&\quad+E_{\ol{\va}}(g_{\delta,\ol{\va}},
\pa\Lda_{\delta}^{\pm}\cap\Ga_{\delta,1})
+E_{\ol{\va}}(g_{\delta,\ol{\va}},B_{\delta}^2\times\{\pm1\})\\
&\leq C(1+(1+\delta^{-1}+\delta)\eta)
\log\f{\delta}{\ol{\va}}+C.
\end{aligned}
\label{Gvadeltaplusminus}
\ee
There exist bi-Lipschitz homeomorphisms
$\Phi^{\pm}:\ol{\Lda}_{\delta}^{\pm}\to\ol{B}_{\delta}^3$ whose
bi-Lipschitz constants are independent of $\delta$. We define
\[
\wh{u}_{\delta,\ol{\va}}^{\pm}(x)
:=g_{\delta,\ol{\va}}^{\pm}\circ(\Phi^{\pm})^{-1}
\circ\(\f{\delta\Phi^{\pm}(x)}{|\Phi^{\pm}(x)|}\)
\quad\text{for }x\in\Lda_{\delta}^{\pm}\backslash(\Phi^{\pm})^{-1}(0).
\]
From \eqref{Gvadeltaplusminus},
\be
E_{\ol{\va}}(\wh{u}_{\delta,\ol{\va}}^{\pm},\Lda_{\delta}^{\pm})
\leq C\delta E_{\ol{\va}}(g_{\delta,\ol{\va}}^{\pm},\pa\Lda_{\delta}^{\pm})
\leq C(\delta+(\delta+\delta^2+1)\eta)
\log\f{\delta}{\ol{\va}}+C.
\label{U0plusminusinmte}
\ee
We now define the global competitor
\[
\wh{u}_{\delta,\ol{\va}}(x):=\left\{\begin{aligned}
&\wh{u}_{\delta,\ol{\va}}^{(0)}(x)
&&\text{if }x\in\Lda_{\delta}^0,\\
&\vp_{\delta,\ol{\va}}(x)
&&\text{if }x\in E_{\delta},\\
&\wh{u}_{\delta,\ol{\va}}^{\pm}(x)
&&\text{if }x\in\Lda_{\delta}^{\pm}.
\end{aligned}\right.
\]
By \eqref{Wvaest1}, \eqref{inteU0hat}, and \eqref{U0plusminusinmte},
\[
E_{\ol{\va}}(\wh{u}_{\delta,\ol{\va}},\Lda_{\delta,1})
\leq(2|\sg|_*+C(\delta+\eta+\delta^{-1}\eta))
\log\f{\delta}{\ol{\va}}+C.
\]
The upper bound follows from the minimality of $u_{\delta,\ol{\va}}$:
\[
E_{\ol{\va}}(u_{\delta,\ol{\va}},\Lda_{\delta,1})
\leq E_{\ol{\va}}(\wh{u}_{\delta,\ol{\va}},\Lda_{\delta,1}).
\]

\smallskip
\noindent\textbf{Step~2. Lower bound for
$E_{\ol{\va}}(u_{\delta,\ol{\va}},\Lda_{\delta,1})$.}
In cylindrical coordinates
$(\rho,\theta,z)\in[0,\delta]\times\Ss^1\times[-1,1]$, define
\[
\wh{u}_{\delta,\ol{\va}}^{(1)}(\rho,\theta,z):=
\left\{\begin{aligned}
&u_{\delta,\ol{\va}}(2\rho,\theta,z)
&&\text{if }\rho\in[0,\tfrac{\delta}{2})\text{ and }z\in(z_-,z_+),\\
&\vp_{\delta,\ol{\va}}(\tfrac{3\delta}{2}-\rho,\theta,z)
&&\text{if }\rho\in[\tfrac{\delta}{2},\delta)\text{ and }z\in(z_-,z_+).
\end{aligned}\right.
\]
By construction,
$\wh{u}_{\delta,\ol{\va}}^{(1)}\in
H^1(B_{\delta}^2\times(z_-,z_+),\R^m)$ and
\be
\wh{u}_{\delta,\ol{\va}}^{(1)}(x)=v_{\delta,\ol{\va}}(x)
\quad\text{for }\HH^2\text{-a.e. }x\in
\pa B_{\delta}^2\times(z_-,z_+).
\label{boundaryconUV2}
\ee
A direct computation, together with \eqref{Wvaest1}, gives
\be
\begin{aligned}
E_{\ol{\va}}(\wh{u}_{\delta,\ol{\va}}^{(1)},
B_{\delta}^2\times(z_-,z_+))
&\leq E_{\ol{\va}}(u_{\delta,\ol{\va}},\Lda_{\delta,1})+E_{\ol{\va}}(\vp_{\delta,\ol{\va}},
(B_{\delta}^2\backslash B_{\f{\delta}{2}}^2)\times(z_-,z_+))\\
&\leq E_{\ol{\va}}(u_{\delta,\ol{\va}},\Lda_{\delta,1})
+C\eta\log\f{\delta}{\ol{\va}}.
\end{aligned}
\label{U1Udeltaminus}
\ee
By Fubini's theorem, Lemma~\ref{LowerBound}, and \eqref{boundaryconUV2},
\[
E_{\ol{\va}}(\wh{u}_{\delta,\ol{\va}}^{(1)},B_{\delta}^2\times\{z\})
+C\delta E_{\ol{\va}}(v_{\delta,\ol{\va}},
\pa B_{\delta}^2\times\{z\})
\geq|\sg|_*\log\f{\delta}{\ol{\va}}-C
\]
for a.e. $z\in(z_-,z_+)$. Integrating over $z\in(z_-,z_+)$ and using
$z_+-z_-\geq2-\delta$, we obtain
\[
E_{\ol{\va}}(\wh{u}_{\delta,\ol{\va}}^{(1)},
B_{\delta}^2\times(z_-,z_+))
+C\delta\int_{\pa B_{\delta}^2\times(z_-,z_+)}
|\na v_{\delta,\ol{\va}}|^2\ud\HH^2
\geq(2|\sg|_*-C\delta)\log\f{\delta}{\ol{\va}}-C.
\]
Combining this estimate with \eqref{Vvaes1}, \eqref{GvaboundaryH13}, and
\eqref{U1Udeltaminus}, we get
\[
E_{\ol{\va}}(u_{\delta,\ol{\va}},\Lda_{\delta,1})
\geq(2|\sg|_*-C(\delta+\eta))\log\f{\delta}{\ol{\va}}-C.
\]
This proves the lower bound and completes the proof of part~$(1)$.

For part~$(2)$, the additional assumption
$\|\dist(g_{\delta,\ol{\va}},\cN)\|_{L^{\ift}(\Ga_{\delta,1})}<\eta$ allows
us to apply Lemma~\ref{closeprojlem} and choose
$v_{\delta,\ol{\va}}=\Pi_{\cN}\circ g_{\delta,\ol{\va}}$ on
$\Ga_{\delta,1}$. Hence \eqref{sgfirstdefine} follows directly from
\eqref{sgdefine1}.
\end{proof}

The preceding cylindrical estimate gives the quantization of the density of
the limiting measure at almost any interior point of the singular set.

\begin{lem}\label{densitydescrete}
For $\HH^1$-a.e. $x\in S_*\cap\om$,
\[
\Theta^1(\mu_*,x)\in\{|\sg|_*:\sg\in[\Ss^1,\cN]\}\backslash\{0\}.
\]
\end{lem}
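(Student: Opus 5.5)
Fix $x\in S_*\cap\om$ where, by Lemma~\ref{stationvari}, the approximate tangent line $T_xS_*$ exists and the density $\theta:=\Theta^1(\mu_*,x)$ is positive and finite. We may take such a point to be a Lebesgue point of $\Theta^1(\mu_*,\cdot)$ with respect to $\HH^1\llcorner S_*$, and arrange that the blow-ups of $\mu_*$ at $x$ converge to $\theta\HH^1\llcorner T_xS_*$. After a rotation and translation, $x=0$ and $T_xS_*$ is the $z$-axis. Fix a small $\delta\in(0,\f12]$ and set $\mu_{*,r}:=r^{-1}(\tau_r)_{\#}\mu_*$, where $\tau_r(y)=y/r$. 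For small $r$, the measure $\mu_{*,r}$ restricted to $\Lda_{2,2}$ is close to $\theta\HH^1\llcorner(\{0\}\times(-2,2))$. In particular, $\mu_{*,r}$ of the region $\{|y'|\geq\delta/2\}\cap\Lda_{2,2}$ tends to $0$, and $\mu_{*,r}(\Lda_{\delta,1})\to2\theta$.

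Next we transfer this to $u_{\va_i}$. Set $\ol{\va}_i:=\va_i/r$ and $\wt u_i(y):=u_{\va_i}(ry)$. Weak$^*$ convergence gives
\[
\f{E_{\ol{\va}_i}(\wt u_i,\Lda_{\delta,1})}{|\log\va_i|}\to\f{\mu_*(\Lda_{r\delta,r})}{r}
\]
for a.e.\ choice of radii, and $|\log\va_i|/\log(\delta/\ol{\va}_i)\to1$. The energy of $\wt u_i$ in the shell $(B_\delta^2\backslash B_{\delta/2}^2)\times(-1,1)$ is $o(1)\log(\delta/\ol{\va}_i)$. So by Fubini, averaging over the radius $\delta'\in(\delta/2,\delta)$, we can choose a lateral surface $\Ga_{\delta',1}$ on which
\[
E_{\ol{\va}_i}(\wt u_i,\Ga_{\delta',1})\leq\eta\log\f{\delta'}{\ol{\va}_i}.
\]
For the caps, averaging over heights near $\pm1$ and using the density bound of Proposition~\ref{locallogestimate} gives $E\leq M\log(\delta'/\ol{\va}_i)$ on $B_{\delta'}^2\times\{\pm1\}$ after a harmless rescaling of the height. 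Since $u_{\va_i}$ is a global minimizer, $\wt u_i$ minimizes $E_{\ol{\va}_i}$ on this cylinder with its own boundary data. Lemma~\ref{chooseseveralcases}(1) therefore yields classes $\sg_i\in[\Ss^1,\cN]$ with
\[
\Bigl|E_{\ol{\va}_i}(\wt u_i,\Lda_{\delta',1})-2|\sg_i|_*\log\f{\delta'}{\ol{\va}_i}\Bigr|\leq C(\delta+\eta+\delta^{-1}\eta)\log\f{\delta'}{\ol{\va}_i}+C.
\]
Because $[\Ss^1,\cN]$ is finite by \ref{A1}, we may pass to a subsequence with $\sg_i=\sg$ fixed.

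Dividing by $\log(\delta'/\ol{\va}_i)$ and letting $i\to\infty$, then $r\to0$, gives
\[
|2\theta-2|\sg|_*|\leq C(\delta+\eta+\delta^{-1}\eta).
\]
Here $\eta$ may be taken arbitrarily small for fixed $\delta$, since the shell energy is $o(1)$. So we first send $\eta\to0$ and then $\delta\to0$. The class $\sg$ may depend on $\delta$, but the set of values $\{|\sg|_*\}$ is finite, so a single value is attained along a sequence $\delta\to0$. Hence $\theta=|\sg|_*$ for some class $\sg$, and $\sg\neq0$ because $\theta>0$ by Lemma~\ref{Thetamu0eta0}.

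The main obstacle is the second step: extracting good slices with small lateral energy from tangent-measure convergence while keeping the cap energies at most $M\log(\delta/\ol{\va})$. This requires a careful order of limits ($i\to\infty$, then $r\to0$, then $\eta\to0$, then $\delta\to0$), together with Fubini selections of both radius and height that hold uniformly along the subsequence. If the direct Fubini selection of a height fails, I would instead use the rectifiability of $S_*$ to get flatness of $\mu_*$ in the cone complement, and then the interior clearing-out of Proposition~\ref{InteriorClearingout} to control the lateral energy.
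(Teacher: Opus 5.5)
Your proposal is correct and takes the same route as the paper, which deduces the lemma from Lemma~\ref{chooseseveralcases} via the blow-up argument of \cite[Proposition~59]{Can17}: you blow up at a point with an approximate tangent line, use Fubini to select a thin cylinder with small lateral energy and caps of energy $O(\log)$, and use the finiteness of $[\Ss^1,\cN]$ to identify the density with some $|\sg|_*$. It is worth stating explicitly that the cap heights are chosen over an interval of length comparable to $1$, as you do; this keeps the constant $M$ in \eqref{GvaboundaryH13M}, and hence the $\delta M$ contribution of the end regions in Step~1 of that lemma, independent of $\delta$, so the fallback you mention is not needed.
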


\begin{proof}
This follows from Lemma~\ref{chooseseveralcases} by the blow-up argument of
\cite[Proposition~59]{Can17}.
\end{proof}

\subsubsection{Conclusions of the interior analysis}

We first record the interior structure of the limiting singular set. The following proposition gives a precise geometric description of $S_*$ near any compact subset of $\om$.

\begin{prop}\label{interiorproperty1}
Let $K\subset\om$ be a compact set such that $S_*\cap K\neq\emptyset$. Then there exists an open set $U\subset\subset\om$ whose connected components are Lipschitz domains and such that the following properties hold.
\begin{enumerate}[label=$(\theenumi)$]
\item There exist closed line segments $\{L_i\}_{i=1}^k$ with $\HH^1(L_i)>0$ for any $i\in\Z\cap[1,k]$ such that
\[
S_*\cap\ol{U}=\bigcup_{i=1}^k L_i.
\]
Moreover, $\op{Int}(L_i)\cap\op{Int}(L_j)=\emptyset$ for any $i\neq j$, where $\op{Int}(L_i)$ denotes the relative interior of $L_i$.
\item If $K$ is connected, then $U$ is connected.
\item $S_*\cap\pa U$ is a finite set. For any $x\in S_*\cap\pa U$, there is a unique $L\in\{L_i\}_{i=1}^k$ such that $x$ is an endpoint of $L$.
\end{enumerate}
\end{prop}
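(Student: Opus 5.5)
The plan is to combine the stationarity of the varifold $V_0$ (Lemma~\ref{stationvari}) with the lower density bound (Lemma~\ref{Thetamu0eta0}) and the discreteness of the density values (Lemma~\ref{densitydescrete}). Together these should give a structure theorem of Allard--Almgren type for one-dimensional stationary varifolds: locally, $S_*\cap\om$ is a finite union of segments meeting at finitely many junctions. This is the strategy of \cite{BSV25} (and \cite{Can17} for the interior), and I would follow it.

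\textbf{Step 1 (tangent cones).} At every $x\in S_*\cap\om$, Lemma~\ref{monomu0} gives a well-defined density $\Theta^1(\mu_*,x)\geq\delta$. Blow-ups of $V_0$ at $x$ converge, up to subsequences, to stationary cones. The monotonicity identity obtained in the limit from \eqref{Mo11} forces these cones to be radially invariant. A one-dimensional stationary cone is a finite union of rays $\sum_j \theta_j\HH^1\llcorner\{tv_j:t>0\}$ with $\sum_j\theta_j v_j=0$. Each $\theta_j$ is at least $\delta$, and by Lemma~\ref{densitydescrete} together with upper semicontinuity it lies in the finite set $\{|\sg|_*\}$. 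Since the total density is bounded by $\mu_*(\ol B_r(x))/(2r)\leq C$, the number of rays is uniformly bounded.

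\textbf{Step 2 (local structure).} Next I would use the Allard--Almgren argument: one-dimensional stationary varifolds with density bounded below have tangent cones that are unique. The set of density values is finite and the mass gap holds. Together these give, in a small ball $B_{r_x}(x)$, that $S_*\cap B_{r_x}(x)$ is exactly the union of the segments $x+[0,r_x)v_j$. The key point is that away from the vertex the density is constant along each ray. Stationarity with constant density forces each arc to have zero curvature, hence to be straight, and the discrete density values prevent accumulation of small branches.

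\textbf{Step 3 (assembling $U$).} Cover the compact set $K\cap S_*$ by finitely many such balls. Only finitely many junction points can occur: each junction is the center of its own ball, and the neighbouring balls contain only straight pieces. Then $S_*\cap\ol U$ is a finite union of segments, which can be subdivided at junctions so that their relative interiors are disjoint. I would then take $U$ to be a union of slightly enlarged balls or tubes around $K$, with radii chosen generically. Genericity (Sard/Fubini on radii) ensures that $\pa U$ meets $S_*$ transversally in finitely many points, each lying in the interior of exactly one segment and not at a junction. After truncation this gives item (3).

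To obtain Lipschitz components, I would take $U$ to be a finite union of balls whose boundaries intersect transversally; for connected $K$ one adds small connecting balls, or simply uses a tubular neighbourhood $\{\dist(\cdot,K)<s\}$ smoothed out, whose components are connected when $K$ is. Segments are cut at $\pa U$, and the unique segment at each boundary point gives (3).

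The main obstacle is Step 2: showing that the tangent cone is unique and that $S_*$ near a point is genuinely a finite union of straight segments rather than merely rectifiable. I would first try the classical result of Allard--Almgren on one-dimensional stationary integral-like varifolds, adapted to densities taking finitely many values bounded below, which is exactly the situation here.
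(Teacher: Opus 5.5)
Your proposal follows essentially the same route as the paper, whose proof combines the stationarity and rectifiability from Lemma~\ref{stationvari}, the density lower bound of Lemma~\ref{Thetamu0eta0} and the finiteness of density values from Lemma~\ref{densitydescrete}, applies the Allard--Almgren structure theorem \cite{AA76}, and then constructs $U$ as in \cite[Proposition~5.17]{BSV25}. One minor imprecision is in Step~1: upper semicontinuity alone does not place the tangent-cone multiplicities in $\{|\sg|_*\}$, since a priori they could be finite sums of such values, but this is harmless because the structure theorem only needs the $\HH^1$-a.e.\ density of $V_0$ itself to take finitely many values.
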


\begin{proof}
The proof follows the argument of \cite[Proposition~5.17]{BSV25}, using Lemma~\ref{stationvari}, Lemma~\ref{densitydescrete}, and the structure theorem for $1$-dimensional stationary varifolds \cite[Theorem~p.89]{AA76}.
\end{proof}

The next proposition identifies the homotopy class carried by each interior segment of $S_*$ and gives the balance condition at the interior branching points.

\begin{prop}\label{interiorproperty11}
Let $u_*$ and $S_0$ be as in Proposition~\ref{propustar}. The following properties hold.
\begin{enumerate}[label=$(\theenumi)$]
\item Let $L\subset S_*$ be a closed line segment, let $x_0\in\op{Int}(L)$, and let $D\subset\om$ be a closed $2$-disk centered at $x_0$ such that
\[
D\cap S_*=\{x_0\}\quad\text{and}\quad\pa D\cap S_0=\emptyset.
\]
Then the homotopy class $\sg:=[u_*|_{\pa D}]_{\cN}\in[\Ss^1,\cN]$ is non-trivial and, for any $y\in\op{Int}(L)$,
\be
\Theta^1(\mu_*,y)=|\sg|_*.\label{Theta1muy}
\ee
\item Let $x_0\in S_*\cap\om$. There exists $r>0$ such that $B_r(x_0)\subset\subset\om$ and
\[
S_*\cap\ol{B}_r(x_0)=\bigcup_{i=1}^k L_i,
\]
where $k\in\N$ and each $L_i$ is a closed line segment with $x_0\in L_i$. Let $v_i\in\Ss^2$ be the unit direction vector of $L_i$ pointing outward from $x_0$. Then
\[
\sum_{i=1}^k |\sg_i|_*v_i=0,
\]
where $\sg_i:=[u_*|_{\pa D_i}]_{\cN}$, and $D_i$ is a closed $2$-disk centered at some $x_i\in\op{Int}(L_i)\cap B_r(x_0)$ and lying in the $2$-plane orthogonal to $L_i$, with
\[
D_i\cap S_*=\{x_i\}\quad\text{and}\quad\pa D_i\cap S_0=\emptyset.
\]
\end{enumerate}
\end{prop}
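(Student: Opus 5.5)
For part (1), I would work locally near a point $y\in\op{Int}(L)$ and combine Lemma~\ref{chooseseveralcases} with the strong convergence of Proposition~\ref{propustar}.

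\emph{Nontriviality and choice of disk.} By Proposition~\ref{interiorproperty1}, near $y$ the set $S_*$ coincides with a segment. I would fix a thin cylinder $\Lda$ of radius $\delta r$ and half-length $r$ around $L$ at $y$, after a rigid motion and rescaling to $\Lda_{\delta,1}$. I choose it so that its lateral boundary $\Ga$ avoids $S_*\cup S_0$; this is possible since $S_0$ is locally finite. On $\Ga$, $u_{\va_i}\to u_*$ uniformly by Proposition~\ref{propustar}(2). Hence $\dist(u_{\va_i},\cN)<\eta$ on $\Ga$ and $E_{\va_i}(u_{\va_i},\Ga)\le C$, so hypotheses \eqref{GvaboundaryH13} and part (2) of Lemma~\ref{chooseseveralcases} hold. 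The cap energies are $O(|\log\va_i|)$ by a Fubini choice of $z=\pm1$, using Proposition~\ref{locallogestimate}. Since $u_{\va_i}$ minimizes on $\Lda$ with its own trace, Lemma~\ref{chooseseveralcases} gives
\[
E_{\va_i}(u_{\va_i},\Lda)=2|\sg|_* r\,|\log\va_i|\,(1+O(\delta+\eta+\eta/\delta))+O(1),
\qquad \sg=[\Pi_\cN\circ u_{\va_i}|_{\text{slice}}]_\cN=[u_*|_{\pa D'}]_\cN ,
\]
for a slice disk $D'$. Dividing by $|\log\va_i|$ and passing to the limit, $\mu_*(\Lda)/(2r)$ is close to $|\sg|_*$. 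Here $\mu_*(\pa\Lda)=0$ for a.e. choice of dimensions, and the contribution of the caps is negligible because $\mu_*=\Theta^1\HH^1\llcorner L$ near $y$. Letting $r\to0$, then $\eta\to0$ with $\delta$ chosen suitably (e.g.\ $\eta\ll\delta$), I get $\Theta^1(\mu_*,y)=|\sg|_*$. Since $y\in S_*$, Lemma~\ref{Thetamu0eta0} gives $\Theta^1>0$, hence $\sg\ne0$.

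\emph{Independence of the disk and of $y$.} The disk $D$ in the statement and the slice disk $D'$ are connected by a homotopy of loops in $\om\setminus(S_*\cup S_0)$. The annulus between their boundaries can be perturbed to avoid the discrete set $S_0$, since $S_0$ lies in the complement of a 1-dimensional set in 3D and loops can slide around isolated points. Since $u_*$ is continuous there, the free homotopy classes coincide. Moving $y$ along $\op{Int}(L)$, a tube around the segment avoids $S_*\setminus L$ and $S_0$ after perturbation, so the class is constant; therefore \eqref{Theta1muy} holds for all $y\in\op{Int}L$.

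For part (2), I would take $r$ from Proposition~\ref{interiorproperty1}, shrunk so that $S_*\cap\ol B_r(x_0)$ is a union of segments through $x_0$. This structure comes from the stationary varifold structure theorem: tangent cones are finite unions of rays. By part (1), each ray carries the constant density $|\sg_i|_*$. Stationarity of $V_0$ (Lemma~\ref{stationvari}) tested with $\xi=\psi(|x-x_0|)e$, for a radial cutoff $\psi$ and a constant vector $e$, gives
\[
\sum_i|\sg_i|_*\int_0^r\psi'(t)\,(v_i\cdot e)\,dt=0 .
\]
Taking $\psi(0)=1$ yields $\sum_i|\sg_i|_*v_i\cdot e=0$ for every $e$, which is the balance condition.

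The main obstacle is the first step: verifying the hypotheses of Lemma~\ref{chooseseveralcases} uniformly. This includes controlling the cap energies and ensuring that the error term $\al(M,\delta,\eta)$ vanishes in the limit. It also requires justifying that the cylinder mass of $\mu_*$ is essentially $2r\,\Theta^1$, which uses the segment structure and the weak-$*$ convergence with null boundary.
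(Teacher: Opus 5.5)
Your proposal is correct and follows essentially the same route as the paper. For part~(1) it uses a thin cylinder around $L$ whose lateral boundary avoids $S_*\cup S_0$, Lemma~\ref{chooseseveralcases} with $\sg$ identified through the projection (Lemma~\ref{closeprojlem}) as $[u_*|_{\pa D'}]_{\cN}$, homotopy invariance via $u_*$, and positivity of the density; for part~(2) it uses the first-variation identity of the stationary varifold $V_0$ tested against a radial cutoff times a constant vector, which the paper delegates to Corollary~5.19 of BSV25.

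Two small points need tightening. First, uniform convergence on $\Ga$ does not by itself bound $E_{\va_i}(u_{\va_i},\Ga)$, so choose the lateral radius by Fubini and Fatou from \eqref{limsupleqC}; an $o(|\log\va_i|)$ bound, as in the paper, suffices. Second, the relative error in \eqref{EolvaapproximateEstar} stays $O(\delta)$ even when the lateral energy ratio vanishes, so you must explicitly send the aspect ratio $\delta\to0$. This is harmless, because $\mu_*(\Lda)$ does not depend on $\delta$ once the cylinder isolates $L$.
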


\begin{proof}
We first prove $(1)$. By a rigid motion, we may assume that
$D=B_{r_0}^2\times\{0\}$ and
$L=\{0^2\}\times[-4h_0,4h_0]$, where $0^2$ denotes the origin of $\R^2$, and that
\be
S_*\cap(D\times[-4h_0,4h_0])=L\quad\text{and}\quad
S_*\cap(\pa D\times[-4h_0,4h_0])=\emptyset.\label{cSlinecSpts1}
\ee
Recalling \eqref{assumptionbound} and \eqref{assumptionbound1}, we have
\be
E_{\va_i}(u_{\va_i},\om)\leq M(|\log\va_i|+1)
\quad\text{and}\quad
\|u_{\va_i}\|_{L^{\ift}(\om)}\leq M.\label{recallEva}
\ee
By Fubini's theorem, there exists $h\in(h_0,2h_0)$ such that, for any $r\in(0,r_0)$ and all sufficiently large $i$,
\be
E_{\va_i}(u_{\va_i},B_r^2\times\{\pm h\})
\leq E_{\va_i}(u_{\va_i},B_{r_0}^2\times\{\pm h\})
\leq C_0\log\f{r_0}{\va_i},\label{uplowh}
\ee
where $C_0=C_0(M,h_0)>0$. Since $S_0$ is locally finite in $\om\backslash S_*$ by Proposition~\ref{propustar}, for $\HH^1$-a.e. $r\in(0,r_0)$,
\be
S_0\cap\ol{\Ga}_{r,h}=\emptyset\quad\text{and}\quad
S_*\cap\ol{\Lda}_{r,h}=\{0^2\}\times[-h,h],\label{cSptscSline}
\ee
where $\Lda_{r,h}=B_r^2\times(-h,h)$ and $\Ga_{r,h}=\pa B_r^2\times(-h,h)$.

Fix such an $r$. Set $\delta:=\f{r}{h}$, $\ol{\va}_i:=\f{\va_i}{h}$, and $u_{\delta,\ol{\va}_i}(x):=u_{\va_i}(hx)$ for $x\in\Lda_{\delta,1}$. By Proposition~\ref{propustar}, there exists a sequence $\eta_i\to 0^+$ such that
\[
E_{\va_i}(u_{\va_i},\Ga_{r,h})\leq\eta_i\log\f{r}{\va_i}.
\]
Together with \eqref{recallEva} and \eqref{uplowh}, this gives
\begin{gather*}
\|u_{\delta,\ol{\va}_i}\|_{L^{\ift}(\pa\Lda_{\delta,1})}\leq M,\\
E_{\ol{\va}_i}(u_{\delta,\ol{\va}_i},\Ga_{\delta,1})
\leq\f{\eta_i}{h_0}\log\f{\delta}{\ol{\va}_i},\\
E_{\ol{\va}_i}(u_{\delta,\ol{\va}_i},B_{\delta}^2\times\{\pm 1\})
\leq C_0\log\f{\delta}{\ol{\va}_i}.
\end{gather*}
Applying Lemma~\ref{chooseseveralcases}, we obtain a class $\sg\in[\Ss^1,\cN]$ such that, for all sufficiently large $i$,
\be
\left|E_{\ol{\va}_i}(u_{\delta,\ol{\va}_i},\Lda_{\delta,1})
-2|\sg|_*\log\f{\delta}{\ol{\va}_i}\right|
\leq\wt{C}(\delta+\eta_i+\delta^{-1}\eta_i)\log\f{\delta}{\ol{\va}_i}+\wt{C},
\label{wechooseuse}
\ee
where $\wt{C}>0$ depends only on $f,M,\cN$, and $h_0$.

Since $r$ satisfies \eqref{cSptscSline}, we have $u_{\va_i}\to u_*$ uniformly near $\ol{\Ga}_{r,h}$, and $u_*$ takes values in $\cN$ there. Lemma~\ref{closeprojlem} gives
\be
\sg=[\Pi_{\cN}\circ u_{\va_i}|_{\pa B_r^2\times\{0\}}]_{\cN}
=[u_*|_{\pa B_r^2\times\{0\}}]_{\cN}
=[v_{\delta,\ol{\va}_i}]_{\cN}.\label{rHequal}
\ee
Since $u_*\in C_{\loc}^{\ift}(\om\backslash(S_*\cup S_0),\cN)$, conditions \eqref{cSlinecSpts1} and \eqref{cSptscSline} imply that $u_*$ gives a homotopy between the loops $u_*|_{\pa B_r^2\times\{0\}}$ and $u_*|_{\pa B_{r_0}^2\times\{0\}}$. Hence \eqref{rHequal} yields $ \sg=[u_*|_{\pa B_{r_0}^2\times\{0\}}]_{\cN} $. Recalling that $\delta=\f{r}{h}$ and $\eta_i\to 0^+$, for any $\al_0>0$ we may fix $h$ and then choose $r>0$ sufficiently small so that, for all sufficiently large $i$,
\[
\left|E_{\ol{\va}_i}(u_{\delta,\ol{\va}_i},\Lda_{\delta,1})
-2|\sg|_*\log\f{\delta}{\ol{\va}_i}\right|
\leq\al_0\log\f{\delta}{\ol{\va}_i}.
\]
Rescaling back to the original variables and dividing by the length $2h$ of the cylinder in the axial direction gives
\[
\f{1}{2h}\lim_{i\to+\ift}
\f{E_{\va_i}(u_{\va_i},\Lda_{r,h})}{\log\f{1}{\va_i}}
=|\sg|_*.
\]
Letting $r\to 0^+$ along admissible radii gives \eqref{Theta1muy} at $x_0$. Since $x_0\in S_*$, the density is positive; hence $|\sg|_*>0$, and $\sg$ is non-trivial. The same argument, applied after translating the cylinder along $L$, gives \eqref{Theta1muy} for any $y\in\op{Int}(L)$.

Part~$(2)$ follows by the same argument as \cite[Corollary~5.19]{BSV25}, using the stationarity of $\mu_*\llcorner\om$ established in Lemma~\ref{stationvari}.
\end{proof}

\subsection{The singular set near the boundary}

\subsubsection{Preliminary results near the boundary}

We next prove the boundary estimates needed to control the first variation of the limiting stress tensor near $\pa\om$. The first estimate bounds the full boundary gradient by the interior energy and the tangential boundary energy.

\begin{prop}\label{EstimatesOntheboundary}
Let $U\subset\R^3$ be a bounded $C^{2,1}$ domain with parameters $M_{U,2}$ and $r_{U,2}$. Let $x_0\in\pa U$ and $r\in(0,r_{U,2})$. Assume that $u\in C^{\ift}(U_r(x_0),\R^m)\cap C^1(\ol{U}_r(x_0),\R^m)$ satisfies \eqref{EL} and that $u(x)\in\cN$ for any $x\in T_r^U(x_0)$. Then, for any relatively open subset $V\subset\subset T_r^U(x_0)$, there exists $C>0$, depending only on $V$, $M_{U,2}$, and $r_{U,2}$, such that
\be
\int_V|\na u|^2\ud\HH^2
\leq C\left(\int_{U_r(x_0)}e_{\va}(u)\ud x
+\int_{T_r^U(x_0)}|\na_{\pa U}u|^2\ud\HH^2\right).\label{nablauK}
\ee
\end{prop}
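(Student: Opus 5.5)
The plan is a Pohozaev-type argument based on the stress-energy tensor $T_{\va}(u)$ from \eqref{stress}, in the same way the normal-derivative bound \eqref{similarusefor} was obtained in the proof of Proposition~\ref{MonotoneBoundary}.

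Choose a cutoff $\zeta\in C_c^1(B_r(x_0))$ with $0\leq\zeta\leq1$, $\zeta\equiv1$ on $V$, and $|\na\zeta|\leq C(V)$. This is possible since $V\subset\subset T_r^U(x_0)$, so $\dist(V,\pa B_r(x_0))>0$. Choose also a vector field $X\in C^1(\ol{U},\R^3)$ with $X=\nu$ on $\pa U\cap B_r(x_0)$ and $\|X\|_{C^1}\leq C(M_{U,2},r_{U,2})$, obtained for instance by extending the normal of the graph representation of Definition~\ref{RegularityBoundary}. Test \eqref{stressidentity} against $\zeta X$ and integrate by parts over $U_r(x_0)$. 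Since $\zeta$ vanishes near $U\cap\pa B_r(x_0)$, only the piece $T_r^U(x_0)$ of the boundary contributes, and
\[
\int_{\pa U}T_{\va}^{ij}(u)X_i\nu_j\zeta\ud\HH^2=\int_{U}T_{\va}^{ij}(u)\pa_j(\zeta X_i)\ud x.
\]
The right-hand side is bounded by $C\int_{U_r(x_0)}e_{\va}(u)$, because $|T_{\va}(u)|\leq Ce_{\va}(u)$ and $|\na(\zeta X)|\leq C$.

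Next evaluate the boundary integrand. On $T_r^U(x_0)$ we have $u\in\cN$, so $f(u)=0$. Split $\na u$ into its normal part $\pa_\nu u$ and its tangential part $\na_{\pa U}u$. With $X=\nu$ this gives
\[
T_{\va}^{ij}X_i\nu_j=\tfrac12|\na u|^2-|\pa_\nu u|^2=\tfrac12|\na_{\pa U}u|^2-\tfrac12|\pa_\nu u|^2.
\]
Therefore
\[
\tfrac12\int_{\pa U}|\pa_\nu u|^2\zeta\leq\tfrac12\int_{T_r^U(x_0)}|\na_{\pa U}u|^2+C\int_{U_r(x_0)}e_{\va}(u).
\]
Adding the tangential part, and using $\zeta=1$ on $V$ together with $|\na u|^2=|\pa_\nu u|^2+|\na_{\pa U}u|^2$, yields \eqref{nablauK}.

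The only delicate point is justifying the integration by parts up to the boundary. For this we use $u\in C^1(\ol U_r(x_0))$, which makes $T_{\va}(u)$ continuous up to $T_r^U(x_0)$. The equation \eqref{stressidentity} holds in the interior, so I would apply the divergence theorem on slightly shrunk domains $\{y\in U:\dist(y,\pa U)>t\}$ and let $t\to0^+$ by dominated convergence. Beyond that step, only the dependence of the constant on $V$ through $|\na\zeta|$ needs tracking.
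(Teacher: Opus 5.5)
Your proposal is correct and follows the paper's proof: both test $\pa_jT_{\va}^{ij}(u)=0$ against a cutoff times a $C^1$ extension of $\nu$, and both use $T_{\va}(u)[\nu,\nu]=\tfrac12\bigl(|\na_{\pa U}u|^2-|\pa_\nu u|^2\bigr)$ on $T_r^U(x_0)$, which holds because $f(u)=0$ there. Your direct use of $\zeta\geq0$ is slightly cleaner than the paper's case split over the set $\{|\pa_\nu u|^2\geq|\na_{\pa U}u|^2\}$, which is not actually needed.
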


The proof depends on two elementary identities for the stress tensor. We first record the integration-by-parts formula.

\begin{lem}\label{integrationbyparts}
Let $U\subset\R^3$ be a bounded $C^{2,1}$ domain with parameters $M_{U,2}$ and $r_{U,2}$. Let $x_0\in\pa U$ and $r\in(0,r_{U,2})$. Assume that $u\in C^{\ift}(U_r(x_0),\R^m)\cap C^1(\ol{U}_r(x_0),\R^m)$ satisfies \eqref{EL}. Then, for any $\xi\in C_0^1(B_r(x_0),\R^3)$,
\[
\int_{U_r(x_0)}\<T_{\va}(u),D\xi\>\ud x
=\int_{T_r^U(x_0)}T_{\va}(u)[\nu,\xi]\ud\HH^2,
\]
where $T_{\va}(u)$ is defined by \eqref{stress}, $\nu$ is the unit outer normal to $\pa U$, and
\[
T_{\va}(u)[\nu,\xi]:=T_{\va}^{ij}(u)\nu_i\xi_j,
\quad
\<T_{\va}(u),D\xi\>:=T_{\va}^{ij}(u)\pa_i\xi_j.
\]
\end{lem}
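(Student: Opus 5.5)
The identity is a divergence-theorem computation resting on \eqref{stressidentity}. First I would verify that $\pa_i T_{\va}^{ij}(u)=0$ holds pointwise in $U_r(x_0)$. Using \eqref{EL}, we have
\[
\pa_i\bigl(e_{\va}(u)\delta_{ij}\bigr)=\pa_ju:\pa_j\pa_iu\cdot{}+\va^{-2}Df(u):\pa_ju ,
\]
more precisely $\pa_j e_{\va}(u)=\pa_ku:\pa_k\pa_ju+\va^{-2}Df(u):\pa_ju$. On the other hand,
\[
\pa_i(\pa_iu:\pa_ju)=\Delta u:\pa_ju+\pa_iu:\pa_i\pa_ju=\va^{-2}Df(u):\pa_ju+\pa_iu:\pa_i\pa_ju .
\]
Subtracting the two expressions gives zero. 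Since $T_{\va}$ is symmetric, the index placement in the divergence does not matter.

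Next, for $\xi\in C^1_0(B_r(x_0),\R^3)$ I would use the product rule:
\[
\pa_i\bigl(T_{\va}^{ij}\xi_j\bigr)=\pa_iT_{\va}^{ij}\,\xi_j+T_{\va}^{ij}\pa_i\xi_j=\<T_{\va}(u),D\xi\>.
\]
Integrating over $U_r(x_0)$, I would apply the divergence theorem to the vector field $Y_i:=T_{\va}^{ij}(u)\xi_j$. The boundary of $U_r(x_0)$ consists of $T_r^U(x_0)$ together with $U\cap\pa B_r(x_0)$. Because $\xi$ has compact support in $B_r(x_0)$, $Y$ vanishes near $U\cap\pa B_r(x_0)$, so only $\int_{T_r^U(x_0)}Y\cdot\nu\,\ud\HH^2=\int_{T_r^U(x_0)} T_{\va}^{ij}\nu_i\xi_j$ survives.

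The one point needing care is justifying the divergence theorem. The field $Y$ is only $C^0$ up to the boundary, since $u\in C^1(\ol U_r)$, while $\operatorname{div}Y=\<T_{\va},D\xi\>$ is continuous and bounded on the support of $\xi$. To handle this, I would approximate the Lipschitz domain $U_r(x_0)\cap\supp\xi$ from inside. Concretely, in the graph coordinates of Definition~\ref{RegularityBoundary}, I would integrate over $\{y_3>\psi(y')+t\}$ and let $t\to0^+$. On these shifted domains $Y$ is $C^1$, so the classical theorem applies. Uniform continuity of $\na u$ up to $T_r^U$ then gives convergence of the boundary integrals, and dominated convergence gives convergence of the bulk integrals. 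This limiting step is the only mild obstacle; everything else is algebraic.
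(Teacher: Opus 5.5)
Your proposal is correct and takes the same route as the paper, which simply invokes the divergence-free identity \eqref{stressidentity}, integrates by parts against $\xi$, and uses the compact support of $\xi$ in $B_r(x_0)$ to remove the spherical part of $\pa(U_r(x_0))$. Your explicit verification of $\pa_iT_{\va}^{ij}(u)=0$ and your approximation by the shifted graph domains $\{y_3>\psi(y')+t\}$ supply exactly the details the paper leaves implicit, namely those needed because the field is only $C^0$ up to $T_r^U(x_0)$.
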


\begin{proof}
The identity follows from \eqref{stressidentity} and integration by parts. The compact support of $\xi$ in $B_r(x_0)$ removes the contribution from the spherical part of $\pa U_r(x_0)$.
\end{proof}

The next identity expresses the normal derivative on the boundary in terms of the normal component of the stress tensor.

\begin{lem}\label{partialnuestimates}
Under the same assumptions as Proposition~\ref{EstimatesOntheboundary},
\[
|\pa_{\nu}u|^2=-2T_{\va}(u)[\nu,\nu]+|\na_{\pa U}u|^2
\quad\text{on }T_r^U(x_0),
\]
where $\nu$ is the unit outer normal to $\pa U$.
\end{lem}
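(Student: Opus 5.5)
This is a pointwise algebraic identity at each point of $T_r^U(x_0)$. I will expand $T_{\va}(u)[\nu,\nu]$ from the definition \eqref{stress}, split $\na u$ into normal and tangential parts, and use that the potential vanishes on the boundary. The regularity assumption $u\in C^1(\ol{U}_r(x_0),\R^m)$ from Proposition~\ref{EstimatesOntheboundary} guarantees that $\na u$, $\pa_\nu u$ and $\na_{\pa U}u$ are defined pointwise on $T_r^U(x_0)$. Hence all quantities below are classical and no trace argument is needed.

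\textbf{Step 1.} Since $|\nu|=1$, contracting \eqref{stress} with $\nu_i\nu_j$ gives
\[
T_{\va}(u)[\nu,\nu]=e_{\va}(u)\,|\nu|^2-(\nu_i\pa_iu):(\nu_j\pa_ju)=e_{\va}(u)-|\pa_\nu u|^2 .
\]

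\textbf{Step 2.} At each $x\in T_r^U(x_0)$ I decompose $\R^3=T_x\pa U\oplus\R\nu(x)$ orthogonally, with an orthonormal frame $\{\tau_1,\tau_2,\nu\}$. This gives
\[
|\na u|^2=|\pa_\nu u|^2+|\pa_{\tau_1}u|^2+|\pa_{\tau_2}u|^2=|\pa_\nu u|^2+|\na_{\pa U}u|^2 .
\]
Here the tangential derivatives of $u$ coincide with derivatives of its boundary restriction, because $u$ is $C^1$ up to the boundary.

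\textbf{Step 3.} By hypothesis $u(x)\in\cN=f^{-1}(0)$ on $T_r^U(x_0)$, so $f(u)=0$ there. Therefore
\[
e_{\va}(u)=\tfrac12|\pa_\nu u|^2+\tfrac12|\na_{\pa U}u|^2 ,
\]
and substituting into Step~1 yields
\[
T_{\va}(u)[\nu,\nu]=\tfrac12|\na_{\pa U}u|^2-\tfrac12|\pa_\nu u|^2 .
\]
Multiplying by $-2$ and rearranging gives the claimed identity.

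There is no real obstacle in this argument. The only points needing care are that the vanishing of the potential term on the boundary is essential, and that the $C^1$-up-to-the-boundary regularity is what justifies the pointwise orthogonal splitting of $\na u$.
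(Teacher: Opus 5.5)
Your proof is correct and is essentially the paper's argument: contract the stress tensor with $\nu\otimes\nu$ to get $e_{\va}(u)-|\pa_\nu u|^2$, use $f(u)=0$ on $T_r^U(x_0)$, and split $|\na u|^2=|\na_{\pa U}u|^2+|\pa_\nu u|^2$.
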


\begin{proof}
On $T_r^U(x_0)$, we have $u\in\cN$, and hence $f(u)=0$. Since
$|\na u|^2=|\na_{\pa U}u|^2+|\pa_{\nu}u|^2$, the definition \eqref{stress} gives
\[
2T_{\va}(u)[\nu,\nu]
=|\na u|^2+\f{2}{\va^2}f(u)-2|\pa_{\nu}u|^2
=|\na_{\pa U}u|^2-|\pa_{\nu}u|^2.
\]
This is the desired identity.
\end{proof}

\begin{proof}[Proof of Proposition~\ref{EstimatesOntheboundary}]
Choose $\theta\in C^1(\pa U,[0,1])$ such that $\theta\equiv 1$ on $V$ and $\theta\equiv 0$ on $\pa U\backslash B_r(x_0)$. Let $\xi\in C_0^1(B_r(x_0),\R^3)$ satisfy $\xi=\theta\nu$ on $\pa U$. By the $C^{2,1}$ regularity of $\pa U$, we may choose $\xi$ so that
$\|D\xi\|_{L^{\ift}(U_r(x_0))}\leq C$, where $C=C(V,M_{U,2},r_{U,2})>0$. Lemma~\ref{integrationbyparts} gives
\[
\int_{T_r^U(x_0)}\theta T_{\va}(u)[\nu,\nu]\ud\HH^2
=\int_{U_r(x_0)}\<T_{\va}(u),D\xi\>\ud x.
\]
Since $|T_{\va}(u)|\leq Ce_{\va}(u)$, we obtain
\be
\left|\int_{T_r^U(x_0)}\theta T_{\va}(u)[\nu,\nu]\ud\HH^2\right|
\leq C\int_{U_r(x_0)}e_{\va}(u)\ud x.\label{thetaTvau}
\ee
Set
\[
E:=\{x\in T_r^U(x_0):|\pa_{\nu}u(x)|^2\geq|\na_{\pa U}u(x)|^2\}.
\]
By Lemma~\ref{partialnuestimates}, $-T_{\va}(u)[\nu,\nu]\geq 0$ on $E$, and hence
\be
-\int_{V\cap E}T_{\va}(u)[\nu,\nu]\ud\HH^2
\leq -\int_E\theta T_{\va}(u)[\nu,\nu]\ud\HH^2.\label{KcapEleq}
\ee
On $T_r^U(x_0)\backslash E$, Lemma~\ref{partialnuestimates} gives
\be
0\leq T_{\va}(u)[\nu,\nu]\leq\f{1}{2}|\na_{\pa U}u|^2.\label{backE}
\ee
Using \eqref{thetaTvau} and \eqref{backE}, we have
\[
-\int_E\theta T_{\va}(u)[\nu,\nu]\ud\HH^2
\leq C\int_{U_r(x_0)}e_{\va}(u)\ud x
+\f{1}{2}\int_{T_r^U(x_0)}|\na_{\pa U}u|^2\ud\HH^2.
\]
We now compute, using Lemma~\ref{partialnuestimates} and then \eqref{KcapEleq}--\eqref{backE},
\begin{align*}
\int_V|\na u|^2\ud\HH^2
&=\int_V(|\na_{\pa U}u|^2+|\pa_{\nu}u|^2)\ud\HH^2\\
&=2\int_V|\na_{\pa U}u|^2\ud\HH^2
  -2\int_VT_{\va}(u)[\nu,\nu]\ud\HH^2\\
&\leq -2\int_{V\cap E}T_{\va}(u)[\nu,\nu]\ud\HH^2
  +2\int_V|\na_{\pa U}u|^2\ud\HH^2\\
&\leq C\int_{U_r(x_0)}e_{\va}(u)\ud x
  +C\int_{T_r^U(x_0)}|\na_{\pa U}u|^2\ud\HH^2.
\end{align*}
This proves \eqref{nablauK}.
\end{proof}

The next two lemmas estimate $\op{div}T_{\va}$ in the functional sense. They will be used to reach the weak${}^*$ limit of normalized stress tensors.

\begin{lem}\label{xiLinfty}
Under the same assumptions as Proposition~\ref{EstimatesOntheboundary}, for any $\xi\in C_0^1(B_r(x_0),\R^3)$,
\[
\left|\int_{U_r(x_0)}\<T_{\va}(u),D\xi\>\ud x\right|
\leq C\|\xi\|_{L^{\ift}(T_r^U(x_0),\R^3)}
\int_{T_r^U(x_0)}|\na u|^2\ud\HH^2,
\]
where $C>0$ is an absolute constant.
\end{lem}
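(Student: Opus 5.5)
The plan is to start from Lemma~\ref{integrationbyparts}, which converts the bulk pairing into a boundary integral:
\[
\int_{U_r(x_0)}\<T_{\va}(u),D\xi\>\ud x=\int_{T_r^U(x_0)}T_{\va}^{ij}(u)\nu_i\xi_j\ud\HH^2 .
\]
After this, everything reduces to a pointwise bound on $\pa U$ for the normal component $T_{\va}(u)[\nu,\cdot]$ of the stress tensor, paired against the vector field $\xi$.

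For the pointwise bound, I will use that $u\in\cN$ on $T_r^U(x_0)$ by the standing hypotheses of Proposition~\ref{EstimatesOntheboundary}. Hence $f(u)=0$ there and $e_{\va}(u)=\f{1}{2}|\na u|^2$ on $T_r^U(x_0)$. The definition \eqref{stress} then gives, at each boundary point,
\[
T_{\va}^{ij}(u)\nu_i\xi_j=\f{1}{2}|\na u|^2(\nu\cdot\xi)-(\pa_\nu u):(\xi\cdot\na u).
\]
The Cauchy--Schwarz inequality yields $|(\pa_\nu u):(\xi\cdot\na u)|\leq|\xi||\na u|^2$. Combining the two terms gives
\[
|T_{\va}(u)[\nu,\xi]|\leq\f{3}{2}|\xi||\na u|^2 .
\]
The constant here is absolute, with no dependence on $M_{U,2}$ or $r_{U,2}$, as the statement requires. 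Indeed, the only geometric input is $|\nu|=1$; the flatness of the boundary never enters.

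Finally, I will bound $|\xi|$ by $\|\xi\|_{L^\infty(T_r^U(x_0),\R^3)}$ and integrate over $T_r^U(x_0)$, which gives the claim with $C=\f{3}{2}$. The only step needing care, though not a real obstacle, is the regularity behind Lemma~\ref{integrationbyparts}. Since $u\in C^1(\ol U_r(x_0))$, the trace of $\na u$ on the boundary makes sense, and $\na u$ is bounded on $\supp\xi\cap\ol U$. So the boundary integral is finite, and the identity applies directly; if needed, one approximates by the subdomains $\{y_3>\psi(y')+t\}$ and lets $t\to0^+$.
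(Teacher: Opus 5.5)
Your proposal is correct and follows essentially the paper's proof. Lemma~\ref{integrationbyparts} reduces the bulk pairing to a boundary integral, and $f(u)=0$ on $T_r^U(x_0)$ then gives the pointwise bound $|T_{\va}(u)[\nu,\xi]|\leq C|\xi||\na u|^2$. Your explicit constant $C=\f{3}{2}$, your observation that only $|\nu|=1$ is used, and your remark on approximating by interior subdomains are accurate details that the paper leaves implicit.
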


\begin{proof}
By Lemma~\ref{integrationbyparts},
\[
\left|\int_{U_r(x_0)}\<T_{\va}(u),D\xi\>\ud x\right|
\leq \int_{T_r^U(x_0)}|T_{\va}(u)[\nu,\xi]|\ud\HH^2.
\]
Since $u\in\cN$ on $T_r^U(x_0)$, we have $f(u)=0$ there and therefore
$|T_{\va}(u)[\nu,\xi]|\leq C|\xi||\na u|^2$ on $T_r^U(x_0)$. The estimate follows directly.
\end{proof}

\begin{lem}\label{lowerboundxi}
Under the same assumptions as Proposition~\ref{EstimatesOntheboundary}, for any $\xi\in C_0^1(B_r(x_0),\R^3)$ with $\xi\cdot\nu\leq 0$ on $T_r^U(x_0)$,
\[
\int_{U_r(x_0)}\<T_{\va}(u),D\xi\>\ud x
\geq -\|\xi\|_{L^{\ift}(T_r^U(x_0),\R^3)}
\int_{T_r^U(x_0)}\left(\f{1}{2}|\na_{\pa U}u|^2
+|\na_{\pa U}u||\pa_{\nu}u|\right)\ud\HH^2,
\]
where $\nu\in C^1(\pa U,\Ss^2)$ is the unit outer normal to $\pa U$.
\end{lem}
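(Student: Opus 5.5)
We begin from the identity in Lemma~\ref{integrationbyparts}: for $\xi\in C_0^1(B_r(x_0),\R^3)$,
\[
\int_{U_r(x_0)}\<T_{\va}(u),D\xi\>\ud x=\int_{T_r^U(x_0)}T_{\va}(u)[\nu,\xi]\ud\HH^2 .
\]
The task is therefore to bound the boundary integrand $T_{\va}(u)[\nu,\xi]$ pointwise from below on $T_r^U(x_0)$, using only the sign condition $\xi\cdot\nu\leq0$.

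On $T_r^U(x_0)$ we have $u\in\cN$, so $f(u)=0$. By \eqref{stress},
\[
T_{\va}(u)[\nu,\xi]=\tfrac12|\na u|^2(\xi\cdot\nu)-(\pa_\nu u):(\xi\cdot\na u).
\]
We split $\xi$ into its normal and tangential parts, $\xi=(\xi\cdot\nu)\nu+\xi^{\top}$ with $\xi^{\top}$ tangent to $\pa U$. Then
\[
\xi\cdot\na u=(\xi\cdot\nu)\pa_\nu u+\xi^{\top}\cdot\na_{\pa U}u .
\]
Writing $|\na u|^2=|\na_{\pa U}u|^2+|\pa_\nu u|^2$ and collecting terms gives
\[
T_{\va}(u)[\nu,\xi]=\tfrac12(\xi\cdot\nu)\bigl(|\na_{\pa U}u|^2-|\pa_\nu u|^2\bigr)-(\pa_\nu u):(\xi^{\top}\cdot\na_{\pa U}u).
\]

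We now bound each piece, which is where the sign assumption enters. Since $\xi\cdot\nu\leq0$, the term $-\tfrac12(\xi\cdot\nu)|\pa_\nu u|^2$ is nonnegative and can be dropped. The term $\tfrac12(\xi\cdot\nu)|\na_{\pa U}u|^2$ is bounded below by $-\tfrac12|\xi||\na_{\pa U}u|^2$. The cross term is bounded below by $-|\xi^{\top}||\pa_\nu u||\na_{\pa U}u|$, using Cauchy--Schwarz together with $|\xi^{\top}\cdot\na_{\pa U}u|\leq|\xi^{\top}||\na_{\pa U}u|$. Finally $|\xi^{\top}|\leq|\xi|\leq\|\xi\|_{L^\infty(T_r^U(x_0))}$.

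Integrating this pointwise lower bound over $T_r^U(x_0)$ gives the claimed inequality. No step is a genuine obstacle. The only points needing care are the sign bookkeeping in the expansion (the normal-derivative-squared term must appear with coefficient $-\tfrac12(\xi\cdot\nu)\geq0$) and the fact that $\nu$ is $C^1$. Regularity of $u$ up to the boundary ($C^1(\ol U_r)$) makes all boundary traces classical, so the pointwise computation is legitimate.
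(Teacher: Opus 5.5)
Your proposal is correct and follows the paper's proof essentially step for step. You use the same splitting of $\xi$ into its normal and tangential parts in $T_{\va}(u)[\nu,\xi]$, drop the nonnegative term $-\frac12(\xi\cdot\nu)|\pa_\nu u|^2$, bound the tangential cross term by Cauchy--Schwarz, and integrate via Lemma~\ref{integrationbyparts}. The only cosmetic difference is that you expand the stress tensor directly, where the paper quotes Lemma~\ref{partialnuestimates} for $T_{\va}(u)[\nu,\nu]$.
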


\begin{proof}
On $T_r^U(x_0)$, we decompose
\[
T_{\va}(u)[\nu,\xi]
=(\nu\cdot\xi)T_{\va}(u)[\nu,\nu]
-\bigl[(\xi-(\nu\cdot\xi)\nu)\cdot\na_{\pa U}u\bigr]\cdot\pa_{\nu}u.
\]
By Lemma~\ref{partialnuestimates},
\[
(\nu\cdot\xi)T_{\va}(u)[\nu,\nu]
=\f{1}{2}(\nu\cdot\xi)(|\na_{\pa U}u|^2-|\pa_{\nu}u|^2).
\]
Since $\nu\cdot\xi\leq 0$ and
$|\na_{\pa U}u|^2-|\pa_{\nu}u|^2\leq |\na_{\pa U}u|^2$, we obtain
\[
(\nu\cdot\xi)T_{\va}(u)[\nu,\nu]
\geq -\f{1}{2}\|\xi\|_{L^{\ift}(T_r^U(x_0),\R^3)}|\na_{\pa U}u|^2.
\]
The tangential part satisfies
\[
\left|\bigl[(\xi-(\nu\cdot\xi)\nu)\cdot\na_{\pa U}u\bigr]\cdot\pa_{\nu}u\right|
\leq \|\xi\|_{L^{\ift}(T_r^U(x_0),\R^3)}
|\na_{\pa U}u||\pa_{\nu}u|.
\]
Combining the two bounds and integrating over $T_r^U(x_0)$ by Lemma~\ref{integrationbyparts} completes the proof.
\end{proof}

We can now pass to the weak${}^*$ limit of the normalized stress tensors and retain the boundary sign information needed later.

\begin{prop}\label{divTstar}
Let $U\subset\R^3$ be a bounded $C^{2,1}$ domain with parameters $M_{U,2}$ and $r_{U,2}$. Let $x_0\in\pa U$ and $r\in(0,\f{r_{U,2}}{2})$. Assume that $\{u_{\va}\}_{\va\in(0,1)}\subset C^{\ift}(U_{2r}(x_0),\R^m)\cap C^1(\ol{U}_{2r}(x_0),\R^m)$ is a family of solutions of \eqref{EL} with $u_{\va}(x)\in\cN$ for any $x\in T_{2r}^U(x_0)$. Suppose that
\begin{gather*}
\sup_{\va\in(0,1)}\f{1}{|\log\va|}\int_{U_{2r}(x_0)}e_{\va}(u_{\va})\ud x
\leq M_1,\\
\sup_{\va\in(0,1)}\f{1}{|\log\va|}
\int_{T_{2r}^U(x_0)}|\na_{\pa U}u_{\va}|^2\ud\HH^2\leq M_2.
\end{gather*}
Then there exists a subsequence $\va_i\to 0^+$ such that
\[
\f{T_{\va_i}(u_{\va_i})}{|\log\va_i|}\ud x
\wc^*T_*\quad\text{weakly${}^*$ in }
(C^0(\ol{U}_{2r}(x_0),\R^3\otimes\R^3))',
\]
and the following properties hold.
\begin{enumerate}[label=$(\theenumi)$]
\item For any $\xi\in C_0^1(B_r(x_0),\R^3)$,
\be
\left|\int_{U_r(x_0)}\<T_*,D\xi\>\ud x\right|
\leq C\|\xi\|_{L^{\ift}(T_r^U(x_0),\R^3)},\label{Tstar1}
\ee
where $C>0$ depends only on $M_{U,2}$, $r_{U,2}$, $M_1$, and $M_2$.
\item If, in addition,
\be
\lim_{i\to+\ift}\f{1}{|\log\va_i|}
\int_{T_{2r}^U(x_0)}|\na_{\pa U}u_{\va_i}|^2\ud\HH^2=0,\label{Tstar0property}
\ee
then, for any $\xi\in C_0^1(B_r(x_0),\R^3)$ with $\xi\cdot\nu\leq 0$ on $T_r^U(x_0)$,
\be
\int_{U_r(x_0)}\<T_*,D\xi\>\ud x\geq 0.\label{Tstar2}
\ee
\end{enumerate}
\end{prop}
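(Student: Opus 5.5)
The approach is to obtain $T_*$ by weak$^*$ compactness and then pass to the limit in Lemmas~\ref{xiLinfty} and \ref{lowerboundxi}. The tool that makes the boundary terms controllable is Proposition~\ref{EstimatesOntheboundary}.

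\textbf{Existence of $T_*$.} Since $|T_{\va}(u_{\va})|\leq Ce_{\va}(u_{\va})$, the normalized tensors $|\log\va|^{-1}T_{\va}(u_{\va})\ud x$ have total variation on $\ol{U}_{2r}(x_0)$ bounded by $CM_1$. Banach--Alaoglu then gives a subsequence $\va_i\to0^+$ and a matrix-valued Radon measure $T_*$ with the stated weak$^*$ convergence. Here $\int\<T_*,D\xi\>$ means integration of $D\xi$ against $T_*$ over $U_r(x_0)$; since $\xi$ is compactly supported in $B_r(x_0)$, this equals the pairing with $D\xi\in C^0(\ol{U}_{2r}(x_0))$ extended by zero. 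One should choose the pairing on the open set so that mass of $T_*$ on $\pa U$ is included consistently, i.e.\ test against the continuous function $D\xi$ on $\ol{U}_{2r}(x_0)$. Then $\int_{U_r(x_0)}\<T_{\va_i},D\xi\>/|\log\va_i|\to\int\<T_*,D\xi\>$ for each fixed $\xi\in C_0^1(B_r(x_0),\R^3)$.

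\textbf{Proof of (1).} Fix a relatively open $V$ with $T_r^U(x_0)\subset\subset V\subset\subset T_{2r}^U(x_0)$, for instance $V=T_{3r/2}^U(x_0)$. Apply Proposition~\ref{EstimatesOntheboundary} on $U_{2r}(x_0)$ to get
\[
\int_{V}|\na u_{\va}|^2\ud\HH^2\leq C(M_1+M_2)|\log\va|,
\]
with $C$ depending on $V$, $M_{U,2}$, $r_{U,2}$, hence only on $r$ and the domain parameters. Lemma~\ref{xiLinfty}, applied with $r$ replaced by $2r$, yields
\[
\Bigl|\int\<T_{\va},D\xi\>\Bigr|\leq C\|\xi\|_{L^\infty(T_r^U)}\int_{T_r^U}|\na u_{\va}|^2,
\]
because $\supp\xi\subset B_r(x_0)$ so that only $T_r^U$ contributes. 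Dividing by $|\log\va_i|$ and letting $i\to\infty$ gives \eqref{Tstar1}.

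\textbf{Proof of (2).} Apply Lemma~\ref{lowerboundxi} and divide by $|\log\va_i|$. The lower bound involves
\[
\frac{1}{|\log\va_i|}\int_{T_r^U}\Bigl(\tfrac12|\na_{\pa U}u|^2+|\na_{\pa U}u||\pa_\nu u|\Bigr).
\]
The first term tends to $0$ by \eqref{Tstar0property}. For the cross term, Cauchy--Schwarz bounds it by
\[
\Bigl(\frac{1}{|\log\va_i|}\int|\na_{\pa U}u|^2\Bigr)^{1/2}\Bigl(\frac{1}{|\log\va_i|}\int_{V}|\pa_\nu u|^2\Bigr)^{1/2}.
\]
The second factor is bounded by the estimate above from Proposition~\ref{EstimatesOntheboundary}, and the first tends to $0$. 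Passing to the limit gives \eqref{Tstar2}.

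The main obstacle is the uniform control of the normal derivative $\pa_\nu u_{\va}$ on the boundary at the $|\log\va|$ scale, and this is exactly what Proposition~\ref{EstimatesOntheboundary} supplies. The only remaining subtlety is the measure-theoretic one: the limit pairing must be taken over the closed set, so that boundary concentration of $T_*$ is accounted for consistently.
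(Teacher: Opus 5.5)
Your proposal is correct and follows essentially the same route as the paper: weak$^*$ compactness from $|T_{\va}(u_{\va})|\leq Ce_{\va}(u_{\va})$, then Proposition~\ref{EstimatesOntheboundary} at radius $2r$ to control the boundary gradient (hence $\pa_\nu u_{\va}$) at the $|\log\va|$ scale, combined with Lemma~\ref{xiLinfty} for (1) and with Lemma~\ref{lowerboundxi} plus Cauchy--Schwarz on the cross term for (2). The only differences are cosmetic: the paper takes $V=T_r^U(x_0)$ instead of $T_{3r/2}^U(x_0)$, and it leaves implicit your point that the limit pairing means testing $D\xi$ against $T_*$ on $\ol{U}_{2r}(x_0)$, so that any boundary mass of $T_*$ is included.
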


\begin{proof}
Since $|T_{\va}(u_{\va})|\leq Ce_{\va}(u_{\va})$, the first energy bound gives a uniform bound on the total variation of
$\f{T_{\va}(u_{\va})}{|\log\va|}\ud x$ in $\ol{U}_{2r}(x_0)$. Thus, after passing to a subsequence, these tensor-valued Radon measures converge weakly${}^*$ to a limit $T_*$. It remains to prove \eqref{Tstar1} and \eqref{Tstar2}.

\textit{Proof of \eqref{Tstar1}.} Since $r<\f{r_{U,2}}{2}$, we may apply Proposition~\ref{EstimatesOntheboundary} with radius $2r$ and with $V=T_r^U(x_0)$. This yields
\[
\int_{T_r^U(x_0)}|\na u_{\va_i}|^2\ud\HH^2
\leq C\left(\int_{U_{2r}(x_0)}e_{\va_i}(u_{\va_i})\ud x
+\int_{T_{2r}^U(x_0)}|\na_{\pa U}u_{\va_i}|^2\ud\HH^2\right).
\]
By Lemma~\ref{xiLinfty},
\[
\left|\int_{U_r(x_0)}\<T_{\va_i}(u_{\va_i}),D\xi\>\ud x\right|
\leq C(M_1+M_2)|\log\va_i|
\|\xi\|_{L^{\ift}(T_r^U(x_0),\R^3)}.
\]
Dividing by $|\log\va_i|$ and passing to the limit $i\to+\ift$ gives \eqref{Tstar1}.

\textit{Proof of \eqref{Tstar2}.} The preceding boundary estimate also gives
\[
\sup_i\f{1}{|\log\va_i|}
\int_{T_r^U(x_0)}|\pa_{\nu}u_{\va_i}|^2\ud\HH^2
\leq C(M_1+M_2).
\]
Combining this bound with \eqref{Tstar0property} and the Cauchy--Schwarz inequality, we obtain
\[
\lim_{i\to+\ift}\f{1}{|\log\va_i|}
\int_{T_r^U(x_0)}
\left(|\na_{\pa U}u_{\va_i}|^2
+|\na_{\pa U}u_{\va_i}||\pa_{\nu}u_{\va_i}|\right)
\ud\HH^2=0.
\]
Dividing the inequality in Lemma~\ref{lowerboundxi} by $|\log\va_i|$ and passing to the limit gives \eqref{Tstar2}.
\end{proof}

\subsubsection{Properties of the singular set up to the boundary}

We finally collect the structural properties of the limiting singular set up to the boundary.

\begin{prop}\label{boundarySstarbehave}
Under the same assumptions as Theorem~\ref{globalminimizersproperties}, the following properties hold.
\begin{enumerate}[label=$(\theenumi)$]
\item\label{boundary1} $S_*$ is contained in $\op{Conv}(\cA)$, the convex hull of $\cA$, and $S_*\cap\pa\om=\cA$.
\item\label{boundary11} Let $u_*$ and $S_0$ be as in Proposition~\ref{propustar}. There exist finitely many closed line segments $\{L_i\}_{i=1}^n\subset\ol{\om}$ such that
\[
S_*=\bigcup_{i=1}^n L_i.
\]
\item\label{boundary10} Let $i\in\Z\cap[1,n]$, and let $x$ be an endpoint of $L_i\subset S_*$. Then exactly one of the following alternatives holds.
\begin{enumerate}[label=$(\op{\alph*})$]
\item\label{boundary10a} $x$ is a branching point of $S_*$ lying in $\om$.
\item\label{boundary10b} $x\in\cA$. Moreover,
\be
[g|_{\pa D_x}]_{\cN}=[u_*|_{\pa D}]_{\cN},\label{gDxeq}
\ee
where $D_x$ is a closed geodesic disk on $\pa\om$ centered at $x$ with $D_x\cap\cA=\{x\}$, and $D\subset\om$ is a closed $2$-disk centered at a point of $\op{Int}(L_i)$ such that $ \pa D\cap(\op{Conv}(\cA)\cup S_0)=\emptyset $.
\end{enumerate}
\end{enumerate}
\end{prop}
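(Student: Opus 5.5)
My plan has three stages: first show that near boundary points outside $\cA$ there is no concentration, then use a convex-hull argument with the limit stress tensor, and finally use stationarity and a flux argument to get the segment structure and the endpoint classification.

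\textbf{Step 1: no concentration at $\pa\om\backslash\cA$.} Fix $x_0\in\pa\om\backslash\cA$ and take $r$ small with $B_{2r}(x_0)\cap\cA=\emptyset$. For $\va<\rho$, Definition~\ref{defnsuitabledata} makes $g_\va$ fixed, $\cN$-valued and $C^2$-bounded on $T^{\om}_{2r}(x_0)$. By Proposition~\ref{locallogestimate}, Proposition~\ref{MonotoneBoundary} holds with $M_1\sim|\log\va|$. Here I expect the main obstacle. One must show $\mu_*(\ol B_r(x_0))<\delta r$ for small $r$, so that Proposition~\ref{BoundaryClearingout} gives bounded energy and hence $x_0\notin S_*$. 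I would get this from the half-space analogue of Lemma~\ref{monomu0}, passing \eqref{Monotone11b} to the limit. The error term $\al$ is $O(1)$ in $\va$, so after dividing by $|\log\va|$ the limit ratio $\mu_*(\ol B_r(x_0))/r$ is monotone. The boundary density at $x_0$ should then vanish: a positive density would force a half-line tangent cone, and that contradicts \eqref{Tstar2} in the blow-up, since the tangential energy $|\na_{\pa\om}g|^2$ is $O(1)$, so \eqref{Tstar0property} holds. If this direct route fails, I would instead use strong convexity and the convex-hull argument of Step~2 localized near $x_0$.

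\textbf{Step 2: $S_*\subset\op{Conv}(\cA)$ and $S_*\cap\pa\om=\cA$.} By Step~1, \eqref{Tstar0property} holds near each point of $\pa\om\backslash\cA$, and $\mu_*$ carries no mass on $\pa\om\backslash\cA$. Consider a half-space $H=\{x\cdot e>c\}$ with $H\cap\op{Conv}(\cA)=\emptyset$. I would test $T_*=\mu_*$-weighted $(\op{Id}-A_*)$ (Lemma~\ref{stationvari}) against $\xi(x)=\vp(x\cdot e)e$, with $\vp'\ge0$ and $\vp=0$ on $\{x\cdot e\le c\}$. Near $\pa\om\cap H$, strong convexity allows this to be modified into a field with $\xi\cdot\nu\le0$ on the boundary, so \eqref{Tstar2} applies. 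This gives $\int\vp'(x\cdot e)\,(1-|A_*e|^2)\ud\mu_*\le0$. Together with the interior stationarity, this forces $\mu_*\llcorner H=0$, as in \cite{BSV25}. Each $a\in\cA$ lies in $S_*$ because of the non-trivial loop in \ref{boundarycondition3}: Lemma~\ref{LowerBound} applied on half-disks around $a_i$ yields a lower bound of order $|\log\va|$ in every neighborhood.

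\textbf{Step 3: segments and endpoints.} Proposition~\ref{interiorproperty1} with compact $K\uparrow\om$ gives locally finite segments. Finiteness up to the boundary needs two ingredients. Density is at least $\delta$ (Lemma~\ref{Thetamu0eta0}). Near each $a_i$, monotonicity at the boundary (Proposition~\ref{locallogestimate} and Step~1) bounds $\mu_*(B_r(a_i))\le Cr$; combined with stationarity, this means only finitely many segments enter, and no accumulation occurs because segments cannot end in $\om$ without branching (Proposition~\ref{interiorproperty11}(2) balance). For the endpoint statement, an endpoint in $\om$ must be a branching point, since a single segment violates the balance. Otherwise the endpoint lies in $\pa\om$, hence in $\cA$ by Step~2. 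For \eqref{gDxeq}, I would note that $u_*$ is smooth away from $S_*\cup S_0$ up to $\pa\om\backslash\cA$ by the boundary regularity (Proposition~\ref{BoundaryPartialRegularity1prop} and uniform convergence). I would then deform the loop $\pa D$ along a tube around $L_i$ to a small half-sphere around $x$ meeting $\pa\om$ in $\pa D_x$. Its restriction is homotopic to $g|_{\pa D_x}$, because the half-sphere cap avoids $S_*\cup S_0$ after a generic choice. In this deformation, choosing $\pa D$ disjoint from $\op{Conv}(\cA)$ ensures that no other segments are crossed.
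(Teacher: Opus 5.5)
\textbf{Gap in Step 1.} After blowing up at $x_0\in\pa\om\backslash\cA$, the half-space version of \eqref{Tstar2} only excludes tangent cones whose weighted ray directions have a nonzero resultant, for instance a single half-line. Any balanced cone lying in the tangent plane $T_{x_0}\pa\om$ is not excluded. A full line through $x_0$ in that plane is an example: it has zero first variation, so it satisfies the blown-up inequality, and it has positive density. So positive density at $x_0$ does not force a half-line, and nothing is contradicted in the blow-up. Such tangential configurations can only be ruled out by the curvature of $\pa\om$ at $x_0$, i.e.\ by strong convexity, which is lost under blow-up.

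Moreover, blowing up $T_*$ at boundary points presupposes that $\mu_*$ is rectifiable with $T_*=A_*\mu_*$ up to $\pa\om$; mass may a priori sit on $\pa\om$. Lemma~\ref{stationvari}, however, is purely interior. The paper supplies exactly these two missing ingredients:
\begin{itemize}
\item boundary clearing-out gives the density dichotomy \eqref{claimSstar} at boundary points;
\item one has $T_*=F_*\mu_*$ with $\tr F_*\geq 1$ and $\lambda_j(F_*)\leq 1$, and the criterion of \cite{ADHR19} identifies $F_*$ with the tangent projection;
\item White's maximum principle \cite{Whi10}, which is where strong convexity at $x_0$ enters, then gives $x_0\notin S_*$.
\end{itemize}
Your fallback (a localized half-space test near $x_0$) can be made to work, but only after this identification. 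Without it, $e^{\T}F_*e\leq 0$ for all $e$ near $\nu(x_0)$ is compatible with, e.g., $F_*=\op{diag}(1,1,-1)$ in a frame whose third axis is $\nu(x_0)$.

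There are also two smaller slips. The limit tensor is $A_*\mu_*$ (projection onto the tangent line, $\tr F_*=1$), not $(\op{Id}-A_*)\mu_*$. And $\al(M_0,M_1,r)$ is of order $|\log\va|$ when $M_1\sim|\log\va|$, so you only get almost-monotonicity with error $C(t-s)$, not exact monotonicity. Once Step~1 is known, Step~2 needs no boundary modification: $S_*\cap\ol{H}$ is then compact in $\om$, and interior stationarity tested with $\xi=\vp(x\cdot e)e$ suffices.

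\textbf{Gap in Step 3.} The bounds $\mu_*(B_r(a))\leq Cr$ and $\Theta^1(\mu_*,\cdot)\geq\delta$, together with the interior balance condition, do not prevent infinitely many branching points from accumulating at $a\in\cA$. They control the number of points of $S_*\cap\pa B_\rho(a)$ only on average in $\rho$. Even the monotonicity formula centered at $a$ (available once Step~1 holds, since the first variation near $a$ is then a point mass at $a$) yields only $\int|(x-a)^\perp|^2|x-a|^{-3}\ud\mu_*<+\infty$, which allows non-radial structure at every scale.

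The missing idea is to use the global minimality of $u_{\va}$ near $a$, as the paper does. Take a sphere $\pa B_\delta(a)$ whose normalized energy is at most $h(\delta)$. Use the cone competitor $u_{\va_i}(\delta x/|x|)$ on the conical region, and fill the thin boundary layer with $\cN$-valued maps of bounded energy (the dyadic construction). This gives $\int_0^\delta h\leq\delta h(\delta)$. Choosing $\delta$ where $h$ is minimal forces equality in the monotonicity formula, hence $(x-a)^\perp=0$, so $S_*$ near $a$ is a finite union of segments issuing from $a$.

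The rest of your plan is sound and in places simpler than the paper's route. Applying Lemma~\ref{LowerBound} on the caps $\om\cap\pa B_\rho(a)$ (after flattening) gives $\mu_*(\ol{B}_r(a))\geq cr$, hence $\cA\subset S_*$ directly. An interior endpoint of a single segment contradicts $\sum_i|\sg_i|_*v_i=0$ at once. For \eqref{gDxeq}, however, a half-sphere centered at $x$ is pierced by every segment ending at $x$. Your deformation argument therefore only works if $L_i$ is the unique segment ending at $x$.
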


\begin{proof}[Proof of Proposition~\ref{boundarySstarbehave}\ref{boundary1}]
Throughout the proof, $C>0$ denotes a constant depending only on
$M_0,\om,f$, and $\cN$, and it can change from line to line.

\smallskip
\noindent\textbf{Part 1. Proof that $S_*\cap\pa\om\subset\cA$.}

Fix $x_0\in\pa\om\backslash\cA$. We choose
$r\in(0,\f{1}{10})$ such that $\cA\cap\ol{B}_r(x_0)=\emptyset$. Since the
boundary data are suitable and no point of $\cA$ lies in $\ol{B}_r(x_0)$, we
have
\be
g_{\va_i}\in\cN\quad\text{on }T_r^{\om}(x_0)
\label{gvaNawayA}
\ee
and
\be
\|g_{\va_i}\|_{L^{\ift}(T_r^{\om}(x_0))}
+r\|\na_{\top}g_{\va_i}\|_{L^{\ift}(T_r^{\om}(x_0))}\leq C,
\label{gvaassuse}
\ee
with $C$ independent of $i$. In particular,
\be
\lim_{i\to+\ift}\f{1}{|\log\va_i|}
\int_{T_r^{\om}(x_0)}|\na_{\top}g_{\va_i}|^2\ud\HH^2=0.
\label{tangentialenergyawayA}
\ee

We first pass to the weak limit of the normalized stress tensors. A direct
calculation gives
\begin{align}
\left|\f{T_{\va_i}(u_{\va_i})}{|\log\va_i|}\right|
&\leq C\f{e_{\va_i}(u_{\va_i})}{|\log\va_i|},
\label{Tva1}\\
\tr\left(\f{T_{\va_i}(u_{\va_i})}{|\log\va_i|}\right)
&=\f{1}{|\log\va_i|}
\left(3e_{\va_i}(u_{\va_i})-|\na u_{\va_i}|^2\right)
\geq\f{e_{\va_i}(u_{\va_i})}{|\log\va_i|},
\label{Tva11}
\end{align}
and, for any unit vector $v\in\Ss^2$,
\be
\f{T_{\va_i}^{jk}(u_{\va_i})v_jv_k}{|\log\va_i|}
=\f{e_{\va_i}(u_{\va_i})-|v\cdot\na u_{\va_i}|^2}{|\log\va_i|}
\leq\f{e_{\va_i}(u_{\va_i})}{|\log\va_i|}.
\label{Tva111}
\ee
By \eqref{assumptionbound} and \eqref{Tva1}, after passing to a subsequence,
there exists a Radon tensor-valued measure $T_*$ such that
\[
\f{T_{\va_i}(u_{\va_i})}{|\log\va_i|}\ud x
\wc^*T_*
\quad\text{weakly${}^*$ in }
(C^0(\ol{\om}\cap\ol{B}_r(x_0),\R^3\otimes\R^3))'.
\]
We restrict both $T_*$ and $\mu_*$ to $B_r(x_0)$ and keep the same notation:
\[
T_*:=T_*\llcorner B_r(x_0),\quad
\mu_*:=\mu_*\llcorner B_r(x_0).
\]

We next record the density lower bound near this boundary point. By
\eqref{Monotone11} and \eqref{assumptionbound}, for any
$x\in\ol{\om}\cap B_r(x_0)$ and any
$0<s<t<\f{r-|x-x_0|}{10}$,
\[
\f{\theta_{\va_i}^{\om}(u_{\va_i};x,s)}{|\log\va_i|}
\leq
\f{\theta_{\va_i}^{\om}(u_{\va_i};x,t)}{|\log\va_i|}
+C(t-s).
\]
Passing to the limit $i\to+\ift$ gives
\[
\f{\mu_*(\ol{B}_s(x))}{2s}
\leq
\f{\mu_*(\ol{B}_t(x))}{2t}+C(t-s).
\]
Hence
$\Theta^1(\mu_*,x)$ exists for any $x\in\ol{\om}\cap B_r(x_0)$.

We claim that there exists $\delta>0$, depending only on the fixed data, such
that
\be
S_*\cap B_r(x_0)
=\{x\in\ol{\om}\cap B_r(x_0):\Theta^1(\mu_*,x)>0\}
=\{x\in\ol{\om}\cap B_r(x_0):\Theta^1(\mu_*,x)\geq\delta\}.
\label{claimSstar}
\ee
For interior points, this follows from Lemma~\ref{Thetamu0eta0}. It remains
to consider points on $T_r^{\om}(x_0)$. If
$x\in T_r^{\om}(x_0)$ satisfies $\Theta^1(\mu_*,x)>0$ but $x\notin S_*$,
then $\mu_*(B_{\rho}(x))=0$ for some $\rho>0$, which contradicts the
positivity of the density. Hence
\be
\{x\in\ol{\om}\cap B_r(x_0):\Theta^1(\mu_*,x)>0\}
\subset S_*\cap B_r(x_0).
\label{inclusion}
\ee
Conversely, suppose that $x\in S_*\cap B_r(x_0)$ and
$\Theta^1(\mu_*,x)<\delta$, where $\delta\in(0,1)$ will be chosen below.
Then there exists
$\rho\in(0,\f{1}{10}(r-|x-x_0|))$ such that, for all sufficiently large $i$,
\[
\theta_{\va_i}^{\om}(u_{\va_i};x,\rho)
<\delta\log\f{\rho}{\va_i}.
\]
Using \eqref{gvaassuse} and Proposition~\ref{BoundaryClearingout}, and then
choosing $\delta$ sufficiently small, we obtain
\be
\theta_{\va_i}^{\om}\left(u_{\va_i};x,\f{\rho}{2}\right)\leq C,
\label{thetarhohalf}
\ee
with $C$ independent of $i$. Dividing \eqref{thetarhohalf} by
$|\log\va_i|$ and letting $i\to+\ift$ give
$\mu_*(B_{\f{\rho}{4}}(x))=0$, contradicting $x\in S_*$. Thus,
\[
S_*\cap B_r(x_0)\subset
\{x\in\ol{\om}\cap B_r(x_0):\Theta^1(\mu_*,x)\geq\delta\}.
\]
Together with \eqref{inclusion}, this proves \eqref{claimSstar}.

By \eqref{Tva1} and \eqref{munconve}, we have $|T_*|\leq C\mu_*$. Hence
$T_*$ is absolutely continuous with respect to $\mu_*$. By the
Radon--Nikodym theorem, there exists a $\mu_*$-measurable map
$F_*:\ol{\om}\cap B_r(x_0)\to\R^3\otimes\R^3$ such that $ T_*=F_*\mu_* $ and
\be
\int_{\ol{\om}\cap B_r(x_0)}|F_*|\ud\mu_*<+\ift.
\label{Tstarmustar}
\ee
Passing to the limit in the symmetry of $T_{\va_i}(u_{\va_i})$ and in
\eqref{Tva11}--\eqref{Tva111}, we obtain, for $\mu_*$-a.e.
$x\in\ol{\om}\cap B_r(x_0)$,
\be
F_*(x)^{\T}=F_*(x),
\quad
\tr F_*(x)\geq 1,
\quad
\lda_j(F_*(x))\leq 1
\quad\text{for }j=1,2,3,
\label{Foengenva}
\ee
where $\lda_1(F_*)\leq\lda_2(F_*)\leq\lda_3(F_*)$ denote the eigenvalues of
$F_*$. In particular, $F_*(x)\neq0$ and
\[
\rank\f{\ud T_*}{\ud|T_*|}\geq1
\quad\text{for }|T_*|\text{-a.e. }x\in B_r(x_0).
\]

By Proposition~\ref{divTstar}, \eqref{gvaNawayA}, and
\eqref{tangentialenergyawayA}, the limit tensor satisfies
\[
\int_{\om_r(x_0)}\<T_*,D\xi\>\ud x\geq0
\]
for any $\xi\in C_0^1(B_r(x_0),\R^3)$ with
$\xi\cdot\nu\leq0$ on $T_r^{\om}(x_0)$, where $\nu$ is the unit outer normal
to $\pa\om$.

We now apply the rectifiability criterion in
\cite[Proposition~3.1]{ADHR19}, together with \eqref{claimSstar}. It gives a
$1$-rectifiable set $R\subset\ol{\om}\cap B_r(x_0)$ and a Borel map
$\lda_*:R\to\R^3\otimes\R^3$ such that $R=S_*\cap B_r(x_0)$ up to an
$\HH^1$-null set and, for $\HH^1$-a.e. $x\in R$, $\lda_*(x)$ is the
orthogonal projection onto the approximate tangent line $T_xR$. The
associated rectifiable varifold is
\[
V_*(\ud A,\ud x):=\delta_{\lda_*(x)}(\ud A)\otimes\mu_*(\ud x).
\]
The preceding first-variation inequality shows that $V_*$ satisfies the
hypotheses of \cite[Theorem~1]{Whi10} in
$\om_r(x_0)\cup T_r^{\om}(x_0)$. Since $\ol{\om}$ is strongly convex at
$x_0$, that theorem implies $x_0\notin S_*$. As
$x_0\in\pa\om\backslash\cA$ was arbitrary, we conclude that $ S_*\cap\pa\om\subset\cA $.

\smallskip
\noindent\textbf{Part 2. Proof that $S_*\subset\op{Conv}(\cA)$.}

Suppose, for contradiction, that
$S_*\cap(\om\backslash\op{Conv}(\cA))\neq\emptyset$. Define
\be
F:=\left\{x\in S_*:
\dist(x,\op{Conv}(\cA))
=\max_{y\in S_*}\dist(y,\op{Conv}(\cA))>0\right\}.\label{DefF}
\ee
Since $S_*$ is compact, $F\neq\emptyset$. By Part~1, every boundary point of
$S_*$ lies in $\cA$. Hence, the above maximum is positive and $ F\subset\om\backslash\op{Conv}(\cA) $. Moreover, $F$ is a compact subset of $\om$.

Fix $x_0\in F$. By Proposition~\ref{interiorproperty11}, there exist
$r>0$ and closed line segments $\{L_i\}_{i=1}^k\subset S_*$ emanating from
$x_0$, with unit outward direction vectors $v_i\in\Ss^2$, such that
\[
S_*\cap B_r(x_0)=\bigcup_{i=1}^kL_i,
\quad
\sum_{i=1}^k\lda_iv_i=0,
\]
where $\lda_i>0$ for any $i\in\Z\cap[1,k]$. Let $p(x_0)\in\op{Conv}(\cA)$ be
the nearest point to $x_0$, and set
\[
n_0:=\f{x_0-p(x_0)}{|x_0-p(x_0)|}\in\Ss^2.
\]
Then
\[
\op{Conv}(\cA)\subset H^-:=
\{y:(y-p(x_0))\cdot n_0\leq0\},
\]
while $x_0$ lies on the parallel plane
$\{y:(y-p(x_0))\cdot n_0=|x_0-p(x_0)|\}$.

For any $i$, if $v_i\cdot n_0>0$, then for all sufficiently small $t>0$,
\[
\dist(x_0+tv_i,\op{Conv}(\cA))
\geq \dist(x_0+tv_i,H^-)
>\dist(x_0,\op{Conv}(\cA)),
\]
which contradicts the definition of $F$. Therefore, $v_i\cdot n_0\leq0$ for
all $i$. If $v_{i_0}\cdot n_0<0$ for some $i_0$, then
\[
0=\left(\sum_{i=1}^k\lda_iv_i\right)\cdot n_0
=\sum_{i=1}^k\lda_i(v_i\cdot n_0)<0.
\]
Again, a contradiction. Hence $ v_i\cdot n_0=0 $ for any $ i\in\Z\cap[1,k] $. It follows that for any $i$ and any
$t\in[0,\HH^1(L_i)]$,
\[
\dist(x_0+tv_i,\op{Conv}(\cA))
\geq \dist(x_0+tv_i,H^-)
=\dist(x_0,\op{Conv}(\cA)).
\]
By the maximality of $x_0$, every point in each segment $L_i$ belongs to
$F$.

We now apply the same argument at an arbitrary point of $F$. Let $z\in F$.
Since $F\subset\om\backslash\op{Conv}(\cA)$, Proposition~\ref{interiorproperty11}
applies to $z$. Repeating the preceding argument with $z$ in place of $x_0$,
we find $\rho_z>0$ such that $ S_*\cap B_{\rho_z}(z)\subset F $. Thus, $F$ is relatively open in $S_*$. Since $F$ is defined as the level set
in which the continuous function $x\mapsto\dist(x,\op{Conv}(\cA))$ reaches its
maximum on $S_*$, it is also relatively close in $S_*$.

Since $S_*$ is connected, this implies $F=S_*$. This is impossible: according
to Part~1, the boundary points of $S_*$ lie in $\cA\subset\op{Conv}(\cA)$; this contradicts the definition of $ F $ in \eqref{DefF}. Therefore, $ S_*\subset\op{Conv}(\cA) $.

\smallskip
\noindent\textbf{Part 3. Proof that $\cA\subset S_*\cap\pa\om$.}

Let $a\in\cA$. Suppose, for contradiction, that $a\notin S_*$. Since $S_*$
is closed and since we have already proved
$S_*\cap\pa\om\subset\cA$ and $S_*\subset\op{Conv}(\cA)$, there exists
$r\in(0,\f{1}{10})$ such that
\be
S_*\cap\ol{B}_{2r}(a)=\emptyset,
\quad
\cA\cap B_{2r}(a)=\{a\}.
\label{aproperty}
\ee

Choose $\rho\in(\f{r}{2},r)$ such that the geodesic circle
\[
\gamma_{a,\rho}:=\pa D_a^{\rho}\subset\pa\om
\]
is smooth, where $D_a^{\rho}\subset\pa\om$ is the geodesic disk centered at
$a$ with radius $\rho$. By the definition of suitable boundary data at
$a$, the free homotopy class
\be
\sg_a:=[g|_{\gamma_{a,\rho}}]_{\cN}\in[\Ss^1,\cN]
\label{sigmadefineboundarya}
\ee
is non-trivial.

For $\eta>0$ small, define the boundary collar annulus
\[
U_{\eta,r}:=
\left\{x\in\om\cap(B_r(a)\backslash B_{\f{r}{2}}(a)):
\dist(x,\pa\om)<\eta\right\}.
\]
By \eqref{aproperty}, Proposition~\ref{BoundaryClearingout}, and a standard
covering argument, we can choose $\eta>0$ sufficiently small so that $ E_{\va_i}(u_{\va_i},U_{\eta,r})\leq C $ for all sufficiently large $i$, with $C$ independent of $i$. Applying Proposition~\ref{BoundaryPartialRegularity1prop}, and decreasing $\eta$ and
$r$ if necessary, we obtain
\[
\|e_{\va_i}(u_{\va_i})\|_{L^{\ift}(U_{\eta,r})}\leq C
\]
for all sufficiently large $i$. Consequently,
\[
u_*\in C^{\ift}(U_{\eta,r},\cN)\cap C^0(\ol{U}_{\eta,r},\cN),
\]
and the trace of $u_*$ on
$\pa\om\cap(B_r(a)\backslash B_{\f{r}{2}}(a))$ is $g$.

Let $\nu_{\op{in}}$ denote the unit inner normal to $\pa\om$. After decreasing
$\eta>0$, if necessary, the normal collar map
\[
\Psi:\pa\om\times[0,\eta]\to\ol{\om},
\quad
\Psi(y,s):=y+s\nu_{\op{in}}(y),
\]
is a $C^1$ diffeomorphism onto its image. For $s\in(0,\eta)$, set
\[
\gamma_{a,\rho}^s:=\Psi(\gamma_{a,\rho},s),
\quad
D_{a,\rho}^s:=\Psi(D_a^\rho,s).
\]
Here, $D_{a,\rho}^s$ is an embedded two-dimensional disk in $\om$ and $ \pa D_{a,\rho}^s=\gamma_{a,\rho}^s $.

Since $S_*\cap\ol{B}_{2r}(a)=\emptyset$ and $S_0$ is locally finite in
$\om\backslash S_*$, we may choose $s\in(0,\eta)$ such that
\[
D_{a,\rho}^s\subset\om\cap B_{2r}(a),
\quad
D_{a,\rho}^s\cap S_0=\emptyset.
\]
The map $ (t,y)\in[0,1]\times\gamma_{a,\rho}\mapsto\Psi(y,ts) $ gives a homotopy in $U_{\eta,r}$ between $\gamma_{a,\rho}$ and $\gamma_{a,\rho}^s$. Since $u_*$ is continuous up to the boundary in $\ol{U}_{\eta,r}$ and its trace on $\pa\om\cap(B_r(a)\backslash B_{\f{r}{2}}(a))$ is $g$, this homotopy gives
\[
[u_*|_{\gamma_{a,\rho}^s}]_{\cN}
=[g|_{\gamma_{a,\rho}}]_{\cN}
=\sg_a.
\]
In particular, $[u_*|_{\gamma_{a,\rho}^s}]_{\cN}$ is non-trivial by
\eqref{sigmadefineboundarya}.

On the other hand, $u_*$ is smooth and $\cN$-valued on
$D_{a,\rho}^s$, because $D_{a,\rho}^s\cap(S_*\cup S_0)=\emptyset$. Hence,
the loop $u_*|_{\gamma_{a,\rho}^s}$ extends to a $\cN$-valued map on the
disk $D_{a,\rho}^s$. Therefore, this loop is freely null-homotopic in
$\cN$, which contradicts the non-triviality of $\sg_a$. Consequently,
$a\in S_*$.

Combining this with Part~1, we obtain $ S_*\cap\pa\om=\cA $. 
Together with Part~2, this proves Proposition~\ref{boundarySstarbehave}\ref{boundary1}.
\end{proof}

\begin{proof}[Proof of Proposition~\ref{boundarySstarbehave}\ref{boundary11}]
Fix $x_0\in\cA$. After translation and rotation, we assume that
$x_0=0$ and that $\om$ is locally represented near $0$ by the model ball
\[
B_*:=\{y\in\R^3:y_1^2+y_2^2+(y_3+1)^2<1\}.
\]
All estimates in the following are invariant under the fixed local $C^{2,1}$ change of
coordinates. The constants may depend on $\om$, $f$, $\cN$, and the local
chart, but not on $i$.

For any $r_0>0$ sufficiently small, the global energy bound gives
\[
\limsup_{i\to+\ift}\f{1}{|\log\va_i|}
\int_{\om_{r_0}}e_{\va_i}(u_{\va_i})\ud x<+\ift.
\]
By Fubini's theorem, after passing to a subsequence and using Fatou's lemma,
\[
\limsup_{i\to+\ift}\f{1}{|\log\va_i|}
\int_{\pa B_r\cap\om}e_{\va_i}(u_{\va_i})\ud\HH^2<+\ift
\quad\text{for }\HH^1\text{-a.e. }r\in(0,r_0).
\]
We choose $r_0>0$ so small that $\cA\cap B_{r_0}=\{0\}$. Recalling
Proposition~\ref{boundarySstarbehave}\ref{boundary1}, we have
$S_*\cap\pa\om=\cA$ and $S_*\subset\op{Conv}(\cA)$. Hence, after decreasing
$r_0$ if necessary, $\pa B_{r_0}$ does not contain a branching point of $S_*$.

By Lemma~\ref{stationvari},
\[
\mu_*\llcorner\om=\Theta^1(\mu_*,x)\HH^1\llcorner(S_*\cap\om).
\]
For $r\in(0,r_0)$ such that $\pa B_r$ contains no branching point of $S_*$,
define
\[
h(r):=\int_{S_*\cap\pa B_r}\Theta^1(\mu_*,x)\ud\HH^0(x).
\]
By Lemma~\ref{densitydescrete} and inclusion 
$S_*\subset\op{Conv}(\cA)$, the function $h$ is bounded from below by a
positive constant and takes values in a discrete subset of $(0,+\ift)$.
Moreover, by Proposition~\ref{interiorproperty1}, the branching points of
$S_*$ are locally finite in $\om$. Hence, the radii at which
$\pa B_r$ contains a branching point form a discrete subset of $(0,r_0]$.
It follows that $h$ is piecewise constant on $(0,r_0]$. More precisely,
after decreasing $r_0$ if necessary, there exist $k_0\in\Z$, $k_0\geq 0$ and a
partition
\[
b_0=r_0,\quad
(0,r_0]=\bigcup_{j=0}^{k_0}(a_j,b_j],
\quad
a_j=b_{j+1}\quad\text{for }j\in\Z\cap[0,k_0-1],
\]
such that, for each $j\in\Z\cap[0,k_0]$, the following properties hold:
\begin{itemize}
\item $h$ is constant on $(a_j,b_j)$;
\item $\pa B_r$ contains no branching point of $S_*$ for every
$r\in(a_j,b_j)$;
\item if $a_j>0$, then $\pa B_{a_j}$ contains at least one branching point
of $S_*$.
\end{itemize}

Since $\ol{\om}$ is strongly convex at every boundary point and
$S_*\subset\op{Conv}(\cA)$, we have
\[
\mu_*(\pa(\om_r))=0
\quad\text{for every }r\in(0,r_0].
\]
Combining this with \eqref{munconve}, for each $j\in\Z\cap[0,k_0]$ we obtain
\[
\lim_{i\to+\ift}
\int_{a_j}^{b_j}
\left(
\f{1}{|\log\va_i|}
\int_{\pa B_r\cap\om}e_{\va_i}(u_{\va_i})\ud\HH^2
-h(r)
\right)\ud r=0.
\]
After passing to a further subsequence, Fatou's lemma gives
\be
\int_{a_j}^{b_j}
\left(
\limsup_{i\to+\ift}
\f{1}{|\log\va_i|}
\int_{\pa B_r\cap\om}e_{\va_i}(u_{\va_i})\ud\HH^2
-h(r)
\right)\ud r\leq0.
\label{sibileq}
\ee
Therefore, for each $j=\Z\cap[0,k_0]$, we claim that there exists
$\delta_j\in(a_j,b_j)$ such that
\be
\limsup_{i\to+\ift}
\f{1}{|\log\va_i|}
\int_{\pa B_{\delta_j}\cap\om}e_{\va_i}(u_{\va_i})\ud\HH^2
\leq h(\delta_j).
\label{geqMr}
\ee
Indeed, if the limsup equals $h(r)$ for $\HH^1$-a.e. $r$, then \eqref{sibileq}
forces equality a.e., and any such $r$ works. Otherwise, any $r$ in the
positive-measure set where the limsup is strictly below $h(r)$ serves.

Fix $\delta=\delta_j$ as in \eqref{geqMr} so that some $j$ will be chosen later.
We now construct a competitor $v_i$ for $u_{\va_i}$ in $B_\delta\cap\om$,
with $v_i=u_{\va_i}$ on $\om\backslash B_\delta$, and prove
\be
\limsup_{i\to+\ift}\f{1}{|\log\va_i|}
E_{\va_i}(v_i,B_\delta\cap\om)=0
\label{iiftviestimate}
\ee
away from the conical part where $v_i$ is defined by radial extension.

\medskip
\noindent\textbf{Step~1. Construction on the conical region.}
Define
\[
K_*:=\left\{y\in B_*\cap B_\delta:
\f{\delta y}{|y|}\in\pa B_\delta\cap\ol{B}_*\right\},
\quad
\wt{K}_*:=\pa K_*\backslash\pa B_\delta .
\]
Thus, $\wt{K}_*$ is the lateral interface between the conical region $K_*$
and the remaining part of $B_*\cap B_\delta$. We set $v_i=u_{\va_i}$ on
$\om\backslash B_\delta$, and for $x\in K_*\backslash B_{\va_i}$ define
\[
v_i(x):=u_{\va_i}\left(\f{\delta x}{|x|}\right).
\]
On the interface $\wt{K}_*$ this induces the boundary value
\[
\wt{g}_{\va_i}(x):=g_{\va_i}\left(\f{\delta x}{|x|}\right).
\]

\medskip
\noindent\textbf{Step~2. Dyadic decomposition of the boundary layer.}
Set
\[
V_0:=(\ol{B}_*\cap(B_\delta\backslash B_{\va_i}))\backslash K_*,
\quad
V_1:=\ol{B}_*\cap B_{\va_i}.
\]
For $r>0$, define
\[
E_r:=(B_*\cap B_r)\backslash K_*,
\quad
\wt{E}_r:=\pa B_r\cap E_r.
\]
Let
\[
N_0:=\left\lfloor\log_2\f{\delta}{\va_i}\right\rfloor-10,
\quad
F_j:=E_{2^{j+1}\va_i}\backslash\ol{E}_{2^j\va_i}
\quad\text{for }j\in\Z\cap[1,N_0].
\]
Then
\[
V_0=(V_0\backslash E_{2^{N_0+1}\va_i})\cup
\bigcup_{j=1}^{N_0}F_j.
\]

For $x\in F_j$, let $\theta(x)\in[0,2\pi)$ be the azimuthal angle in the
cylindrical coordinates $(\rho,\theta,z)$ of $\R^3$, and define
\be
z_j(x):=\f{|x|}{2^j\va_i}\in[1,2],\quad
d_0(x):=\dist(x,K_*),\quad
d_1(x):=\dist(x,\pa B_*).
\label{d0d1def}
\ee
We also set
\[
r_j(x):=1+\f{d_0(x)}{d_0(x)+d_1(x)}\in[1,2].
\]
Define $\Phi_j:\ol{F}_j\to(\ol{B}_2^2\backslash B_1^2)\times[1,2]$ by
\be
\Phi_j(x):=(r_j(x)\cos\theta(x),r_j(x)\sin\theta(x),z_j(x)).
\label{representationPhi}
\ee
We claim that, for $\delta\in(0,1)$ sufficiently small, $\Phi_j$ is
bi-Lipschitz on the closed set $\ol{F}_j$ and satisfies
\be
\|\na\Phi_j\|_{L^{\ift}(F_j)}
\leq C(2^j\va_i)^{-1},
\quad
\|\na(\Phi_j^{-1})\|_{L^{\ift}((B_2^2\backslash B_1^2)\times[1,2])}
\leq C(2^j\va_i),
\label{Phijdef}
\ee
where $C>0$ may depend on $\delta$ but is independent of $j$ and $i$.

We verify \eqref{Phijdef}. The functions $d_0$ and $d_1$ are Lipschitz, and
the following computations are understood at points of differentiability. On
$F_j$, one has $|\na d_0|=1$ and $d_1\leq C(2^j\va_i)\delta$. Moreover, the
angle between the ray through $x$ and the interface $\wt{K}_*$ is bounded
from below by $C^{-1}\delta$, and hence
\be
d_0(x)+d_1(x)\geq C^{-1}(2^j\va_i)\delta
\quad\text{for any }x\in F_j.
\label{d0d1lowerbound}
\ee
For $\delta$ small,
\be
|\na(d_0+d_1)|^2
=2+2\na d_0\cdot\na d_1
=2-2\na d_0\cdot\f{x+(0,0,1)}{|x+(0,0,1)|}
\leq C\delta.
\label{nad0nad1}
\ee
Since
\[
r_j=2-\f{d_1}{d_0+d_1},
\]
we have
\be
\na r_j=-\f{\na d_0}{d_0+d_1}
+\f{d_0(\na d_0+\na d_1)}{(d_0+d_1)^2}.
\label{nablarjrepresent}
\ee
For $x\in F_j$, the geometry of the model ball gives
$|x_3|\leq C(x_1^2+x_2^2)^{\f{1}{2}}$, and therefore
\be
|\na\theta|=\f{1}{(x_1^2+x_2^2)^{\f{1}{2}}}
\leq\f{C}{2^j\va_i},
\quad\na z_j=\f{x}{2^j\va_i|x|}.
\label{nathetaj}
\ee
The upper bound in \eqref{Phijdef} follows from
\eqref{d0d1lowerbound}--\eqref{nathetaj}. For the inverse bound, we use
\eqref{d0d1lowerbound} to obtain
\be
\left|\det\left(
\f{\na\theta}{|\na\theta|},\f{x}{|x|},
\f{2^j\va_i\na d_1}{d_0+d_1}\right)\right|
\geq\f{1}{C\delta}
\left|\det\left(
\f{\na\theta}{|\na\theta|},\f{x}{|x|},
\f{x+(0,0,1)}{|x+(0,0,1)|}\right)\right|
\geq\f{1}{C\delta},
\label{Cdelta}
\ee
while \eqref{d0d1lowerbound} and \eqref{nad0nad1} give
\be
\left|\det\left(
\f{\na\theta}{|\na\theta|},\f{x}{|x|},
\f{2^j\va_i d_0(\na d_0+\na d_1)}{(d_0+d_1)^2}\right)\right|
\leq C\delta^{-\f{1}{2}}.
\label{Cdelta1}
\ee
The constants in \eqref{d0d1lowerbound}--\eqref{Cdelta1} are independent of
$\delta$. Thus, for $\delta$ small, \eqref{Cdelta} and \eqref{Cdelta1} imply
\[
|\det(2^j\va_i\na r_j,2^j\va_i\na\theta,2^j\va_i\na z_j)|
\geq C^{-1}.
\]
Consequently,
\[
|\det(\na\Phi_j)|\geq \f{1}{C(2^j\va_i)^3},
\quad
\|\na(\Phi_j^{-1})\|_{L^{\ift}}
\leq\f{C\|\na\Phi_j\|_{L^{\ift}}^2}{|\det(\na\Phi_j)|}
\leq C(2^j\va_i),
\]
which completes the proof of \eqref{Phijdef}.

Under $\Phi_j$, the four boundary components of $F_j$ correspond to
\begin{align*}
\wt{E}_{2^{j+1}\va_i}
&\longleftrightarrow
(\ol{B}_2^2\backslash B_1^2)\times\{2\}
&&(\text{top annulus}),\\
\wt{E}_{2^j\va_i}
&\longleftrightarrow
(\ol{B}_2^2\backslash B_1^2)\times\{1\}
&&(\text{bottom annulus}),\\
F_j\cap\pa B_*
&\longleftrightarrow
\pa B_2^2\times[1,2]
&&(\text{outer lateral face}),\\
F_j\cap\wt{K}_*
&\longleftrightarrow
\pa B_1^2\times[1,2]
&&(\text{inner lateral face}).
\end{align*}

\medskip
\noindent\textbf{Step~3. Inductive construction of $v_i$ in $V_0$.}
We construct $v_i$ on the shells $F_j$, proceeding from $j=N_0$ down to
$j=1$. The induction maintains the following properties.
\begin{itemize}
\item \emph{Shell energy}: $E_{\va_i}(v_i,F_j)\leq C(2^j\va_i)$;
\item \emph{Bottom face gradient}:
\be
\|\na_\top(v_i|_{\wt{E}_{2^j\va_i}})\|_{L^2(\wt{E}_{2^j\va_i})}
\leq C_1.
\label{inductive-invariant}
\ee
\end{itemize}
Here $C>0$ and $C_1>0$ depend only on $\delta,f,\cN$, and the constant
$C_0$ in \eqref{Giestimate1} below, but not on $j$ or $i$. The independence
of $C_1$ from $j$ is the essential point. 

\smallskip
\textit{Base case.}
For $x\in V_0\backslash E_{2^{N_0+1}\va_i}$, let
$y\in B_\delta\cap\pa B_*$ be the unique point collinear with $0$ and $x$.
Set
\[
v_i(x):=g_{\va_i}(y).
\]
This defines $v_i$ on $V_0\backslash E_{2^{N_0+1}\va_i}$ and, in particular,
on $\wt{E}_{2^{N_0+1}\va_i}$. By the regularity of $g_{\va_i}$ from
\eqref{gvaassuse}, the tangential gradient of
$v_i|_{\wt{E}_{2^{N_0+1}\va_i}}$ is bounded in $L^2$ by a constant
independent of $i$. Hence, the inductive invariant
\eqref{inductive-invariant} holds at the initial level $j=N_0+1$, after
enlarging $C_1$ if necessary.

\smallskip
\textit{Inductive step at level $j$ $(N_0\geq j\geq 1)$.}
Suppose that $v_i$ has already been constructed on all shells $F_{j'}$ with
$j'>j$, as well as on $V_0\backslash E_{2^{N_0+1}\va_i}$. In particular, the
top annulus $\wt{E}_{2^{j+1}\va_i}$ of the shell $F_j$ already carries the
trace of $v_i$, and the inductive invariant gives
\[
\|\na_\top(v_i|_{\wt{E}_{2^{j+1}\va_i}})\|_{L^2(\wt{E}_{2^{j+1}\va_i})}
\leq C_1 .
\]
The two lateral pieces also carry prescribed data: on $F_j\cap\pa B_*$ the
data come from $g_{\va_i}$, while on $F_j\cap\wt{K}_*$ they come from
\[
\wt{g}_{\va_i}(x):=g_{\va_i}\left(\f{\delta x}{|x|}\right).
\]
These lateral data satisfy the scale-independent $L^\infty$ gradient bound
that will be recorded in \eqref{Giestimate1}. The goal at the present level
is to construct $v_i$ on $F_j$ so that
\[
E_{\va_i}(v_i,F_j)\leq C(2^j\va_i)
\quad\text{and}\quad
\|\na_\top(v_i|_{\wt{E}_{2^j\va_i}})\|_{L^2(\wt{E}_{2^j\va_i})}\leq C_1,
\]
with the same constants independent of $j$ and $i$. We carry this out in the
following sub-steps.

\smallskip
\noindent\textbf{(3a) Data on the standard annular cylinder.}
Via $\Phi_j$, the data on the inner and outer lateral faces induce boundary
values $G_i$ on $\pa B_1^2\times[1,2]$ and $\pa B_2^2\times[1,2]$. By the
boundary regularity of $g_{\va_i}$ and \eqref{Phijdef},
\be
\|\na_\top G_i\|_{L^{\ift}(\pa B_1^2\times[1,2])}
+\|\na_\top G_i\|_{L^{\ift}(\pa B_2^2\times[1,2])}
\leq C_0,
\label{Giestimate1}
\ee
where $C_0$ is independent of $j$ and $i$. The top annulus
$(\ol{B}_2^2\backslash B_1^2)\times\{2\}$ receives its data from
$v_i|_{\wt{E}_{2^{j+1}\va_i}}$ through $\Phi_j$, and its tangential gradient
has $L^2$-norm bounded by $C_1$ by the inductive hypothesis.

\smallskip
\noindent\textbf{(3b) Unfolding of the annular cylinder.}
We cut the closed annular cylinder
$(\ol{B}_2^2\backslash B_1^2)\times[1,2]$ along one radial half-plane and use
the map $\Phi$ shown in Figure~\ref{fig:cylinder-map} to unfold the resulting
closed cylinder onto a rectangular box $\mathcal{B}$ with corners
$A_1,A_2,A_2',A_1',B_1,B_2,B_2',B_1'$.

\begin{figure}[htbp]
\centering
\begin{tikzpicture}[>=stealth, line cap=round, line join=round]

    \begin{scope}[shift={(0,0)}]
        \draw[thick] (1.8, 0) arc (0:-180:1.8 and 0.6);
        \draw[dashed, thin] (1.8, 0) arc (0:180:1.8 and 0.6);
        \draw[dashed, thin] (0,0) ellipse (0.9 and 0.3);

        \draw[thick] (-1.8, 0) -- (-1.8, 3.5);
        \draw[thick] (1.8, 0) -- (1.8, 3.5);
        \draw[dashed, thick] (-0.9, 0) -- (-0.9, 3.5);
        \draw[dashed, thick] (0.9, 0) -- (0.9, 3.5);

        \draw[thick] (0, 3.5) ellipse (1.8 and 0.6);
        \draw[thick] (0, 3.5) ellipse (0.9 and 0.3);

        \draw[thick] (0.9, 3.5) -- (1.8, 3.5);
        \draw[dashed, thick] (0.9, 0) -- (1.8, 0);

        \node[right] at (0.8, 3.7) {$A_1$};
        \node[right] at (1.6, 3.7) {$A_2$};
        \node[right] at (0.8, -0.2) {$B_1$};
        \node[right] at (1.6, -0.2) {$B_2$};

        \draw[->, thick] (2.8, 1.75) -- (4.8, 1.75);
        \node[above] at (3.8, 1.75) {$\Phi$};
    \end{scope}

    \begin{scope}[shift={(6,0)}]
        \draw[dashed, thick] (1.2, 1.0) -- (4.7, 1.0);
        \draw[dashed, thick] (0, 0) -- (1.2, 1.0);
        \draw[dashed, thick] (1.2, 1.0) -- (1.2, 4.5);

        \draw[thick] (0,0) -- (3.5,0) -- (3.5,3.5) -- (0,3.5) -- cycle;
        \draw[thick] (3.5, 3.5) -- (4.7, 4.5) -- (4.7, 1.0) -- (3.5, 0);
        \draw[thick] (0, 3.5) -- (1.2, 4.5) -- (4.7, 4.5);

        \node[left] at (0, 3.5) {$A_1$};
        \node[right] at (3.5, 3.5) {$A_1'$};
        \node[above] at (1.2, 4.5) {$A_2$};
        \node[above] at (4.7, 4.5) {$A_2'$};

        \node[below left] at (0, 0) {$B_1$};
        \node[below] at (3.5, 0) {$B_1'$};
        \node[right] at (1.2, 1.0) {$B_2$};
        \node[right] at (4.7, 1.0) {$B_2'$};
    \end{scope}

\end{tikzpicture}
\caption{Unfolding of the annular cylinder to a rectangular box. The two vertical faces
$A_1A_2B_2B_1$ and $A_1'A_2'B_2'B_1'$ are the two faces created by the angular cut.}
\label{fig:cylinder-map}
\end{figure}

Under $\Phi$, the inner lateral face corresponds to $A_1A_1'B_1'B_1$, the
outer lateral face corresponds to $A_2A_2'B_2'B_2$, the top face corresponds
to $A_1A_2A_2'A_1'$, and the bottom face corresponds to
$B_1B_2B_2'B_1'$. The two remaining faces,
$A_1A_2B_2B_1$ and $A_1'A_2'B_2'B_1'$, are the two faces introduced by the
angular cut.

Set $ H_i:=v_i\circ\Phi_j^{-1}\circ\Phi^{-1} $ on the faces where the data have already been prescribed. Thus, the data are known on the inner and outer lateral faces, with the $L^\infty$ tangential gradient bound in \eqref{Giestimate1}, and on the top face, where the $L^2$ tangential gradient is bounded by the inductive hypothesis. By Fubini's theorem, we choose the angular cut defining $\Phi$ so that
\[
\|\na_{A_1A_2}H_i\|_{L^2(A_1A_2)}
+\|\na_{A_1'A_2'}H_i\|_{L^2(A_1'A_2')}
\leq C\|\na_\top H_i\|_{L^2(A_1A_2A_2'A_1')}
\leq CC_1 .
\]

\smallskip
\noindent\textbf{(3c) Construction of the lower and cut faces.}
The edge data already prescribed determine two paths in $\cN$:
\[
\mathcal{P}_i:=B_1\op{-}A_1\op{-}A_2\op{-}B_2,
\quad
\mathcal{P}_i':=B_1'\op{-}A_1'\op{-}A_2'\op{-}B_2' .
\]
Here, the notation means that the path is obtained by following the existing
edge data. The $L^2$ bounds on the top edges and the $L^\infty$ bounds on the
lateral edges give
\[
\|\na H_i\|_{L^2(\mathcal{P}_i)}
+\|\na H_i\|_{L^2(\mathcal{P}_i')}
\leq C .
\]
We define $H_i$ on the bottom edges $B_1B_2$ and $B_1'B_2'$ by
reparameterization $\mathcal{P}_i$ and $\mathcal{P}_i'$, respectively. Hence,
\[
\|\na H_i\|_{L^2(B_1B_2)}
+\|\na H_i\|_{L^2(B_1'B_2')}
\leq C,
\]
where $C$ is independent of $j$ and $i$.

We now fill the two cut faces. Consider first the face $A_1A_2B_2B_1$.
With the orientation
\[
B_1\op{-}A_1\op{-}A_2\op{-}B_2\op{-}B_1,
\]
its boundary loop is the concatenation of the path $\mathcal{P}_i$ and the
reverse of the bottom edge $B_1B_2$. Since the bottom edge has just been
chosen as a reparameterization of $\mathcal{P}_i$, this loop is freely
null-homotopic in $\cN$. Therefore, Lemma~\ref{ExtensionLemma1} extends
$H_i$ to $A_1A_2B_2B_1$ and gives
\[
\|\na H_i\|_{L^2(A_1A_2B_2B_1)}\leq C .
\]
The same argument, using $\mathcal{P}_i'$ and the bottom edge $B_1'B_2'$,
extends $H_i$ to the primed cut face $A_1'A_2'B_2'B_1'$ with
\[
\|\na H_i\|_{L^2(A_1'A_2'B_2'B_1')}\leq C .
\]

It remains to fill the bottom face $B_1B_2B_2'B_1'$. We verify that its
boundary loop is null-homotopic. By the construction of the two cut faces,
the edge $B_1B_2$ is homotopic relative to its endpoints to
$B_1$-$A_1$-$A_2$-$B_2$, and the edge
$B_2'B_1'$ is homotopic relative to its endpoints to
$B_2'$-$A_2'$-$A_1'$-$B_1'$. Therefore, the boundary loop
\[
B_1\op{-}B_2\op{-}B_2'\op{-}B_1'\op{-}B_1
\]
is homotopic to
\[
B_1\op{-}A_1\op{-}A_2\op{-}B_2\op{-}B_2'
\op{-}A_2'\op{-}A_1'\op{-}B_1'\op{-}B_1 .
\]
Using the data already prescribed on the outer lateral face, the path
$A_2$-$B_2$-$B_2'$-$A_2'$ is homotopic relative to its
endpoints to the top edge $A_2$-$A_2'$. Similarly, using data on
the inner lateral face, the path
$A_1'$-$B_1'$-$B_1$-$A_1$ is homotopic relative to its
endpoints to the top edge $A_1'$-$A_1$. Hence, the bottom boundary loop
is homotopic in $\cN$ to a conjugate of the top boundary loop
\[
A_1\op{-}A_2\op{-}A_2'\op{-}A_1'\op{-}A_1 .
\]
The top face has already been filled by the inductive construction or by
the initialization at the first level. Thus, its boundary loop is
null-homotopic in $\cN$, and so the boundary loop of
$B_1B_2B_2'B_1'$ is also null-homotopic. Applying
Lemma~\ref{ExtensionLemma1} once again, we extend $H_i$ to the bottom face and
obtain
\be
\|\na H_i\|_{L^2(B_1B_2B_2'B_1')}\leq C.\label{nablaHiB1B}
\ee

\smallskip
\noindent\textbf{(3d) Interior extension through a bi-Lipschitz ball.}
At this point $H_i$ is defined on the whole boundary $\pa\mathcal{B}$ and
takes values in $\cN$. Since the closed box $\mathcal{B}$ is bi-Lipschitz
homeomorphic to the closed ball $\ol{B}_1^3$, fix a bi-Lipschitz map
$\Psi:\ol{B}_1^3\to\mathcal{B}$ with $\Psi(\pa B_1^3)=\pa\mathcal{B}$. Its
bi-Lipschitz constant depends only on the shape of $\mathcal{B}$. Pull back
the boundary data by
\[
\wt{H}_i:=H_i\circ\Psi|_{\pa B_1^3}\in H^1(\pa B_1^3,\cN),
\]
and extend homogeneously to $B_1^3$ by
\[
\wt{H}_i(x):=\wt{H}_i\left(\f{x}{|x|}\right),
\quad x\in B_1^3\backslash\{0\}.
\]
The homogeneous extension estimate in dimension three gives
\[
\|\na\wt{H}_i\|_{L^2(B_1^3)}^2
\leq C\|\na_\top\wt{H}_i\|_{L^2(\pa B_1^3)}^2.
\]
Pushing forward through $\Psi$ yields
\[
\|\na H_i\|_{L^2(\mathcal{B})}
\leq C\|\na H_i\|_{L^2(\pa\mathcal{B})}
\leq C.
\]
Finally, pulling back through $\Phi^{-1}$ and $\Phi_j^{-1}$ defines
$v_i\in H^1(F_j,\cN)$.

\smallskip
\noindent\textbf{(3e) Energy estimates and completion of the inductive step.} All gradient bounds on the faces of $\mathcal{B}$ are controlled by constants independent of $j$ and $i$. For functional energy, a change of variables through $\Phi_j$ (using estimates from \eqref{Phijdef}) gives
\begin{align*}
\int_{F_j}|\na v_i|^2\ud x
&=\int_{\Phi_j(F_j)}|\na H_i\circ\Phi|^2\cdot|\na(\Phi_j^{-1})|^2
\cdot|\det\na\Phi_j^{-1}|\ud y\\
&\leq C(2^j\va_i)^{-2}\cdot C(2^j\va_i)^3\cdot\|\na H_i\|_{L^2(\mathcal{B})}^2,
\end{align*}
so $E_{\va_i}(v_i,F_j)\leq C(2^j\va_i)$. For the bottom face, a change of variables on $\wt{E}_{2^j\va_i}$ using \eqref{nablaHiB1B} and the two-dimensional Jacobian $|\det(\na\Phi_j)|\sim C(2^j\va_i)^{-2}$ gives
\[
\|\na_\top(v_i|_{\wt{E}_{2^j\va_i}})\|_{L^2(\wt{E}_{2^j\va_i})}^2
\leq C(2^j\va_i)^{-2}\cdot C(2^j\va_i)^2\cdot\|\na H_i\|_{L^2(B_1B_2B_2'B_1')}^2
\leq C_1.
\]
Collecting,
\be
E_{\va_i}(v_i,F_j)\leq C(2^j\va_i),\quad
\|\na_\top(v_i|_{\wt{E}_{2^j\va_i}})\|_{L^2(\wt{E}_{2^j\va_i})}\leq C_1,
\label{nablavifinal}
\ee
with $C,C_1>0$ independent of $j$ and $i$. The second bound in
\eqref{nablavifinal} is exactly the invariant \eqref{inductive-invariant} at level $j$ and gives the estimate on $ E_{\va_i}(v_i,F_j) $, completing the inductive step.

\medskip
\noindent\textbf{Step~4: Innermost region $V_1$.}

For $x\in V_1=(\ol{B}_*\cap B_{\va_i})\backslash K_*$, extend $v_i$
homogeneously from $\pa(B_{\va_i}\cap B_*)\backslash K_*$. Using \eqref{nablavifinal}
with $j=1$ and the regularity of $g_{\va_i}$,
\be
E_{\va_i}(v_i,B_{\va_i}\cap B_*)
\leq C\va_i E_{\va_i}(v_i,\pa(B_{\va_i}\cap B_*))\leq C,\label{EvaiBvai}
\ee
with $C$ independent of $i$.

\medskip
\noindent\textbf{Step~5. Energy estimate in the boundary layer.}
Summing \eqref{nablavifinal} over the dyadic shells gives
\[
\int_{V_0}e_{\va_i}(v_i)\ud x
\leq \sum_{j=1}^{N_0}E_{\va_i}(v_i,F_j)
\leq C\(\sum_{j=1}^{N_0}2^j\va_i\)
\leq C.
\]
Together with \eqref{EvaiBvai}, this proves
\[
\limsup_{i\to+\ift}\f{1}{|\log\va_i|}
E_{\va_i}(v_i,B_\delta\cap\om)=0.
\]
Thus, only the conical region $K_*$ contributes at the scale
$|\log\va_i|$.

\medskip
\noindent\textbf{Step~6. Minimality and the equality case in monotonicity.}
By the weak${}^*$ convergence \eqref{munconve} and $\mu_*(\pa(\om_\delta))=0$,
\[
\int_0^\delta h(\rho)\ud\rho=\mu_*(\om_\delta)
=\lim_{i\to+\ift}\f{1}{|\log\va_i|}\int_{\om_\delta}e_{\va_i}(u_{\va_i})\ud x.
\]
Since $v_i=u_{\va_i}$ outside $B_\delta$, the minimality of $u_{\va_i}$ and
\eqref{iiftviestimate} give
\be
\begin{aligned}
\int_0^\delta h(\rho)\ud\rho
&\leq\limsup_{i\to+\ift}\f{1}{|\log\va_i|}E_{\va_i}(v_i,\om_\delta)=\limsup_{i\to+\ift}\f{1}{|\log\va_i|}\int_{K_*}e_{\va_i}(v_i)\ud x\\
&\leq\limsup_{i\to+\ift}\f{\delta}{|\log\va_i|}
\int_{\pa B_\delta\cap\om}e_{\va_i}(u_{\va_i})\ud\HH^2
\leq\delta h(\delta),
\end{aligned}\label{deltafinal}
\ee
where the third inequality uses $|\na_{\pa B_\rho}u_{\va_i}|\leq|\na u_{\va_i}|$
and the cone construction, and the last uses \eqref{geqMr}.

We now choose $\delta$ to minimize $h$ on $(0,r_0]$. Since $h$ has finitely
many values, its minimum $h_{\min}$ is attained on some interval
$(a_{j_1},b_{j_1})$. Take $\delta=\delta_{j_1}\in(a_{j_1},b_{j_1})$
satisfying \eqref{geqMr}. Then $h\geq h_{\min}=h(\delta)$ on $(0,\delta)$,
so
\[
\int_0^\delta h(\rho)\ud\rho\geq\delta h_{\min}=\delta h(\delta).
\]
Combined with \eqref{deltafinal}, equality holds throughout:
$h\equiv h_{\min}$ on $(0,\delta]$, i.e., $r^{-1}\mu_*(\om_r)=h_{\min}$ is constant for $r\in(0,\delta)$.

We now apply the monotonicity formula for the stationary one-dimensional
varifold $V_*$ associated with $\mu_*$; see \cite[Formula~17.4]{Sim83}.
Although the center $0$ lies on $\pa\om$, the usual interior formula applies
here. Indeed, by Proposition~\ref{boundarySstarbehave}\ref{boundary1} and
our choice of $\delta$, we have $ S_*\cap\pa\om\cap\ol{B}_\delta=\{0\} $.

Hence, for any $0<\rho<\sigma<\delta$, the support of $V_*$ in
$\ol{B}_\sigma\backslash B_\rho$ is compactly contained in $\om$. Applying
the standard monotonicity formula in this annular region gives
\[
\f{\mu_*(\om_\sigma)}{\sigma}
-\f{\mu_*(\om_\rho)}{\rho}
=
\int_{S_*\cap(\om_\sigma\backslash\om_\rho)}
\left|\f{x^\perp}{|x|}\right|^2
\f{\ud\mu_*}{|x|},
\]
where $x^\perp$ denotes the component of $x$ perpendicular to the
approximate tangent line to $S_*$ at $x$. Since the left-hand side is zero,
we obtain
\[
x^\perp=0
\quad\text{for }\mu_*\text{-a.e. }x\in S_*\cap\om_\delta.
\]
Therefore, the approximate tangent line to $S_*$ is radial with respect to
the origin at $\mu_*$-a.e. point. It follows that every connected arc of
$S_*$ in $\om_\delta$ is contained in a line segment emanating from $0$.

Since $S_*\cap\pa B_\delta$ is finite for the chosen admissible radius
$\delta$, we conclude that $S_*\cap\ol{B}_\delta$ consists of finitely many
closed straight segments joining points of $S_*\cap\pa B_\delta$ to the
point $0\in\cA$. This proves the asserted finite-segment structure near the
chosen boundary point $x_0=0$. Since $x_0\in\cA$ was arbitrary and $\cA$ is
finite, Proposition~\ref{interiorproperty1} gives the same finite-segment
structure in the interior away from small neighborhoods of $\cA$. Therefore, there exist finitely many closed line segments $\{L_i\}_{i=1}^n\subset\ol{\om}$ such that $ S_*=\cup_{i=1}^nL_i $. This proves Proposition~\ref{boundarySstarbehave}\ref{boundary11}.
\end{proof}

\begin{proof}[Proof of Proposition~\ref{boundarySstarbehave}\ref{boundary10}]

Suppose $x\in\om$ is an endpoint of $L_i\subset S_*$ but is not a branching point. Since the branching points of $S_*$ are locally finite (Proposition~\ref{interiorproperty1}), there exists
$r\in(0,\frac{1}{10}\dist(x,\pa\om))$ such that $S_*\cap\ol{B}_r(x)=
L_i\cap\ol{B}_r(x)$ is the single segment from $x$ to $x':=S_*\cap\pa B_r(x)$. By translation and rotation, assume $x=0$ and the direction of $L_i$ is $(0,0,1)$, so
\[
S_*\cap\ol{B}_r(0)=\{(0,0,t):0\leq t\leq r\}.
\]
By Proposition~\ref{propustar}, $S_0$ is locally finite in $\om\backslash S_*$. Hence, for $\HH^1$-a.e. $\rho_0\in(0,\f{r}{2})$, we have $\pa B_{\rho_0}\cap S_0=\emptyset$. Fix such a $\rho_0$. Since $S_*\cap\pa B_{\rho_0}=\{(0,0,\rho_0)\}$ and $(0,0,\rho_0)$ has $y_3=\rho_0>\f{\rho_0}{4}$, the spherical cap
\[
C_-:=\pa B_{\rho_0}\cap\left\{y_3\leq\frac{\rho_0}{4}\right\}
\]
satisfies $C_-\cap(S_*\cup S_0)=\emptyset$, so $u_*$ is smooth and $\cN$-valued on $C_-$.

Define the closed disk
\[
D_0:=\ol{B}_{\rho_0}\cap\left\{y_3=\frac{\rho_0}{4}\right\}\subset\om.
\]
Its center $y_0:=(0,0,\f{\rho_0}{4})$ satisfies $y_0\in\op{Int}(L_i)$. Consequently, $D_0\cap S_*=\{y_0\}$ and
$\pa D_0\subset\pa B_{\rho_0}$, so $\pa D_0\cap(S_*\cup S_0)=\emptyset$. By~\eqref{Theta1muy} applied to $y_0$ and $D_0$, the class $[u_*|_{\pa D_0}]_{\cN}$ is non-trivial. On the other hand, $C_-$ is a topological disk with $\pa C_-=\pa D_0$, and
$u_*|_{C_-}:C_-\to\cN$ is smooth. Hence $u_*|_{C_-}$ provides a null-homotopy of $u_*|_{\pa D_0}$ in $\cN$, so $[u_*|_{\pa D_0}]_{\cN}=0$. This contradicts the non-triviality above. Therefore, $x$ must be a branching point, establishing property~\ref{boundary10a}.

Using the first property of Proposition~\ref{boundarySstarbehave}, if $x\in\pa\om$ is an endpoint of $L_i$, then $x\in\cA$. Let $D$ and $D_x$ be given by property~\ref{boundary10b}. Let
\begin{align*}
\cA^{\eta}&:=\{y\in\pa\om:\dist(y,\cA)>\eta\},\\
\om^{\eta}&:=\{y\in\om:\dist(y,\cA)<\eta\}.
\end{align*}
We now follow the argument in Part 3 of this proof. From $S_*\subset\op{Conv}(\cA)$, Proposition~\ref{BoundaryClearingout}, and Proposition~\ref{BoundaryPartialRegularity1prop}, we first deduce that there exists $\eta>0$ such that $ \cA^{\eta}\cap\om^{\eta}\cap S_*=\emptyset $ and
\[
u_*\in C^{\ift}(\cA^{\eta}\cap\om^{\eta},\cN)
\cap C^0(\ol{\cA}^{\eta}\cap\ol{\om}^{\eta},\cN).
\]
Since $S_0$ is locally finite in $\om$, there exists a $2$-disk $D'$ such that $\ol{D}'\subset\cA^{\eta}\cap\om^{\eta}$ and $ [u_*|_{\pa D'}]_{\cN}=[u_*|_{\pa D}]_{\cN} $. Note that $[u_*|_{\pa D}]_{\cN}=[g|_{D_x}]_{\cN}$. We now verify \eqref{gDxeq}.
\end{proof}

\subsection{Proof of Theorem \ref{globalminimizersproperties}}
The bound~\eqref{EvaCbound} follows from Proposition~\ref{locallogestimate}. The first property follows from Lemma~\ref{stationvari}. The second and third properties are established in Proposition~\ref{propustar}.

\subsection{Proof of Theorem \ref{structureofthelimitset}}

The first property is a consequence of Proposition~\ref{boundarySstarbehave}\ref{boundary1}--\ref{boundary11}. The second and third properties are derived from
Proposition~\ref{interiorproperty1} and Proposition~\ref{interiorproperty11}, respectively. Finally, the fourth property follows from
Proposition~\ref{boundarySstarbehave}\ref{boundary10}.

\section{Uniform estimates}\label{SectionUniform}

In this section, we establish uniform estimates for the minimizers of \eqref{GLfunctional} and prove Proposition~\ref{W1pestimate}. The results in this section follow the approach developed in \cite{FWW25a, FWW25b}. The arguments differ from those in \cite{BSV25} in the treatment of the bad sets and the covering procedure.

\subsection{Regular scales and bad sets}
In this subsection, we define the regular scales, which characterize the regularity of maps quantitatively.

\begin{defn}[Regular scales]\label{regular}
Let $\va\in(0,1)$ and $\Lda>0$. Assume that $U\subset\R^3$ is a bounded domain and $u\in C^{\ift}(U,\R^m)$. For $x\in U$, we set
\begin{align*}
r(u;x)&:=\sup\left\{0\leq r\leq 1:r^2\|e_{\va}(u)\|_{L^{\ift}(U_r(x))}\leq 1\right\},\\
r^{\Lda}(u;x)&:=\sup\left\{0\leq r\leq 1:\theta_{\va}^U(u;x,r)\leq \Lda\right\}.
\end{align*}
For $\Lda,r>0$, define the type I and II bad sets of $u$ in $U$ by
\begin{align*}
\op{Bad}_{\op{I}}(u;r)&:=\{y\in U:r(u;y)<r\},\\
\op{Bad}_{\op{II}}(u;r,\Lda)&:=\{y\in U:r^{\Lda}(u;y)<r\}.
\end{align*}
\end{defn}

\subsection{Interior estimates}
In this subsection, we discuss the uniform interior estimates.

\subsubsection{Interior \texorpdfstring{$W^{1,q}$}{}-estimates}

The first main result is the following uniform interior $W^{1,q}$-estimate for local minimizers.

\begin{prop}\label{InteriorW1p}
Let $M>0$. Assume that $\{u_{\va}\}_{\va\in(0,1)}\subset H^1(B_{10},\R^m)$ is a family of local minimizers of \eqref{GLfunctional}, satisfying
\[
E_{\va}(u_{\va},B_{10})\leq M(|\log\va|+1),
\quad
\|u_{\va}\|_{L^{\ift}(B_{10})}\leq M.
\]
Then, for any $q\in(1,2)$,
\[
\|\na u_{\va}\|_{L^q(B_1)}\leq C,
\]
where $C>0$ depends only on $f,M,\cN$, and $q$.
\end{prop}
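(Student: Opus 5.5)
The estimate will come from a layer-cake (distribution function) bound on $|\na u_{\va}|$, using the regular scale $r(u_{\va};x)$ of Definition~\ref{regular}. The pointwise input is immediate from the definition: for $x\in B_1$ with $r(u_{\va};x)\ge s$ we have $|\na u_{\va}(x)|^2\le 2e_{\va}(u_{\va})(x)\le 2s^{-2}$. Hence
\[
\{x\in B_1:|\na u_{\va}(x)|>\sqrt2\,s^{-1}\}\subset \op{Bad}_{\op{I}}(u_{\va};s)\cap B_1,
\]
and it suffices to prove the volume estimate
\[
\cL^3\bigl(\op{Bad}_{\op{I}}(u_{\va};s)\cap B_1\bigr)\le Cs^2
\quad\text{for all } s\in(0,1),
\]
with $C$ independent of $\va$. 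Granting this, $\int_{B_1}|\na u_{\va}|^q\le C+C\int_1^\infty \lambda^{q-1}\lambda^{-2}\,\ud\lambda<\infty$ because $q<2$. For $s\lesssim\va$ the estimate is automatic: Lemma~\ref{Apriori} gives $|\na u_{\va}|\le C\va^{-1}$, so $e_{\va}\le C\va^{-2}$ and $r(u_{\va};x)\ge c\va$. We may therefore restrict to $s\in(C\va,1)$.

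The volume bound is derived from the $\varepsilon$-regularity results. By Proposition~\ref{InteriorPartialRegularity1}, if $x\in\op{Bad}_{\op{I}}(u_{\va};s)$ with $s\ge C\va$, then $\theta_{\va}(u_{\va};x,2\rho)\ge\delta$ at every scale $\rho\lesssim s$. We first split off the high-density part with the clearing-out dichotomy, Proposition~\ref{InteriorClearingout}. Fix $\Lda$ large. Points where $\theta_{\va}(u_{\va};x,r)\le\Lda$ for all $r\le1$ (low density at all scales) are controlled by a Vitali covering of $\op{Bad}_{\op{I}}$ by balls $B_s(x_j)$, together with the quantitative stratification / energy-gap argument of \cite{FWW25b}. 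There the energy drop $\vt_{\va}(x,2s)-\vt_{\va}(x,s)$ is summed over dyadic scales via the monotonicity formula \eqref{Monotone1}, and this yields the bound $\#\{j\}\le Cs^{-1}$, so that the covered volume is at most $Cs^{-1}\cdot s^3=Cs^2$.

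The remaining set $\op{Bad}_{\op{II}}(u_{\va};r,\Lda)$ consists of the line-like defect region, where the density is of order $|\log(r/\va)|$. This is the main obstacle. Since the total energy is at most $M|\log\va|$ and Lemma~\ref{LowerBound} forces energy at least $c_0\log(\rho/\va)$ per unit length on each $2$-disk transverse to a nontrivial defect, the set of $\rho$-balls carrying density $\ge\Lda$ should number at most $C\rho^{-1}$ (a one-dimensional packing bound). I would prove this by a Besicovitch covering combined with a cylinder argument in the spirit of Lemma~\ref{chooseseveralcases}. Concretely, around a high-density ball, monotonicity \eqref{Mo11} gives $\int_{B_{\rho}}e_{\va}\gtrsim\rho\log(\rho/\va)$. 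Disjoint such balls then sum to at most $M|\log\va|$, which yields a count of order $\rho^{-1}|\log\va|/\log(\rho/\va)$. To remove the logarithmic loss, I would use the clearing-out dichotomy at scale $\sqrt{\rho\va}$: below it the density is $O(1)$ (Proposition~\ref{InteriorClearingout}), and above it the $\log$ factors are comparable. This gives $\#\le C\rho^{-1}$ up to constants, hence volume at most $C\rho^2$.

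Finally, inside these $\rho$-tubes the gradient satisfies $|\na u_{\va}(x)|\lesssim \dist(x,\text{core})^{-1}$, by Proposition~\ref{InteriorPartialRegularity1} applied at the distance scale. Such a tube contributes an $L^q$ norm of order $\int_{\va}^{1} t^{1-q}\,\ud t<\infty$ for $q<2$, which closes the estimate.
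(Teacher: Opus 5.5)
Your outer framework is the same as the paper's: a layer-cake estimate over the regular scale, the trivial regime $s\lesssim\va$ from Lemma~\ref{Apriori}, and a clearing-out packing bound for $\op{Bad}_{\op{II}}$ against the total energy $M|\log\va|$. There is, however, a genuine gap between your two regimes.

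Points with density at most $\Lda$ at every scale up to $1$, together with $\op{Bad}_{\op{II}}(u_\va;\rho,\Lda)$, do not exhaust $B_1$. Everything with $r^{\Lda}(u_\va;x)\in(s,1)$ is left over, i.e.\ points at distance between $s$ and $O(1)$ from the high-density set. You handle this region with the pointwise bound $|\na u_\va(x)|\lesssim\dist(x,\text{core})^{-1}$, ``by Proposition~\ref{InteriorPartialRegularity1} at the distance scale''. That proposition needs $\theta_\va(u_\va;x,2t)<\delta$ for a small threshold $\delta$, and this fails in two ways:
\begin{enumerate}
\item At distance $t$ from a defect line, the density at scale $t$ is of order one (comparable to $|\sg|_*$), and monotonicity cannot make it smaller.
\item Near a singular point of the limiting harmonic map, which may lie at any distance from the lines, the density stays above the threshold at every scale. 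There $|\na u_\va|$ behaves like the inverse distance to that point, not to the ``core'' (which is not even defined at fixed $\va$).
\end{enumerate}

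The missing ingredient is the bounded-density covering lemma, applied at every intermediate scale. If $\theta_\va(u;x_0,40R)\le\Lda$, then $\op{Bad}_{\op{I}}(u;\delta r)\cap B_R(x_0)$ is covered by $n$ balls of radius $r$, with $n$ depending only on $f,M,\Lda,\cN$ and uniform in $r\in(\va/\delta,R)$. This is Lemma~\ref{coverpoint}. It comes from iterating energy drops, and Lemmas~\ref{regularscalelem} and~\ref{Fprop} rest on compactness to a minimizing harmonic map and on constancy of $0$-homogeneous, translation-invariant minimizers.

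The paper then splits $B_{1/4}$ into dyadic layers $\cA_j=B_{2^jr}(F_j)\setminus B_{2^{j-1}r}(F_{j-1})$, where $F_j=\op{Bad}_{\op{II}}(u;2^jr,\Lda)\cap B_1$. On each layer:
\begin{enumerate}
\item at points of $\cA_j$ the density at scale $2^{j-1}r$ is at most $\Lda$;
\item the packing bound at scale $2^jr$ covers $\cA_j$ by $C(2^jr)^{-1}$ balls of radius comparable to $2^jr$;
\item each of these balls contributes $\op{Bad}_{\op{I}}(\delta r)$-volume at most $Cr^3$.
\end{enumerate}
Summing, $\sum_j(2^jr)^{-1}r^3\le Cr^2$. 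What is needed is an $O(1)$ count of $r$-balls per layer ball at each scale, not the count $Cs^{-1}$ you quote for the low-density part.

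Second, the packing count $C\rho^{-1}$ for $\op{Bad}_{\op{II}}(u_\va;\rho,\Lda)$ holds only when $\log(\rho/\va)\gtrsim|\log\va|$, i.e.\ $\rho\ge\va^{\ga}$. Below that, clearing-out gives only density $\gtrsim\log(\rho/\va)$, and the count degrades to $C\rho^{-1}|\log\va|/\log(\rho/\va)$. Your remedy via the scale $\sqrt{\rho\va}$ does not change this. For $\rho=K\va$, both $\log(\rho/\va)$ and $\log(\sqrt{\rho\va}/\va)$ are $O(\log K)$, still not comparable to the total energy $M|\log\va|$.

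You do not need the sharp bound $Cs^2$ at all scales. Run the layer argument only for $r\in(\va^{\ga},1)$. For $R\in(\va,1)$, use the inclusion $\{r(u_\va;\cdot)<\delta R\}\subset\{r(u_\va;\cdot)<\delta R^{\ga}\}$. This gives $\cL^3(\{r(u_\va;\cdot)<\delta R\}\cap B_1)\le CR^{2\ga}$, so $\na u_\va$ is bounded in weak $L^{2\ga}$. Choosing $\ga\in(q/2,1)$ then yields the $L^q$ bound.
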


The proof proceeds through a chain of covering lemmas. We begin by identifying a sufficient condition for a point to lie outside the bad set at a given scale.

\begin{lem}\label{regularscalelem}
Let $M>0$, $0<\va<r<1$, and $x_0\in\R^3$. Assume that $u\in H^1(B_{10r}(x_0),\R^m)$ is a local minimizer of \eqref{GLfunctional}, satisfying
\[
\theta_{\va}(u;x_0,10r)+\|u\|_{L^{\ift}(B_{10r}(x_0))}\leq M.
\]
There exists $\delta\in(0,1)$, depending only on $f,M,\cN$, such that if $\va\in(0,\delta r)$ and
\begin{gather*}
\vt_{\va}(u;x_0,r)-\vt_{\va}\(u;x_0,\f{r}{2}\)<\delta,\\
\inf_{v\in\Ss^2}\(\f{1}{r}\int_{B_r(x_0)}|v\cdot\na u|^2\)<\delta,
\end{gather*}
then $r(u;x_0)\geq \f{r}{2}$. In particular, $x_0\notin\op{Bad}_{\op{I}}(u;\f{r}{2})$.
\end{lem}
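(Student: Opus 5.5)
We argue by contradiction and compactness, as in the proof of Lemma~\ref{BoundaryPartialRegularity1}, but now in the interior. After translating and rescaling we may take $x_0=0$ and $r=1$. Suppose the conclusion fails. Then there are local minimizers $u_i$ on $B_{10}$ and parameters $\va_i<\delta_i\to0^+$ such that:
\begin{itemize}
\item $\theta_{\va_i}(u_i;0,10)+\|u_i\|_{L^\ift}\leq M$;
\item $\vt_{\va_i}(u_i;0,1)-\vt_{\va_i}(u_i;0,\f12)<\delta_i$;
\item there exist $v_i\in\Ss^2$ with $\int_{B_1}|v_i\cdot\na u_i|^2<\delta_i$;
\item $\f14\|e_{\va_i}(u_i)\|_{L^\ift(B_{1/2})}>1$.
\end{itemize}
Passing to a subsequence, $v_i\to v\in\Ss^2$. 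Since $E_{\va_i}(u_i,B_{10})\leq 10M$ is uniformly bounded, Proposition~\ref{interiorcompactnesslem} applies. It gives $u_i\to u_0$ strongly in $H^1_{\loc}(B_{10})$, with $u_0$ an $\cN$-valued minimizing harmonic map whose singular set is locally discrete. It also gives uniform convergence away from that set.

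We pass both smallness conditions to the limit.
\begin{itemize}
\item From the weighted monotonicity identity \eqref{Monotone1}, with $\overline\phi\leq-1$ on the relevant region, the first condition gives $\int_{B_{1/2}\backslash B_{\rho}}|y\cdot\na u_i|^2|y|^{-2}\to0$ roughly (and $\va_i^{-2}\int f(u_i)\to0$ on $B_{9/20}$). Strong $H^1$ convergence then yields $y\cdot\na u_0=0$ on $B_{9/20}$, so $u_0$ is $0$-homogeneous there.
\item The second condition gives $v\cdot\na u_0=0$ on $B_1$, so $u_0$ is invariant in the direction $v$.
\end{itemize}
A $0$-homogeneous map invariant along $v$ is determined by its restriction to a great circle of $\Ss^2$ orthogonal to $v$. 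It is therefore a homogeneous map of degree zero from the plane $v^\perp$ into $\cN$. For it to be minimizing and in $H^1_{\loc}$ in $\R^3$, it must have finite energy on $B_1$. However, $\int_{B_1}|\na u_0|^2$ behaves like $\int_0^1\rho^{-2}\cdot\rho\,\ud\rho\cdot E(\text{circle})$, which diverges unless the circle map is constant. Alternatively, its singular set contains the whole line $\R v$, contradicting local discreteness of $\sing(u_0)$. Hence $u_0$ is constant on $B_{9/20}$.

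It remains to upgrade this to a pointwise bound and reach a contradiction.
\begin{itemize}
\item Strong convergence gives $\theta_{\va_i}(u_i;0,\f{9}{20})\to0$.
\item By the monotonicity formula \eqref{Mo11}, for every $y\in B_{1/4}$ we get $\theta_{\va_i}(u_i;y,\f{1}{5})\leq C\theta_{\va_i}(u_i;0,\f{9}{20})\to0$, and $\va_i/\tfrac1{10}\to0$.
\item For $i$ large, Proposition~\ref{InteriorPartialRegularity1} with radius $\f1{10}$ gives $e_{\va_i}(u_i)\leq100$ on $B_{1/10}(y)$ for each $y\in B_{1/4}$.
\end{itemize}
This only bounds $e_{\va_i}$ by $100$, not by $4$ on $B_{1/2}$. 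So the radii in the argument have to be adjusted, or the improved bound $\|e_{\va}\|_{L^\ift(B_1)}\leq C\delta$ from inside the proof of Proposition~\ref{InteriorPartialRegularity1} must be used. Fix the bookkeeping by working with $\theta\to0$ on a slightly larger ball: use the $y\cdot\na u$ smallness on a larger annulus via \eqref{phiproperty0}, so that $u_0$ is constant on $B_{9/10}$. Then $\theta_{\va_i}(u_i;y,\f{3}{10})\to0$ for all $y\in B_{1/2}$. The Harnack step in Proposition~\ref{InteriorPartialRegularity1} then gives $\sup_{B_{1/2}}e_{\va_i}(u_i)\to0$, which contradicts $\|e_{\va_i}(u_i)\|_{L^\ift(B_{1/2})}>4$.

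The main obstacle is the precise passage from the weighted monotonicity deficit to homogeneity of $u_0$ on a ball large enough to cover $B_{1/2}$ by $\varepsilon$-regularity. This requires careful use of the support properties of $\phi$ (namely $\phi'\leq-1$ on $[0,90]$) and the comparison \eqref{phiproperty1}.
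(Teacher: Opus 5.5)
Your proposal is correct and follows essentially the paper's route: contradiction and compactness via Proposition~\ref{interiorcompactnesslem}, then homogeneity from the pinching plus invariance along $v$ forcing the limit to be constant, then the $\va$-regularity of Proposition~\ref{InteriorPartialRegularity1}; the one difference is that you rule out a nonconstant limit by the logarithmic energy divergence or the discreteness of $\sing(u_0)$, where the paper invokes smoothness of two-dimensional minimizers. The radius bookkeeping you flag as the main obstacle is not a real issue, because in \eqref{Monotone1} with $\rho\in(\frac12,1)$ one has $-\overline{\phi}_{0,\rho}\geq1$ and $\phi_{0,\rho}\geq1$ on $B_4$, so the pinching already gives $\int_{B_4}|y\cdot\na u_i|^2+\va_i^{-2}\int_{B_4}f(u_i)\to0$, hence $\theta_{\va_i}(u_i;0,1)\to0$, and Proposition~\ref{InteriorPartialRegularity1} at radius $\frac12$ contradicts $r(u_i;0)<\frac12$ directly, exactly as in the paper.
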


\begin{proof}
By translation, assume $x_0=0$. Suppose, in contradiction, that the conclusion fails. Then there exist local minimizers $\{u_i\}\subset H^1(B_{10r_i},\R^m)$ of \eqref{GLfunctional} and $\{r_i\}\subset\R_+$ such that $\wt{\va}_i:=\f{\va_i}{r_i}<\delta_i\to 0^+$, and
\begin{gather}
\vt_{\va_i}(u_i;0,r_i)-\vt_{\va_i}\(u_i;0,\f{r_i}{2}\)<\delta_i,\label{assum1}\\
\inf_{v\in\Ss^2}\(\f{1}{r_i}\int_{B_{r_i}}|v\cdot\na u_i|^2\)<\delta_i,\label{assum2}
\end{gather}
but $r(u_i;0)<\f{r_i}{2}$ for all $i$. Let $\wt{u}_i:=u_i(r_i\cdot)$. By Proposition~\ref{interiorcompactnesslem}, passing to a subsequence,
\[
\wt{u}_i\to\wt{u}_0\quad\text{strongly in }H_{\loc}^1(B_{10},\R^m),
\]
where $\wt{u}_0\in H^1(B_{10},\cN)$ is a local minimizer of \eqref{Dirichlet}. It follows from \eqref{assum2} that $\wt{u}_0$ is invariant in some direction and degenerates to a two-dimensional minimizer, which is smooth by \cite[Theorem II]{SU82}. Using \eqref{phiproperty1}, \eqref{Monotone1}, and \eqref{assum1}, we deduce that $\wt{u}_0$ is homogeneous and
\[
\lim_{i\to+\ift}\f{1}{\wt{\va}_i^2}\int_{B_4}f(\wt{u}_i)=0.
\]
Consequently, $\wt{u}_0$ must be a constant and 
\[
\theta_{\va_i}(u_i;0,r_i)=\theta_{\wt{\va}_i}(\wt{u}_i;0,1)\to 0^+
\]
as $i\to+\ift$. This contradicts Proposition~\ref{InteriorPartialRegularity1}.
\end{proof}

Based on Lemma~\ref{regularscalelem}, the next result shows that a small directional energy condition in $B_r(x_0)$ gives a lower bound on the regular scale.

\begin{lem}\label{kplus1}
Let $\va\in(0,1)$, $M>0$, $r\in(0,1)$, and $x_0\in\R^3$. Assume that $u\in H^1(B_{2r}(x_0),\R^m)$ is a local minimizer of \eqref{GLfunctional}, satisfying
\[
\theta_{\va}(u;x_0,2r)+\|u\|_{L^{\ift}(B_{2r}(x_0))}\leq M.
\]
There exists $\delta\in(0,1)$, depending only on $f,M,\cN$, such that if $\va\in(0,\delta r)$ and
\be
\inf_{v\in\Ss^2}\(\f{1}{r}\int_{B_r(x_0)}|v\cdot\na u|^2\)<\delta,\label{detaVsmall}
\ee
then $r(u;y)\geq\f{\delta^{\f{1}{2}}r}{2}$ for any $y\in B_{\f{r}{2}}(x_0)$.
\end{lem}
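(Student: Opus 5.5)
The plan is to reduce Lemma~\ref{kplus1} to Lemma~\ref{regularscalelem}, applied at a smaller scale centred at each point $y\in B_{\f{r}{2}}(x_0)$. The smallness of the directional energy passes to small balls with a loss given by the scale ratio. The smallness of the monotonicity defect is obtained by pigeonholing over dyadic scales, since the density is bounded by $M$.

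Fix $y\in B_{\f{r}{2}}(x_0)$ and a direction $v\in\Ss^2$ almost realizing the infimum in \eqref{detaVsmall}. For any $s\leq\f{r}{2}$ we have $B_s(y)\subset B_r(x_0)$, so
\[
\f{1}{s}\int_{B_s(y)}|v\cdot\na u|^2\leq\f{r}{s}\,\delta .
\]
Let $\delta_0=\delta_0(f,M',\cN)$ be the constant of Lemma~\ref{regularscalelem}, where $M'$ bounds $\theta_\va(u;y,10s)+\|u\|_{L^\infty}$. By the interior monotonicity \eqref{Mo11}, $M'\leq CM$ whenever $10s\leq r$. If $s\geq\delta^{1/2}r$, then the directional energy on $B_s(y)$ is at most $\delta^{1/2}$, which is smaller than $\delta_0$ once $\delta\leq\delta_0^2$.

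Next I choose the scale. The function $\rho\mapsto\vt_\va(u;y,\rho)+C\rho$ is essentially non-decreasing by \eqref{Monotone1}; in the interior it is exactly non-decreasing. Its values are bounded by $CM$ for $\rho\leq\f{r}{100}$, using \eqref{phiproperty1}. Consider the dyadic radii $s_k=2^{-k}\f{r}{100}$ for $k=0,\dots,N$, with $2^{-N}\sim\delta^{1/2}$ up to a constant factor. Then
\[
\sum_{k}\Bigl(\vt_\va(u;y,s_k)-\vt_\va(u;y,s_{k+1})\Bigr)\leq CM,
\]
so some $k$ satisfies
\[
\vt_\va(u;y,s_k)-\vt_\va\Bigl(u;y,\f{s_k}{2}\Bigr)\leq \f{CM}{N}\sim\f{CM}{|\log\delta|}.
\]
This quantity is below $\delta_0$ provided $\delta$ is small. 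Here the hypothesis $\va<\delta r$ is convenient, since it makes the logarithmic number of scales independent of $\va$. All $s_k\geq c\,\delta^{1/2}r$, and $\va<\delta r\leq\delta_0 s_k$ after shrinking $\delta$.

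Lemma~\ref{regularscalelem} at scale $s_k$ then gives $r(u;y)\geq\f{s_k}{2}\geq c\,\delta^{1/2}r$. To get exactly $\f{\delta^{1/2}r}{2}$, I will tune the constants: take $N$ so that the smallest scale is at least $\delta^{1/2}r$, and compensate the factor $\f{1}{100}$ by requiring $\delta$ so small that $N$ is still large.

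The main obstacle is the bookkeeping between two competing requirements. The scale range must contain enough dyadic steps to force a small defect, since the number of steps grows only like $|\log\delta|$. At the same time, every admissible scale must keep $\f{r}{s}\delta$ below $\delta_0$. Taking $s\geq\delta^{1/2}r$ reconciles both, because $\f{r}{s}\delta\leq\delta^{1/2}$ while the number of steps is about $\f{1}{2}|\log_2\delta|$. A secondary point is verifying the density hypothesis of Lemma~\ref{regularscalelem}, namely $\theta_\va(u;y,10s_k)\leq CM$, via monotonicity from $B_{2r}(x_0)$.
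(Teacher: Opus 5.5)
Your proposal is correct and matches the paper's proof. Both use a dyadic pigeonhole on the monotone quantity $\vt_{\va}(u;y,\cdot)$ over scales in $[\delta^{1/2}r, r/40]$ to get a pinching defect of order $C/|\log\delta|$, transfer the directional smallness to $B_{r_y}(y)$ with loss $r/r_y\leq\delta^{-1/2}$, and then apply Lemma~\ref{regularscalelem} to obtain $r(u;y)\geq r_y/2\geq\delta^{1/2}r/2$. Your extra checks are exactly the details the paper leaves implicit: the density bound at $y$ via monotonicity from $B_{2r}(x_0)$, and $\va<\delta r\leq\delta_0 r_y$.
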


\begin{proof}
It follows from \eqref{Monotone1} and a dyadic decomposition of the radius $r$ that, for some $r_y\in[\delta^{\f{1}{2}}r,\f{r}{40}]$ with $\delta\in(0,\f{1}{40^2})$,
\be
\vt_{\va}(u;y,r_y)-\vt_{\va}\(u;y,\f{r_y}{2}\)<\f{C}{|\log\delta|}.\label{pinchsmall}
\ee
From \eqref{detaVsmall}, we estimate
\[
\inf_{v\in\Ss^2}\(\f{1}{r_y}\int_{B_{r_y}(y)}|v\cdot\na u|^2\)
\leq
\inf_{v\in\Ss^2}\(\f{1}{\delta^{\f{1}{2}}r}\int_{B_r(x_0)}|v\cdot\na u|^2\)
<\delta^{\f{1}{2}}.
\]
Choosing $\delta=\delta(f,M,\cN)\in(0,1)$ sufficiently small and applying Lemma~\ref{regularscalelem} together with \eqref{pinchsmall}, we conclude that if $\va\in(0,\delta r)$, then $r(u;y)\geq \f{r_y}{2}\geq\f{\delta^{\f{1}{2}}r}{2}$.
\end{proof}

The following lemma localizes the type I bad set to a small neighborhood of a point where the energy-pinching condition holds.

\begin{lem}\label{Fprop}
Let $\beta\in(0,\f{1}{2})$, $0<\va<r<1$, $M>0$, and $x_0\in\R^3$. Assume that $u\in H^1(B_{20r}(x_0),\R^m)$ is a local minimizer of \eqref{GLfunctional}, satisfying
\[
\theta_{\va}(u;x_0,20r)+\|u\|_{L^{\ift}(B_{20r}(x_0))}\leq M.
\]
There exists $\delta\in(0,1)$, depending only on $f,M,\cN$, and $\beta$, such that if $\va\in(0,\delta r)$ and
\be
\vt_{\va}(u;y,r)-\vt_{\va}\(u;y,\f{r}{2}\)<\delta\label{pincheta}
\ee
for some $y\in B_{2r}(x_0)$, then $ \op{Bad}_{\op{I}}(u;\delta r)\cap B_r(x_0)\subset B_{2\beta r}(y) $.
\end{lem}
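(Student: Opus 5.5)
Proof strategy: argue by contradiction and compactness, in the same way as Lemma~\ref{regularscalelem}. Then finish with Lemma~\ref{kplus1}, using the fact that a $0$-homogeneous minimizing harmonic map whose singular set contains a point other than its vertex must be translation invariant along the line through that point.

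Suppose the statement fails for a fixed $\beta$. Translate and rescale so that $x_0=0$ and $r_i=1$. We obtain local minimizers $u_i$ on $B_{20}$ with $\theta_{\va_i}(u_i;0,20)+\|u_i\|_{L^\infty}\le M$, parameters $\va_i<\delta_i\to0^+$, and points $y_i\in B_2$ satisfying the pinching condition \eqref{pincheta} with $\delta_i$. We also obtain points $z_i\in B_1\setminus B_{2\beta}(y_i)$ with $r(u_i;z_i)<\delta_i$. Pass to subsequences with $y_i\to y$ and $z_i\to z$, where $|z-y|\ge2\beta$.

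\textbf{Limit map.} Apply Proposition~\ref{interiorcompactnesslem}, with the energy bound on $B_{18}$ supplied by the density bound. This gives strong $H^1_{\loc}$ convergence $u_i\to u_0$, where $u_0$ is an $\cN$-valued minimizing harmonic map. Passing \eqref{Monotone1} to the limit, with \eqref{phiproperty0} controlling the weight, the pinching forces
\[
\int_{B_{9/10}(y)\setminus B_{1/2\cdot 9/10}(y)}|(x-y)\cdot\na u_0|^2=0
\]
together with vanishing of the potential term. Combined with the monotonicity of $\frac1\rho\int_{B_\rho(y)}|\na u_0|^2$ and unique continuation for the radial derivative, this should make $u_0$ $0$-homogeneous about $y$ on $B_{1/2}(y)$. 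I expect the step from pinching on an annulus to homogeneity on a full ball to be the main obstacle. My first attempt is this: the limiting monotonicity identity for minimizing harmonic maps shows that $\frac1\rho\int_{B_\rho(y)}|\na u_0|^2$ is constant on $[\tfrac{9}{20},\tfrac{9}{10}]$, hence $u_0$ is radial there, and then the homogeneous extension inward has the same energy, so minimality propagates radial invariance to $B_{9/10}(y)$. If that fails, I would shrink the pinching radii and restate the lemma with a comparable annulus.

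\textbf{Locating the bad point.} Since $r(u_i;z_i)<\delta_i\to0$, Proposition~\ref{InteriorPartialRegularity1} forces $\liminf_i\theta_{\va_i}(u_i;z,\rho)\ge c>0$ for every small $\rho$. Hence either $z\in\sing(u_0)$, or energy concentrates at $z$ at the logarithmic scale. The concentrating case is also excluded at order one: the limit is taken in $H^1_{\loc}$ without loss, and $\theta$ small passes to the limit. So $z\in\sing(u_0)$. The minimizing map $u_0$ is homogeneous about $y$, and $z\ne y$. Since $u_0$ is constant along rays from $y$, the whole open segment from $y$ through $z$, inside $B_{9/10}(y)$, lies in $\sing(u_0)$. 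This contradicts the discreteness of singularities of minimizers in Proposition~\ref{interiorcompactnesslem}, which rests on \cite{SU82}, unless we are in a degenerate case. To make the argument robust I would instead use Lemma~\ref{kplus1}. Homogeneity of $u_0$ about $y$, together with its smoothness near $z$ in the tangential directions, gives
\[
\frac1\rho\int_{B_\rho(z)}|v\cdot\na u_0|^2=0\quad\text{for }v=\frac{z-y}{|z-y|}\text{ and some }\rho\sim\beta.
\]
By strong convergence, the same quantity for $u_i$ is below $\delta(f,M,\cN)$ for large $i$. Lemma~\ref{kplus1} at $z$ with radius $\rho$ then gives $r(u_i;z_i)\ge\delta^{1/2}\rho/2$, which contradicts $r(u_i;z_i)<\delta_i$. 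The resulting constant $\delta$ depends only on $f$, $M$, $\cN$ and $\beta$, through $\rho\sim\beta$.
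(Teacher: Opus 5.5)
\textbf{There is a gap in the homogeneity step, caused by misreading the pinching hypothesis.}

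The density $\vartheta_\varepsilon$ is the smoothly weighted one. In \eqref{Monotone1} the radial term carries the weight $-\overline{\phi}_{y,\rho}$, and this weight is $\geq 1$ on the \emph{whole} ball $B_{\sqrt{90}\rho}(y)$, not on an annulus. Integrating over $\rho\in(\frac r2,r)$, \eqref{pincheta} gives directly
\[
\int_{B_{4r}(y)}|(\zeta-y)\cdot\nabla u|^2\,\mathrm{d}\zeta+\frac{r^2}{\varepsilon^2}\int_{B_{4r}(y)}f(u)\leq C\delta r^3 .
\]
Moreover $B_{4r}(y)\supset B_r(x_0)$, since $|y-x_0|<2r$.

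With your annulus reading (normalization $r=1$) you only get radial invariance on $B_{9/10}(y)$. Even that rests on an inward propagation that minimality does not give: equal energy only shows that the homogeneous extension is \emph{another} minimizer, not that it equals $u_0$; you would need unique continuation. More seriously, $|z-y|$ can be anywhere in $[2\beta,3)$. So $z$ need not lie in $B_{9/10}(y)$, and for such bad points your cone-of-singularities contradiction is unavailable. Restating the lemma with a different annulus is not an option.

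Once you use the full-ball bound, your first route does close:
\begin{enumerate}
\item $u_0$ is $0$-homogeneous about $y$ on $B_4(y)$.
\item The potential term vanishes there. You need this, because the compactness proposition as stated only gives strong $H^1_{\mathrm{loc}}$ convergence, which does not control $\varepsilon_i^{-2}\int f(u_i)$. Without it you cannot pass $\theta_{\varepsilon_i}(u_i;z,\rho)\geq c$ to the Dirichlet density of $u_0$.
\item Hence $z\in\operatorname{sing}(u_0)$.
\item The ray from $y$ through $z$ then lies in $\operatorname{sing}(u_0)$, contradicting discreteness.
\end{enumerate}

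\textbf{Your ``robust'' alternative is incorrect as written.} Homogeneity about $y$ does not make $v\cdot\nabla u_0$ vanish on $B_\rho(z)$ for the fixed direction $v=\frac{z-y}{|z-y|}$. One only has $v\cdot\nabla u_0(x)=-\frac{(x-z)\cdot\nabla u_0(x)}{|z-y|}$, which is $O(\rho/\beta)\,|\nabla u_0|$. You also invoke smoothness near $z$ right after concluding $z\in\operatorname{sing}(u_0)$.

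Made quantitative, however, this is exactly the paper's proof, which needs no compactness in this lemma (compactness is confined to Lemma~\ref{regularscalelem}). Work on $B_{\sigma r}(z)\subset B_{4r}(y)$ with $\sigma=\delta^{1/2}$. Write $z-y=(\zeta-y)-(\zeta-z)$, and combine:
\begin{itemize}
\item $|z-y|\geq2\beta r$;
\item the full-ball bound above;
\item the monotonicity bound $\int_{B_{\sigma r}(z)}|\nabla u|^2\leq C\sigma r$.
\end{itemize}
This gives
\[
\frac{1}{\sigma r}\int_{B_{\sigma r}(z)}\Bigl|\frac{z-y}{|z-y|}\cdot\nabla u\Bigr|^2\leq C(\beta,M)\,\delta^{1/2},
\]
and Lemma~\ref{kplus1} then yields $r(u;z)\geq\delta' r$ with $\delta'=\delta'(f,M,\mathcal{N},\beta)$.

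Once repaired, your first route is a valid soft alternative, based on compactness plus Schoen--Uhlenbeck discreteness. The paper's argument is shorter and gives the estimate directly.
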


\begin{proof}
Fix $z\in B_r(x_0)\backslash B_{2\beta r}(y)$. There exists $\sg=\sg(\beta)>0$ such that
\be
B_{\sg r}(z)\subset B_{4r}(y)\cap(B_r(x_0)\backslash B_{2\beta r}(y)).\label{Bsgrz}
\ee
It follows from \eqref{Monotone1} and \eqref{pincheta} that
\be
\int_{B_{4r}(y)}|(\zeta-y)\cdot\na u|^2\ud\zeta\leq C\delta r^3.\label{leqCr3}
\ee
Since $|z-y|\geq 2\beta r$, we compute
\begin{align*}
\int_{B_{\sg r}(z)}\left|\f{z-y}{|z-y|}\cdot\na u\right|^2
&\leq \f{C}{r^2}\(\int_{B_{\sg r}(z)}|(\zeta-y)\cdot\na u|^2\ud\zeta
+\int_{B_{\sg r}(z)}|(\zeta-z)\cdot\na u|^2\ud\zeta\)\\
&\stackrel{\eqref{leqCr3}}{\leq} C\delta r+2\sg^2r\(\f{1}{r}\int_{B_{\sg r}(z)}|\na u|^2\)\\
&\leq C(\beta,M)(\delta+2\sg^2)r,
\end{align*}
where the last inequality uses \eqref{Monotone1} again. After decreasing $\delta$ if necessary, we may take $\sg=\delta^{\f{1}{2}}$ in \eqref{Bsgrz}. Then
\[
\inf_{v\in\Ss^2}\(\f{1}{\sg r}\int_{B_{\sg r}(z)}|v\cdot\na u|^2\)
\leq C(\beta,M)\delta^{\f{1}{2}}.
\]
Choosing $\delta=\delta(f,M,\cN,\beta)\in(0,1)$ sufficiently small, Lemma~\ref{kplus1} yields $\delta'=\delta'(f,M,\cN,\beta)\in(0,1)$ such that $r(u;z)\geq\delta'r$ whenever $\va\in(0,\delta r)$. The replacement of $\delta$ with $\min\{\delta,\delta'\}$ completes the proof.
\end{proof}

We now combine the preceding estimates to confine the type I bad set to a single ball, subject to a quantitative energy-drop condition on its complement.

\begin{lem}\label{cover1}
Let $M>0$, $0<\va<r<R\leq 1$, and $x_0\in\R^3$. Assume that $u\in H^1(B_{40R}(x_0),\R^m)$ is a local minimizer of \eqref{GLfunctional}, satisfying
\[
\theta_{\va}(u;x_0,40R)+\|u\|_{L^{\ift}(B_{40R}(x_0))}\leq M.
\]
There exists $\delta\in(0,1)$, depending only on $f,M,\cN$, such that if $\va\in(0,\delta r)$, the following holds: there is a ball $B_{2r_x}(x)\subset B_{2R}(x_0)$ with $r_x\geq r$ such that
\[
\op{Bad}_{\op{I}}(u;\delta r)\cap B_R(x_0)\subset B_{r_x}(x).
\]
Moreover, either $r_x=r$, or
\[
\sup_{y\in B_{2r_x}(x)}\vt_{\va}\(u;y,\f{r_x}{20}\)
\leq
\sup_{y\in B_{2R}(x_0)}\vt_{\va}(u;y,R)-\delta.
\]
\end{lem}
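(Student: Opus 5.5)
The proof will be an iterative (stopping-time) argument built on Lemma~\ref{Fprop}, with the energy-drop alternative as the stopping criterion. Fix $\beta:=\f{1}{40}$ and let $\delta_1=\delta_1(f,M,\cN)$ be the constant of Lemma~\ref{Fprop} for this $\beta$. The final $\delta$ will be taken much smaller than $\delta_1$, so that $\va<\delta r$ allows Lemma~\ref{Fprop} at every scale $s\geq r$ that appears. The density hypotheses needed at the intermediate scales, namely $\theta_{\va}(u;y,20s)\leq CM$ for $y\in B_{2R}(x_0)$ and $s\leq R$, follow from the monotonicity formula \eqref{Mo11} and $\theta_{\va}(u;x_0,40R)\leq M$.

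\textbf{Step 1: the first alternative at scale $R$.} Set $\Lambda_0:=\sup_{y\in B_{2R}(x_0)}\vt_{\va}(u;y,R)$. There are two cases.

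\emph{Case (a).} Every $y\in B_{2R}(x_0)$ satisfies $\vt_{\va}(u;y,R)-\vt_{\va}(u;y,\f{R}{2})\geq\delta_1$. Then for $x$ near $x_0$ with $r_x$ comparable to $R$ (say $r_x=R$ and $x=x_0$, after adjusting the constants in the containment $B_{2r_x}(x)\subset B_{2R}(x_0)$ by shrinking to $\f{R}{20}$-type scales), the monotonicity formula \eqref{Monotone1} gives
\[
\vt_{\va}\(u;y,\tfrac{r_x}{20}\)\leq\vt_{\va}\(u;y,\tfrac{R}{2}\)\leq\Lambda_0-\delta_1 .
\]
This yields the second alternative with $\delta\leq\delta_1$, and trivially $\op{Bad}_{\op{I}}\cap B_R(x_0)\subset B_R(x_0)$.

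\emph{Case (b).} Some $y$ has pinching $<\delta_1$. Then Lemma~\ref{Fprop} confines $\op{Bad}_{\op{I}}(u;\delta_1R)\cap B_R(x_0)$ to $B_{2\beta R}(y)$.

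\textbf{Step 2: iteration down the scales.} In Case (b) we pass to the ball $B_{R_1}(x_1)$ with $x_1=y$ and $R_1=2\beta R=\f{R}{20}$, and repeat Step 1 on $B_{R_1}(x_1)$. This generates nested balls $B_{R_k}(x_k)$ with $R_k=20^{-k}R$, each containing $\op{Bad}_{\op{I}}(u;\delta_1R_{k-1})\cap B_R(x_0)$. The centers satisfy $|x_{k+1}-x_k|<2R_k$, so summing the geometric series keeps all $B_{2R_k}(x_k)$ inside $B_{2R}(x_0)$ after a harmless constant adjustment.

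\textbf{Step 3: stopping rule.} Stop at the first $k$ where Case (a) occurs, and set $r_x:=R_k$, $x:=x_k$. If no such $k$ occurs before $R_k$ drops to the order of $r$, stop there and take $r_x=r$, which is the first alternative. A final choice $\delta\ll\delta_1 20^{-1}$ converts the bad sets at scale $\delta_1R_k\geq\delta r$ into $\op{Bad}_{\op{I}}(u;\delta r)$, since these sets are monotone in the scale.

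\textbf{Main obstacle.} The delicate point is the second alternative in Case (a). It compares the supremum over $B_{2r_x}(x)$ at scale $\f{r_x}{20}$ with the supremum over the \emph{original} ball $B_{2R}(x_0)$ at scale $R$, not with the previous step's supremum. This requires the pinching sets at consecutive scales to be tracked consistently.

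I would handle it as follows. Because Case (b) held at every previous step, monotonicity combined with the quasi-monotone comparison \eqref{phiproperty1} bounds $\sup_{B_{2R_k}(x_k)}\vt_{\va}(u;\cdot,R_k)$ by $\Lambda_0$, up to a constant factor that must be avoided. The fix is to apply \eqref{Monotone1} directly, which makes $\vt_{\va}(u;y,\cdot)$ nondecreasing for each $y$. Since $B_{2R_k}(x_k)\subset B_{2R}(x_0)$ and $R_k\leq R$, this gives $\vt_{\va}(u;y,R_k)\leq\Lambda_0$ with no loss. At the stopping step, every $y$ has pinching $\geq\delta_1$ at scale $R_k$, which produces the drop $\delta_1$.
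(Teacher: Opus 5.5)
Your argument is essentially the paper's: iterate Lemma~\ref{Fprop} down the scales, and stop once the density at the next scale sits a fixed amount below the \emph{initial} supremum $\sup_{B_{2R}(x_0)}\vt_{\va}(u;\cdot,R)$. Your stopping test is a harmless variant of the paper's. You stop when every point of $B_{2R_k}(x_k)$ has pinching at least $\delta_1$, and the monotonicity bound $\vt_{\va}(u;y,R_k)\le\Lambda_0$ from \eqref{Monotone1} then gives the drop. The paper instead asks whether some $y\in B_{2r_j}(x_j)$ has $\vt_{\va}(u;y,r_j/20)>E-\delta$; if such a point exists, it supplies the pinching needed for Lemma~\ref{Fprop}.

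One step has a faulty justification: the containment $B_{2R_k}(x_k)\subset B_{2R}(x_0)$, which that monotonicity bound needs. Summing $|x_{k+1}-x_k|<2R_k$ does not give it, since $|x_1-x_0|$ may already be close to $2R$. The fix is to first dispose of the trivial case $\op{Bad}_{\op{I}}(u;\delta r)\cap B_R(x_0)=\emptyset$ by taking $x=x_0$ and $r_x=r$. Otherwise this nonempty set lies in every $B_{R_k}(x_k)$, so $|x_k-x_0|<R+R_k$, and hence $B_{2R_k}(x_k)\subset B_{R+3R_k}(x_0)\subset B_{2R}(x_0)$ for $k\ge1$. The paper's proof is equally silent on this point.
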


\begin{proof}
Up to a translation, assume $x_0=0$. For $x\in B_R$ and $0<\rho<R$, define
\[
F_{\delta}(u;x,\rho):=\left\{y\in B_{2\rho}(x):\vt_{\va}\(u;y,\f{\rho}{20}\)>E-\delta\right\},
\]
where $E:=\sup_{y\in B_{2R}}\vt_{\va}(u;y,R)$. Choose $j_0\in\N$ such that $2^{-j_0}R\leq r<2^{-j_0+1}R$, and set $r_j:=\f{R}{2^j}$ for $j\in\Z\cap[1,j_0-1]$. If $F_{\delta}(u;0,R)=\emptyset$, then $B_R$ is the desired ball. We therefore assume $F_{\delta}(u;0,R)\neq\emptyset$.

For $j=1$, choose $x_1\in F_{\delta}(u;0,R)$. Since
\[
\vt_{\va}(u;x_1,R)-\vt_{\va}\(u;x_1,\f{R}{20}\)<\delta,
\]
Lemma~\ref{Fprop}, applied with $\beta=\f{1}{20}$, gives
\[
\op{Bad}_{\op{I}}(u;\delta r)\cap B_R\subset B_{\f{R}{10}}(x_1).
\]
If $F_{\delta}(u;x_1,r_1)=\emptyset$, the ball $B_{r_1}(x_1)$ is the desired one. Otherwise, we apply Lemma~\ref{Fprop} again to obtain $x_2$ with
\[
\op{Bad}_{\op{I}}(u;\delta r)\cap B_R\subset B_{\f{R}{20}}(x_2).
\]
Repeating this procedure, we either stop at some step $j\in\Z\cap[1,j_0-1]$, or arrive at $B_{r_{j_0-1}}(x_{j_0-1})$ satisfying
\[
\op{Bad}_{\op{I}}(u;\delta r)\cap B_R
\subset B_{\f{R}{10\cdot 2^{j_0-2}}}(x_{j_0-1}).
\]
In the latter case, the ball $B_r(x_{j_0-1})$ satisfies all the properties required.
\end{proof}

\begin{rem}\label{remcover}
Under the assumptions of Lemma~\ref{cover1}, a further covering argument yields a collection of balls $ \{B_{r_y}(y)\}_{y\in\cD} $ with the following properties.
\begin{itemize}
\item $ \inf_{y\in\cD}r_y\geq r $.
\item $ \op{Bad}_{\op{I}}(u;\delta r)\cap B_R(x_0)\subset\cup_{y\in\cD}B_{r_y}(y) $.
\item $ \#\cD\leq C $, where $C>0$ is an absolute constant.
\item For any $ y\in\cD $, either $ r_y=r $ or
\be
\sup_{z\in B_{2r_y}(y)}\vt_{\va}(u;z,r_y)\leq\sup_{z\in B_{2R}(x_0)}\vt_{\va}(u;z,R)-\delta.\label{energydrop}
\ee
\end{itemize}
\end{rem}

Iterating the energy-drop condition in Remark~\ref{remcover}, we obtain a finite covering of the type I bad set by balls of the prescribed scale.

\begin{lem}\label{coverpoint}
Let $ M,\va,r,R,x_0 $, and $ u $ be as in Lemma~\ref{cover1}. There exist $ \delta\in(0,1) $ and $ n\in\N $, depending only on $ f,M,\cN $, such that if $ \va\in(0,\delta r) $, then there are points $ \{x_i\}_{i=1}^n\subset B_R(x_0) $ satisfying
\[
\op{Bad}_{\op{I}}(u;\delta r)\cap B_R(x_0)\subset\bigcup_{i=1}^nB_r(x_i).
\]
In particular,
\[
\cL^3(B_r(\op{Bad}_{\op{I}}(u;\delta r)\cap B_R(x_0)))\leq Cr^3,
\]
where $ C>0 $ depends only on $ f,M $, and $ \cN $.
\end{lem}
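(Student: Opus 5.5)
The plan is to iterate Lemma~\ref{cover1} and Remark~\ref{remcover}, using the energy drop \eqref{energydrop} as a potential function that can decrease only boundedly many times. Set $E_0:=\sup_{z\in B_{2R}(x_0)}\vt_{\va}(u;z,R)$. By \eqref{phiproperty1} and the interior monotonicity \eqref{Mo11}, applied to centers $z\in B_{2R}(x_0)$ with $B_{20R}(z)\subset B_{40R}(x_0)$, we have $E_0\leq C(M)$. Also $\vt_{\va}\geq0$ always. Hence any chain of successive drops of size $\delta$ has length at most $N:=\lceil C(M)/\delta\rceil$.

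\textbf{The iteration.} Apply Remark~\ref{remcover} at scale $R$ to get at most $C$ balls $B_{r_y}(y)$, $y\in\cD$, covering $\op{Bad}_{\op{I}}(u;\delta r)\cap B_R(x_0)$. For each ball with $r_y>r$, apply Remark~\ref{remcover} again on $B_{r_y}(y)$, with $R$ replaced by $r_y$. The hypotheses hold there: $B_{40r_y}(y)\subset B_{40R}(x_0)$ can be arranged after harmlessly shrinking constants, and the density bound transfers by monotonicity. The relevant sup at the new scale is $\sup_{z\in B_{2r_y}(y)}\vt_{\va}(u;z,r_y)$, which by \eqref{energydrop} is $\leq E_0-\delta$. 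The children produced then satisfy either radius $r$ or sup $\leq E_0-2\delta$, and so on. Every branch therefore terminates at radius $r$ after at most $N$ generations. Since each generation multiplies the number of balls by at most the absolute constant $C$, the total number of terminal balls is at most $n:=C^{N}$, which depends only on $f$, $M$, $\cN$ through $\delta$. Their centers lie in $B_{2R}(x_0)$. Moving each center to a point of the bad set inside it, and doubling the radius if needed (absorbed by relabeling $r$ and $\delta$), gives centers in $B_R(x_0)$.

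\textbf{Main obstacle: comparability of densities across scales.} In the $k$-th generation the density sup must be measured at the scale $r_y$ of the current ball. Lemma~\ref{cover1} is stated with $\vt_{\va}(u;\cdot,r_x/20)$, while \eqref{energydrop} uses $r_y$. I would reconcile the two by using monotonicity \eqref{Monotone1}, which gives $\vt_{\va}(u;z,r_y/20)\le \vt_{\va}(u;z,r_y)$ only with the wrong direction. So instead I would run Lemma~\ref{cover1} with the pinching sets defined at radius $\rho$ and compare against $E$ at radius $R$. Monotonicity then implies $\vt_{\va}(u;z,\rho)\leq\vt_{\va}(u;z,R)$ for smaller $\rho$, so the sup is nonincreasing under scale reduction, and each application of Remark~\ref{remcover} produces a genuine drop of $\delta$ relative to the parent's sup. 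One also has to check that $\va<\delta r\le\delta r_y$ persists, which is automatic since radii never go below $r$.

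\textbf{Volume bound.} $B_r$ of the bad set lies in $\bigcup_{i}B_{2r}(x_i)$. Hence its Lebesgue measure is at most $n\cdot\frac{32\pi}{3}r^3\leq Cr^3$.
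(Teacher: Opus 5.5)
Your proposal is correct and is the same argument as the paper's: iterate Remark~\ref{remcover}, use the $\delta$-drop \eqref{energydrop} together with $\vt_{\va}\geq0$ and the a priori bound on the initial supremum to bound the number of generations, and hence the number of balls, then get the volume bound from balls of radius $2r$. Your ``main obstacle'' needs no change to Lemma~\ref{cover1}. The covering step inside Remark~\ref{remcover} already converts the drop at radius $r_x/20$ into the drop \eqref{energydrop} at the radius $r_y$, which is exactly the new $R$ at the next step, so iterating the remark as stated involves no mismatch of scales.
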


\begin{proof}
We apply Remark~\ref{remcover} to $B_R(x_0)$. For each ball in the covering whose radius is larger than $r$, we apply Remark~\ref{remcover} again on that ball. Each such application produces an energy drop of $\delta$ as in \eqref{energydrop}. Since $\vt_{\va}$ is nonnegative and the initial supremum is bounded under the assumptions of Lemma~\ref{cover1}, the iteration stops after finitely many steps. The number of generations, and hence the total number of final balls, is bounded by a constant depending only on $f,M$, and $\cN$. This proves the covering statement. The volume estimate follows by covering the $r$-neighborhood of the bad set by balls of radius $2r$.
\end{proof}

We turn next to the type II bad set and establish an analogous covering with an explicit bound on the number of balls.

\begin{lem}\label{coveringlemma2}
Let $ M>0 $ and $ r\in(0,1) $. Assume that $ u\in H^1(B_{40},\R^m) $ is a local minimizer of \eqref{GLfunctional}, satisfying
\be
E_{\va}(u,B_{40})\leq M(|\log\va|+1),\quad\|u\|_{L^{\ift}(B_{40})}\leq M.\label{thetava40}
\ee
For any $ \ga\in(0,1) $, there exist $ \delta\in(0,1) $ and $ \Lda>0 $, depending only on $ f,\ga,M,\cN $, such that when $ \va\in(0,\delta) $ and $ r\in(\va^{\ga},1) $, there exist points $ \{x_i\}_{i=1}^n\subset B_1 $ with $ n\leq\Lda r^{-1} $ satisfying
\[
B_r(\op{Bad}_{\op{II}}(u;r,\Lda)\cap B_1)\subset\bigcup_{i=1}^nB_r(x_i).
\]
\end{lem}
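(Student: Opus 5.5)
The plan is a Vitali-type covering combined with a counting argument based on the logarithmic energy bound and the density lower bound coming from monotonicity. Fix $y\in\op{Bad}_{\op{II}}(u;r,\Lda)\cap B_1$. By definition, $r^{\Lda}(u;y)<r$, so there is a radius $s_y\in(r^{\Lda}(u;y),r)$ with $\theta_{\va}(u;y,s_y)>\Lda$; in particular $s_y\leq r$. The first step is to convert this into a lower bound at the fixed scale $r$. Here I would use the interior monotonicity formula \eqref{Mo11}: $\theta_{\va}(u;y,\cdot)$ is non-decreasing on $(0,39)$, so $\theta_{\va}(u;y,r)\geq\theta_{\va}(u;y,s_y)>\Lda$. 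Hence every bad point $y$ satisfies
\[
\int_{B_r(y)}e_{\va}(u)\ud x>\Lda r.
\]

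Next, cover the bad set. By the Vitali covering lemma, applied to the family $\{B_{r}(y)\}$ with $y\in\op{Bad}_{\op{II}}(u;r,\Lda)\cap B_1$, I select a maximal disjoint subfamily $\{B_r(x_i)\}_{i=1}^n$ with $x_i$ bad points. Maximality gives
\[
\op{Bad}_{\op{II}}\cap B_1\subset\bigcup_i B_{2r}(x_i),
\qquad
B_r(\op{Bad}_{\op{II}}\cap B_1)\subset\bigcup_i B_{3r}(x_i).
\]
Each ball $B_{3r}(x_i)$ is covered by $C$ balls of radius $r$ centered in $B_1$, with $C$ an absolute constant; centers can be taken in $B_1$ after a harmless adjustment, or by allowing centers in $B_2$. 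This replaces $n$ by $Cn$, which is absorbed into $\Lda$.

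The counting step uses disjointness:
\[
n\Lda r<\sum_i\int_{B_r(x_i)}e_{\va}(u)\leq E_{\va}(u,B_{40})\leq M(|\log\va|+1).
\]
This alone gives $n\leq M(|\log\va|+1)/(\Lda r)$, which is off by the factor $|\log\va|$. This is the main obstacle, and it is where the hypothesis $r>\va^{\ga}$ and a clearing-out argument should enter. I expect the fix to go as follows. Set $\Lda_0:=\delta_0\ga^{-1}(M+1)$, with $\delta_0$ to be chosen. On any ball with $\theta_{\va}(u;x,r)\leq\delta_0\log\f{r}{\va}$, Proposition~\ref{InteriorClearingout} (valid since $\va<\delta r$ for $\va$ small, because $r>\va^{\ga}$) yields $\theta_{\va}(u;x,\f r2)\leq C_*$. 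So choose $\Lda>C_*$ large, and argue at the half scale, i.e.\ define the bad-point test at $r/2$ and compare, which only changes constants. With this choice, a point bad at level $\Lda$ must have $\theta_{\va}(u;x_i,r)>\delta_0\log\f{r}{\va}\geq\delta_0(1-\ga)|\log\va|$. Summing over disjoint balls then gives
\[
n\,\delta_0(1-\ga)|\log\va|\,r\leq M(|\log\va|+1),
\qquad\text{so}\qquad
n\leq Cr^{-1},
\]
with $C=C(f,\ga,M,\cN)$. Enlarging $\Lda$ to dominate this constant finishes the proof.

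The delicate point is the interplay between scales. The bad radius lies below $r$ while clearing-out is applied at $r$, so monotonicity is needed in the direction going from small radii to large ones. Clearing-out requires smallness at the larger radius, and its conclusion at $r/2$ contradicts badness only once $\Lda$ exceeds the clearing-out constant $C_*$ (rescaled). I would verify carefully that $\theta_{\va}(u;x,r/2)\leq C_*$ combined with monotonicity forces $r^{\Lda}(u;x)\geq r/2$. This requires replacing $r$ by $2r$ in the setup, which costs only constants.
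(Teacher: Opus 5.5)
Your proposal is correct and is essentially the paper's proof. Proposition~\ref{InteriorClearingout} applied at radius $2r$ (admissible because $r>\va^{\ga}$ and $\va$ is small), with $\Lda$ above the clearing-out constant, shows that every $y\in\op{Bad}_{\op{II}}(u;r,\Lda)\cap B_1$ satisfies $\theta_{\va}(u;y,2r)\geq c(1-\ga)|\log\va|$. A Vitali covering combined with $E_{\va}(u,B_{40})\leq M(|\log\va|+1)$ then gives $n\leq Cr^{-1}$, and $\Lda$ is taken to be the larger of the two constants.

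One caveat: your first formulation, with clearing-out at radius $r$ and conclusion at $r/2$, does not contradict badness at scale $r$. So the switch to radius $2r$ that you flag at the end is genuinely needed, and the disjoint Vitali balls should then have radius $2r$ (or you use the bounded overlap of the balls $B_{2r}(x_i)$).
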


\begin{proof}
If $ r\in[\f{1}{10},1) $, the result is trivial. We therefore assume $ r\in(\va^{\ga},\f{1}{10}) $. By Proposition~\ref{InteriorClearingout}, there exists $ \eta=\eta(f,M,\cN)\in(0,1) $ such that for any $ y\in B_2 $, if $ \va\in(0,\eta r) $ and
\[
\theta_{\va}(u;y,2r)\leq\eta\log\f{r}{\va},
\]
then $ \theta_{\va}(u;y,r)\leq \f{1}{2}C_0(f,M,\cN) $. Choose $ \delta=\delta(\eta,\ga)\in(0,1) $ so that, when $ \va\in(0,\delta) $, we have $ \va\in(0,\eta\va^{\ga})\subset(0,\eta r) $ and
\[
\eta\log\f{r}{\va}\geq\eta\log\f{\va^{\ga}}{\va}\geq\delta|\log\va|.
\]
For any $ y\in E:=\op{Bad}_{\op{II}}(u;r,C_0)\cap B_1 $, the preceding clearing-out statement gives
\be
\theta_{\va}(u;y,2r)\geq\delta|\log\va|.\label{thetageq}
\ee
Indeed, otherwise, we would have $\theta_{\va}(u;y,r)\leq \f{C_0}{2}$, contradicting the definition of $E$.

The family $ \{B_{2r}(y)\}_{y\in E} $ covers $ B_r(E) $. By Vitali's covering lemma, there is a subcollection of pairwise disjoint balls $ \{B_{2r}(y_i)\}_{i\in I} $ such that
\[
B_r(E)\subset\bigcup_{y\in E}B_{2r}(y)\subset\bigcup_{i\in I}B_{10r}(y_i).
\]
Using \eqref{thetava40} and \eqref{thetageq}, we estimate
\[
\sum_{i\in I}r\leq C\sum_{i\in I}\f{E_{\va}(u,B_{2r}(y_i))}{|\log\va|}
\leq \f{C}{|\log\va|}\int_{\bigcup_{i\in I}B_{2r}(y_i)}e_{\va}(u)\leq C.
\]
Covering each $B_{10r}(y_i)$ by balls of radius $r$, we obtain a collection $\{B_r(x_i)\}_{i=1}^n$ with
\[
B_r(E)\subset\bigcup_{i\in I}B_{10r}(y_i)\subset\bigcup_{i=1}^nB_r(x_i)
\]
and $ n\leq C_1(f,M,\cN)r^{-1} $. Setting $ \Lda=\max\{C_0,C_1\} $ completes the proof.
\end{proof}

We are now ready to estimate the measure of the type I bad set.

\begin{lem}\label{interiorlemma}
Let $ M>0 $ and $ \va,r\in(0,1) $. Assume that $ u\in H^1(B_{40},\R^m) $ is a local minimizer of \eqref{GLfunctional}, satisfying
\[
E_{\va}(u,B_{40})\leq M(|\log\va|+1),\quad\|u\|_{L^{\ift}(B_{40})}\leq M.
\]
For any $ \ga\in(0,1) $, there exist $ \delta\in(0,1) $ and $ C>0 $, depending only on $ f,\ga,M,\cN $, such that if $ \va\in(0,\delta) $ and $ r\in(\va^{\ga},1) $, then
\[
\cL^3\(\op{Bad}_{\op{I}}(u;\delta r)\cap B_{\f{1}{4}}\)\leq Cr^2.
\]
\end{lem}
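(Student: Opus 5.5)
We split $\op{Bad}_{\op{I}}(u;\delta r)\cap B_{\f14}$ according to whether the point lies in the type II bad set at scale $r$, and handle the two pieces separately. Fix $\ga\in(0,1)$. Apply Lemma~\ref{coveringlemma2} with this $\ga$ to get $\delta_1,\Lda$ (depending on $f,\ga,M,\cN$) and points $\{x_i\}_{i=1}^n\subset B_1$ with $n\leq\Lda r^{-1}$ such that
\[
B_r(\op{Bad}_{\op{II}}(u;r,\Lda)\cap B_1)\subset\bigcup_{i=1}^nB_r(x_i).
\]
Thus the part of $\op{Bad}_{\op{I}}(u;\delta r)$ lying in $\op{Bad}_{\op{II}}(u;r,\Lda)$ has measure at most $Cnr^3\leq C\Lda r^2$.

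It remains to control $\op{Bad}_{\op{I}}(u;\delta r)\cap B_{\f14}\backslash\op{Bad}_{\op{II}}(u;r,\Lda)$. For $y$ in this set we have $r^{\Lda}(u;y)\geq r$, hence $\theta_{\va}(u;y,\rho)\leq\Lda$ for some $\rho$ arbitrarily close to $r$. Monotonicity \eqref{Mo11} then gives $\theta_{\va}(u;y,s)\leq\Lda$ for all $s\leq\rho$, and so comparable density bounds on balls of radius $\sim r$ around nearby points. Cover this set by a Vitali family of balls $B_{r/40}(y_j)$ with centers in the set; their number is $O(r^{-3})$, which is far too many. We must instead localize: the key point is that these balls are either near the type II covering or in regions where the good density bound holds at scale comparable to $r$.

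For the second kind of ball, apply Lemma~\ref{coverpoint} on $B_R(x_0)$ with $R=r/40$ (rescaled so that the hypothesis $\theta_{\va}(u;x_0,40R)\leq M'$ holds with $M'=C\Lda$) and with inner scale $\delta'R$. This shows that each such ball contains type I bad points of scale $\delta\delta' R$ only within $C$ balls of radius $\delta'R$. This is the wrong direction, though: we need every point of a good ball to be regular at scale $\delta r$. So instead use the energy-drop/pinching dichotomy directly. By Proposition~\ref{InteriorPartialRegularity1} and Lemma~\ref{Fprop}, a ball with bounded density $\Lda$ has its type I bad set at scale $\delta r$ contained in $C(\Lda)$ balls of radius comparable to $\delta r$, i.e. of volume $O(r^3)$ per ball. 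The problem is then reduced to counting how many scale-$r$ balls with bounded density contain type I bad points.

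\textbf{Main obstacle.} The difficulty is showing that only $O(r^{-1})$ such balls can contain type I bad points. The plan is as follows. A scale-$r$ ball around a good point that meets $\op{Bad}_{\op{I}}(u;\delta r)$ must, by the clearing-out dichotomy (Proposition~\ref{InteriorClearingout} combined with Lemma~\ref{kplus1}), carry a non-small pinching defect or energy $\gtrsim1$ at scale $r$. Energy of order $1$ per disjoint ball of radius $r$ is still compatible with $r^{-3}$ balls given total energy $M|\log\va|$, so this alone does not suffice. The counting must come from Lemma~\ref{coverpoint}, applied at unit scale on $B_{\f14}$ with inner radius $r$: it places the whole bad set $\op{Bad}_{\op{I}}(u;\delta r)$ in $B_1$ minus the type II region inside $n\leq C$ balls of radius $r$. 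Its hypothesis of bounded density requires removing the type II region, and this is exactly what the first step supplied, via the union of the $\Lda r^{-1}$ balls from Lemma~\ref{coveringlemma2}. Making this splice rigorous, by applying Lemma~\ref{coverpoint} on balls of radius $\sim r$ away from the type II cover, where $\theta\leq\Lda$, and showing the relevant balls number $O(r^{-1})$, is the delicate step. Summing the two contributions gives $\cL^3\leq Cr^2$.
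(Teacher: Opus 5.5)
Your proposal has a genuine gap, and it is exactly the step you flag as ``delicate''. A two-piece split of $B_{\f14}$ into points that are type-II bad at scale $r$ and points that are type-II good at scale $r$ cannot give $Cr^2$. A point outside $\op{Bad}_{\op{II}}(u;r,\Lda)$ is only known to satisfy $\theta_{\va}(u;y,r)\leq\Lda$, so the only balls on which you may invoke Lemma~\ref{coverpoint} have radius comparable to $r$.

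\begin{itemize}
\item Covering the good region by such balls takes on the order of $r^{-3}$ of them. Lemma~\ref{coverpoint} gives at most $Cr^3$ of type I bad volume per ball, so the total is only $O(1)$.
\item Lemma~\ref{coverpoint} at unit scale is unavailable, because $\theta_{\va}(u;\cdot,1)$ is only bounded by $C|\log\va|$.
\item Lemma~\ref{coveringlemma2} used at the single scale $r$ says nothing about how many good $r$-balls actually meet $\op{Bad}_{\op{I}}(u;\delta r)$.
\end{itemize}

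So the $O(r^{-1})$ count of good $r$-balls that you want cannot be obtained along the lines you sketch.

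The missing idea is to sort points dyadically by their type-II regular scale and use both lemmas at every scale $2^jr$, for $0\leq j\leq j_0+1$ with $2^{j_0}r\sim\f14$.

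\begin{itemize}
\item Set $F_j:=\op{Bad}_{\op{II}}(u;2^jr,\Lda)\cap B_1$ and $\cA_j:=(B_{2^jr}(F_j)\backslash B_{2^{j-1}r}(F_{j-1}))\cap B_{\f14}$.
\item Every $x\in\cA_j$ satisfies $\theta_{\va}(u;x,2^{j-1}r)\leq\Lda$.
\item Since $2^jr>\va^{\ga}$, Lemma~\ref{coveringlemma2} at scale $2^jr$ covers $\cA_j$ by $O((2^jr)^{-1})$ balls of radius comparable to $2^jr$, centered in $\cA_j$. Each has bounded density at its own scale.
\item On each of these balls apply Lemma~\ref{coverpoint} with outer radius $R\sim 2^jr$ and inner radius $r$. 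Its number $n$ of radius-$r$ balls does not depend on $R/r$; this is what the energy-drop iteration of Lemma~\ref{cover1} and Remark~\ref{remcover} provides. Hence each such ball contributes at most $Cr^3$ of $\op{Bad}_{\op{I}}(u;\delta r)$.
\item Summing gives $\cL^3(\op{Bad}_{\op{I}}(u;\delta r)\cap B_{\f14})\leq Cr^2+\sum_j C(2^jr)^{-1}r^3\leq Cr^2$, since the series is geometric. Points that are already good at scale comparable to $1$ are handled by $O(1)$ balls in the same way.
\end{itemize}

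Your first step is precisely the $j=0$ term of this sum. What is missing is the remaining layers, in which good points are treated at their own, larger scale rather than at scale $r$.
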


\begin{proof}
Fix $ \ga\in(0,1) $. The case $r\geq\f{1}{10}$ is trivial, so assume $ r\in(\va^{\ga},\f{1}{10}) $. Let $ \Lda>0 $ be chosen below. Pick $ j_0\in\N $ such that $ 2^{j_0}r\leq \f{1}{4}<2^{j_0+1}r $. For $ j\in\Z\cap[0,j_0+1] $, set $ F_j:=\op{Bad}_{\op{II}}(u;2^jr,\Lda)\cap B_1 $ and
\[
\cA_j:=(B_{2^{j}r}(F_j)\backslash B_{2^{j-1}r}(F_{j-1}))\cap B_{\f{1}{4}}, \quad j\in\Z\cap[1,j_0+1].
\]
For $ x\in\cA_j $, we have $ B_{2^{j-1}r}(x)\cap F_{j-1}=\emptyset $, which implies
\be
\theta_{\va}(u;x,2^{j-1}r)\leq\Lda.\label{thetavaleqLda}
\ee
Moreover,
\be
B_{\f{1}{4}}\subset B_r(F_0)\cup\bigcup_{j=1}^{j_0+1}\cA_j.\label{B1decompose}
\ee
By Lemma~\ref{coveringlemma2}, there exist $ \Lda=\Lda(f,\ga,M,\cN)>0 $ and, for each $ j\in\Z\cap[0,j_0+1] $, a collection of balls $ \{B_{2^jr}(x_{jk})\}_{k=1}^{n_j} $ such that
\be
B_{2^jr}(F_j)\subset\bigcup_{k=1}^{n_j}B_{2^jr}(x_{jk}),\quad 0\leq n_j\leq\Lda(2^jr)^{-1}.\label{Ajcover}
\ee
For each $ j\in\Z\cap[1,j_0+1] $ and $ k\in\Z\cap[1,n_j] $, cover $ \cA_j\cap B_{2^jr}(x_{jk}) $ with balls $ \{B_{r_j}(x_{jk}^{(i)})\}_{i=1}^{n_{jk}} $ with $ r_j:=\f{2^jr}{10} $, where $ n_{jk}\leq C_0 $ for some absolute constant $ C_0>0 $ and $ \{x_{jk}^{(i)}\}\subset\cA_j $. Collecting these balls, we obtain $ \{B_{r_j}(x_{jk}')\}_{k=1}^{n_j'} $ such that, for each $ j\in\Z\cap[1,j_0+1] $,
\be
\cA_j\subset\bigcup_{k=1}^{n_j'}B_{r_j}(x_{jk}'),\quad \{x_{jk}'\}_{k=1}^{n_j'}\subset\cA_j,\quad 0\leq n_j'\leq C(2^jr)^{-1}.\label{Njestimate}
\ee
Since $ r\in(\va^{\ga},\f{1}{10}) $, applying \eqref{thetavaleqLda} and Lemma~\ref{coverpoint} after scaling gives $ \delta=\delta(f,\ga,M,\cN)\in(0,1) $ such that, for each ball $ B_{r_j}(x_{jk}') $ with $ j\in\Z\cap[1,j_0+1] $ and $ \va\in(0,\delta) $,
\[
\cL^3(\op{Bad}_{\op{I}}(u;\delta r)\cap B_{r_j}(x_{jk}'))\leq Cr^3.
\]
Together with \eqref{B1decompose}, \eqref{Ajcover}, and \eqref{Njestimate}, this gives
\[
\cL^3(\op{Bad}_{\op{I}}(u;\delta r)\cap B_{\f{1}{4}})
\leq C r^2+\sum_{j=1}^{j_0+1}C(2^{j}r)^{-1}r^3
\leq C(f,\ga,M,\cN)r^2,
\]
completing the proof.
\end{proof}

\begin{proof}[Proof of Proposition~\ref{InteriorW1p}]
Fix $ \ga\in(0,1) $. Applying Lemma~\ref{interiorlemma} on finitely many balls covering $B_1$, after scaling if necessary, we obtain
\[
\cL^3(\op{Bad}_{\op{I}}(u_{\va};\delta r)\cap B_1)\leq Cr^2
\]
for any $ r\in(\va^{\ga},1) $ and $ \va\in(0,\delta(f,\ga,M,\cN)) $. For $ R\in(\va,1) $, since $ R^{\ga}\in(\va^{\ga},1) $ and $ R<R^{\ga} $, we have
\be
\cL^3(\{y\in B_1:r(u_{\va};y)<\delta R\})
\leq \cL^3(\{y\in B_1:r(u_{\va};y)<\delta R^{\ga}\})
\leq CR^{2\ga}.\label{Rlarge}
\ee
For $ R\in(0,\va) $ and any $ y\in B_1 $, Lemma~\ref{Apriori} gives
\[
R|\na u(y)|\leq CR(\va^{-1}+R^{-1})\leq C.
\]
Thus, choosing $\delta$ smaller if necessary, we have $r(u_{\va};y)\geq\delta R$ for any $y\in B_1$, and hence
\[
\cL^3(\op{Bad}_{\op{I}}(u_{\va};\delta R)\cap B_1)=0.
\]
Combining this with \eqref{Rlarge}, we obtain, for any $ R\in(0,1) $,
\[
\cL^3(\{y\in B_1:r(u_{\va};y)<\delta R\})\leq C(f,\ga,M,\cN)R^{2\ga}.
\]
By Definition~\ref{regular}, this means $ \na u_{\va}\in L^{2\ga,\ift} $. Choosing $ \ga >\f{q}{2} $ completes the proof.
\end{proof}

\subsubsection{Interior estimates on the potential}

We next establish a uniform bound on the potential energy. The argument combines the covering structure from the previous subsection with a pointwise decay estimate near regular points.

\begin{prop}\label{Interiorpotential}
Let $ M>0 $ and $ \va\in(0,1) $. Assume that $ \{u_{\va}\}_{\va\in(0,1)}\subset H^1(B_{10},\R^m) $ is a family of local minimizers of \eqref{GLfunctional}, satisfying
\[
E_{\va}(u_{\va},B_{10})\leq M(|\log\va|+1),\quad\|u_{\va}\|_{L^{\ift}(B_{10})}\leq M.
\]
Then there exists $ C>0 $, depending only on $ f,M $, and $ \cN $, such that
\[
\f{1}{\va^2}\int_{B_1}f(u_{\va})\leq C.
\]
\end{prop}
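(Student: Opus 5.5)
The plan is to split $B_1$ according to the regular scale $r(u_\va;y)$ of Definition~\ref{regular} and to bound the potential on each layer using the measure estimates already obtained in the proof of Proposition~\ref{InteriorW1p}. After dividing by $\va^2$, a uniform sum requires decay faster than $\va^2$ on the good set. This decay should come from Proposition~\ref{improvedpotential}: if $y\in B_1$ has $r(u_\va;y)\geq \rho$ with $\rho\geq C\va$, then $\rho^2\|e_\va(u_\va)\|_{L^\infty(B_\rho(y))}\le 1$. Applying Proposition~\ref{improvedpotential} on $B_{\rho/2}(y)$ with $r=\rho/4$ gives
\[
f(u_\va(y))\le C\va^4\rho^{-4}\qquad\text{whenever } r(u_\va;y)\ge\rho\ge C\va .
\]
If instead $r(u_\va;y)<C\va$, I only use the trivial bound $f(u_\va)\le C$, which follows from $\|u_\va\|_{L^\infty}\le M$.

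Next I decompose $B_1$ dyadically. Set $A_k:=\{y\in B_1:2^{-k-1}<r(u_\va;y)\le 2^{-k}\}$ for $0\le k\le k_\va$, where $2^{-k_\va}\sim C\va$, and let $A_\infty:=\{y\in B_1: r(u_\va;y)\le C\va\}$. From the proof of Proposition~\ref{InteriorW1p}, for any $\ga\in(0,1)$,
\[
\cL^3(\{y\in B_1:r(u_\va;y)<\delta R\})\le CR^{2\ga}.
\]
This yields $\cL^3(A_k)\le C2^{-2\ga k}$ and $\cL^3(A_\infty)\le C\va^{2\ga}$. Summing the two bounds,
\[
\f{1}{\va^2}\int_{B_1}f(u_\va)\le \sum_{k\le k_\va} \f{C\va^4 2^{4k}}{\va^2}2^{-2\ga k}+C\va^{2\ga-2}.
\]
The dyadic sum is dominated by its last term: with $2^{k_\va}\sim\va^{-1}$ it is $C\va^2\va^{-(4-2\ga)}=C\va^{2\ga-2}$. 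So both contributions are of order $\va^{2\ga-2}$, which is bounded only when $\ga\ge1$.

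The main obstacle is therefore the endpoint $\ga=1$: the previous section only gives $\ga<1$, and $\cL^3(\mathrm{Bad}_{\mathrm I}(u_\va;R))\le CR^2$ is needed uniformly down to $R\sim\va$. I would first revisit Lemma~\ref{interiorlemma}. The restriction $r>\va^\ga$ enters only through Lemma~\ref{coveringlemma2}, where clearing-out requires $\log(r/\va)\gtrsim|\log\va|$. For the potential term this can be avoided by using the monotonicity formula~\eqref{Mo11} directly, which gives
\[
\int_0^{t}\f{1}{\va^2\rho^2}\int_{B_\rho(x)}f(u_\va)\,\ud\rho\le \theta_\va(u_\va;x,t)\le C(|\log\va|+1)
\]
on the bad set. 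Concretely, I would cover the set $\{r(u_\va;\cdot)\lesssim 2^{-k}\}$ by balls $B_{2^{-k}}(x_i)$ chosen via Lemmas~\ref{coveringlemma2} and~\ref{coverpoint}. On each such ball I would estimate $\va^{-2}\int_{B_{2^{-k}}(x_i)}f$ by $C\,2^{-k}$, using the $\rho$-integrated form of~\eqref{Mo11} together with the fact that $\theta_\va$ is bounded below by a uniform constant near a line defect. Since the type~II covering gives about $2^{k}$ such balls, each layer should contribute $O(1)$. To make the layers sum rather than each contribute $O(1)$, I would use Proposition~\ref{improvedpotential} to control $f$ outside $\va$-neighborhoods of the defect lines, so that all of the potential concentrates within distance $C\va$ of roughly $\va^{-1}\cdot\va$-length pieces of line. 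This reduces the claim to $\cL^3(A_\infty)\le C\va^2$, which is the sharp tube-volume estimate around a set of bounded length.

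For that last estimate, I would run Lemma~\ref{interiorlemma} with $r=C\va$, replacing the clearing-out step by Lemma~\ref{Apriori} at scales of order $\va$.
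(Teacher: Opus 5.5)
There is a genuine gap. Your first computation correctly shows that the $\ga<1$ bad-set bounds are not enough. The repair, however, rests on an endpoint estimate that the paper does not have and that your sketch does not produce.

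Lemma~\ref{interiorlemma} already gives $\cL^3(\op{Bad}_{\op{I}}(u_\va;\delta r)\cap B_{\f{1}{4}})\le Cr^2$ for $r\ge\va^{\ga}$. What your dyadic sum needs is the same bound for every $r\in(C\va,\va^{\ga})$, not only $\cL^3(A_\infty)\le C\va^2$. Without it, the layers at those scales still contribute about $\va^{2\ga-2}$. Proposition~\ref{improvedpotential} only converts regular scale into pointwise decay, and integrating that decay over a layer still requires the sharp volume of the layer.

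The count $n\le\Lda r^{-1}$ in Lemma~\ref{coveringlemma2} comes from the lower bound $\theta_\va(u;y,2r)\geq\delta|\log\va|$ at type~II bad points. Proposition~\ref{InteriorClearingout} provides this only when $\log(r/\va)\geq c|\log\va|$. For $r\in(C\va,\va^{\ga})$ clearing-out gives only $\theta_\va(u;y,2r)\geq c\log(r/\va)$, so the Vitali argument yields $n\le C|\log\va|/(r\log(r/\va))$. At $r\sim C\va$ this becomes $C|\log\va|/\va$. Hence $\cL^3(A_\infty)\le C\va^2|\log\va|$ and $\va^{-2}\int_{A_\infty}f\le C|\log\va|$, which is exactly the logarithm you are trying to remove. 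Lemma~\ref{Apriori} is an upper bound on $|\nabla u_\va|$, so it cannot replace clearing-out, whose role is to supply a lower energy bound.

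Your per-ball claim $\va^{-2}\int_{B_{2^{-k}}(x_i)}f\le C2^{-k}$ also does not follow from \eqref{Mo11}. At a prescribed scale, \eqref{Mo11} only gives
\[
\va^{-2}\int_{B_\rho(x)}f\le 2\rho\,\bigl(\theta_\va(u;x,2\rho)-\theta_\va(u;x,\rho)\bigr).
\]
These increments are controlled only on average over the roughly $|\log\va|$ dyadic scales, not at each one. A lower bound on $\theta_\va$ plays no role in bounding $f$ from above.

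The missing idea, which is the paper's route, is to stop working at prescribed scales near the line defects and instead choose a good scale for each point. Take $x$ within $\f{1}{2}\va^{1/4}$ of $\op{Bad}_{\op{II}}(u_\va;\f{1}{2}\va^{1/4},\Lda)$. Apply the inequality $\f{\ud}{\ud s}\theta_\va(u;x,e^s)\geq\f{2}{\va^2e^s}\int_{B_{e^s}(x)}f(u)$ on $s\in[\log(2\va^{1/4}),\f{1}{8}\log\va]$, an interval of length comparable to $|\log\va|$. An integrating-factor argument then yields $r_x\in[\va^{1/4},\va^{1/8}]$ with
\[
\f{1}{\va^2}\int_{B_{r_x}(x)}f(u)\le\f{C}{|\log\va|}\log\f{\theta_\va(u;x,\va^{1/8})}{\theta_\va(u;x,2\va^{1/4})}\,E_\va(u,B_{r_x}(x)).
\]
At these scales clearing-out does give $\theta_\va(u;x,2\va^{1/4})\geq C^{-1}|\log\va|$. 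Monotonicity gives $\theta_\va(u;x,\va^{1/8})\le C(|\log\va|+1)$. So the logarithm is $O(1)$.

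Besicovitch's theorem with the variable radii $r_x$ then bounds this part by $\f{C}{|\log\va|}E_\va(u,B_{40})\le C$; this is Lemma~\ref{badsetcoveruse}. Away from this neighbourhood the density is bounded at scale $2^j\va^{1/4}$. There Lemma~\ref{va3estimates} gives $\int_B f\le C\va^3$ on each of the $O((2^j\va^{1/4})^{-1})$ covering balls, which sums to $O(\va^2)$. No volume estimate below scale $\va^{1/4}$ is ever needed.
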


The proof uses two preparatory lemmas. The first gives the estimate on the potential when the energy density is uniformly bounded.

\begin{lem}\label{va3estimates}
Let $ M>0 $, $ r\in(0,1] $, $ \va\in(0,r) $, and $ x_0\in\R^3 $. Assume that $ u\in H^1(B_{4r}(x_0),\R^m) $ is a local minimizer of \eqref{GLfunctional} such that
\[
\theta_{\va}(u;x_0,4r)+\|u\|_{L^{\ift}(B_{4r}(x_0))}\leq M.
\]
Then
\be
\f{1}{\va^2}\int_{B_r(x_0)}f(u)\leq C\va,\label{va3for}
\ee
where $ C>0 $ depends only on $ f,M $, and $ \cN $.
\end{lem}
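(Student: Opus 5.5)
The plan is to combine the pointwise improved potential decay of Proposition~\ref{improvedpotential} at regular points with the covering estimate of Lemma~\ref{coverpoint} for the type I bad set. The bounded-density hypothesis is exactly what makes the number of covering balls independent of $\va$. After translation assume $x_0=0$; throughout, $r(u;y)$ denotes the regular scale of Definition~\ref{regular}.

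\textbf{Step 1: densities at all smaller scales.} For $y\in B_{2r}$ we have $B_{2r}(y)\subset B_{4r}$, hence $\theta_{\va}(u;y,2r)\leq 2\theta_{\va}(u;0,4r)\leq 2M$. By the monotonicity formula \eqref{Mo11}, $\theta_{\va}(u;y,\rho)\leq 2M$ for all $\rho\leq 2r$. So the hypotheses of Lemma~\ref{cover1}, Remark~\ref{remcover} and Lemma~\ref{coverpoint} hold on balls $B_{R}(y)$ with $40R\leq 2r$, with constants depending only on $f,M,\cN$. Covering $B_r$ by a bounded number of such balls, I expect to obtain a distribution estimate of the form
\[
\cL^3\bigl(\{y\in B_r:\ r(u;y)<\delta s\}\bigr)\leq Cs^3
\quad\text{for } s\in(\va/\delta,\,r).
\]
Moreover, Lemma~\ref{Apriori} gives $|\na u|\leq C\va^{-1}$ in $B_{2r}$, and $f(u)\leq C$ there. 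Hence $r(u;y)\geq c\va$ for every $y\in B_r$, with $c=c(f,M,\cN)>0$.

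\textbf{Step 2: pointwise bound at regular points.} Fix $y\in B_r$ and set $\rho(y):=\min\{r(u;y),r\}$. Then $\rho(y)^2\|e_{\va}(u)\|_{L^\infty(B_{\rho(y)}(y))}\leq 1$. If $\va<\delta'\rho(y)/2$, with $\delta'$ the constant of Proposition~\ref{improvedpotential}, apply that proposition on $B_{\rho(y)}(y)$ with radius $\rho(y)/2$. This yields
\[
f(u(y))\leq C\va^4\rho(y)^{-4}.
\]
If instead $\rho(y)\leq C\va$, use the trivial bound $f(u(y))\leq C$.

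\textbf{Step 3: layer-cake summation.} Decompose $B_r$ into dyadic layers $\{\rho(y)\sim t\}$ with $t=2^{k}\va$ and $t\leq r$. Each layer has measure at most $Ct^3$ by Step~1, and there contributes at most $\va^{-2}\cdot C\va^4t^{-4}\cdot Ct^3=C\va^2/t$. Summing the geometric series over $t\geq\va$ gives $C\va$. The points with $\rho(y)\leq C\va$ form a set of measure at most $C\va^3$, since the layers below $c\va$ are empty. They contribute at most $C\va^3\cdot\va^{-2}=C\va$. The points with $\rho(y)=r$ contribute at most $\va^{-2}\cdot\va^4r^{-4}\cdot Cr^3=C\va^2/r\leq C\va$, since $\va<r$. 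Altogether this gives \eqref{va3for}.

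The main obstacle is Step~1: obtaining the scale-invariant bound $Cs^3$ for the type I bad set uniformly in $s$, that is, checking that Lemma~\ref{coverpoint} applies after rescaling to every scale $s\in(\va/\delta,r)$ with a number of balls independent of $s$ and $\va$. The point to verify is that the energy-drop iteration there only needs the bounded density from Step~1, and never the logarithmic bound.
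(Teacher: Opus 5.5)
Your proposal is correct and follows essentially the same route as the paper, which also combines the scale-by-scale volume bound $\cL^3(B_R(\op{Bad}_{\op{I}}(u;\delta R)\cap B_1))\leq CR^3$ from Lemma~\ref{coverpoint} (this needs only the bounded density, as you suspected) with the pointwise decay $f(u)\leq C\va^4\rho^{-4}$ from Proposition~\ref{improvedpotential}, and then sums a geometric series over layers. The only difference is cosmetic: the paper layers by neighborhoods of the bad sets at scales $\nu^j$, whereas you layer pointwise by $\min\{r(u;y),r\}$; both give $\int_{B_1}f(u)\leq C\va^3$ after rescaling.
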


\begin{proof}
By scaling and translation, assume $ r=1 $ and $ x_0=0 $. Fix $ 0<\nu<1 $. By Lemma~\ref{coverpoint}, there exists $ \delta=\delta(f,M,\cN)\in(0,1) $ such that, for any $ 0<R<1 $ with $ \va\in(0,\delta R) $,
\be
\cL^3(B_R(\op{Bad}_{\op{I}}(u;\delta R)\cap B_1))\leq CR^3.\label{applyBR}
\ee
For any $ \va\in(0,\delta) $, choose $ n(\va)\in\N $ such that $ \nu^{n(\va)-1}\in[\delta^{-1}\va,\nu^{-1}\delta^{-1}\va] $. Decompose
\be
\begin{aligned}
\int_{B_1}f(u)
&\leq \int_{B_{\delta^{-1}\va}(\op{Bad}_{\op{I}}(u;\va)\cap B_1)}f(u)
+\sum_{j=0}^{n(\va)-2}\int_{\cA_j}f(u)+\int_{B_1\backslash B_1(\op{Bad}_{\op{I}}(u;\delta)\cap B_1)}f(u),
\end{aligned}\label{use}
\ee
where
\[
\cA_j:=B_{\nu^j}(\op{Bad}_{\op{I}}(u;\delta\nu^j)\cap B_1)\backslash B_{\nu^{j+1}}(\op{Bad}_{\op{I}}(u;\delta\nu^{j+1})\cap B_1).
\]
From \eqref{applyBR},
\be
\begin{gathered}
\cL^3(\cA_j)\leq C(\delta\nu^j)^3\quad\text{for any }j\in\Z\cap[0,n(\va)-2],\\
\cL^3(B_{\delta^{-1}\va}(\op{Bad}_{\op{I}}(u;\va)\cap B_1))\leq C(\delta^{-1}\va)^3.
\end{gathered}\label{cAestimatevolume}
\ee
By Proposition~\ref{improvedpotential},
\[
f(u)\leq
\begin{cases}
C &\text{in }B_{\delta^{-1}\va}(\op{Bad}_{\op{I}}(u;\va)\cap B_1),\\
C\va^4 &\text{in }B_1\backslash B_1(\op{Bad}_{\op{I}}(u;\delta)\cap B_1),\\
C\va^4\nu^{-4(j+1)} &\text{in }\cA_j.
\end{cases}
\]
Together with \eqref{use} and \eqref{cAestimatevolume}, this yields
\[
\int_{B_1}f(u)\leq C\left(\va^3+\sum_{j=0}^{n(\va)-2}\va^{4}\nu^{-(j+1)}+\va^4\right)\leq C\va^3,
\]
completing the proof.
\end{proof}

The second lemma provides a logarithmic-scale estimate of the potential near points in the type II bad set.

\begin{lem}\label{badsetcoveruse}
Let $ \va\in(0,\f{1}{10}) $ and $ M>0 $. Assume that $ u\in H^1(B_4,\R^m) $ is a local minimizer of \eqref{GLfunctional} with $ \|u\|_{L^{\ift}(B_4)}\leq M $. There exist $ \delta\in(0,1) $ and $ \Lda>0 $, depending only on $ f,M,\cN $, such that the following property holds. If $ \va\in(0,\delta) $ and $ x\in B_1 $ satisfy
\be
\dist\left(x,\op{Bad}_{\op{II}}\left(u;\f{\va^{\f{1}{4}}}{2},\Lda\right)\cap B_1\right)<\f{\va^{\f{1}{4}}}{2},\label{badsetcon}
\ee
then there exists $ r_x\in[\va^{\f{1}{4}},\va^{\f{1}{8}}] $ such that
\be
\int_{B_{r_x}(x)}\f{1}{\va^2} f(u)
\leq\f{C}{|\log\va|}\log\left(2+\f{\theta_{\va}(u;x,\va^{\f{1}{8}})}{|\log\va|}\right)r_x\theta_{\va}(u;x,r_x),\label{badsetresult}
\ee
where $ C>0 $ depends only on $ f,M $, and $ \cN $.
\end{lem}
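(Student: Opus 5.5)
The plan is to combine a Pohozaev-type identity on balls, which controls the potential by boundary energy terms, with a pigeonhole choice of radius in $[\va^{\f14},\va^{\f18}]$ so that the radial energy increment is small. Fix $x\in B_1$ satisfying \eqref{badsetcon}. By the monotonicity formula \eqref{Mo11}, for $0<s<t\le\va^{\f18}$,
\[
\int_s^t\f{2}{\va^2\rho^2}\int_{B_\rho(x)}f(u)\,\ud\rho\le\theta_\va(u;x,t)-\theta_\va(u;x,s).
\]
The interval $[\va^{\f14},\va^{\f18}]$ contains $N\simeq c|\log\va|$ dyadic scales $\rho_k=2^k\va^{\f14}$. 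Summing the increments over these scales telescopes to at most $\theta_\va(u;x,\va^{\f18})$.

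The factor $\log(2+\theta_\va(u;x,\va^{\f18})/|\log\va|)$, rather than a plain $\theta/|\log\va|$, indicates that the argument must compare each increment with the density itself, not only with the total. So I will use a multiplicative pigeonhole. Set $a_k:=\theta_\va(u;x,\rho_k)$, which is nondecreasing in $k$ up to harmless constants. I also need a lower bound $a_0\gtrsim\delta|\log\va|$. This should follow from \eqref{badsetcon}: there is $y\in\op{Bad}_{\op{II}}$ with $|x-y|<\va^{\f14}/2$, and the clearing-out argument in the proof of Lemma~\ref{coveringlemma2} (Proposition~\ref{InteriorClearingout} with $\Lda$ chosen as there) gives $\theta_\va(u;y,\va^{\f14})\ge\delta|\log\va|$. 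Since $B_{\va^{\f14}}(y)\subset B_{2\va^{\f14}}(x)$, monotonicity then yields $a_1\gtrsim\delta|\log\va|$.

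With this lower bound, $\sum_k\log(a_{k+1}/a_k)=\log(a_N/a_1)\le\log\bigl(C\theta_\va(u;x,\va^{\f18})/(\delta|\log\va|)\bigr)$. Hence some $k$ has
\[
a_{k+1}-a_k\le a_{k+1}\cdot\f{C}{|\log\va|}\log\bigl(2+\theta_\va(u;x,\va^{\f18})/|\log\va|\bigr).
\]
The doubling $a_{k+1}\le Ca_k$ can be arranged by restricting to indices with $a_{k+1}\le 2a_k$. Such indices are the majority, because the number of doublings is bounded by the same logarithm, which is $\ll N$ when that logarithm is small; otherwise the estimate is trivial.

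Finally, take $r_x=\rho_{k+1}$, or $\rho_k$ after adjusting indices. The increment bound gives
\[
\int_{\rho_k}^{\rho_{k+1}}\f{1}{\va^2\rho^2}\int_{B_\rho}f\,\ud\rho\ge\f{c}{\rho_k\va^2}\int_{B_{\rho_k}}f(u).
\]
Therefore
\[
\f{1}{\va^2}\int_{B_{r_x}(x)}f(u)\le Cr_x(a_{k+1}-a_k)\le\f{C}{|\log\va|}\log(\cdots)\,r_x\theta_\va(u;x,r_x).
\]
This is \eqref{badsetresult}. The potential term on the smaller ball is dominated by the increment over the next dyadic annulus of radii, so indices must be shifted carefully.

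The main obstacle is the lower bound on $a_1$ from the type II bad set. One must check that the $\Lda$ and $\delta$ coming from clearing-out (via Lemma~\ref{coveringlemma2} with $r=\va^{\f14}/2$, i.e. $\ga=\f14$) transfer from $y$ to $x$ at comparable scale. One must also handle the case where $\theta$ grows too fast for the doubling restriction; there I would bound directly using that the logarithm factor is then $\gtrsim1$ and $r_x\theta_\va(u;x,r_x)\ge\int_{B_{r_x}}\va^{-2}f$ trivially up to a constant.
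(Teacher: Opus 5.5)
Your proposal is correct and is essentially the paper's argument. The monotonicity formula bounds $\va^{-2}\int_{B_r(x)}f(u)$ by the log-radius derivative of $\theta_{\va}(u;x,\cdot)$. A multiplicative averaging over the roughly $|\log\va|$ scales in $[2\va^{\f14},\va^{\f18}]$ then yields a radius where this derivative is at most $\f{C}{|\log\va|}\log\bigl(\theta_{\va}(u;x,\va^{\f18})/\theta_{\va}(u;x,2\va^{\f14})\bigr)\,\theta_{\va}(u;x,r)$. Finally, clearing-out at a nearby type II bad point gives $\theta_{\va}(u;x,2\va^{\f14})\geq C^{-1}|\log\va|$.

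The only difference is that the paper runs the pigeonhole continuously in $s=\log r$: if the potential term exceeded $\lda_{\va}\theta_{\va}$ at every scale, then $e^{-\lda_{\va}s}\theta_{\va}(u;x,e^{s})$ would be strictly increasing, contradicting the choice of $\lda_{\va}$. This pairs $\int_{B_r}f$ with $\theta_{\va}(u;x,r)$ at the same radius, so it avoids your dyadic index shift and the doubling/trivial-case split. Your split does work, though: the minimizing index automatically has $a_{k+1}\leq 2a_k$ unless the logarithmic factor is already at least a fixed multiple of $|\log\va|$, in which case the estimate is trivial.
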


\begin{proof}
By Proposition~\ref{MonotoneInterior},
\be
\f{\ud}{\ud r}\theta_{\va}(u;x,r)\geq\f{2}{\va^2r^2}\int_{B_r(x)}f(u).\label{FGpre}
\ee
Choose $\delta>0$ small enough so that $2\va^{\f{1}{4}}<\va^{\f{1}{8}}$ whenever $\va\in(0,\delta)$. Define
\[
F_{\va}(r):=\theta_{\va}(u;x,r),\quad G_{\va}(r):=\f{2}{\va^2r}\int_{B_r(x)}f(u),
\]
\[
\wt{F}_{\va}(s):=F_{\va}(\exp(s)),\quad \wt{G}_{\va}(s):=G_{\va}(\exp(s)),
\]
and
\[
I_{\va}:=[s_{\va}^{0},s_{\va}^{1}]
=\left[\log(2\va^{\f{1}{4}}),\f{1}{8}\log\va\right].
\]
For $\va\in(0,\delta)$, we have $s_\va^0<s_\va^1<0$. From \eqref{FGpre},
\be
\f{\ud}{\ud s}\wt{F}_{\va}(s)\geq \wt{G}_{\va}(s)\quad\text{for any }s\in I_{\va}.\label{fdsg}
\ee
We claim that there exists $ s_{\va}\in I_{\va} $ such that
\be
\wt{G}_{\va}(s_{\va})\leq\lda_{\va}\wt{F}_{\va}(s_{\va}),\quad
\lda_{\va}:=\f{1}{s_{\va}^{1}-s_{\va}^{0}}
\log\left(\f{\wt{F}_{\va}(s_{\va}^1)}{\wt{F}_{\va}(s_{\va}^0)}\right).\label{svaex}
\ee
Suppose not. Then $\wt{G}_{\va}(s)>\lda_{\va}\wt{F}_{\va}(s)$ for any $s\in I_\va$. Combined with \eqref{fdsg}, this gives
\[
\f{\ud}{\ud s}\left(\exp(-\lda_{\va}s)\wt{F}_{\va}(s)\right)
=\exp(-\lda_\va s)\left(\f{\ud}{\ud s}\wt{F}_\va(s)-\lda_\va \wt{F}_\va(s)\right)>0
\quad\text{on }I_\va.
\]
Integrating from $s_\va^0$ to $s_\va^1$ yields
\[
\wt{F}_{\va}(s_{\va}^1)>\exp(\lda_{\va}(s_{\va}^1-s_{\va}^0))\wt{F}_{\va}(s_{\va}^0)=\wt{F}_{\va}(s_{\va}^1),
\]
a contradiction, where the equality uses the definition of $\lda_{\va}$ in \eqref{svaex}. This proves the claim.

Set $ r_x:=\exp(s_{\va}) $. Then $r_x\in[2\va^{\f{1}{4}},\va^{\f{1}{8}}]\subset[\va^{\f{1}{4}},\va^{\f{1}{8}}]$. Since $s_\va^1-s_\va^0\geq C^{-1}|\log\va|$ for $\va$ small, \eqref{svaex} gives
\be
\f{2}{\va^2}\int_{B_{r_x}(x)}f(u)
\leq\f{C}{|\log\va|}
\log\left(\f{\theta_{\va}(u;x,\va^{\f{1}{8}})}
{\theta_{\va}(u;x,2\va^{\f{1}{4}})}\right)
r_x\theta_{\va}(u;x,r_x).\label{fQlogva}
\ee

It remains to bound the denominator from below. By \eqref{badsetcon}, there exists
\[
y\in\op{Bad}_{\op{II}}\left(u;\f{\va^{\f{1}{4}}}{2},\Lda\right)\cap B_1
\]
such that $|x-y|<\f{1}{2}\va^{\f{1}{4}}$. Choosing $\Lda\geq C_0(f,M,\cN)$ and applying Proposition~\ref{InteriorClearingout} as in the proof of Lemma~\ref{coveringlemma2}, we obtain, for $\va$ small enough, $ \theta_{\va}(u;y,\va^{\f{1}{4}})\geq C^{-1}|\log\va| $. Since $B_{\va^{\f{1}{4}}}(y)\subset B_{2\va^{\f{1}{4}}}(x)$, it follows that
\[
\theta_{\va}(u;x,2\va^{\f{1}{4}})\geq C^{-1}|\log\va|.
\]
Substituting this lower bound into \eqref{fQlogva} yields \eqref{badsetresult}.
\end{proof}

\begin{proof}[Proof of Proposition~\ref{Interiorpotential}]
Set $r_{\va}:=\f{\va^{\f{1}{4}}}{2}$, and choose $j_0\in\N$ such that
$2^{j_0}r_{\va}<1\leq 2^{j_0+1}r_{\va}$. For
$j\in\Z\cap[0,j_0+1]$, define
\[
F_j:=B_{2^jr_{\va}}\(\op{Bad}_{\op{II}}\(u_{\va};2^jr_{\va},\Lda\)\cap B_1\),
\]
and, for $j\in\Z\cap[1,j_0+1]$, set $ \cA_j:=(F_j\backslash F_{j-1})\cap B_1 $. Then
\be
B_1\subset F_0\cup\bigcup_{j=1}^{j_0+1}\cA_j.
\label{B1decompose2}
\ee

Applying Lemma~\ref{coveringlemma2} with $\ga=\f{1}{16}$, choose
$\delta=\delta(f,M,\cN)>0$ such that, when $\va\in(0,\delta)$, for any
$j\in\Z\cap[0,j_0+1]$ there is a collection of balls
$\{B_{2^jr_{\va}}(x_{jk})\}_{k=1}^{n_j}$ satisfying
\[
\cA_j\subset\bigcup_{k=1}^{n_j}B_{2^jr_{\va}}(x_{jk}),
\quad
0\leq n_j\leq C(2^jr_{\va})^{-1}.
\]
As in the proof of Lemma~\ref{interiorlemma}, for any
$j\in\Z\cap[1,j_0+1]$ we then choose balls
$\{B_{r_j}(x_{jk}')\}_{k=1}^{n_j'}$, where
$r_j:=\f{2^jr_{\va}}{10}$, such that
\[
\cA_j\subset\bigcup_{k=1}^{n_j'}B_{r_j}(x_{jk}'),
\quad
\{x_{jk}'\}_{k=1}^{n_j'}\subset\cA_j,
\quad
0\leq n_j'\leq C(2^jr_\va)^{-1}.
\]
By the definition of $\cA_j$, for each $x_{jk}'$ we have
$r^{\Lda}(u_{\va};x_{jk}')\geq 2^{j-1}r_{\va}$. Hence
\[
\theta_{\va}\(u_{\va};x_{jk}',\f{2^{j+1}r_{\va}}{10}\)\leq C.
\]
Lemma~\ref{va3estimates} gives
\be
\sum_{j=1}^{j_0+1}\int_{\cA_j}f(u_{\va})
\leq
\sum_{j=1}^{j_0+1}\sum_{k=1}^{n_j'}
\int_{B_{r_j}(x_{jk}')}f(u_{\va})
\leq
\sum_{j=1}^{j_0+1}C(2^jr_{\va})^{-1}\va^3
\leq C\va^2.
\label{backslashLdava}
\ee

For $x\in F_0$, condition \eqref{badsetcon} holds by the definition of
$F_0$. Lemma~\ref{badsetcoveruse} yields, for
$\delta=\delta(f,M,\cN)>0$ sufficiently small, a radius
$r_x\in[\va^{\f{1}{4}},\va^{\f{1}{8}}]$ satisfying \eqref{badsetresult}, namely
\be
\int_{B_{r_x}(x)}\f{1}{\va^2} f(u_{\va})
\leq
\f{C}{|\log\va|}
\log \(2+\f{\theta_{\va}(u_{\va};x,\va^{\f{1}{8}})}{|\log\va|}\)
r_x\theta_{\va}(u_{\va};x,r_x).
\label{everyball}
\ee
The family $\{\ol{B}_{r_x}(x)\}_{x\in F_0}$ covers $F_0$. By Besicovitch's
covering theorem \cite[Theorem~1.27]{EG15}, there exist
$\{x_i\}_{i=1}^n\subset F_0$ such that
$F_0\subset\bigcup_{i=1}^n\ol{B}_{r_i}(x_i)$, where $r_i:=r_{x_i}$. The
family $\{\ol{B}_{r_i}(x_i)\}_{i=1}^n$ can be partitioned into
$\ell$ sub-collections $\{\cB_k\}_{k=1}^{\ell}$ of pairwise disjoint closed
balls, where $\ell$ is an absolute constant. By \eqref{everyball} and
Proposition~\ref{MonotoneInterior},
\be
\int_{F_0}\f{1}{\va^2}f(u_{\va})
\leq
\sum_{i=1}^n\int_{B_{r_i}(x_i)}\f{1}{\va^2}f(u_{\va})
\leq
\frac{C}{|\log\va|}
\log\(2+\f{CE_{\va}(u_{\va},B_{40})}{|\log\va|}\)
\sum_{i=1}^nE_{\va}(u_{\va},B_{r_i}(x_i)).
\label{finall2}
\ee
Since the balls in each $\cB_k$ are disjoint,
\[
\sum_{i=1}^n E_{\va}(u_{\va},B_{r_i}(x_i))
\leq
\sum_{k=1}^{\ell}
\sum_{B_{r_i}(x_i)\in\cB_k}E_{\va}(u_{\va},B_{r_i}(x_i))
\leq
\ell E_{\va}(u_{\va},B_{40})
\leq CM(|\log\va|+1).
\]
Substituting this into \eqref{finall2} gives
\[
\int_{F_0}\f{1}{\va^2}f(u_{\va})\leq C(f,M,\cN).
\]
Combining this estimate with \eqref{B1decompose2} and
\eqref{backslashLdava} completes the proof.
\end{proof}

\subsection{Boundary estimates}

The results in this subsection are boundary analogs of the interior estimates
proven above.

\subsubsection{Boundary \texorpdfstring{$W^{1,q}$}{}-estimates}

The main result here is the following boundary $W^{1,q}$-estimate.

\begin{prop}\label{W1pboundaryfinal}
Let $ \om,\cA,g_{\va} $, and $u_{\va}$ be as in
Theorem~\ref{globalminimizersproperties}. Assume that $\om$ is a $C^{2,1}$
domain with parameters $M_{U,2}$ and $r_{U,2}$. Let $x_0\in\pa\om$ and
$r_0\in(0,\f{r_{U,2}}{10})$. For any $q\in(1,2)$,
\be
\|\na u_{\va}\|_{L^q(\om_{\f{r_0}{10}}(x_0))}\leq C,\label{Lqboundary}
\ee
where $C>0$ depends only on $f,M_0,\cN,M_{U,2},r_{U,2},r_0 $, and $q$.
\end{prop}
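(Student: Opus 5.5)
Our plan is to transplant the interior argument proving Proposition~\ref{InteriorW1p} to half-balls centered on $\pa\om$, replacing each interior ingredient by its boundary analogue, and to treat the finitely many points of $\cA$ separately. The target estimate is again a weak-$L^{2\ga}$ bound for the regular scale: for every $\ga\in(0,1)$ there are $\delta,C>0$ with
\[
\cL^3\bigl(\{y\in\om_{\f{r_0}{10}}(x_0):r(u_{\va};y)<\delta R\}\bigr)\leq CR^{2\ga}\quad\text{for all }R\in(0,1),
\]
where $r(u;\cdot)$ is defined with $U=\om$ as in Definition~\ref{regular}. Since $|\na u_{\va}(y)|\leq r(u_{\va};y)^{-1}$, this gives $\na u_{\va}\in L^{2\ga,\ift}$ uniformly, and choosing $2\ga>q$ yields \eqref{Lqboundary}. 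For $R\leq\va$ the bound is trivial: Lemma~\ref{AprioriBoundary}, together with the bounds on $g_{\va}$ from Definition~\ref{defnsuitabledata} and the uniform $L^\ift$ bound of Proposition~\ref{locallogestimate}, gives $|\na u_{\va}|\leq C\va^{-1}$ up to the boundary. Interior points at distance $\geq\rho$ from $\pa\om$ at scale $\rho$ are handled by the interior lemmas, so only boundary-centered balls require new arguments.

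The first step is the boundary version of Lemma~\ref{regularscalelem}, Lemma~\ref{kplus1} and Lemma~\ref{Fprop} near points $x\in\pa\om\backslash\cA$. We would use Lemma~\ref{BoundaryPartialRegularity1} directly. The pinching hypothesis on $\vt^{\om}_{\va}$ is exactly its assumption, and smallness of $r|\na_\top g_{\va}|$ holds at small scales because $g_{\va}$ is $\cN$-valued with bounded tangential gradient away from $\cA$. The compactness argument there shows that the blow-up limit is a homogeneous minimizer that is constant on the flat boundary, hence constant. We would also use the boundary monotonicity \eqref{Monotone11}, whose error term $\al(M_0,M_1,r)(t-s)$ is summable over dyadic scales. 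The covering steps (Lemmas~\ref{cover1}, \ref{coverpoint}) then carry over, since the energy-drop iteration only needs almost-monotonicity. The type II covering (Lemma~\ref{coveringlemma2}) uses Proposition~\ref{BoundaryClearingout} in place of Proposition~\ref{InteriorClearingout} for boundary-centered balls, and the density bound of Proposition~\ref{locallogestimate}(2) in place of the interior energy bound. Combining these as in Lemma~\ref{interiorlemma} gives $\cL^3(\op{Bad}_{\op I}(u_\va;\delta r)\cap\om_{\f{r_0}{10}}(x_0))\leq Cr^2$ when $x_0\notin\cA$.

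The main obstacle is a point $x_0\in\cA$, where $g_{\va}$ is not $\cN$-valued within distance $\va$ of $x_0$ and $|\na_\top g_{\va}|\sim\dist(\cdot,\cA)^{-1}$. We would decompose $\om_{\f{r_0}{10}}(x_0)$ dyadically into $A_k=\om\cap(B_{2^{-k+1}}(x_0)\backslash B_{2^{-k}}(x_0))$ for $2^{-k}\geq\va$. On $A_k$, after rescaling by $2^{-k}$, the boundary datum is $\cN$-valued with $C^2$ bounds independent of $k$ by condition (1) of Definition~\ref{defnsuitabledata}. The rescaled parameter is $2^k\va$, and the rescaled density is at most $C|\log(2^k\va)|$ by Proposition~\ref{locallogestimate}(2), modulo checking that the monotonicity computation in that proof gives a log bound in terms of the local scale. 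The estimate from the first step then gives $\cL^3(\{r<\delta R\}\cap A_k)\leq C\min\{2^{-3k},2^{-k}R^{2\ga}\}$, with $R$ measured in original units. Summing over $k$ gives $CR^{2\ga}$ up to a harmless logarithm, which is absorbed by slightly decreasing $\ga$. The innermost ball $B_{C\va}(x_0)$ is handled by Lemma~\ref{AprioriBoundary} as in the $R\leq\va$ case. The delicate point is that the clearing-out and regularity constants on $A_k$ must be uniform in $k$, which requires the scale-invariant form of condition (4) of Definition~\ref{defnsuitabledata}.
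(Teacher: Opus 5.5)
Your skeleton matches the paper's: reduce to a weak-type bound on $r(u_\va;\cdot)$, dispose of $R\le\va$ with Lemma~\ref{AprioriBoundary}, and prove $\cL^3(\op{Bad}_{\op{I}}(u_\va;\delta r)\cap\om_{r_0/10}(x_0))\le Cr^2$ for $r>\va^\ga$. The gap is in your treatment of $x_0\in\cA$.

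\textbf{The local density bound you need is not available.} Your rescaling needs $\theta^\om_\va(u_\va;x,2^{-k})\le C|\log(2^k\va)|$, and this does not follow from the proof of Proposition~\ref{locallogestimate}. What is shown there is $\f{\ud}{\ud\rho}\bigl(\vt^\om_\va(u_\va;x_0,\rho)+C\log\rho\bigr)\ge -C(|\log\va|+1)$. This bounds the density at radius $\rho$ only by its value at larger radii. At radii comparable to $r_0$ that value really is of order $|\log\va|$: along $\va_i$, $x_0\in S_*$ by Theorem~\ref{structureofthelimitset}. So monotonicity gives $C(|\log\va|+1)$ at every scale and nothing better.

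\textbf{Why this matters.} After rescaling $A_k$ to unit size, the energy constant of your first step becomes $M_k\approx|\log\va|/|\log(2^k\va)|$. It enters through the Vitali count of the type~II covering, and it is unbounded for annuli of radius $\va^{1-o(1)}$. Equivalently, for those annuli your rescaled threshold $\tilde r>(2^k\va)^\ga$ reaches original scales $\rho\approx 2^{-k(1-\ga)}\va^\ga\ll\va^\ga$. There $\eta\log(\rho/\va)=o(|\log\va|)$, and the global energy bound no longer controls the number of balls. You can patch this by discarding the annuli of radius $<\va^{2\ga/3}$: their total volume is $\lesssim\va^{2\ga}\le R^{2\ga}$, and on the remaining annuli $|\log(2^k\va)|\ge\f{1}{3}|\log\va|$.

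\textbf{How the paper avoids the issue.} It never runs the type~II count after rescaling. The covering of $\op{Bad}_{\op{II}}$ in Lemma~\ref{lemW1pfinal}\ref{lemW1p1} is done once, in original units, for $r>\va^\ga$, using only $E_\va(u_\va,\om)\le C|\log\va|$; balls meeting $B_{20r}(x_0)$ cost $C$ extra balls. The singular point is then simply added to the bad set at every dyadic scale, $F_j:=(\op{Bad}_{\op{II}}(u_\va;2^jr,\Lda)\cap\om_{r_0/10}(x_0))\cup\{x_0\}$. Every center of the good layer $\cA^\om_j$ then lies at distance $>2^{j-1}r$ from $x_0$. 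There, condition (1) of Definition~\ref{defnsuitabledata} gives $(2^jr)|\na_\top g_\va|\le C$; condition (1), not (4), is the scale-invariant one.

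\textbf{Your first step is also heavier than needed.} On the good balls the paper uses only bounded-density statements, and needs no boundary analogues of Lemmas~\ref{regularscalelem}--\ref{coverpoint}. Proposition~\ref{BoundaryPartialRegularity1prop} is Lemma~\ref{BoundaryPartialRegularity1} combined with pigeonholing the pinching over dyadic scales. It shows that bounded density at a boundary point with scale-invariant $\cN$-valued data already forces $e_\va\le C(\delta\rho)^{-2}$ on $\om_{\delta\rho}$. Hence $\op{Bad}_{\op{I}}(u_\va;\delta r)$ misses a boundary strip of width proportional to the ball. The remainder is covered by boundedly many interior balls, to which Lemma~\ref{coverpoint} applies verbatim. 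This also spares you almost-monotonicity for interior centers whose balls cross $\pa\om$. Your boundary energy-drop iteration would need it, since the supremum in Lemma~\ref{cover1} runs over all centers, and Proposition~\ref{MonotoneBoundary}, stated for centers on $\pa U$, does not supply it.
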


The proof relies on the following boundary version of Lemma~\ref{interiorlemma}.

\begin{lem}\label{lemW1pfinal}
Assume that $\om,\cA,g_{\va}$, and $u_{\va}$ are as in
Theorem~\ref{globalminimizersproperties}. Assume that $\om$ is a $C^{2,1}$
domain with parameters $M_{U,2}$ and $r_{U,2}$. Let $\ga\in(0,1)$ and
$x_0\in\pa\om$. Let $r_0\in(0,\f{r_{U,2}}{10})$ be such that
$\cA\cap\om_{10r_0}(x_0)$ is either empty or equals $\{x_0\}$. Then there
exist $\delta\in(0,1)$ and $C,\Lda>0$, depending only on
$f,\ga,\cA,\om,M_0,\cN$, and $r_0$, such that the following properties hold.
\begin{enumerate}[label=$(\theenumi)$]
\item\label{lemW1p1} For any $\va\in(0,\delta)$ and
$r\in(\va^{\ga},\f{r_0}{10})$, there exists a collection of balls
$\{B_{r}(x_i)\}_{i=1}^n$ such that
\be
\op{Bad}_{\op{II}}(u_{\va};r,\Lda)\cap\om_{r_0}(x_0)
\subset \bigcup_{i=1}^nB_r(x_i),
\quad
n\leq \Lda r^{-1}.\label{nleqLdarminus1}
\ee

\item\label{lemW1p11} For any $\va\in(0,\delta)$ and
$r\in(\va^{\ga},\f{r_0}{10})$,
\be
\cL^3\(\op{Bad}_{\op{I}}(u_{\va};\delta r)
\cap\om_{\f{r_0}{10}}(x_0)\)
\leq Cr^2.
\label{BadIestimatefinal}
\ee
\end{enumerate}
\end{lem}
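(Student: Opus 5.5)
The plan is to transfer the interior arguments (Lemmas~\ref{coveringlemma2}, \ref{Fprop}, \ref{cover1}, \ref{coverpoint}, \ref{interiorlemma}) to $\om_{r_0}(x_0)$. At interior points we use the interior monotonicity and clearing-out. At points near $\pa\om$ we use the boundary monotonicity (Proposition~\ref{MonotoneBoundary}), the boundary clearing-out (Proposition~\ref{BoundaryClearingout}) and the boundary $\va$-regularity (Lemma~\ref{BoundaryPartialRegularity1} and Proposition~\ref{BoundaryPartialRegularity1prop}). The one exception is the singular point $x_0\in\cA$, if it lies in $\om_{10r_0}(x_0)$; there we use the density bound of Proposition~\ref{locallogestimate}. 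Throughout, the energy bound $E_{\va}(u_{\va},\om)\le C(|\log\va|+1)$ and the $L^\infty$ bound come from Proposition~\ref{locallogestimate}(1).

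\textbf{Part \ref{lemW1p1}.} Let $y\in\op{Bad}_{\op{II}}(u_{\va};r,\Lda)\cap\om_{r_0}(x_0)$. I would show $\theta^{\om}_{\va}(u_{\va};y,2r)\ge\delta|\log\va|$.
\begin{enumerate}
\item If $\dist(y,\pa\om)\ge 2r$, apply Proposition~\ref{InteriorClearingout} exactly as in Lemma~\ref{coveringlemma2}.
\item If $\dist(y,\pa\om)<2r$, take the nearest boundary point $y'$. The case $y'$ within distance comparable to $r$ of $x_0\in\cA$ is handled in step~3. Otherwise $g_{\va}\in\cN$ on $T^{\om}_{4r}(y')$ with $r|\na_\top g_{\va}|\le C$, so the tangential energy of $g_{\va}$ on $T^\om_{4r}(y')$ is bounded. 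Proposition~\ref{BoundaryClearingout} at $y'$ with radius $4r$ then gives: if $\theta^{\om}_{\va}(u_{\va};y',4r)<\delta\log(r/\va)$, then $\theta^{\om}_{\va}(u_{\va};y',2r)\le C$, hence $\theta^{\om}_{\va}(u_{\va};y,r)\le C$. Choosing $\Lda$ larger than this $C$ contradicts $y\in\op{Bad}_{\op{II}}$.
\item Points $y$ near $x_0\in\cA$ are all contained in the single ball $B_{Cr}(x_0)$, which costs only $O(1)$ balls of radius $r$.
\end{enumerate}
With the lower bound in hand, a Vitali covering by disjoint balls $B_{4r}(y_i)$ together with the global energy bound gives $\#\{y_i\}\le C/r$, and each $B_{20r}(y_i)$ is covered by boundedly many balls of radius $r$. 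Since $r>\va^\ga$, we have $\log(r/\va)\ge(1-\ga)|\log\va|$, which is what makes the clearing-out threshold comparable to $|\log\va|$.

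\textbf{Part \ref{lemW1p11}.} Follow the proof of Lemma~\ref{interiorlemma}. The annuli $\cA_j$ between the $2^jr$-neighborhoods of the type II bad sets are covered by $C(2^jr)^{-1}$ balls $B_{r_j}(x'_{jk})$ with $r_j=2^jr/10$, using part~\ref{lemW1p1}, and on each the density is bounded by $\Lda$. The needed ingredient is a boundary version of Lemma~\ref{coverpoint}: for a ball $B_{\rho}(x)$ with $\theta^\om_\va(u_{\va};x,C\rho)\le\Lda$, the set $\op{Bad}_{\op{I}}(u_{\va};\delta r)\cap B_\rho(x)$ should have measure at most $Cr^3$. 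If $\dist(x,\pa\om)>2\rho$ this is Lemma~\ref{coverpoint}. Otherwise I would rerun the energy-drop iteration of Lemmas~\ref{Fprop}--\ref{cover1}, replacing $\vt_\va$ by $\vt^\om_\va+C_0 s/r_0$, which is monotone by \eqref{Monotone11} and carries a bounded error term. At boundary centers, the pinching alternative is fed into Lemma~\ref{BoundaryPartialRegularity1}, which shows that a small pinch plus small tangential gradient forces regularity at scale comparable to the current radius. At interior centers we use Lemma~\ref{regularscalelem} as before. Since the boundary half-balls are not translation-invariant in the normal direction, the ``localization to one ball'' step simply splits into boundary and interior subballs, at the cost of a bounded number of extra balls per generation. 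Summing, $\sum_j C(2^jr)^{-1}r^3\le Cr^2$.

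\textbf{Main obstacle.} I expect the hardest step to be the neighborhood of $x_0\in\cA$, where $g_{\va}\notin\cN$ and neither Lemma~\ref{BoundaryPartialRegularity1} nor Proposition~\ref{MonotoneBoundary} applies. There I would use the structure of condition~\ref{boundarycondition4}. At distance $d\gg\va$ from $x_0$, the data satisfy $g_{\va}\in\cN$ with $|\na_\top g_{\va}|\le C/d$, so Whitney-type balls of radius $\sim d$ centered near the boundary fall under the previous case. In the balls $B_{C\va^{\ga}}(x_0)$ (and the $\va$-core) we do not seek regularity at all and bound their volume trivially by $C r^3\le Cr^2$. The delicate point is checking that the extra $C/\rho$ term from the monotonicity near $x_0$ (as in the proof of Proposition~\ref{locallogestimate}) does not spoil the energy-drop iteration. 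This should hold because, on Whitney balls, the relevant scale ratio is bounded, so the iteration only runs inside balls where the boundary data are regular.
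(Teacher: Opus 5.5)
Your Part~(1), and the dyadic type~II layering you use in Part~(2), coincide with the paper's argument. The gap is the per-ball estimate for layer balls that meet $\pa\om$. The ``boundary version of Lemma~\ref{coverpoint}'' is never established, and the energy-drop iteration you sketch for it does not work as described.

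\begin{itemize}
\item Lemma~\ref{BoundaryPartialRegularity1} converts a pinch at a boundary center into regularity \emph{near} that center. The localization step (Lemma~\ref{Fprop}) instead needs regularity \emph{away} from the pinched center. So at boundary centers you do not obtain ``all bad points in one small ball''.
\item The correction $C_0 s/r_0$ is only valid far from $\cA$. By Proposition~\ref{MonotoneBoundary}, on a boundary patch at distance $d$ from $x_0$ the error is of order $s/d$. On balls of radius comparable to $d$, a small pinch therefore no longer makes the radial energy small, and that is exactly what the pinching-implies-regularity step uses.
\end{itemize}

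The missing observation is that no iteration is needed near $\pa\om$. Proposition~\ref{BoundaryPartialRegularity1prop} requires neither a pinching condition nor a small tangential gradient, since it finds a pinched scale itself by pigeonhole. Take $y\in\pa\om$ with a density bound on $\om_{2\rho}(y)$, and with $g_\va\in\cN$ and $\rho|\na_{\top}g_\va|+\rho^2|D^2_{\top}g_\va|\le C$ on $T^{\om}_{2\rho}(y)$. Then it already yields $r(u_\va;\cdot)\ge c\rho$ on $\om_{c\rho}(y)$.

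The paper applies this to each ball $B_{2^jr/16}(x'_{jk})$ that meets $\pa\om$. The part within distance $2^j\eta r$ of $\pa\om$ contains no point of $\op{Bad}_{\op{I}}(u_\va;\delta r)$ at all. The remainder is covered by $C(\eta)$ balls of radius $2^j\eta r/100$ lying in $\om$ with bounded density, and on each of these the interior Lemma~\ref{coverpoint} gives $Cr^3$.

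Your treatment of $x_0$ leaves a related point open. A Whitney ball at distance $d\ll 2^jr$ from $x_0$ needs a density bound at its own scale. This is not inherited from $\theta^{\om}_\va(u_\va;x'_{jk},2^{j-1}r)\le\Lda$, because monotonicity across $x_0$ only holds up to the logarithmic error appearing in the proof of Proposition~\ref{locallogestimate}. You also do not say how the Whitney decomposition is merged with the type~II layering.

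The paper's device is simply to add $x_0$ to every $F_j$. Then each layer-$j$ center satisfies $|x'_{jk}-x_0|>2^{j-1}r$. The boundary patch used for Proposition~\ref{BoundaryPartialRegularity1prop} therefore stays at distance comparable to $2^jr$ from $x_0$, where $g_\va\in\cN$ and $2^jr\,|\na_{\top}g_\va|\le C$. The neighborhood of $x_0$ is absorbed into $B_r(F_0)$, whose volume is still $\le Cr^2$ by Part~(1), at the cost of one extra ball per layer. With these two changes, the sum $\sum_j C(2^jr)^{-1}r^3\le Cr^2$ goes through.
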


\begin{proof}[Proof of Lemma~\ref{lemW1pfinal}\ref{lemW1p1}]
Define
\[
E:=\op{Bad}_{\op{II}}(u_{\va};r,\Lda)\cap\om_{r_0}(x_0)
\]
and decompose $E=E_0\cup E_1$, where
\[
E_0:=\{x\in E:B_{2r}(x)\subset\om\},
\quad
E_1:=E\backslash E_0.
\]

\medskip
\noindent\textit{Step 1: Clearing-out estimates.}
By Proposition~\ref{InteriorClearingout}, there exists
$\eta=\eta(f,M_0,\cN)\in(0,1)$ such that, for any
$x\in E_0\backslash B_{20r}(x_0)$, if $\va\in(0,\eta r)$ and
\[
\theta_{\va}(u_{\va};x,4r)\leq \eta\log\f{r}{\va},
\]
then $\theta_{\va}(u_{\va};x,r)\leq C_0$, where
$C_0=C_0(f,M_0,\cN)>0$.

For any $x\in E_1\backslash B_{20r}(x_0)$, choose
$y_x\in\pa\om$ such that $|x-y_x|<2r$. Then
$B_{2r}(x)\subset B_{4r}(y_x)$. Since
$|x-x_0|\geq 20r$, we also have
$B_{8r}(y_x)\cap B_{2r}(x_0)=\emptyset$. Thus this boundary ball does not
meet the possible singular boundary point $x_0$. Since $r>\va^\ga>\va$ and
$\{g_{\va}\}$ is a family of suitable boundary data, we have
$g_{\va}\in\cN$ on the relevant boundary patch and
\[
r^2\|D_{\top}^jg_{\va}\|_{L^{\ift}(B_{4r}(y_x)\cap\pa\om)}
\leq C(M_0)
\quad\text{for }j\in\{0,1,2\}.
\]
By Proposition~\ref{BoundaryClearingout}, after decreasing $\eta$ if needed,
if $\va\in(0,\eta r)$ and
\[
\theta_{\va}^{\om}(u_{\va};y_x,4r)\leq \eta\log\f{r}{\va},
\]
then $\theta_{\va}^{\om}(u_{\va};x,r)\leq C_1(f,M_0,\cN)$.

For $x\in E_0\backslash B_{20r}(x_0)$, set $y_x:=x$. Choose $ \Lda\geq \max\{C_0,C_1\} $. Since $x\in E=\op{Bad}_{\op{II}}(u_{\va};r,\Lda)$, the preceding clearing-out alternative cannot occur. Hence, for any
$x\in E\backslash B_{20r}(x_0)$,
\[
\theta_{\va}^{\om}(u_{\va};y_x,4r)> \eta\log\f{r}{\va}.
\]
Choose $\eta'=\eta'(\eta,\ga)>0$ such that, when $\va\in(0,\eta')$ and
$r>\va^\ga$, we have $\va<\eta r$ and
$\eta\log\f{r}{\va}\geq \eta'|\log\va|$. Therefore
\be
\theta_{\va}^{\om}(u_{\va};y_x,4r)>\eta'|\log\va|
\quad\text{for any }x\in E\backslash B_{20r}(x_0).
\label{thetavageqboundary}
\ee

\medskip
\noindent\textit{Step 2: Vitali's covering argument.}
First cover $E\cap B_{20r}(x_0)$ by at most $C$ balls of radius $r$. For the
remaining part $E\backslash B_{20r}(x_0)$, apply Vitali's covering lemma to
the family $\{B_{4r}(y_x)\}_{x\in E\backslash B_{20r}(x_0)}$. We obtain a
pairwise disjoint sub-collection $\{B_{4r}(y_i)\}_{i\in I}$ such that
\[
E\backslash B_{20r}(x_0)\subset\bigcup_{i\in I}B_{20r}(y_i).
\]
Using \eqref{thetavageqboundary} and the global energy bound,
\[
(\# I)\eta' r
\leq
\f{1}{|\log\va|}
\sum_{i\in I}E_{\va}(u_{\va},\om_{4r}(y_i))
\leq
\f{C}{|\log\va|}
\int_{\bigcup_{i\in I}\om_{4r}(y_i)}e_{\va}(u_{\va})
\leq C.
\]
Thus $(\# I)r\leq C$. Covering each $B_{20r}(y_i)$ by at most $C$ balls of
radius $r$, and adding the finitely many balls covering $E\cap B_{20r}(x_0)$,
we obtain a collection $\{B_r(x_i')\}_{i=1}^n$ covering $E$, with $ n\leq \Lda r^{-1} $, after increasing $\Lda$ if necessary.
\end{proof}

\begin{proof}[Proof of Lemma~\ref{lemW1pfinal}\ref{lemW1p11}]
Let $\Lda>0$ be given by property \ref{lemW1p1}. If
$r\geq\f{r_0}{40}$, then the estimate follows from the trivial bound
\[
\cL^3\(\op{Bad}_{\op{I}}(u_{\va};\delta r)
\cap\om_{\f{r_0}{10}}(x_0)\)
\leq C r_0^3
\leq C r^2,
\]
after increasing $C=C(r_0)>0$ if necessary. We therefore assume
$r\in(0,\f{r_0}{40})$.

Choose $j_0\in\N$ such that $ 2^{j_0}r\leq \f{r_0}{10}<2^{j_0+1}r $. For $j\in\Z\cap[0,j_0]$, define
\[
F_j:=
\(\op{Bad}_{\op{II}}(u_{\va};2^jr,\Lda)
\cap\om_{\f{r_0}{10}}(x_0)\)
\cup\{x_0\}.
\]
For $j\in\Z\cap[1,j_0+1]$, set
\[
\cA_j^{\om}:=
\(B_{2^{j}r}(F_j)\backslash B_{2^{j-1}r}(F_{j-1})\)
\cap\om_{\f{r_0}{10}}(x_0),
\]
where, for the last layer, we take $F_{j_0+1}:=\om_{\f{r_0}{10}}(x_0)$.
Then
\[
\om_{\f{r_0}{10}}(x_0)
\subset
\(B_r(F_0)\cap\om_{\f{r_0}{10}}(x_0)\)
\cup
\bigcup_{j=1}^{j_0+1}\cA_j^\om.
\]

By property \ref{lemW1p1}, for any $j\in\Z\cap[0,j_0]$ the set $F_j$ is
covered by at most $C(2^jr)^{-1}$ balls of radius $2^jr$, after adding one
extra ball centered at $x_0$. For the last layer $j=j_0+1$, the same bound
follows from the bounded geometry of $\om_{\f{r_0}{10}}(x_0)$, since
$2^{j_0+1}r$ is comparable to $r_0$. Refining these coverings by a fixed
finite number of smaller balls, we obtain, for any $j\in\Z\cap[1,j_0+1]$,
balls $\{B_{\f{2^jr}{16}}(x_{jk}')\}_{k=1}^{n_j'}$ such that
\[
\cA_j^{\om}
\subset
\bigcup_{k=1}^{n_j'}B_{\f{2^jr}{16}}(x_{jk}'),
\quad
\{x_{jk}'\}_{k=1}^{n_j'}\subset\cA_j^{\om},
\quad
0\leq n_j'\leq C(2^jr)^{-1}.
\]
For such a center $x_{jk}'$, the condition
$x_{jk}'\notin B_{2^{j-1}r}(F_{j-1})$ implies that
$x_{jk}'\notin\op{Bad}_{\op{II}}(u_{\va};2^{j-1}r,\Lda)$. Hence
\be
\theta_{\va}^{\om}(u_{\va};x_{jk}',2^{j-1}r)\leq C.
\label{thetava2j1}
\ee

We now estimate the type I bad set inside each ball
$B_{\f{2^jr}{16}}(x_{jk}')$.

\medskip
\noindent\textit{Case a: $B_{2^{j-3}r}(x_{jk}')\subset\om$.}
By \eqref{thetava2j1} and Lemma~\ref{coverpoint}, there exists
$\delta'=\delta'(f,M_0,\cN)\in(0,1)$ such that
\be
\cL^3\(\op{Bad}_{\op{I}}(u_{\va};\delta' r)
\cap B_{\f{2^jr}{16}}(x_{jk}')\)
\leq Cr^3.
\label{interioruselemma}
\ee

\medskip
\noindent\textit{Case b: $B_{2^{j-3}r}(x_{jk}')\cap\pa\om\neq\emptyset$.}
Choose $y_{jk}'\in\pa\om$ such that $ B_{2^{j-3}r}(x_{jk}')\subset B_{2^{j-2}r}(y_{jk}') $. Since $x_0\in F_{j-1}$ and
$x_{jk}'\notin B_{2^{j-1}r}(F_{j-1})$, we have
$|x_{jk}'-x_0|>2^{j-1}r$. It follows that the boundary patch
$T_{2^{j-3}r}^{\om}(y_{jk}')$ stays away from the possible singular boundary
point $x_0$. Therefore the regularity of $g_\va$ gives
\[
(2^{j-2}r)\|\na_{\top}g_{\va}\|_{L^{\ift}(T_{2^{j-3}r}^{\om}(y_{jk}'))}
\leq C.
\]
Using this bound, \eqref{thetava2j1}, and
Proposition~\ref{BoundaryPartialRegularity1prop}, there exists
$\eta>0$ such that
\be
r(u_{\va};x)\geq\f{2^jr}{16C}>\delta' r
\label{geqdeltaprime}
\ee
for any $ x\in
B_{\f{2^jr}{16}}(x_{jk}') $ such that $ \dist(x,\pa\om)\leq 2^j\eta r $ after decreasing $\delta'$ if necessary.

It remains to treat the part away from the boundary. Define
\[
F_{jk}':=
B_{\f{2^jr}{16}}(x_{jk}')
\cap
\{y\in\om:\dist(y,\pa\om)>2^j\eta r\}.
\]
Cover $F_{jk}'$ by balls
$\{B_{\f{2^j\eta r}{100}}(x_{ijk}')\}_{i=1}^{n_{jk}'}$ with
$x_{ijk}'\in F_{jk}'$ and $n_{jk}'\leq C(\eta)$. Since
$\dist(x_{ijk}',\pa\om)>2^j\eta r$, each ball satisfies $ B_{\f{2^{j+1}\eta r}{100}}(x_{ijk}')\subset\om $. Moreover, this ball is contained in $B_{2^{j-1}r}(x_{jk}')$ if $\eta$ is chosen below a fixed dimensional constant. By \eqref{thetava2j1} and monotonicity,
\[
\theta_{\va}\(u_{\va};x_{ijk}',\f{2^{j+1}\eta r}{100}\)\leq C.
\]
Lemma~\ref{coverpoint} gives $\delta''=\delta''(f,M_0,\cN)\in(0,1)$ such that
\[
\cL^3\(\op{Bad}_{\op{I}}(u_{\va};\delta'' r)
\cap B_{\f{2^j\eta r}{100}}(x_{ijk}')\)
\leq Cr^3.
\]
Summing over $i$ and combining this estimate with \eqref{geqdeltaprime}, we
obtain, for $\delta:=\min\{\delta',\delta''\}$,
\[
\cL^3\(\op{Bad}_{\op{I}}(u_{\va};\delta r)
\cap B_{\f{2^jr}{16}}(x_{jk}')\)
\leq Cr^3.
\]

For the initial set $B_r(F_0)$, it follows from \eqref{nleqLdarminus1} that it can be covered by at most $C r^{-1}$ balls of radius comparable to $r$. Hence
\[
\begin{aligned}
\cL^3\(\op{Bad}_{\op{I}}(u_{\va};\delta r)
\cap\om_{\f{r_0}{10}}(x_0)\)
&\leq Cr^2+\sum_{j=1}^{j_0+1}\sum_{k=1}^{n_j'}Cr^3\leq C\sum_{j=0}^{j_0+1}(2^jr)^{-1}r^3
\leq Cr^2.
\end{aligned}
\]
This proves \eqref{BadIestimatefinal}.
\end{proof}

\begin{proof}[Proof of Proposition~\ref{W1pboundaryfinal}]
Let $q\in(1,2)$ be fixed, and choose $\ga\in(\f{q}{2},1)$. By a finite covering argument, using the finiteness of $\cA$, it suffices to prove the estimate on boundary patches for which $\cA\cap\om_{10r_0}(x_0)$ is either empty or equals $\{x_0\}$. We assume this local condition below.

Applying Lemma~\ref{lemW1pfinal}, we obtain that, for any $r\in(\va^{\ga},\f{r_0}{10})$ and $\va\in(0,\delta(f,M_0,\ga,\cN,M_{U,2},r_{U,2},r_0))$,
\[
\cL^3\(\op{Bad}_{\op{I}}(u_{\va};\delta r)\cap\om_{\f{r_0}{10}}(x_0)\)\leq Cr^2.
\]
Set $R_0:=(\f{r_0}{10})^{\ga^{-1}}$. For any $R\in(\va,R_0)$, we have $R^\ga\in(\va^\ga,\f{r_0}{10})$ and $R<R^\ga$. An argument analogous to \eqref{Rlarge} gives
\be
\cL^3(\{y\in\om_{\f{r_0}{10}}(x_0):r(u_{\va};y)<\delta R\})\leq CR^{2\ga}.
\label{Rgaes}
\ee
For $R\in[R_0,1)$, the same estimate follows from the trivial volume bound, after increasing $C=C(r_0)$ if necessary. If $R\in(0,\va)$, then for any $y\in\om_{\f{r_0}{10}}(x_0)$, Lemma~\ref{AprioriBoundary} gives $R|\na u_{\va}(y)|\leq C$, so choosing $\delta$ smaller if necessary,
\[
\cL^3(\op{Bad}_{\op{I}}(u_{\va};\delta R)\cap\om_{\f{r_0}{10}}(x_0))=0.
\]
Together with \eqref{Rgaes}, this implies that $\na u_{\va}\in L^{2\ga,+\ift}(\om_{\f{r_0}{10}}(x_0))$. Since $q<2\ga$, the embedding $L^{2\ga,+\ift}\hookrightarrow L^q$ on bounded sets gives \eqref{Lqboundary}.
\end{proof}

\subsubsection{Boundary estimates on the potential}

The following proposition is the boundary analog of Proposition~\ref{Interiorpotential}. The proof combines the covering structure from Lemma~\ref{lemW1pfinal} with a logarithmic-scale potential estimate near points that lie close to the type II bad set but away from the singular set $\cA$.

\begin{prop}\label{potentialfinalestimate}
Let $\om,\cA,g_{\va}$, and $u_{\va}$ be as in Theorem~\ref{globalminimizersproperties}. Assume that $\om$ is a $C^{2,1}$ domain with parameters $M_{U,2}$ and $r_{U,2}$. Let $x_0\in\pa\om$ and $r_0\in(0,\f{r_{U,2}}{10})$. Then
\be
\f{1}{\va^2}\int_{\om_{\f{r_0}{10}}(x_0)}f(u_{\va})\leq C,
\label{potentialfinalestimateeq}
\ee
where $C>0$ depends only on $\cA,f,M_0,M_{U,2},r_{U,2}$, and $\cN$.
\end{prop}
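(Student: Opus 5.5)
I will mirror the proof of Proposition~\ref{Interiorpotential}, replacing the interior covering of type II bad sets by the boundary covering of Lemma~\ref{lemW1pfinal}\ref{lemW1p1}. By a finite covering and the finiteness of $\cA$, it suffices to treat boundary patches with $\cA\cap\om_{10r_0}(x_0)\subset\{x_0\}$. Set $r_\va:=\f{1}{2}\va^{\f{1}{4}}$. Following the proof of Lemma~\ref{lemW1pfinal}\ref{lemW1p11}, define the layers $F_j$ (type II bad set at scale $2^jr_\va$ together with the point $x_0$) and the annular sets $\cA_j^{\om}$ for $j\in\Z\cap[1,j_0+1]$. Then $\om_{\f{r_0}{10}}(x_0)$ splits into $B_{r_\va}(F_0)$ and $\bigcup_j\cA_j^\om$.

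\textbf{Good layers.} On $\cA_j^\om$ I would cover by $C(2^jr_\va)^{-1}$ balls $B_{2^jr_\va/16}(x'_{jk})$ with $\theta^\om_\va(u_\va;x'_{jk},2^{j-1}r_\va)\leq C$, as in \eqref{thetava2j1}, and these balls avoid $x_0$ at distance $\gtrsim 2^jr_\va$. In the interior case a, Lemma~\ref{va3estimates} gives $\int f(u_\va)\leq C\va^3$ on each ball. In case b (near $\pa\om$), Proposition~\ref{BoundaryPartialRegularity1prop} gives $(2^jr_\va)^2e_\va\leq C$ in a boundary layer of width $\sim2^j\eta r_\va$. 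There we also have the pointwise decay $f(u_\va)\leq C\va^4(2^jr_\va)^{-4}$, so the contribution is at most $C\va^4(2^jr_\va)^{-1}$. The part farther from the boundary is handled by Lemma~\ref{va3estimates} on $C(\eta)$ interior balls. Summing,
\[
\sum_j C(2^jr_\va)^{-1}\bigl(\va^3+\va^4(2^jr_\va)^{-1}\bigr)\leq C\va^{2},
\]
which after dividing by $\va^2$ is bounded.

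\textbf{Bad neighbourhood $B_{r_\va}(F_0)$.} For $x$ here I need a boundary version of Lemma~\ref{badsetcoveruse}: a radius $r_x\in[\va^{1/4},\va^{1/8}]$ with
\[
\va^{-2}\int_{\om_{r_x}(x)}f(u_\va)\leq\f{C}{|\log\va|}\,r_x\bigl(\theta^\om_\va(u_\va;x,r_x)+C\bigr).
\]
The ODE argument carries over using \eqref{Monotone11b}. The extra error $\al\,(t-s)$ integrates to $O(\va^{1/8})$ in the logarithmic variable, which is harmless. The lower bound $\theta^\om_\va(u_\va;x,2\va^{1/4})\gtrsim|\log\va|$ comes from Proposition~\ref{BoundaryClearingout} (for $x$ away from $x_0$) or Proposition~\ref{InteriorClearingout} (interior case). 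Then a Besicovitch covering together with $E_\va(u_\va,\om)\leq C|\log\va|$ gives a bound $C$, exactly as in \eqref{finall2}.

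\textbf{Main obstacle: points near $x_0\in\cA$.} Here $g_\va\notin\cN$ and Propositions~\ref{MonotoneBoundary} and \ref{BoundaryClearingout} do not directly apply. I would treat $\om_{C\va^{1/8}}(x_0)$ separately as follows.
\begin{itemize}
\item In $\om_{C\va}(x_0)$, use $f(u_\va)\leq C$; the contribution is $C\va^3$.
\item On dyadic shells $\om_{2^{k+1}\va}(x_0)\setminus\om_{2^k\va}(x_0)$, use the modified monotonicity from the proof of Proposition~\ref{locallogestimate}. This gives $\f{d}{d\rho}\vt^\om_\va\geq\f{2}{\va^2\rho^2}\int f\phi_\rho-C(|\log\va|+1)-C/\rho$.
\item Integrate this against the density bound $\vt^\om_\va\leq C|\log\va|$ over shells, running the same ODE/pigeonhole scheme in $s=\log\rho$. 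The $C/\rho$ term is the delicate piece: it contributes $O(1)$ per logarithmic unit.
\end{itemize}
The goal is $\va^{-2}\int_{\om_{\va^{1/8}}(x_0)}f(u_\va)\leq C$. I would first try to get it from the improved decay $f\leq C\va^4\rho^{-4}$ on shells where $\theta$ is bounded, plus the log-scale estimate elsewhere. I expect this corner analysis to be the hardest step.
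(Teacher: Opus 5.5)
Your treatment of the good layers $\cA_j^\om$ (interior balls via Lemma~\ref{va3estimates}, boundary layers via the pointwise decay from Proposition~\ref{BoundaryPartialRegularity1prop}) matches the paper's Step~1. So does your handling of the bad neighbourhood $B_{r_\va}(F_0)$ away from $x_0$ (a boundary analogue of Lemma~\ref{badsetcoveruse}, which is Lemma~\ref{boundarypotentialrx}, followed by Besicovitch), which is the paper's Step~2.

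The gap is the region near $x_0\in\cA$. You identify it as the main obstacle but do not close it, and the dyadic-shell/ODE plan you sketch is left unfinished. It is also unclear that it can be finished as stated. The decay $f\le C\va^4\rho^{-4}$ comes from boundary $\va$-regularity, which needs $\cN$-valued data and bounded density. Both are unavailable near $x_0$: the boundary loops around $x_0$ are non-trivial, so $\theta^\om_\va(u_\va;x_0,\rho)$ grows like $\log(\rho/\va)$. Likewise, the log-scale estimate of Lemma~\ref{boundarypotentialrx} needs $\cN$-valued data on $T^\om_{2\va^{1/8}}(x)$, which fails for $x$ within $\va^{1/8}$ of $x_0$.

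The missing observation is that no new corner analysis is needed. Proposition~\ref{locallogestimate}(2) holds at every point of $\ol\om$, including $x_0\in\cA$. Its proof already contains the modified-monotonicity argument near $\cA$ that you propose to redo. Since $\va^{-2}f(u_\va)\le e_\va(u_\va)$, for every $\rho>0$ we get
\[
\f{1}{\va^2}\int_{\om_\rho(x_0)}f(u_\va)\le\int_{\om_\rho(x_0)}e_\va(u_\va)=\rho\,\theta^\om_\va(u_\va;x_0,\rho)\le C\rho(|\log\va|+1).
\]
This is bounded as soon as $\rho$ is a positive power of $\va$. The paper removes $\om_{\va^{1/16}}(x_0)$ exactly this way. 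The exponent $1/16<1/8$ also ensures every remaining point $x$ of $B_{r_\va}(F_0)$ has $\dist(x,\cA)>\va^{1/16}\gg2\va^{1/8}$. Then $g_\va$ is $\cN$-valued with $\va^{1/8}|\na_\top g_\va|\le C$ on $T^\om_{2\va^{1/8}}(x)$, which is what Lemma~\ref{boundarypotentialrx} requires. Your excision radius $C\va^{1/8}$ with $C$ large would also work, by condition (1) of Definition~\ref{defnsuitabledata}, once the corner is bounded by the display above. With this estimate in place, the rest of your argument goes through.
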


The key ingredient is the following lemma, which provides a pointwise logarithmic-scale estimate on the potential near a point close to the type II bad set. It is the boundary analog of Lemma~\ref{badsetcoveruse}.

\begin{lem}\label{boundarypotentialrx}
Let $\om,\cA,g_{\va}$, and $u_{\va}$ be as in Theorem~\ref{globalminimizersproperties}. Assume that $\om$ is a $C^{2,1}$ domain with parameters $M_{U,2}$ and $r_{U,2}$. Set $r_{\va}:=\f{\va^{\f{1}{4}}}{2}$ and $\sg_{\va}:=\va^{\f{1}{8}}$. Assume $x\in\om$ satisfies $\dist(x,\cA)>\va^{\f{1}{16}}$. There exist $\delta\in(0,1)$ and $\Lda>0$, depending only on $f,M_0,\cA,\om$, and $\cN$, such that the following property holds. If $\va\in(0,\delta)$ and
\be
\dist\(x,\op{Bad}_{\op{II}}\(u_{\va};r_{\va},\Lda\)\)<r_{\va},
\label{Badboundaryf}
\ee
then there exists $r_x\in[2r_{\va},\sg_{\va}]$ such that
\[
\int_{\om_{r_x}(x)}\f{1}{\va^2}f(u_{\va})\leq\f{C}{|\log\va|}r_x\theta_{\va}^{\om}(u_{\va};x,r_x),
\]
where $C>0$ depends only on $\cA,f,M_0,M_{U,2},r_{U,2}$, and $\cN$.
\end{lem}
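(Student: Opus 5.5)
The argument mirrors Lemma~\ref{badsetcoveruse}, with the interior monotonicity formula replaced by the boundary one. Set
\[
F_{\va}(r):=\theta_{\va}^{\om}(u_{\va};x,r),\qquad G_{\va}(r):=\f{2}{\va^2r}\int_{\om_r(x)}f(u_{\va}).
\]
The lemma asks for a cruder bound than \eqref{badsetresult}: the logarithmic factor is absorbed into $C$. So it suffices to show two things. First, $F_{\va}$ is essentially monotone on $I:=[2r_{\va},\sg_{\va}]$, in the sense that $F_{\va}'\geq G_{\va}-\text{error}$. Second, the ratio $F_{\va}(\sg_{\va})/F_{\va}(2r_{\va})$ is bounded by a constant.

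\textbf{Step 1: monotonicity with admissible error.} Since $\dist(x,\cA)>\va^{1/16}\gg\sg_{\va}$, every ball $B_{r}(x)$ with $r\leq 10\sg_{\va}$ meets $\pa\om$ only in a patch at distance $\gtrsim\va^{1/16}$ from $\cA$. On that patch $g_{\va}$ is $\cN$-valued by Definition~\ref{defnsuitabledata}(2), and $|\na_{\top}g_{\va}|\leq C\va^{-1/16}$. We split according to $d:=\dist(x,\pa\om)$.
\begin{itemize}
\item For $r<d$ we use \eqref{Mo11} directly.
\item For $r\geq d$ we re-centre at a nearest boundary point $y\in\pa\om$ and apply \eqref{Monotone11b} with $r$ comparable to $\va^{1/16}$. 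Then $M_0$ is replaced by $\va^{1/16}\cdot C\va^{-1/16}=C$, and $M_1\leq C|\log\va|$ by Proposition~\ref{locallogestimate}(2).
\end{itemize}
The error term $\al(t-s)$ is at most $C\va^{-1/16}|\log\va|\,\sg_{\va}=C\va^{1/16}|\log\va|$, which tends to $0$. Comparing $\om_r(x)$ with $\om_{r\pm d}(y)$ costs a bounded multiplicative factor, which is harmless because only the logarithmic rate matters. Passing to $s=\log r$ as in Lemma~\ref{badsetcoveruse} gives
\[
\wt F'(s)\geq\wt G(s)-o(1)
\]
on $I_{\va}$, whose length is at least $C^{-1}|\log\va|$. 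The obstacle here is bookkeeping, namely switching between centres $x$ and $y$ when $d\in(2r_{\va},\sg_{\va})$. If this is troublesome, I would work throughout with the monotone quantity centred at $y$ for scales above $d$ and centred at $x$ below $d$, and concatenate the two pieces.

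\textbf{Step 2: pigeonhole.} Arguing by contradiction exactly as in \eqref{svaex}, applied to $\wt F+o(1)$ (note $\wt F\geq\Lda$ is bounded below, see Step 3), we find $s_{\va}\in I_{\va}$ with
\[
\wt G(s_{\va})\leq\lda_{\va}\wt F(s_{\va}),\qquad \lda_{\va}\leq\f{C}{|\log\va|}\log\f{\wt F(s^1)+1}{\wt F(s^0)}.
\]
Setting $r_x:=e^{s_{\va}}$ gives the desired inequality, up to the factor $\log(\cdots)$.

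\textbf{Step 3: bounding the ratio.} For the numerator, Proposition~\ref{locallogestimate}(2) gives $F_{\va}(\sg_{\va})\leq C|\log\va|$. For the denominator we use \eqref{Badboundaryf}: there is $y'\in\op{Bad}_{\op{II}}(u_{\va};r_{\va},\Lda)$ with $|x-y'|<r_{\va}$. As in Step~1 of Lemma~\ref{lemW1pfinal}\ref{lemW1p1}, the interior or boundary clearing-out (Propositions~\ref{InteriorClearingout} and~\ref{BoundaryClearingout}, the latter applicable because the boundary patch avoids $\cA$ with controlled data) forces
\[
\theta^{\om}_{\va}(u_{\va};\cdot,4r_{\va})\geq\eta'|\log\va|
\]
at $y'$ or at a nearby boundary point. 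Hence $F_{\va}(2r_{\va})\gtrsim|\log\va|$, after adjusting the constants $2$ and $4$ via the near-monotonicity of Step 1 (enlarging the starting radius to a fixed multiple of $r_{\va}$ if necessary). The ratio is therefore bounded, the log factor is a constant, and the claim follows. I expect the main obstacle to be the boundary monotonicity in Step~1, specifically checking that the error from the $\va^{-1/16}$ gradient bound of $g_{\va}$ stays $o(1)$ over the whole range of scales.
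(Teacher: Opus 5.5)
Your proposal is correct and follows the paper's proof. The paper sets $F_{\va}(r)=\theta^{\om}_{\va}(u_{\va};x,r)+C|\log\va|\,r$ so that Proposition~\ref{MonotoneBoundary} gives $F_{\va}'\geq G_{\va}$ on $[2r_{\va},\sg_{\va}]$, runs the same pigeonhole argument in $s=\log r$ as in Lemma~\ref{badsetcoveruse}, and bounds the logarithmic factor via $F_{\va}(\sg_{\va})\leq C|\log\va|$ (Proposition~\ref{locallogestimate}) and $F_{\va}(2r_{\va})\geq C^{-1}|\log\va|$ (boundary clearing-out at a nearby type~II bad point), exactly as in your Steps~1--3. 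The only difference is that you explicitly handle the re-centring between the interior point $x$ and a nearest boundary point, which the paper skips by applying the boundary monotonicity formula directly at $x$; absorbing bounded changes of radius through the two-sided bound $\theta^{\om}_{\va}\approx|\log\va|$ is a legitimate way to make that step rigorous.
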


\begin{proof}
For small $\va\in(0,1)$, we have $\va^{\f{1}{16}}>2\sg_{\va}$. Since $\dist(x,\cA)>\va^{\f{1}{16}}$, the boundary patch $T_{2\sg_{\va}}^{\om}(x)$ stays away from $\cA$. Hence $g_{\va}\in\cN$ on this patch and
\[
\sg_{\va}\|\na_{\top}g_{\va}\|_{L^{\ift}(T_{2\sg_{\va}}^{\om}(x))}\leq C.
\]
By Proposition~\ref{locallogestimate}, we have
\be
\theta_{\va}^{\om}(u_{\va};x,2\sg_{\va})\leq C|\log\va|.
\label{upperboundF}
\ee
Define
\[
F_{\va}(r):=\theta_{\va}^{\om}(u_{\va};x,r)+C|\log\va|r,
\quad
G_{\va}(r):=\f{2}{\va^2r}\int_{\om_r(x)}f(u_{\va}).
\]
By Proposition~\ref{MonotoneBoundary}, $F_{\va}'(r)\geq G_{\va}(r)$ for $r\in[2r_{\va},\sg_{\va}]$.

From condition \eqref{Badboundaryf} and Proposition~\ref{BoundaryClearingout}, after choosing $\Lda$ large enough and $\delta$ small enough, we have
\be
\theta_{\va}^{\om}(u_{\va};x,2r_{\va})\geq C^{-1}|\log\va|.
\label{lowerboundF}
\ee
Indeed, otherwise the clearing-out estimate applied at a point of $\op{Bad}_{\op{II}}(u_{\va};r_{\va},\Lda)$ within distance $r_{\va}$ from $x$ would contradict the definition of the type II bad set.

Set
\[
\wt{F}_{\va}(s):=F_{\va}(\exp(s)),\quad
\wt{G}_{\va}(s):=G_{\va}(\exp(s)),\quad
I_{\va}:=\left[\f{\log\va}{4},\f{\log\va}{8}\right].
\]
Then $\exp(s)\in[2r_{\va},\sg_{\va}]$ for $s\in I_{\va}$, and $\wt{F}_{\va}'(s)\geq\wt{G}_{\va}(s)$. Arguing as in the proof of Lemma~\ref{badsetcoveruse}, there exists $s_{\va}\in I_{\va}$ such that
\[
\wt{G}_{\va}(s_{\va})\leq\lda_{\va}\wt{F}_{\va}(s_{\va}),
\quad
\lda_{\va}:=\f{8}{|\log\va|}\log\f{F_{\va}(\sg_{\va})}{F_{\va}(2r_{\va})}\geq0.
\]
Set $r_x:=\exp(s_{\va})\in[2r_{\va},\sg_{\va}]$. Scaling back, we deduce that
\be
\f{1}{\va^2}\int_{\om_{r_x}(x)}f(u_{\va})
\leq\f{\lda_{\va}r_x}{2}F_{\va}(r_x)
\leq\f{C r_xF_{\va}(r_x)}{|\log\va|}
\log\f{F_{\va}(\sg_{\va})}{F_{\va}(2r_{\va})}.
\label{fQlogvaboundary}
\ee
We bound each factor. By \eqref{lowerboundF}, $F_{\va}(2r_{\va})\geq C^{-1}|\log\va|$. From \eqref{upperboundF} and the monotonicity of $F_{\va}$,
\[
F_{\va}(\sg_{\va})\leq F_{\va}(2\sg_{\va})\leq C|\log\va|.
\]
Hence
\[
\log\f{F_{\va}(\sg_{\va})}{F_{\va}(2r_{\va})}\leq C.
\]
Using Proposition~\ref{MonotoneBoundary}, we have
\begin{align*}
\theta_{\va}^{\om}(u_{\va};x,r_x)
&\geq \theta_{\va}^{\om}(u_{\va};x,2r_{\va})-C|\log\va|r_x\\
&\geq \theta_{\va}^{\om}(u_{\va};x,2r_{\va})-C\va^{\f{1}{8}}|\log\va|
\geq C^{-1}|\log\va|.
\end{align*}
Since $r_x\leq\sg_{\va}$, it follows that
\[
F_{\va}(r_x)
=\theta_{\va}^{\om}(u_{\va};x,r_x)+C|\log\va|r_x
\leq C\theta_{\va}^{\om}(u_{\va};x,r_x).
\]
Substituting this into \eqref{fQlogvaboundary} completes the proof.
\end{proof}

\begin{proof}[Proof of Proposition~\ref{potentialfinalestimate}]
By a finite covering argument, using the finiteness of $\cA$, it suffices to prove the estimate on boundary patches for which $\cA\cap\om_{10r_0}(x_0)$ is either empty or equals $\{x_0\}$. We assume this local condition below.

Set $r_{\va}:=\f{\va^{\f{1}{4}}}{2}$ and $\sg_{\va}:=\va^{\f{1}{8}}$. Choose $j_0\in\N$ such that $2^{j_0}r_{\va}<\f{r_0}{10}\leq2^{j_0+1}r_{\va}$. For $j\in\Z\cap[0,j_0+1]$, define
\[
F_j^{\om}:=(\op{Bad}_{\op{II}}(u_{\va};2^jr_{\va},\Lda)\cap\om_{\f{r_0}{10}}(x_0))\cup\{x_0\},
\]
and, for $j\in\Z\cap[1,j_0+1]$,
\[
\cA_j^{\om}:=(B_{2^jr_{\va}}(F_j^{\om})\backslash B_{2^{j-1}r_{\va}}(F_{j-1}^{\om}))\cap\om_{\f{r_0}{10}}(x_0).
\]
We have the decomposition
\[
\om_{\f{r_0}{10}}(x_0)\subset B_{r_{\va}}(F_0^{\om})\cup\bigcup_{j=1}^{j_0+1}\cA_j^{\om}.
\]

\medskip
\noindent\textbf{Step 1: Estimate on $\cup_j\cA_j^{\om}$.}
For $j\in\Z\cap[0,j_0]$, Lemma~\ref{lemW1pfinal} gives a cover of $F_j^\om$ by at most $C(2^jr_{\va})^{-1}$ balls of radius $2^jr_{\va}$, after adding the ball centered at $x_0$. Enlarging and subdividing these balls by a fixed finite factor gives the same type of cover for $B_{2^jr_{\va}}(F_j^\om)\cap\om_{\f{r_0}{10}}(x_0)$. For the last layer $j=j_0+1$, the same bound follows from the bounded geometry of $\om_{\f{r_0}{10}}(x_0)$, since $2^{j_0+1}r_{\va}$ is comparable to $r_0$. Thus, for each $j\in\Z\cap[1,j_0+1]$, the same secondary covering argument as in Lemma~\ref{lemW1pfinal} gives balls $\{B_{\f{2^jr_{\va}}{16}}(x_{jk}')\}_{k=1}^{n_j'}$ such that
\[
\cA_j^{\om}\subset\bigcup_{k=1}^{n_j'}B_{\f{2^jr_{\va}}{16}}(x_{jk}'),
\quad
\{x_{jk}'\}_{k=1}^{n_j'}\subset\cA_j^\om,
\quad
0\leq n_j'\leq C(2^jr_{\va})^{-1}.
\]
Since $x_{jk}'\notin B_{2^{j-1}r_{\va}}(F_{j-1}^{\om})$, we have
\be
\theta_{\va}^{\om}(u_{\va};x_{jk}',2^{j-1}r_{\va})\leq C.
\label{energyboundxjk}
\ee
We estimate the integral of $\f{1}{\va^2}f(u_{\va})$ in $B_{\f{2^jr_{\va}}{16}}(x_{jk}')\cap\om$. We divide it into two cases.

\textit{Case a: $B_{2^{j-3}r_{\va}}(x_{jk}')\subset\om$.}
Applying Lemma~\ref{va3estimates} with radius $2^{j-4}r_{\va}$ and using the energy bound \eqref{energyboundxjk},
\[
\f{1}{\va^2}\int_{B_{\f{2^jr_{\va}}{16}}(x_{jk}')}f(u_{\va})\leq C\va.
\]

\textit{Case b: $B_{2^{j-3}r_{\va}}(x_{jk}')\cap\pa\om\neq\emptyset$.}
Select $y_{jk}'\in\pa\om$ with $B_{2^{j-3}r_{\va}}(x_{jk}')\subset B_{2^{j-2}r_{\va}}(y_{jk}')$. Since $x_0\in F_{j-1}^{\om}$ and $x_{jk}'\notin B_{2^{j-1}r_{\va}}(F_{j-1}^{\om})$, we have $|x_{jk}'-x_0|>2^{j-1}r_{\va}$. Hence the relevant boundary patch stays a distance comparable to $2^jr_{\va}$ from $x_0$. By the local condition on $\cA$ and the regularity of $g_{\va}$ away from $\cA$, we obtain
\[
(2^{j-2}r_{\va})\|\na_{\top}g_{\va}\|_{L^{\ift}(T_{2^{j-2}r_{\va}}^{\om}(y_{jk}'))}\leq C.
\]
By Proposition~\ref{BoundaryPartialRegularity1prop}, there exists $\eta=\eta(f,M_0,\cN)>0$ such that
$r(u_{\va};y)\geq2^j\eta r_{\va}$ for any
\[
y\in E_{jk}':=
B_{\f{2^jr_{\va}}{16}}(x_{jk}')
\cap
\{y\in\om:\dist(y,\pa\om)\leq2^j\eta r_{\va}\}.
\]
Moreover,
\[
f(u_{\va}(y))\leq\f{C\va^4}{(2^jr_{\va})^4}
\quad\text{for any }y\in E_{jk}'.
\]
Since $r_{\va}=\f{\va^{\f{1}{4}}}{2}$, we get $(2^jr_{\va})^4=\f{2^{4j}\va}{16}$ and
\[
\f{1}{\va^2}\int_{E_{jk}'}f(u_{\va})
\leq C\cL^3(E_{jk}')\cdot\f{\va^2}{(2^jr_{\va})^4}
\leq C\va.
\]
For the complementary region $B_{\f{2^jr_{\va}}{16}}(x_{jk}')\backslash E_{jk}'$, any point $z$ satisfies $\dist(z,\pa\om)>2^j\eta r_{\va}$. We cover this region by a finite number, bounded by $C(\eta)$, of balls of radius $\f{2^j\eta r_{\va}}{2}$ lying in $\om$. After decreasing $\eta$ by a dimensional factor if necessary, these balls are contained in $B_{2^{j-1}r_{\va}}(x_{jk}')$. Hence Lemma~\ref{va3estimates} applies on each such ball, using \eqref{energyboundxjk} and Proposition~\ref{MonotoneInterior}. For any ball $B$ in this re-covering, we obtain
\[
\f{1}{\va^2}\int_B f(u_{\va})\leq C\va.
\]
Therefore
\[
\f{1}{\va^2}\int_{B_{\f{2^jr_{\va}}{16}}(x_{jk}')\backslash E_{jk}'}f(u_{\va})\leq C\va.
\]

In both cases, we have
\[
\f{1}{\va^2}\int_{B_{\f{2^jr_{\va}}{16}}(x_{jk}')\cap\om}f(u_{\va})\leq C\va.
\]
Summing over $j\in\Z\cap[1,j_0+1]$,
\be
\f{1}{\va^2}\sum_{j=1}^{j_0+1}\int_{\cA_j^{\om}}f(u_{\va})
\leq
\sum_{j=1}^{j_0+1}Cn_j'\va
\leq
C\va\(\sum_{j=1}^{\ift}(2^jr_{\va})^{-1}\)
\leq
\f{C\va}{r_{\va}}
\leq C.
\label{Ajompotential}
\ee

\medskip
\noindent\textbf{Step 2: Estimate on $B_{r_{\va}}(F_0^{\om})$.}
By Proposition~\ref{locallogestimate},
\be
\f{1}{\va^2}\int_{\om_{\va^{\f{1}{16}}}(x_0)}f(u_{\va})
\leq C\va^{\f{1}{16}}(|\log\va|+1)
\leq C.
\label{Bva16bound}
\ee
Set
\[
E_\va^0:=
\(B_{r_{\va}}(F_0^{\om})\cap\om_{\f{r_0}{10}}(x_0)\)
\backslash \om_{\va^{\f{1}{16}}}(x_0).
\]
For any $x\in E_\va^0$, there exists $z\in F_0^\om$ such that $|x-z|<r_{\va}$. Since $r_{\va}<\va^{\f{1}{16}}$ for small $\va$ and $x\notin\om_{\va^{\f{1}{16}}}(x_0)$, this point $z$ cannot be $x_0$. Hence $ z\in\op{Bad}_{\op{II}}(u_{\va};r_{\va},\Lda)\cap\om_{\f{r_0}{10}}(x_0) $, and condition~\eqref{Badboundaryf} holds. Moreover, by the local separation of $\cA$ and after decreasing $\delta$ if necessary, we have $\dist(x,\cA)>\va^{\f{1}{16}}$ for all such $x$. 

By Lemma~\ref{boundarypotentialrx}, there exists $r_x\in[2r_{\va},\sg_{\va}]$ with
\[
\int_{\om_{r_x}(x)}\f{1}{\va^2}f(u_{\va})
\leq
\f{C}{|\log\va|}r_x\theta_{\va}^{\om}(u_{\va};x,r_x).
\]
The family $\{\ol{B}_{r_x}(x)\}_{x\in E_\va^0}$ covers $E_\va^0$. By Besicovitch's covering theorem, we extract points $\{x_i\}$ from $E_\va^0$, with $r_i:=r_{x_i}$, such that $\{\ol{B}_{r_i}(x_i)\}$ can be partitioned into $\ell$ collections $\{\cB_k\}_{k=1}^{\ell}$ of pairwise disjoint closed balls, where $\ell$ is an absolute constant. Then
\[
\int_{E_\va^0}\f{1}{\va^2}f(u_{\va})
\leq
\sum_i\int_{\om_{r_i}(x_i)}\f{1}{\va^2}f(u_{\va})
\leq
\f{C}{|\log\va|}\sum_i E_{\va}^{\om}(u_{\va},\om_{r_i}(x_i)).
\]
By disjointness within each $\cB_k$,
\[
\sum_i E_{\va}^{\om}(u_{\va},\om_{r_i}(x_i))
\leq
\sum_{k=1}^{\ell}
\sum_{\ol{B}_{r_i}(x_i)\in\cB_k}
E_{\va}^{\om}(u_{\va},\om_{r_i}(x_i))
\leq
\ell E_{\va}(u_{\va},\om)
\leq
CM_0(|\log\va|+1).
\]
Hence
\[
\int_{E_\va^0}\f{1}{\va^2}f(u_{\va})\leq C.
\]
Combining this estimate with \eqref{Ajompotential} and \eqref{Bva16bound} gives \eqref{potentialfinalestimateeq}, completing the proof.
\end{proof}

\subsection{Proof of Theorem \ref{W1pestimate}} 

The estimate \eqref{W1pestimateeq1} follows by applying Proposition \ref{InteriorW1p} and \ref{W1pboundaryfinal}. The estimate \eqref{W1pestimateeq11} follows analogously from \ref{Interiorpotential} and \ref{potentialfinalestimate}.

\section{Minimality: Proof of Theorem~\ref{minimalitytheorem}}\label{SectionMinimality}

\begin{proof}[Proof of Theorem~\ref{minimalitytheorem}]
The fact that $\Gamma_*\in\cC(\om,g)$ follows from Theorem~\ref{structureofthelimitset} and the meridian characterization of the limiting singular set obtained there. It remains to prove the minimality inequality. Let
\[
\Gamma=\{(K_i,\sg_i):i\in I\}\in\cC(\om,g)
\]
be an admissible competitor, and set $S:=S_\Gamma=\bigcup_{i\in I}K_i$. By subdividing the family at the finitely many branching points, if necessary, and assigning the same label to the resulting sub-segments, we may assume that the endpoints of the segments are precisely the vertices of the network. This operation does not change $S_\Gamma$, admissibility, or $\MM(\Gamma)$.

Let $v\in H_{\loc}^1(\ol{\om}\backslash S,\cN)$ be the map given by property~\ref{property4} in Definition~\ref{defadmissible}. We construct a sequence $w_n\in H^1(\om,\R^m)$ such that $w_n|_{\pa\om}=g_{\va_n}$ and
\be
\limsup_{n\to+\ift}\f{1}{|\log\va_n|}\int_{\om}e_{\va_n}(w_n)
\leq \MM(\Gamma)
=\sum_{i\in I}|\sg_i|_*\HH^1(K_i).
\label{limitwi}
\ee
Given such $w_n$, the minimality of $u_{\va_n}$ and the shared boundary condition $u_{\va_n}=w_n$ on $\pa\om$ yield
\[
\sum_{j\in J}|\al_j|_*\HH^1(L_j)
\leq
\liminf_{n\to+\ift}\f{1}{|\log\va_n|}\int_{\om}e_{\va_n}(u_{\va_n})
\leq
\limsup_{n\to+\ift}\f{1}{|\log\va_n|}\int_{\om}e_{\va_n}(w_n)
\leq
\MM(\Gamma),
\]
which gives the desired inequality.

\medskip
\noindent\textbf{Step 1: Preliminary setup.}
Let $P$ be the finite set of branching points of $S$, namely the points of $\ol\om$ that are endpoints of at least two segments in the family. Let $Q$ be the finite set of boundary endpoints of $S$ that are incident to exactly one segment. Choose $\eta\in(0,1)$ so small that the balls $B_{2\eta}(x)$, $x\in P\cup Q$, are pairwise disjoint and meet only the segments incident to their centers. Then choose $\delta\in(0,\eta/10)$.

Since $v\in H_{\loc}^1(\ol{\om}\backslash S,\cN)$, the coarea formula applied away from $S$ provides $\delta'\in(\f{3\delta}{4},\f{5\delta}{4})$ and $\eta'\in(\f{3\eta}{4},\f{5\eta}{4})$ such that, with
\[
S_{\delta'}:=\{y\in\om:\dist(y,S)\leq\delta'\},
\]
the following properties hold:
\begin{itemize}
\item $v|_{\pa S_{\delta'}\cap\om}\in H^1(\pa S_{\delta'}\cap\om,\cN)$;
\item for any $x\in P$, $ v|_{(\pa B_{\eta'}(x)\backslash S_{\delta'})\cap\om}
\in H^1((\pa B_{\eta'}(x)\backslash S_{\delta'})\cap\om,\cN) $. 
\end{itemize}
To simplify the notation, we write $\delta'=\delta$ and $\eta'=\eta$. Throughout this proof, $C$ denotes a positive constant independent of $n,\eta,\delta$, though it may depend on the fixed competitor $\Gamma$.

\medskip
\noindent\textbf{Step 2: Construction on a cylinder.}
Fix $i\in I$. Up to rotation and translation, write
\[
K_i=\{(0,0)\}\times[-h_i,h_i],
\quad
h_i:=\f{1}{2}\HH^1(K_i)>0.
\]
Set $\Lda_i:=B_\delta^2\times(z_{i,-},z_{i,+})$, where
\[
z_{i,-}:=
\begin{cases}
-h_i+\sqrt{\eta^2-\delta^2} & \text{if }(0,0,-h_i)\in P,\\
-h_i+2\delta & \text{if }(0,0,-h_i)\in Q,
\end{cases}
\]
and define $z_{i,+}$ symmetrically. Let $\Ga_i$ be the lateral surface of $\Lda_i$. By the meridian condition in Definition~\ref{defadmissible}, the homotopy class of $v|_{\pa D}$ is $\sg_i$ for any admissible meridian disk $D$ centered at an interior point of $K_i$ and orthogonal to $K_i$.

By Lemma~\ref{cylinderex}, there exists $w_n|_{\Lda_i}\in H^1(\Lda_i,\R^m)$ such that
\be
\int_{\Lda_i} e_{\va_n}(w_n)\leq\HH^1(K_i)|\sg_i|_*\log\f{\delta}{\va_n}
+C\left(\f{(\HH^1(K_i))^2}{\delta}+\delta\right)
\int_{\Ga_i}|\na_{\Ga_i}v|^2\ud\HH^2
+C\HH^1(K_i).
\label{wionLda}
\ee
Here we have used $\Ga_i\subset\pa S_\delta\cap\om$ and $v|_{\pa S_\delta\cap\om}\in H^1(\pa S_\delta\cap\om,\cN)$. The same construction gives, for any flat base disk $D$ of $\Lda_i$,
\be
\int_D e_{\va_n}(w_n)\ud\HH^2
\leq
C\left(\f{\HH^1(K_i)}{\delta}+\f{\delta}{\HH^1(K_i)}\right)
\int_{\Ga_i}|\na_{\Ga_i}v|^2\ud\HH^2
+|\sg_i|_*\log\f{\delta}{\va_n}
+C.
\label{wionbaseD}
\ee

\medskip
\noindent\textbf{Step 3: Construction near a boundary endpoint.}
Let $x\in Q$, and let $K_i$ be the unique segment of $S$ ending at $x$. Up to rotation and translation, write
\[
K_i=\{(0,0)\}\times[-h_i,h_i],
\quad
x=(0,0,h_i).
\]
Define the cap region
\[
\Lda_{\delta,x}:=\{y=(y_1,y_2,t)\in\om:\dist(y,K_i)<\delta,\ t>h_i-2\delta\}.
\]
Since $\om$ is $C^{2,1}$ and $\ol{\om}$ is strongly convex at every point $y\in\pa\om$, the closure $\ol{\Lda}_{\delta,x}$ is bi-Lipschitz homeomorphic to $\ol{B}_\delta$ via some map $ \Psi_x:\ol{\Lda}_{\delta,x}\to\ol{B}_\delta $, with a bi-Lipschitz constant depending only on the geometry of $\pa\om$.

The map $w_n$ has already been defined on the cylindrical part adjacent to $\Lda_{\delta,x}$. By \eqref{wionbaseD}, applied to the disk $B_\delta^2\times\{h_i-2\delta\}$, we have
\be
\begin{aligned}
\int_{B_\delta^2\times\{h_i-2\delta\}}e_{\va_n}(w_n)\ud\HH^2
&\leq
C\left(\f{\HH^1(K_i)}{\delta}+\f{\delta}{\HH^1(K_i)}\right)
\int_{\Ga_i}|\na_{\Ga_i}v|^2\ud\HH^2
+|\sg_i|_*\log\f{\delta}{\va_n}
+C.
\end{aligned}
\label{wionLdadelta}
\ee
Now define $\wt{g}_n:\pa\Lda_{\delta,x}\to\R^m$ by
\[
\wt{g}_n(y):=
\begin{cases}
g_{\va_n}(y) & \text{if }y\in\pa\Lda_{\delta,x}\cap\pa\om,\\
v|_{\pa S_\delta\cap\om}(y) & \text{if }y\in\pa S_\delta\cap\om,\\
w_n|_{B_\delta^2\times\{h_i-2\delta\}}(y) & \text{if }y\in B_\delta^2\times\{h_i-2\delta\}.
\end{cases}
\]
By the trace condition for $v$ and the construction of the boundary data $g_{\va_n}$ on regular boundary pieces, the first two traces agree on their common edge. The second and third traces agree on $\pa B_\delta^2\times\{h_i-2\delta\}$ by the cylinder construction. Hence $\wt{g}_n\in H^1(\pa\Lda_{\delta,x},\R^m)$. From \eqref{wionLdadelta} and $\delta\in(0,1)$,
\be
\int_{\pa\Lda_{\delta,x}}e_{\va_n}(\wt{g}_n)\ud\HH^2
\leq
C\log\f{\delta}{\va_n}
+C(\delta^{-1}+1)\int_{\Ga_i}|\na_{\Ga_i}v|^2\ud\HH^2
+C.
\label{gvaipaLdadelta}
\ee

We extend $w_n$ to $\Lda_{\delta,x}$ by the homogeneous extension
\[
w_n|_{\Lda_{\delta,x}}(y):=
\wt{g}_n\left(\Psi_x^{-1}\left(\f{\delta\Psi_x(y)}{|\Psi_x(y)|}\right)\right),
\quad
y\in\Lda_{\delta,x}\backslash\{\Psi_x^{-1}(0)\}.
\]
The standard energy estimate for radial extensions, together with \eqref{gvaipaLdadelta}, gives
\be
\int_{\Lda_{\delta,x}}e_{\va_n}(w_n)
\leq
C\delta\int_{\pa\Lda_{\delta,x}}e_{\va_n}(\wt{g}_n)\ud\HH^2
\leq
C\delta\log\f{\delta}{\va_n}
+C\int_{\Ga_i}|\na_{\Ga_i}v|^2\ud\HH^2
+C\delta.
\label{wiLdadeltaestimate}
\ee

\medskip
\noindent\textbf{Step 4: Construction near a branching point.}
Let $x\in P$, and let $K_{i_1},\ldots,K_{i_\ell}$ be the segments of $S$ incident to $x$. By construction,
\[
v|_{(\pa B_\eta(x)\backslash S_\delta)\cap\om}
\in
H^1((\pa B_\eta(x)\backslash S_\delta)\cap\om,\cN).
\]
For each $j\in\Z\cap[1,\ell]$, let $D_j$ be the open $2$-disk orthogonal to $K_{i_j}$, with radius $\delta$, and with center on $K_{i_j}$ at distance $\sqrt{\eta^2-\delta^2}$ from $x$. We fill the gaps in $(\pa B_\eta(x)\backslash S_\delta)\cap\om$ with the disks $\ol D_1,\ldots,\ol D_\ell$ and set
\[
\M_x:=(\pa\om\cap\ol{B}_\eta(x))
\cup((\pa B_\eta(x)\backslash S_\delta)\cap\om)
\cup\bigcup_{j=1}^{\ell}\ol{D}_j.
\]
Then $\M_x$ is a compact $2$-manifold without boundary, bi-Lipschitz homeomorphic to $\pa B_\eta$. If $x\in\om$, we have $\pa\om\cap\ol B_\eta(x)=\emptyset$ by the choice of $\eta$. By construction, $w_n|_{D_j}\in H^1(D_j,\R^m)$ for any $j\in\Z\cap[1,\ell]$.

Define $v_n:\M_x\to\R^m$ by
\[
v_n(y):=
\begin{cases}
g_{\va_n}(y) & \text{if }y\in\pa\om\cap\ol{B}_\eta(x),\\
v|_{(\pa B_\eta(x)\backslash S_\delta)\cap\om}(y) & \text{if }y\in(\pa B_\eta(x)\backslash S_\delta)\cap\om,\\
w_n|_{\ol{D}_j}(y) & \text{if }y\in\ol{D}_j,\ j\in\Z\cap[1,\ell].
\end{cases}
\]
The traces agree on the common edges by the trace condition for $v$, the construction of $g_{\va_n}$ on regular boundary pieces, and the cylinder construction. Hence $v_n\in H^1(\M_x,\R^m)$.

Let $E_x$ be the open bounded set with $\pa E_x=\M_x$, and let $ \Phi_x:\ol{E}_x\to\ol{B}_\eta $ be a bi-Lipschitz homeomorphism. We define
\[
w_n|_{E_x}(y):=
v_n\left(\Phi_x^{-1}\left(\f{\eta\Phi_x(y)}{|\Phi_x(y)|}\right)\right),
\quad
y\in E_x\backslash\{\Phi_x^{-1}(0)\}.
\]
Using \eqref{wionbaseD} to bound $\int_{D_j}e_{\va_n}(w_n)\ud\HH^2$, the standard energy estimate for homogeneous extensions gives
\be
\begin{aligned}
\int_{E_x}e_{\va_n}(w_n)
&\leq C\eta\int_{\M_x} e_{\va_n}(v_n)\ud\HH^2\\
&\leq
C\eta\int_{\pa\om\cap B_\eta(x)}e_{\va_n}(g_{\va_n})\ud\HH^2
+C\eta\int_{(\pa B_\eta(x)\backslash S_\delta)\cap\om}e_{\va_n}(v)\ud\HH^2\\
&\quad
+C\eta\sum_{j=1}^{\ell}\int_{D_j}e_{\va_n}(w_n)\ud\HH^2\\
&\leq
C\eta\log\f{\delta}{\va_n}
+C\eta\int_{(\pa B_\eta(x)\backslash S_\delta)\cap\om}e_{\va_n}(v)\ud\HH^2
+\f{C\eta}{\delta}\int_{\pa S_\delta\cap\om}e_{\va_n}(v)\ud\HH^2.
\end{aligned}
\label{wiEestimates}
\ee

\medskip
\noindent\textbf{Step 5: Global definition.}
Define
\[
E_{\eta,\delta}:=S_\delta\cup\bigcup_{x\in P}\ol{\om}_\eta(x).
\]
For $y\in\om\backslash E_{\eta,\delta}$, set $w_n(y):=v(y)$. On the cylindrical pieces, on the cap regions near points of $Q$, and on the regions $E_x$ near points of $P$, define $w_n$ as above. Since all traces agree at every interface, $w_n\in H^1(\om,\R^m)$ and $w_n|_{\pa\om}=g_{\va_n}$.

\medskip
\noindent\textbf{Step 6: Conclusion.}
Summing \eqref{wionLda}, \eqref{wiLdadeltaestimate}, and \eqref{wiEestimates} over all segments, all boundary endpoints in $Q$, and all branching points in $P$, we obtain, for all sufficiently large $n$,
\[
\begin{aligned}
\int_\om e_{\va_n}(w_n)
&\leq
\MM(\Gamma)\log\f{\delta}{\va_n}
+C\eta\log\f{\delta}{\va_n}
+C\eta\sum_{x\in P}\int_{(\pa B_\eta(x)\backslash S_\delta)\cap\om}e_{\va_n}(v)\ud\HH^2\\
&\quad
+\f{C}{\delta}\int_{\pa S_\delta\cap\om}e_{\va_n}(v)\ud\HH^2
+\int_{\om\backslash E_{\eta,\delta}}e_{\va_n}(v)\ud x
+C.
\end{aligned}
\]
Dividing both sides by $|\log\va_n|$ and letting $n\to+\ift$, we note that $v\in\cN$ on $\om\backslash S$, so $f(v)=0$ there and $e_{\va_n}(v)=\f{1}{2}|\na v|^2$ on $\om\backslash S$. Since
\[
v\in H^1(\pa S_\delta\cap\om,\cN),\quad
v\in H^1((\pa B_\eta(x)\backslash S_\delta)\cap\om,\cN)\quad\text{for any }x\in P,
\]
and $v\in H^1(\om\backslash E_{\eta,\delta},\cN)$, all integrals involving $v$ are bounded independently of $n$. Their contributions therefore vanish after division by $|\log\va_n|$. Moreover,
\[
\f{\log(\delta\va_n^{-1})}{|\log\va_n|}\to 1
\quad\text{as }n\to+\ift.
\]
We conclude that
\[
\limsup_{n\to+\ift}\f{1}{|\log\va_n|}\int_\om e_{\va_n}(w_n)
\leq
\MM(\Gamma)+C\eta.
\]
Since $\eta>0$ is arbitrary and $C$ is independent of $\eta$, this proves \eqref{limitwi}. As $\Gamma\in\cC(\om,g)$ was arbitrary, the minimality inequality follows.
\end{proof}

\section*{Acknowledgment}

Haotong Fu and Wei Wang are partially supported by the National Key R$\&$D Program of China under Grant 2023YFA1008801 and NSF of China under Grant 12288101. Giacomo Canevari is partially supported by INdAM--GNAMPA, project ref.~E53C25002010001.

\bibliographystyle{plain}

\end{document}